\documentclass[%
  12pt, 
]{amsart}
  \usepackage{aliascnt}
  \usepackage{mathtools}
  \usepackage{keytheorems}
  \usepackage{dsfont} 
  \usepackage{amssymb} 
  \usepackage{yhmath} 
  \usepackage[%
      shortlabels,
      inline,
  ]{enumitem} 
  \usepackage{mathrsfs}
  \usepackage[Symbol]{upgreek}
  \usepackage{xfrac}
  \usepackage{accents}
  \usepackage[dvipsnames]{xcolor} 
  \usepackage{interval} 
  \usepackage{mleftright} 
  \usepackage{fixdif}
  \usepackage{leftindex} 
  \usepackage{mathcommand} 
  \usepackage{tikz} 
  \usepackage[all,cmtip]{xy} %
  \usepackage{zref-clever}
  \usepackage[%
      pagebackref, 
  ]{hyperref}
  \usepackage{caption} 
  \numberwithin{equation}{section}
  \mathtoolsset{showonlyrefs}
  \let\realItem\item 
\makeatletter
\NewDocumentCommand\myItem{ o }{%
   \IfNoValueTF{#1}%
      {\realItem}
      {\realItem[#1{\MakeLinkTarget[item]{}}]\def\@currentlabel{#1}}
}
\makeatother
\setlist[enumerate,1]{
  before=\let\item\myItem, 
  label = \textup{(\alph*)}, 
  ref = \textup{(\alph*)}
}
  
  \zcsetup{%
      cap,
      abbrev,
      nameinlink,
  }
  \hypersetup{%
      bookmarksnumbered=true,
      colorlinks=true,
      linkcolor=blue,
      citecolor=cyan,
      pdfstartview=FitBH,
  }
  \renewcommand*{\backrefalt}[4]{%
      \ifcase #1 %
        No citations.%
      \or
        $\uparrow$ #2%
      \else
        $\uparrow$ #2%
      \fi
  }
  \providecommand{\noopsort}[1]{} 

  \newkeytheorem{mainthm}[%
      name=Theorem,%
      counter-format=\Alph{mainthm},%
  ]

  \newkeytheorem{%
      theorem,%
      proposition,%
      lemma,%
      corollary,%
      conjecture,%
  }[sharenumber=equation]

  \newkeytheorem{%
      definition,%
      remark,%
      notation,%
      example,%
      warning,%
      construction,%
      convention,%
  }[sharenumber=equation,style=definition]

  \newkeytheorem{deflem}[%
      name={Definition/Lemma},%
      sharenumber=equation,%
      style=definition]

      \newkeytheorem{defprop}[%
      name={Definition/Proposition},%
      sharenumber=equation,%
      style=definition]

  \NewDocumentCommand \ListInThm { m O{\textup} O{(\roman{enumi})} O{.} }
    {
      \AtBeginEnvironment{#1}
        {
          \zcsetup{ countertype={enumi=#1} }
          \setlist[enumerate,1]
            {
              label={#2{#3}},
              ref={\csname the#1\endcsname#4{#3}}
            }%
        }
    }

  \ListInThm{theorem}
  \ListInThm{corollary}

  \usetikzlibrary{%
      cd,%
      arrows,%
      backgrounds,%
      calc,%
      decorations,%
      decorations.pathmorphing,%
      shapes,%
      tikzmark%
  }
  \tikzcdset{%
      scale cd/.style = %
          {%
            every label/.append style = {scale = #1},%
            cells = {nodes = {scale = #1}}%
          }%
  }

  \NewDocumentMathCommand\txand%
      { O{\quad} }{#1\text{and}#1}
  \NewDocumentMathCommand\txforall%
      { O{\quad} }{#1\text{for all }}
  \NewDocumentMathCommand\txforsome%
      { O{\quad} }{#1\text{for some }}
  \NewDocumentCommand\SetSymbol{o}{\nonscript\:#1\vert\allowbreak\nonscript\:\mathopen{}}  
  \DeclarePairedDelimiterX\Set[1]\{\}
      {#1} 
  \NewDocumentMathCommand\textSet{m}%
      {\Set*{\text{#1}}}
  \DeclarePairedDelimiterX\GSet[1]\langle\rangle
      {#1} 
  \NewDocumentCommand\placeholder{}{\:\cdot\:} 
  \NewDocumentCommand\NewPairedDelimiterS{mmm}{%
      \DeclarePairedDelimiterX{#1}[1]{#2}{#3}%
          {\ifblank{##1}{\placeholder}{##1}}%
  }
  \NewDocumentCommand\NewPairedDelimiterSS{mmmO{,}}{%
      \DeclarePairedDelimiterX{#1}[2]{#2}{#3}%
          {\ifblank{##1}{\placeholder}{##1}%
            #4%
          \ifblank{##2}{\placeholder}{##2}}%
  }
  \NewPairedDelimiterS\abs\lvert\rvert 
  \NewPairedDelimiterS\norm\lVert\rVert 
  \NewPairedDelimiterS\lrangle\langle\rangle 
  \NewPairedDelimiterS\paren\lparen\rparen 
  \NewPairedDelimiterS\lrbrack\lbrack\rbrack 
  \NewPairedDelimiterS\lrbrace\lbrace\rbrace 
  \NewPairedDelimiterS\ceil\lceil\rceil 
  \NewPairedDelimiterS\floor\lfloor\rfloor 
  \NewPairedDelimiterS\bra\langle\vert 
  \NewPairedDelimiterS\ket\vert\rangle 
  \NewPairedDelimiterSS\braket\langle\rangle[%
      \,\delimsize\vert\,\mathopen{}%
  ] 
  \NewPairedDelimiterSS\pairing\langle\rangle
  \NewPairedDelimiterSS\inner\lparen\rparen
  \NewPairedDelimiterSS\Liebracket\lbrack\rbrack
  \DeclarePairedDelimiterX\bracket[3]\langle\rangle%
      {\ifblank{#1}{\placeholder}{#1}%
      \,\delimsize\vert\,\mathopen{}%
      \ifblank{#2}{\placeholder}{#2}
      \,\delimsize\vert\,\mathopen{}%
      \ifblank{#3}{\placeholder}{#3}}%
  \NewDocumentMathCommand\dparen{m}%
      {\lparen\!\lparen{#1}\rparen\!\rparen}
  \NewDocumentMathCommand\dbrack{m}%
      {\lbrack\!\lbrack{#1}\rbrack\!\rbrack}
  \NewDocumentMathCommand\dangle{m}%
      {\langle\!\langle{#1}\rangle\!\rangle}
  \NewDocumentCommand \fun { m e{^_} d() }
      {%
        \operatorname{#1}%
        \IfValueT{#2}{\sp{#2}}%
        \IfValueT{#3}{\sb{#3}}%
        \IfNoValueTF{#4}{}{\mleft(#4\mright)}%
      }
  \LoopCommands{%
      NZQRCKkWM%
  }[#1]{\declaremathcommand#2{\mathbb#1}}
\declaremathcommand\kk{\Bbbk}
  
  \LoopCommands{%
      {Aut}%
      {an}%
      {Coker}%
      {Cotor}%
      {coker}%
      {Der}%
      {End}%
      {Ext}%
      {Gal}%
      {Gr}%
      {gl}%
      {GL}%
      {gr}%
      {Ho}%
      {Hom}%
      {Id}%
      {id}%
      {Ind}%
      {Ker}%
      {Mor}%
      {Ob}%
      {Pic}%
      {Proj}%
      {pr}%
      {Res}%
      {SL}%
      {Sq}%
      {Spc}%
      {Spf}%
      {Spec}%
      {Spm}%
      {supp}%
      {Tor}%
      {tr}%
      {wt}%
      {Tot}%
  }[#1]{\declaremathcommand#2{\fun{#1}}}
    \renewmathcommand\le\leqslant
    \renewmathcommand\ge\geqslant
    \declaremathcommand\one{{\scriptstyle\mathfrak{I}}}
    \declaremathcommand\unit{\mathbb{1}}
    \declaremathcommand\iu{\mathtt{i}}
    \declaremathcommand\e{\mathsf{e}}
    \declaremathcommand\Rf{\operatorname{\mathbf{R}}}
    \declaremathcommand\qbar{\mathscr{q}}
    \declaremathcommand\SG{\mathfrak{S}}
    \declaremathcommand\Gm{\mathbb{G}_{\mathtt{m}}}
    \declaremathcommand\Ga{\mathbb{G}_{\mathtt{a}}}
    \declaremathcommand\vac{\vac}
    \declaremathcommand\cch{\mathscr{c}}
    \declaremathcommand\cfv{\upomega}
    \declaremathcommand\hollowcolon{{}^{\circ}_{\circ}}
    \declaremathcommand\O{\mathscr{O}}
    \declaremathcommand{\H}{\fun{H}}
    \declaremathcommand\et{\text{\'et}}
    \declaremathcommand\pt{\mathsf{p}}
    \declaremathcommand\tpt{\tilde{\mathsf{p}}}
    \declaremathcommand\qt{\mathsf{q}}
    \declaremathcommand\Moduli{\mathcm{M}}
    \declaremathcommand\shHom{\fun{\mathscr{Hom}}}
    \declaremathcommand\V{\mathbb{V}}
    \declaremathcommand\uAut{\operatorname{\underline{Aut}}}
    \declaremathcommand{\G}{\mathbb{G}}
    \declaremathcommand{\vac}{\mathds{1}}
        
    \newmathcommand\Span{\operatorname{span}}
    \newmathcommand\ch{\operatorname{ch}}
    \newmathcommand\Image{\operatorname{Im}}
    
    \newmathcommand\Lie\Liebracket
    \NewDocumentMathCommand\odv{m}%
        {\frac{\d}{\d{#1}}}%
    \NewDocumentMathCommand\pdv{m}%
        {\frac{\partial}{\partial{#1}}}
    \NewDocumentMathCommand\dual{m}
        {\ifblank{#1}{(\:\cdot\:)}{#1}^{\ast}}
    \NewDocumentMathCommand\rldual{m}
        {\ifblank{#1}{(\:\cdot\:)}{#1}^{\dagger}}
    \NewDocumentMathCommand\opp{m}
        {\ifblank{#1}{(\:\cdot\:)}{#1}^{\mathrm{op}}}
    \NewDocumentMathCommand\invo{m}%
        {\prescript{\theta}{}{#1}}%
    \NewDocumentMathCommand\vect{m}%
        {\boldsymbol{#1}}
    \NewDocumentMathCommand\grp{m}%
        {\fun{\mathsf{#1}}}%
    \NewDocumentMathCommand\cat{m}%
        {\operatorname{\mathsf{#1}}}%
      
    \NewDocumentMathCommand\vo{mm}%
        {#1_{(#2)}}%
    \NewDocumentMathCommand\lo{mm}%
        {#1_{[#2]}}%
    \NewDocumentMathCommand\VL{m}%
        {L_{#1}}%

    \NewDocumentMathCommand\Nn{O{\bullet}}%
        {\mathsf{N}^{#1}}
    \NewDocumentMathCommand\NL{O{\bullet}}%
        {\mathsf{N}_{\mathsf{L}}^{#1}}
    \NewDocumentMathCommand\NR{O{\bullet}}%
        {\mathsf{N}_{\mathsf{R}}^{#1}}
    \NewDocumentMathCommand\NLR{O{\bullet}}%
        {\mathsf{N}^{#1}}
    \NewDocumentMathCommand\cNL{O{\bullet}}%
        {\prescript{\mathrm{c}}{}{\mathsf{N}}_{\mathsf{L}}^{#1}}
    \NewDocumentMathCommand\cNR{O{\bullet}}%
        {\prescript{\mathrm{c}}{}{\mathsf{N}}_{\mathsf{R}}^{#1}}
    \NewDocumentMathCommand\cNLR{O{\bullet}}%
        {\prescript{\mathrm{c}}{}{\mathsf{N}}^{#1}}

    \NewDocumentMathCommand\normord{m}%
        {\mathopen{\hollowcolon}\mathinner{#1}\mathclose{\hollowcolon}}%

\newmathcommand\bPhi{\mathbf{\Phi}}

\makeatletter
\newmathcommand{\ostar}{\mathbin{\mathpalette\make@circled\star}}

\newcommand{\make@circled}[2]{%
  \ooalign{$\m@th#1\smallbigcirc{#1}$\cr\hidewidth$\m@th#1#2$\hidewidth\cr}%
}
\newcommand{\smallbigcirc}[1]{%
  \mathcal{V}enter{\hbox{\scalebox{0.7}{$\m@th#1\bigcirc$}}}%
}

\newmathcommand{\halfstar}{\mathbin{\tikz@halfstar}}
\newcommand{\tikz@halfstar}{%
  \tikzstyle{scorestars}=[star, star points=5, star point ratio=2.25, draw, inner sep=0.2ex, anchor=outer point 3]%
  \begin{tikzpicture}[baseline]%
    \node[scorestars] {};
    \path node[scorestars,fill=black] (s) {} [clip] (s.south west) rectangle (s.north);
  \end{tikzpicture}%
}
\makeatother

\def\cal{\mathcal}
\def\frak{\mathfrak}

\def\eqdef{\stackrel{\textup{def}}{=}}
\def\Im{\operatorname{Im}}

\def\op{{\operatorname{op}}}
\DeclareMathOperator{\im}{im}

\DeclareMathOperator{\Vir}{Vir}

\def\Z{{\mathbb{Z}}}
\def\C{{\mathbb{C}}}
\def\Q{{\mathbb{Q}}}
\def\N{{\mathbb{N}}}
\def\P{{\mathbb{P}}}
\def\F{{\mathbb{F}}}
\def\A{{\mathbb{A}}}

\declaremathcommand{\sfL}{{\mathsf{L}}}
\declaremathcommand{\sfR}{{\mathsf{R}}}
\newcommand{\Uu}{\mathit{U}}

\newcommand{\UV}{\mathscr{U}}
\newcommand{\UVf}{\mathscr{U}^{\mathsf{f}}}
\newcommand{\UVR}{\mathscr{U}^{\mathsf{R}}}
\newcommand{\UVL}{\mathscr{U}^{\mathsf{L}}}

\newcommand{\UuR}{\Uu^{\mathsf{R}}}

\newcommand{\hUu}{\widehat{\Uu}}

\newcommand{\frakL}{\mathfrak{L}}

\newcommand{\LV}{{\mathfrak{L}}(V)}
\newcommand{\LVf}{{\mathfrak{L}}(V)^{\mathsf{f}}}

\newcommand{\LVR}{\LV^{\mathsf{R}}}
\newcommand{\LVL}{\LV^{\mathsf{L}}}

\newcommand{\PhiL}{\Phi^\mathsf{L}}
\newcommand{\PhiR}{\Phi^\mathsf{R}}

\newcommand{\Ac}{\mathfrak{A}}

\newcommand{\Aa}{\mathsf{A}}

\newcommand{\Cs}{\mathscr{C}}

\usepackage{stackengine}

\newcommand{\wh}{\widehat}

\newmathcommand\ctensor{\mathbin{\mathop{\widehat{\otimes}}}}

\newcommand{\mrm}{\mathrm}

\DeclareMathOperator{\Char}{char}

\begin{document}


\title{Conformal blocks for modular vertex algebras}
\subjclass[2020]{14H10, 17B69 (primary), 81R10, 81T40, 14G17 (secondary)}
\keywords{Modular vertex algebras, factorization, sewing, conformal blocks, vector bundles on
moduli of curves, logarithmic conformal field theory}

\begin{abstract}
Given a vertex operator algebra $V$ over $\C$, it is known how to define so-called sheaves of coinvariants and conformal blocks on the moduli space $\overline{\mathcal{M}}_{g,n}$ of stable, $n$-pointed, genus $g$ curves. We extend this theory to vertex algebras equipped with a notion of change of coordinates over any algebraically closed field. As a by-product of our arguments, we also show that a number of standard assumptions in these constructions can be removed, such as being $C_1$-cofinite or CFT-type.
\end{abstract}

\date{\today}
\author[Griffin]{Colton Griffin}

\address{David Rittenhouse Lab, University of Pennsylvania, 209 South 33rd Street, Philadelphia, PA 19104-6395}


\setcounter{tocdepth}{1}

\maketitle

\section{Introduction}

Vertex operator algebras (VOAs) over the complex numbers have a rich and interesting geometric representation theory. By assigning a VOA module to each marked point of a stable algebraic curve, one may form the associated mutually dual spaces of coinvariants and conformal blocks. These give rise to sheaves on the moduli space $\overline{\mathcal{M}}_{g,n}$ of stable, $n$-pointed curves of genus $g$ \cite{TUY89,NT05,DGT21,DGT23,DGK23}. If $V$ is of CohFT type,\footnote{A VOA $V$ is of \textit{CohFT} type if it is of CFT-type, rational, $C_2$-cofinite, simple, and self-contragredient.} then they are vector bundles on $\overline{\mathcal{M}}_{g,n}$, and their Chern classes form semisimple cohomological field theories \cite{DGT22}. Since it is common to study algebraic curves over fields other than $\C$ or even more general bases, it is natural to ask if the theory of coinvariants and conformal blocks on $\overline{\mathcal{M}}_{g,n}$ over $\C$ extends to a more general base.

Here, we consider the theory of coinvariants and conformal blocks associated to vertex algebras over any algebraically closed field $\kk$ of any characteristic. The sheaves we construct are quasi-coherent and as such rely on certain forms of descent data (\zcref{sec:descent}). The structure of a VOA in positive characteristic may not permit descent to $\overline{\mathcal{M}}_{g,n}$ (\zcref{rmk:poschardescent}). 
To handle this challenge, we study $\N$-graded vertex algebras equipped with an action of the group $\Aut\mathcal{O}$ of continuous automorphisms of the power series ring $\kk[\![t]\!]$.

With our assumptions, we give generalizations of the propagation of vacua and factorization properties for coinvariants and conformal blocks (\zcref{thm:propagation of vacua}, \zcref{thm:factorization theorem}) and show that, under certain assumptions on modules, their associated sheaves on $\overline{\mathcal{M}}_{g,n}$ are quasi-coherent (\zcref{sheaves of coinvariants descent}). Assuming that the underlying VOA satisfies the so-called strong identity condition associated to its mode transition algebra (\zcref{def:stronglyunital}), we show that the sheaves satisfy smoothing (\zcref{smoothing property}).

Our results may be of interest to readers that are exclusively concerned with VOAs over $\C$ since, as a by-product of our arguments, a number of prior assumptions in the construction of these algebraic structures can be relaxed. For instance, one no longer needs the vertex algebra to be of CFT-type to define the sheaves of coinvariants on $\overline{\mathcal{M}}_{g,n}$. Moreover, the assumption that the vertex algebra is $C_1$-cofinite is not needed for the factorization property \cite[Proposition 6.2.1]{DGT23}, \cite[Lemma 4.4.4]{DGK23}. This allows one to consider conformal blocks associated to a more general class of vertex algebras such as the $\beta\gamma$ (or bosonic ghost) vertex algebra, which is a VOA not of CFT-type. The mode transition algebra of this VOA has been shown to satisfy the strong identity condition, so we have the smoothing property \cite{Barron}.

Over $\C$, the vertex algebra sheaf associated to a nodal curve $C$ and a VOA $V$ is known to admit a flat logarithmic connection. Given a vertex algebra satisfying our assumptions, we show that the vertex algebra sheaf $\mathcal{V}_C$ on a smooth curve $C$ admits a natural $\mathcal{D}$-module structure (\zcref{PD-connection smooth curve}).
More precisely, we have a \textit{stratification} or a so-called \textit{PD-connection} from \cite{BerthelotOgus}. If the curve has a nodal singularity, then we require the compatible structure of a so-called \textit{M\"obius vertex algebra} (or $\mathcal{H}$-module vertex algebra from \cite{Li18}), in which case we have a logarithmic version of a PD-connection (\zcref{PD-connection nodal curve}).

Aside from the geometric challenges in studying vertex algebras in positive characteristic, there are also some missing details in the vertex algebra literature that need to be filled in for our purposes. In particular, there are many well-known statements about $C_2$-cofiniteness that have to be reproven in positive characteristic to establish certain properties of coinvariants, such as coherence. We recover many of the results of \cite{KL98, GN03, BuhlSpanning, AN2} for CFT-type vertex algebras over arbitrary rings, and we apply these to show that the sheaves of coinvariants associated to a $C_2$-cofinite, CFT-type vertex algebra are coherent (\zcref{cor:CoherenceOnMgn}). Many of the other tools required to study sheaves of coinvariants already exist in the literature, such as the Zhu algebra $\Aa(V)$ from \cite{Z} and its generalizations in \cite{DLM2, DongRen14, Ren17}; the mode transition algebra $\mathfrak{A}(V)$ from \cite{DGK23}; or generalizations of M\"obius vertex algebras in \cite{Li18}.

Possible directions of research related to modular conformal blocks include the following:
\begin{enumerate}
    \item It was shown in \cite{DGT23} that rational, $C_2$-cofinite VOAs over $\C$ give rise to finite rank vector bundles over $\overline{\mathcal{M}}_{g,n}$. It would be interesting to see if this extends to sheaves of coinvariants over any algebraically closed field.  This would provide new avenues for studying $\overline{\mathcal{M}}_{g,n}$ in positive characteristic. If the vertex algebra is $C_2$-cofinite, then we show in \zcref{cor:CoherenceOnMgn} that the sheaves of coinvariants are coherent using a similar proof to \cite{AN2}. If $V$ is rational, then it satisfies the strong identity condition \cite{Griffin26}, so we have the smoothing property (\zcref{smoothing property}). Using a similar argument to \cite[Corollary 5.2.6]{DGK23}, it remains to show that the sheaves of coinvariants are flat. For this, it suffices to provide a twisted logarithmic $\mathcal{D}$-module structure.

    \item VOAs of CohFT type are closely related to modular tensor categories and the Verlinde formula \cite{Huang2005}. While we work exclusively with algebro-geometric methods, it would be interesting to see if there are non-Archimedean analytic methods of studying vertex algebras applicable for modular forms or modular tensor categories. This perspective was proposed in \cite{Mason-p-adic}, where they work with analytic versions of vertex algebras over $\Q_p$. 
    We wonder if our results can be stated in a rigid geometric framework when the base field is non-Archimedean. In this way, one could study non-Archimedean versions of 2D CFT using sewing operations on $\P^1_\kk$ as in \cite{Huang1997}. In the same vein, it may be possible to study the relationship between modular forms and VOAs in a non-Archimedean setting.
    In some sense, our work gives a partial confirmation of these ideas. Indeed, given an non-Archimedean, algebraically closed field $\kk$, rigid GAGA implies that the coherent sheaves on $\overline{\mathcal{M}}_{g,n}$---such as those arising from $C_2$-cofinite vertex algebras---are in equivalence with coherent sheaves on the rigid analytification $\overline{\mathcal{M}}_{g,n}^{\mathrm{an}}$, which parametrizes stable, $n$-pointed, rigid analytic curves of genus $g$.
\end{enumerate}
We hope that our work on modular conformal blocks will stimulate interest in VOAs and their geometric applications over more arbitrary fields.

\subsection{Plan of the paper}
In \zcref{part:prelims}, we develop the general theory of vertex algebras that we will use throughout the paper. 
In \zcref{sec:Basics}, we define vertex algebras and their modules, studying the modules in terms of the universal enveloping algebra and the associated (higher) Zhu algebras. 
In \zcref{sec:contragredient}, we give the basic tools necessary to define an involution on the universal enveloping algebra that gives the left $V$-module structure on the contragredient dual $M'$ of an admissible module $M$. In \zcref{sec:PBW}, we develop the theory of $C_n$-cofiniteness for CFT-type vertex algebras. We recover most of the main results of \cite{KL98,GN03,BuhlSpanning,AN2} for arbitrary base fields.
In \zcref{sec:Aut O}, we study vertex algebras equipped with an action of $\Aut\mathcal{O}$. This generalizes the notion of a quasi-conformal vertex algebra $V$.

In \zcref{part: sheaves}, we define and study the vertex algebra sheaf associated to a vertex algebra with an $\Aut\mathcal{O}$-action over an arbitrary algebraically closed field $\kk$.
In \zcref{sec:vertex alg sheaf}, we define the vertex algebra sheaf $\mathcal{V}_C$ and give a natural construction of its logarithmic $\mathcal{D}$-module structure.
In \zcref{sec:chiralLieAlg}, we study the chiral Lie algebra $\mathcal{L}_{C\setminus P_\bullet}(V)$ associated to a vertex algebra $V$ with an $\Aut\mathcal{O}$-action and a stable, $n$-pointed curve $C$.
In \zcref{sec:coinvariants}, we define the sheaves of coinvariants and conformal blocks associated to a stable relative curve $\mathcal{C}\to S$. We prove analogues of propagation of vacua (\zcref{thm:propagation of vacua}) and the factorization theorems (\zcref{thm:factorization theorem}) in full generality, with no additional assumptions beyond what we have required so far. We also characterize the space of intertwining operators using conformal blocks associated to the smooth relative curve $\P^1_{\kk[z,z^{-1}]} \to \Spec\kk[z,z^{-1}]$ (\zcref{thm:intertwining}).
In \zcref{sec:finiteness-smoothing}, we study certain properties of sheaves of coinvariants given certain assumptions on the underlying vertex algebra. We show that sheaves of coinvariants associated to a $C_2$-cofinite vertex algebra with an $\Aut\mathcal{O}$-action are finite-dimensional over $\kk$, and we show that coinvariants satisfy smoothing as in \cite{DGK23} provided that the mode transition algebras $\Ac_d$ are strongly unital for all $d\in \N$.

\subsection*{Acknowledgements}
We thank Angela Gibney and Danny Krashen for the discussions that led to our main results. The author was supported by the NSF GRFP.

\part{Preliminaries on vertex algebras}\label{part:prelims}
In this part, we give an overview of vertex algebras and their properties over an arbitrary ring. In particular, we will reprove many well-known properties of vertex operator algebras for vertex algebras over an arbitrary ring. Much of this part of the paper is discussed in more generality in \cite{Griffin26}.

\section{Basics}\label{sec:Basics}
In this section, we recall definitions and facts about vertex algebras. Many of our definitions are given over an arbitrary ring $\kk$, but in practice $\kk$ will be a field.

\subsection{Main definitions}
Given $n\in \Z$ and $k\in \N$, we set
\[(z+w)^n = \sum_{k\ge 0}\binom{n}{k}z^{n-k}w^k,\qquad \binom{n}{k} = \frac{n(n-1)\cdots (n-k+1)}{k!}.\]
\begin{definition}
    Let $\kk$ be a commutative unital ring. A \textit{vertex $\kk$-algebra} is a tuple $(V,\vac,Y)$ consisting of a $\kk$-module $V$, an element $\vac\in V$, and a $\kk$-linear map
    \begin{align*}
        Y(\cdot,z) \colon V&\to \End(V)[\![z,z^{-1}]\!],\\
        a&\mapsto Y(a,z) = \sum_{k\in \Z}a_{(k)}z^{-1-k},
    \end{align*}
    satisfying the following properties:
    \begin{enumerate}[label = (V\arabic*)]
        \item \label{V1} For all $a,b\in V$, there exists $K=K_{a,b}\in \Z$ such that $a_{(k)}b = 0$ if $k\ge K$.
        \item \label{V2} $Y(\vac,z)=\id_V$, or equivalently $\vac_{(k)}a = \delta_{k,-1}a$.
        \item \label{V3} For all $a\in V$, we have $Y(a,z)\vac \in a + zV[\![z]\!]$, or equivalently
        \[a_{(-1)}\vac = a,\quad \text{and}\quad \forall k\ge 0,\ a_{(k)}\vac = 0.\]
        \item \label{V4} For all $a,b,c\in V$, we have the \textit{Jacobi identity}:
        \begin{align*}
            &z^{-1}\delta\left(\frac{x-y}{z}\right)Y(a,x)Y(b,y)c - z^{-1}\delta\left(\frac{-y+x}{z}\right)Y(b,y)Y(a,x)c\\
            &=y^{-1}\delta\left(\frac{x-z}{y}\right)Y(Y(a,z)b,y)c,
        \end{align*}
        where $\delta(z) = \sum_{k\in \Z}z^k$.
    \end{enumerate}
\end{definition}
There are many properties of vertex algebras one can prove, such as the following:
\begin{defprop}\label{prop:VA properties}
    Let $V$ be a vertex $\kk$-algebra. We let
    \[e^{zT} = \sum_{n\ge 0}z^nT^{(n)} \in \End(V)[\![z]\!]\]
    be defined so that $e^{zT}a = Y(a,z)\vac$. That is, $T^{(n)}a = a_{(-1-n)}\vac$. We call $\{T^{(n)}\}_{n\ge 0}$ the \textit{(divided power) translation operators}. Then the following identities hold:
    \begin{enumerate}
        \item (Divided power structure): We have $T^{(0)} = \id_V$ and $e^{zT}e^{wT} = e^{(z+w)T}$, or equivalently for all $n,m\ge 0$
        \[T^{(n)}T^{(m)} = \binom{n+m}{n}T^{(n+m)}.\]
        \item (Translation covariance) For all $a\in V$, we have
        \[e^{wT}Y(a,z)e^{-wT} = Y(e^{wT}a,z) = Y(a,z+w)\eqdef e^{w\partial_z}Y(a,z) = \sum_{k\ge 0}w^k\partial_z^{(k)}Y(a,z).\]
        \item (Skew-symmetry) For all $a,b\in V$, we have
        \[Y(a,z)b = e^{zT}Y(b,-z)a.\]
        \item (Commutator formula) For all $a,b\in V$ and $r,s\in \Z$, we have
        \begin{equation}\label{eq:commutator}
            [a_{(r)},b_{(s)}] = \sum_{k\ge 0}\binom{r}{k}(a_{(k)}b)_{(r+s-k)}.
        \end{equation}
        \item (Iterate/associator formula) For all $a,b\in V$ and $r,s\in \Z$, we have
        \begin{equation}\label{eq:associator}
            (a_{(r)}b)_{(s)} = \sum_{k\ge 0}(-1)^k\binom{r}{k}\left(a_{(r-k)}b_{(s+k)} - (-1)^rb_{(r+s-k)}a_{(k)}\right).
        \end{equation}
    \end{enumerate}
\end{defprop}
One may check that the commutator formula and the associator formula together are equivalent to the Jacobi identity.

\begin{remark}\label{D-module remark}
    Define $\cal D$ to be the $\kk$-algebra 
    \[\cal D = \kk\left[D^{(n)} \mid n\ge 0\right]\big/\left(D^{(0)} - 1,\, D^{(n)}D^{(m)} - \tbinom{n+m}{n}D^{(n+m)} \mid n,m\ge 0\right).\]
    Then the above proposition states that the operators $\{T^{(n)}\}_{n\ge 0}$ define a representation of $\cal D$ on $V$. If $\kk$ is an algebra over the finite field $\F_p$ for a prime $p$, then Lucas' theorem implies that $\cal D$ is generated over $\kk$ by the set $\{D^{(p^n)}\}_{n\ge 0}$, and we have $(D^{(p^n)})^p = 0$ for all $n\ge 0$.

    Let $\widetilde{V}$ be the quasi-coherent sheaf corresponding to the $\kk$-module $V$ on $\Spec \kk$. Then a $\widehat{\G}_a$-equivariant structure on $\widetilde{V}$, where $\G_a = \A^1_\kk = \Spec \kk[t]$, is equivalent to a representation of $\cal D$ on $V$.
\end{remark}

Throughout the paper, we will require gradings on vertex algebras:
\begin{definition}\label{Z-grading}
    Let $V$ be a vertex $\kk$-algebra. A \textit{$\Z$-grading} on $V$ is a $\Z$-grading on the underlying $\kk$-module
    $V = \bigoplus_{s\in \Z}V_{s}$
    such that:
    \begin{enumerate}
        \item The vacuum element $\vac$ is of degree 0, that is $\vac\in V_{0}$.
        \item For all $a\in V$ homogeneous of degree $\deg(a)$ and $k\in \Z$, the mode $a_{(k)}$ is a homogeneous operator of degree $\deg(a)-1-k$.
    \end{enumerate}
    We say that a $\Z$-grading on $V$ is \textit{lower-truncated} if there exists $K\in \Z$ such that $V_{s} = 0$ unless $s\ge K$. We say that $V$ is \textit{$\N$-graded} if the grading is lower-truncated with $K=0$.

    We say an $\N$-graded vertex algebra $V$ over $\kk$ is of \textit{CFT-type} if the $\N$-grading $V = \bigoplus_{s\ge 0}V_{s}$ is such that $V_{0} = \kk\vac$.
\end{definition}
While we will mostly consider $\N$-graded vertex algebras, we will not require that they be CFT-type. We will also generally not assume that the graded components are finite-dimensional, though in practice this will often be the case.

One way gradings can arise is as part of a larger structure, like a conformal element.
\begin{definition}
    A \textit{vertex operator algebra} (or \textit{VOA}) over a ring $\kk$ is an $\N$-graded vertex $\kk$-algebra $V$ with a distinguished element $\omega\in V_2$, called the \textit{conformal element}, such that the following hold:
    \begin{enumerate}
        \item Each graded part of $V$ is finitely generated and projective over $\kk$.
        \item Denote by $T(z)$ the field corresponding to $\omega$:
        \[T(z) = Y(\omega,z) = \sum_{n\in \Z}\omega_{(n)}z^{-1-n} = \sum_{m\in \Z}L_{m}z^{-2-m}.\]
        We require that the modes $\{L_m\}_{m\in \Z}$ satisfy the commutator relations for the Virasoro algebra such that the central element $\mathbf{c}$ acts as a fixed scalar $c\in \kk$. We refer to $c$ as the \textit{central charge}.
        \item The mode $L_0$ acts as the grading operator on $V$. That is, $L_0v = \deg(v)v$ for all homogeneous $v$.
        \item The mode $L_{-1}$ acts as the first translation operator $T^{(1)}$.
    \end{enumerate}
\end{definition}
A vertex operator $\kk$-algebra has an obvious action of $\mathfrak{sl}_2(\kk) = \Span_\kk\{L_{-1},L_0,L_1\}$, but this will not be sufficient to define contragredient modules unless $\kk$ is a $\Q$-algebra. We discuss the replacement of this action in \zcref{sec:contragredient}.
\begin{remark}
    Note that the conformal element does not determine the higher-order translation operators $\{T^{(n)}\}_{n\ge 1}$ unless $\kk$ is a $\Q$-algebra. This is one subtle reason that VOAs are often easier to study over $\C$.
\end{remark}

Now we recall modules for vertex algebras.
\begin{definition}
    Let $V$ be a vertex $\kk$-algebra. A \textit{weak $V$-module} is a $\kk$-module $W$ together with a $\kk$-linear map
    \begin{align*}
        Y^W(\cdot,z)\colon V&\to \End(W)[\![z,z^{-1}]\!]\\
        a&\mapsto Y^W(a,z) = \sum_{k\in \Z}a^W_{(k)}z^{-1-k},
    \end{align*}
    satisfying the following properties:
    \begin{enumerate}[label = (M\arabic*)]
        \item For all $a\in V$ and $b\in W$, there exists $K=K_{a,b}\in \Z$ such that $a^W_{(k)}b = 0$ if $k\ge K$.
        \item We have $Y^W(\vac,z) = \id_W$.
        \item For all $a,b\in V$ and $c\in W$, we have
        \begin{align*}
            &z^{-1}\delta\left(\frac{x-y}{z}\right)Y^W(a,x)Y^W(b,y)c - z^{-1}\delta\left(\frac{-y+x}{z}\right)Y^W(b,y)Y^W(a,x)c\\
            &=y^{-1}\delta\left(\frac{x-z}{y}\right)Y^W(Y(a,z)b,y)c.
        \end{align*}
    \end{enumerate}
\end{definition}
\begin{definition}
    Let $V$ be a $\Z$-graded vertex $\kk$-algebra. Let $\Lambda$ be a subset of $\Q$ that is closed under the action of $(\Z,+)$. A \textit{$\Lambda$-grading} on a weak $V$-module $W$ is a $\Lambda$-grading on the underlying $\kk$-module $W = \bigoplus_{\ell\in \Lambda}W_{\ell}$ such that for all $a\in V$ homogeneous of degree $\deg(a)$ and $k\in \Z$, the mode $a^W_{(k)}$ is a homogeneous operator of degree $\deg(a)-1-k$ on $W$. That is, $a_{(k)}^W\cdot W_\ell \subset W_{\ell + \deg(a)-1-k}$ for all $\ell\in \Lambda$.

    We say $W$ is \textit{admissible} if $W$ admits an $\N$-grading, that is, a $\Z$-grading such that $W_{s} = 0$ unless $s\ge 0$.
\end{definition}
We will be primarily concerned with the case where $\Lambda = c_W + \Z$ for some $c_W\in \Q$. We say that $W$ has \textit{conformal dimension $c_W$} when this is the case. 

Every lower-truncated $\Z$-grading on a weak $V$-module $W$ may be shifted so that we may regard it as an admissible $V$-module such that $W_0\neq 0$. The morphisms in the category of admissible $V$-modules are weak $V$-module morphisms. That is, a morphism $W^1\to W^2$ must intertwine the operators $Y^{W^i}(a,z)$, but it need not respect the grading (but in practice it usually does).

\begin{notation}
    Given a $\Lambda$-graded $\kk$-module $W$, it admits a canonical left filtration given by $\mrm G_\ell W = \bigoplus_{s\le \ell}W_{s}$ for $s\in \Z$. This filtration is exhaustive and separated. There is a right filtration given by replacing ``$s\le \ell$'' with ``$s\ge \ell$'' in the direct sum.
\end{notation}
From here on in the paper, we assume that all $\Z$-gradings are lower-truncated.

\begin{definition}\label{def:rational}
    Let $V$ be a $\Z$-graded vertex $\kk$-algebra over a field $\kk$. We say that $V$ is \textit{quasi-rational} if every admissible $V$-module is semisimple (a direct sum of simple admissible $V$-modules). We say that $V$ is \textit{rational} if it is quasi-rational and the graded components of every simple admissible $V$-module are finite-dimensional over $\kk$.
\end{definition}
Being rational is a much more useful condition than being quasi-rational. These conditions are known to be equivalent for a VOA over $\C$ \cite[Theorem 8.1(c)]{DLM1}. We will discuss the case over an arbitrary algebraically closed field in \zcref{sec:Aut O}.

\subsection{The universal enveloping algebra}
Now we discuss the (finite) ancillary Lie algebra associated with a vertex $\kk$-algebra $V$. This Lie algebra and its associated constructions control the module theory for $V$.
\begin{definition}
    Given a vertex $\kk$-algebra $V$, we define the \textit{finite, left, and right ancillary Lie algebras} as
    \begin{align*}
        \LVf &= \frac{V\otimes_{\kk}\kk[t,t^{-1}]}{\left(\Im \nabla^{(n)}\right)_{n> 0}}\cong \frac{V\otimes_{\kk}\kk[t,t^{-1}]}{\left(\Im\left(\id\otimes \partial_{t}^{(n)} - (-1)^nT^{(n)}\otimes \id\right)\right)_{n> 0}},\\
        \LVL &= \frac{V\otimes_{\kk}\kk(\!(t)\!)}{\left(\Im \nabla^{(n)}\right)_{n> 0}}\cong \frac{V\otimes_{\kk}\kk(\!(t)\!)}{\left(\Im\left(\id\otimes \partial_{t}^{(n)} - (-1)^nT^{(n)}\otimes \id\right)\right)_{n> 0}},\\
        \LVR &= \frac{V\otimes_{\kk}\kk(\!(t^{-1})\!)}{\left(\Im \nabla^{(n)}\right)_{n> 0}}\cong \frac{V\otimes_{\kk}\kk(\!(t^{-1})\!)}{\left(\Im\left(\id\otimes \partial_{t}^{(n)} - (-1)^nT^{(n)}\otimes \id\right)\right)_{n> 0}},
    \end{align*}
    where $\nabla^{(n)} = \sum_{0\le k\le n}T^{(k)} \otimes \partial_{t}^{(n-k)}$.
\end{definition}
The $\kk$-module isomorphism above follows from iteratively substituting the identity $\nabla^{(n-j)}\equiv 0$ into $\nabla^{(n)}$.
We write $a_{[n]}$ for the image of $a\otimes t^{n}$ in the above spaces.
The Lie brackets on $\LVf$, $\LVL$, and $\LVR$ are given by
\begin{equation}
    [a\otimes f(t),b\otimes g(t)] = \sum_{k\ge 0}a_{(k)}b \otimes \partial_t^{(k)}[f(t)]g(t)
\end{equation}
for $a,b\in V$ and $f,g\in \kk[t,t^{-1}]$, $\kk(\!(t)\!)$, and $\kk(\!(t^{-1})\!)$ respectively.
For $\LVf$ specifically, this bracket is equivalently given by the formula
\begin{equation}\label{Lie bracket}
[a_{[n]},b_{[m]}] = \sum_{k\ge 0}\binom{n}{k}(a_{(k)}b)_{[n+m-k]},
\end{equation}
where $a,b\in V$ and $n,m\in \Z$. This bracket satisfies skew-symmetry and the Jacobi identity. 
\begin{warning}
    We will generally refer to $\LVf$, $\LVL$, $\LVR$, and related constructions as Lie algebras even though Lie algebras are usually required to satisfy $[x,x]=0$ for all $x$, which is a slightly stronger condition. This is equivalent to skew-symmetry if 2 is invertible in $\kk$, but not in general.
\end{warning}

We leave the following observation from \cite[\textsection 2.2]{DGK23}:
\begin{lemma}
    Let $V$ be a $\Z$-graded vertex $\kk$-algebra. Then the Lie algebras $\LVf$, $\LVL$, and $\LVR$ admit a natural $\Z$-grading, left filtration, and right filtration respectively by setting $\deg(a_{[k]}) = \deg(a)-1-k$ for $a\in V$ homogeneous.
\end{lemma}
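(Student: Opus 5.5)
Put the grading on $V\otimes_\kk\kk[t,t^{-1}]$ first, check that the relations are homogeneous so it descends to the quotient $\LVf$, and then check that the bracket has degree zero. For $\LVL$ and $\LVR$ the same computation gives filtrations instead of gradings, because Laurent series in $t$ (resp.\ $t^{-1}$) are infinite sums in one direction.

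For $\LVf$, declare $a\otimes t^k$ homogeneous of degree $\deg(a)-1-k$ for homogeneous $a$. This is a $\Z$-grading of the $\kk$-module $V\otimes\kk[t,t^{-1}]=\bigoplus_{s,k}V_s\otimes\kk t^k$. The relation generators are the elements $(\id\otimes\partial_t^{(n)}-(-1)^nT^{(n)}\otimes\id)(a\otimes t^k)$, and we check that each is homogeneous. By \ref{V3} we have $T^{(n)}a=a_{(-1-n)}\vac$. The grading axioms give $\deg\vac=0$ and say that $a_{(-1-n)}$ raises degree by $\deg(a)+n$, so $\deg T^{(n)}a=\deg(a)+n$. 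On the other side, $\partial_t^{(n)}t^k=\binom{k}{n}t^{k-n}$. Hence $a\otimes\partial_t^{(n)}t^k$ has degree $\deg(a)-1-k+n$, and $T^{(n)}a\otimes t^k$ has degree $\deg(a)+n-1-k$; these agree. The relation submodule is therefore graded, so the quotient $\LVf$ inherits the grading with $\deg(a_{[k]})=\deg(a)-1-k$.

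Next we check that the bracket has degree zero using \eqref{Lie bracket}. The term $(a_{(j)}b)_{[n+m-j]}$ has degree
\[
(\deg a+\deg b-1-j)-1-(n+m-j)=(\deg a-1-n)+(\deg b-1-m).
\]
So $[\LVf_p,\LVf_q]\subset\LVf_{p+q}$.

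For $\LVL$, write each element as a finite $\kk$-combination of classes $a\otimes f$ with $a$ homogeneous and $f\in\kk(\!(t)\!)$. Define $\mrm G_\ell\LVL$ to be the image of those $a\otimes f$ with $f\in t^{\deg(a)-1-\ell}\kk[\![t]\!]$, that is, of the elements all of whose monomials $a\otimes t^k$ have degree $\le \ell$. Since $\kk(\!(t)\!)$ is bounded below in $t$, the degrees of the monomials are bounded above; hence every element lies in some $\mrm G_\ell$ and the filtration is exhaustive. These are increasing, i.e.\ left, filtrations in the sense of the earlier Notation. Compatibility with the bracket follows from the general formula $\sum_j a_{(j)}b\otimes\partial_t^{(j)}[f]\,g$. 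If $f\in t^{\alpha}\kk[\![t]\!]$ and $g\in t^{\beta}\kk[\![t]\!]$, then $\partial_t^{(j)}[f]g\in t^{\alpha+\beta-j}\kk[\![t]\!]$, and the same degree count as above gives $[\mrm G_p,\mrm G_q]\subset\mrm G_{p+q}$. The argument for $\LVR$ is symmetric. One uses $\kk(\!(t^{-1})\!)$ and $f\in t^{\deg(a)-1-\ell}\kk[\![t^{-1}]\!]$, so that all monomial degrees are $\ge\ell$, and obtains a decreasing (right) filtration.

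The only step needing care is well-definedness of the filtrations on the quotients, since here we cannot simply take the graded pieces of a graded relation submodule. We have defined $\mrm G_\ell$ as the image of a filtered piece upstairs, so it is automatically well defined, and both bracket compatibility and exhaustiveness are checked upstairs. The only remaining point is that the relation operators $\id\otimes\partial_t^{(n)}-(-1)^nT^{(n)}\otimes\id$ preserve the upstairs filtration pieces. This follows from the same degree identity $\deg T^{(n)}a=\deg(a)+n$ used for $\LVf$.
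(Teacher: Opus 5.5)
Your proof is correct and is the standard verification behind this observation, which the paper does not prove but quotes from \cite[\textsection 2.2]{DGK23}: the divided-power relations are homogeneous of degree $n$ because $\deg T^{(n)}a=\deg(a)+n$, the bracket has degree zero, and the filtrations on $\LVL$ and $\LVR$ are the images of the degree-bounded pieces upstairs. One inaccuracy in your last paragraph: on $\LVL$ the operators $\id\otimes\partial_t^{(n)}-(-1)^nT^{(n)}\otimes\id$ do not preserve $\mrm G_\ell$ but send it into $\mrm G_{\ell+n}$; fortunately no such property is needed, since defining $\mrm G_\ell$ as an image already makes it well defined, and homogeneity of the relations only matters for finer statements such as $\gr_\ell\LVL\cong\LVf_\ell$.
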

There is a natural notion of a universal enveloping algebra $\UV$ associated to $V$, which we briefly describe here.
Let $U$ be the universal enveloping algebra of $\LVf$. This naturally inherits a $\Z$-grading from the $\Z$-grading on $\LVf$. The \textit{left and right canonical seminorms} on $U$ are defined to be
\[\NL[m]U = UU_{\le -m} = \sum_{i\le -m}UU_i,\qquad \NR[m]U = U_{\ge m}U = \sum_{i\ge m}U_iU.\]
These both define exhaustive right filtrations of $U$, though their roles are distinct.
We can restrict these seminorms to various filtered and graded parts of these algebras in a canonical way. We write $\NLR[d]_{\bullet}U_{\le p} = (\NLR[d]_{\bullet}U)\cap U_{\le p}$ and similarly so for $\NLR[d]_{\bullet}U_{p}$:
\[
    \NL[m]\Uu_{p} = (\Uu \Uu_{\le -m})_{p} = \sum_{j\le -m} \Uu_{p-j}\Uu_{j}, 
    \qquad \NR[m]\Uu_{p} = (\Uu_{\ge m} \UuR)_{p} = \sum_{i\ge m} \Uu_{i}\Uu_{p-i}.
\]
We have $\NR[m+p]\Uu_p = \NL[m] \Uu_p$. Restricting the seminorm to these subspaces, we may define a graded completion of $\Uu$:
Specifically, we define
\[
\hUu_d := \varprojlim_m \dfrac{\Uu_{ d}}{\NL[m]\Uu_{d}} = \varprojlim_m \dfrac{\Uu_{ d}}{\NR[m+d]\Uu_{d}},\qquad \hUu := \bigoplus_d \hUu_d.
\]
We set $J$ to be the two-sided ideal in $\wh{U}$ generated by the Jacobi relations and the vacuum relations. Let $\overline J$ be the respective topological closure of $J$. The resulting quotient algebra
\begin{equation}\label{eq:UV}
    \UV = \hUu/\overline J
\end{equation}
is a graded associative algebra with an almost canonical seminorm.
\begin{definition}
    We call $\UVf = \UV = \UV(V)$ from \zcref[noname]{eq:UV} the \textit{(finite) universal enveloping algebra} of $V$.
\end{definition}
We refer the reader to \cite[\textsection 2--3]{DGK23} to see a more explicit construction of $\UV = \UVf$ and the left and right completions $\UVL$ and $\UVR$.
\begin{lemma}\label{lem:weak V mods equiv}
    Let $V$ be a $\Z$-graded vertex $\kk$-algebra. The category of weak $V$-modules is equivalent to the category of continuous left $\UV$-modules.
\end{lemma}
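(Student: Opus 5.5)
The plan is to construct mutually inverse functors between weak $V$-modules and continuous left $\UV$-modules, in three steps.

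First, let $W$ be a weak $V$-module. We define an action of $\LVf$ on $W$ by $a_{[n]}\mapsto a^W_{(n)}$, extended linearly from $V\otimes\kk[t,t^{-1}]$.

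To see that this descends to the quotient by $\Im\nabla^{(n)}$, we need the translation identity $(T^{(j)}a)^W_{(k)}=(-1)^j\binom{k}{j}a^W_{(k-j)}$ on $W$. This identity follows from the module Jacobi identity applied with $b=\vac$, or equivalently from the associator formula \zcref[noname]{eq:associator} specialized to $a_{(-1-j)}\vac$ together with $Y^W(\vac,z)=\id_W$. In characteristic $p$ the divided powers $T^{(j)}$ must be handled directly rather than through $T^j/j!$, and the associator formula does exactly this.

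That the action respects the bracket \zcref[noname]{Lie bracket} is precisely the commutator formula \zcref[noname]{eq:commutator}, which holds on $W$ as a consequence of the module Jacobi identity (M3). Hence $W$ becomes a $U$-module. Axiom (M1) says that for each $w\in W$ the operators of sufficiently large $n$ kill $w$; this is the statement that $w$ is annihilated by $\NL[m]U_d$ for $m\gg 0$. So the action extends by continuity to each $\hUu_d$, hence to $\hUu$.

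The Jacobi relations and vacuum relations generating $J$ act by zero, by (M3) and (M2). By continuity the closure $\overline J$ also acts by zero, so $W$ is a continuous $\UV$-module. Morphisms of weak modules commute with all modes, so this construction is functorial.

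Second, start from a continuous left $\UV$-module $W$. Define $Y^W(a,z)=\sum_k \overline{a_{[k]}}\,z^{-1-k}$, where $\overline{a_{[k]}}$ is the image of $a_{[k]}$ in $\UV$. Continuity, meaning that each $w$ is annihilated by $\NL[m]\UV$ for some $m$, gives (M1). The vacuum relation in $\UV$ gives (M2). The Jacobi relations imposed in $\UV$ give (M3). Here one should check that the relations generating $J$ are exactly the coefficientwise components of the Jacobi identity, which are the commutator and associator formulas of \zcref{prop:VA properties}. Since these relations involve infinite sums, one also needs continuity of the action to interpret those sums on $W$.

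The two constructions are visibly inverse on objects. On morphisms, a $\kk$-linear map commutes with all $\overline{a_{[k]}}$ if and only if it commutes with their topological span, because the span is dense and the action is continuous.

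The main obstacle is the careful bookkeeping of the completion: showing that the action of $U$ on a weak module genuinely extends to $\hUu_d$, and that the closure $\overline J$ (not just $J$) acts trivially. I would handle this by fixing $w$ and $d$ and observing the following. By (M1), applied to the finitely many generators involved, only finitely many terms of any Cauchy sequence in $U_d/\NL[m]U_d$ act nontrivially on $w$. Hence the action factors through $U_d/\NL[m]U_d$ for $m$ large, uniformly along the sequence, and zero sets of continuous actions are closed. Everything else follows from the standard arguments of \cite[\textsection 3]{DGK23}, which use no characteristic assumption once divided powers replace $L_{-1}^n/n!$.
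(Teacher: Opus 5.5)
\noindent Your overall route is the standard one, and the paper itself states this lemma without proof, deferring the construction of $\mathscr{U}$ to \cite{DGK23}. That route is: send $a_{[n]}\mapsto a^W_{(n)}$, check the $\Im\nabla^{(n)}$ relations via $(T^{(j)}a)_{(k)}=(-1)^j\binom{k}{j}a_{(k-j)}$, get the bracket from the commutator formula and the relations in $J$ from (M2)--(M3), and go back using continuity. But the step that carries all the weight is false: (M1) does \emph{not} imply that $w$ is killed by $\mathsf{N}_{\mathsf{L}}^{m}U_d$ for $m\gg0$. Axiom (M1) bounds the mode index separately for each state $a$. By contrast, $\mathsf{N}_{\mathsf{L}}^{m}U_d=\sum_{j\le -m}U_{d-j}U_j$ is defined by degree, so it requires all of $U_{\le -m}$ to kill $w$, uniformly in $a$ and including long products of modes of small index.

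Concretely, over $\mathbb{C}$ take $V=\pi$, the rank one Heisenberg vertex algebra, and the Whittaker module $W=\mathbb{C}[x_1,x_2,\dots]$ with $\alpha_{(-n)}=x_n$, $\alpha_{(0)}=0$, and $\alpha_{(n)}=n\partial_{x_n}+\delta_{n,1}$ for $n\ge1$. This is a restricted level one Heisenberg module, hence a weak $\pi$-module. Since $\deg\alpha_{[n]}=-n$, we have $\alpha_{[-1]}^s\alpha_{[1]}^s\in U_sU_{-s}\cap U_0\subseteq\mathsf{N}_{\mathsf{L}}^{s}U_0$, yet $\alpha_{[-1]}^s\alpha_{[1]}^s\cdot 1=x_1^s\neq0$. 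So no $\mathsf{N}_{\mathsf{L}}^{m}U_0$ annihilates $1$. Moreover, the convergent element $\sum_{s\ge0}\alpha_{[-1]}^s\alpha_{[1]}^s\in\widehat{U}_0$ would have to send $1$ to $\sum_s x_1^s\notin W$. Your treatment of the ``main obstacle'' breaks on exactly this series, even though it involves a single generator. So this $W$ is a weak module that is not a continuous $\mathscr{U}$-module, and the lemma as literally stated (for all weak modules, with this completion) cannot be proved.

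\noindent What is true is the equivalence with those weak modules in which every vector is killed by $U_{\le -m}$ for some $m$. This is also all the paper actually uses: its proof that $M_d$ is an $\mathsf{A}_d$-module appeals to $M$ being ``a continuous $\mathbb{N}$-graded module''. This class contains all admissible modules, and for them the needed input is a degree argument rather than (M1). If $W=\bigoplus_{s\ge s_0}W_s$ and $w\in W_\ell$, then $U_{\le -m}w\subseteq\bigoplus_{s\le\ell-m}W_s=0$ once $m>\ell-s_0$, so $\mathsf{N}_{\mathsf{L}}^{m}U_d\,w=0$. With that substitution the rest of your argument goes through:
\begin{itemize}
\item the action on $w$ factors through $U_d/\mathsf{N}_{\mathsf{L}}^{m}U_d$, so it extends to $\widehat{U}_d$;
\item in each degree the annihilator of $w$ is open, hence closed, so it contains $\overline{J}$ once it contains $J$;
\item conversely, continuity gives (M1), since $a_{[n]}\in U_{\le -m}$ as soon as $n\ge\deg(a)-1+m$.
\end{itemize}
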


\subsection{The Zhu algebras and mode transition algebras}
Here we define the Zhu algebras and mode transition algebras associated to a $\Z$-graded vertex algebra $V$ over $\kk$. The Zhu algebras are usually defined as a quotient of $V$ (\cite{DLM2, Ren17}), but we opt to define these as quotients of the degree zero part of the universal enveloping algebra $\UV$.

\begin{definition}\label{Zhu algs}
Let $V$ be a $\Z$-graded vertex $\kk$-algebra.
The $d$-th \textit{(higher) Zhu algebra} is the quotient $\kk$-algebra
\begin{equation}
    \Aa_d(V) = \UV_0\big/\NLR[d+1]\UV_0.
\end{equation}
If $V$ is clear in context, we will just write $\Aa_d$ instead of $\Aa_d(V)$. We also write $\Aa_0 = \Aa$, and we refer to the zeroth Zhu algebra as just the \textit{Zhu algebra}. There are natural surjections $\Aa_d\twoheadrightarrow \Aa_{d-1}$ for all $d\ge 1$, and $\Aa_d = 0$ unless $d\ge 0$.
\end{definition}
This definition is equivalent to the ``quotient of $V$'' construction $A_d(V)$, which is more common in the literature. See \cite{Griffin26}, \cite{HeHigherZhu}, or \cite{Xu2026} for a proof of this. In this paper, we are primarily concerned with the zeroth Zhu algebra $\Aa(V)$.
Here is the main reason we study Zhu algebras:
\begin{lemma}
    Let $V$ be a $\Z$-graded vertex $\kk$-algebra, and let $M$ be an admissible $V$-module. Then the degree $d$ part $M_d$ canonically has the structure of an $\Aa_d$-module via the action of $\UV_0$.
\end{lemma}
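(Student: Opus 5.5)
The plan is to use \zcref{lem:weak V mods equiv} to view $M$ as a continuous left $\UV$-module, and then to check that the action of $\UV_0$ on $M$ preserves $M_d$ and that $\NLR[d+1]\UV_0$ acts on $M_d$ by zero. Once both facts are in hand, the action factors through $\Aa_d = \UV_0/\NLR[d+1]\UV_0$ by \zcref{Zhu algs}.

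\textbf{Step 1: the action is graded.} By definition of a $\Lambda$-grading, each mode $a_{(k)}$ acts on $M$ with degree $\deg(a)-1-k$. This is exactly the degree of $a_{[k]}$ in $\LVf$. Hence the action of $U$, the enveloping algebra of $\LVf$, on $M$ is graded, with $U_p M_\ell\subset M_{\ell+p}$.

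I will then pass this to the completion $\hUu_p$. Fix $v\in M_\ell$. Since $M$ is $\N$-graded, any element of $U_j$ with $j<-\ell$ sends $v$ to $M_{\ell+j}=0$. So every element of $\NL[m]U_p=\sum_{j\le -m}U_{p-j}U_j$ kills $v$ once $m>\ell$. The action of $\hUu_p=\varprojlim_m U_p/\NL[m]U_p$ on $v$ is therefore well defined by truncation, and it is still graded. This is the same continuity that underlies \zcref{lem:weak V mods equiv}. Since $M$ is a weak module, $\overline J$ acts by zero, so $\UV_p M_\ell\subset M_{\ell+p}$. In particular $\UV_0$ preserves $M_d$.

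\textbf{Step 2: the ideal acts by zero.} Restricted to degree $0$, we have $\NLR[d+1]\UV_0$, which is the closure of the image of
\[
\NL[d+1]U_0=\sum_{j\le -d-1}U_{-j}U_j .
\]
For $v\in M_d$ and $x\in U_j$ with $j\le -d-1$, we get $xv\in M_{d+j}$. Since $d+j\le -1$ and $M$ is $\N$-graded, this space is zero. So every element of $U_{-j}U_j$ kills $M_d$, and by linearity so does $\NL[d+1]U_0$.

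\textbf{The main obstacle} is transferring this vanishing through the completion and the closure of $J$. A general element of $\NLR[d+1]\UV_0$ is a limit of elements of the image of $\NL[d+1]U_0$ in the seminorm topology. Its action on a fixed $v\in M_d$ agrees with that of any approximant modulo $\NL[m]$ with $m>d$. That approximant already kills $v$, so the limit acts by zero as well.

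I would double-check that the seminorm on $\UV_0$ is the quotient of the one on $\hUu_0$, as in \cite[\textsection 2--3]{DGK23}, so that this approximation argument is legitimate. Granting that, $M_d$ is a module over $\UV_0/\NLR[d+1]\UV_0=\Aa_d$. The structure is canonical because it is induced from the $\UV$-action and no choices were made.
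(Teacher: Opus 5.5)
Your proposal is correct and matches the paper's argument with the details filled in: view $M$ as a continuous graded $\UV$-module, then observe that $\NLR[d+1]\UV_0=\UV\UV_{\le -d-1}\cap\UV_0$ annihilates $M_d$ because $M$ is $\N$-graded. With that description of the ideal, the Step~2 approximation argument is not needed, since your Step~1 already shows that every element of $\UV_j$ with $j\le -d-1$ maps $M_d$ into $M_{d+j}=0$.
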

\begin{proof}
    This comes from the fact that $M$ is equivalently a continuous $\N$-graded module. The image of the action of $\NLR[d+1]\UV_0 = \UV\UV_{\le -d-1}\cap \UV_0$ is then zero on $M_d$.
\end{proof}
There are several functors relating $V$-modules and modules for the above algebras. See \cite{Griffin26} for more details.
\begin{definition}
    Let $n,m\ge 0$. We define the \textit{Zhu bimodule} $\Aa_{n,m}(V)$ to be
    \[\Aa_{n,m}(V) = \UV_{n-m}/\NL[m+1]\UV_{n-m}.\]
\end{definition}
Note that $\Aa_d(V) = \Aa_{d,d}(V)$ for all $d\ge 0$, and $\Aa_{n,m} = 0$ unless $n\ge 0$ and $m\ge 0$.
\begin{lemma}
    For all $n,m,\ell\ge 0$, there is a natural morphism of $(\Aa_{n},\Aa_{\ell})$-bimodules
    \[\mu_{n,\ell}^m \colon \Aa_{n,m}\otimes_{\Aa_m}\Aa_{m,\ell} \to \Aa_{n,\ell}\]
    given by multiplication in $\UV$.
\end{lemma}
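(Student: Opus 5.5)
The plan is to define the map directly on representatives and check three things: it is well defined on the quotients, it factors through the balanced tensor product over $\Aa_m$, and it is bilinear for the outer actions. Throughout, $\Aa_{n,m}=\UV_{n-m}/\NL[m+1]\UV_{n-m}$ and $\NL[k]\UV_p=(\UV\,\UV_{\le -k})_p$. The identity $\NR[k+p]\UV_p=\NL[k]\UV_p$ lets me pass freely between the left and right descriptions of the seminorm.

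\textbf{Step 1: the multiplication descends.} For $x\in\UV_{n-m}$ and $y\in\UV_{m-\ell}$ the product $xy$ lies in $\UV_{n-\ell}$, and I will show the class of $xy$ in $\Aa_{n,\ell}$ depends only on the classes of $x$ and $y$. There are two cases.
\begin{itemize}
\item[] If $y\in\NL[\ell+1]\UV_{m-\ell}=(\UV\,\UV_{\le-\ell-1})_{m-\ell}$, then $xy\in\UV\,\UV_{\le-\ell-1}$. Since it has degree $n-\ell$, it lies in $\NL[\ell+1]\UV_{n-\ell}$.
\item[] If $x\in\NL[m+1]\UV_{n-m}$, I rewrite it as $x\in\NR[n+1]\UV_{n-m}=(\UV_{\ge n+1}\UV)_{n-m}$. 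Then $xy\in(\UV_{\ge n+1}\UV)_{n-\ell}=\NR[n+1]\UV_{n-\ell}$, which equals $\NL[\ell+1]\UV_{n-\ell}$ by the identity applied with $p=n-\ell$.
\end{itemize}
So multiplication induces a $\kk$-bilinear map $\Aa_{n,m}\times\Aa_{m,\ell}\to\Aa_{n,\ell}$.

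\textbf{Step 2: the module structures.} I need $\Aa_{n,m}$ to be an $(\Aa_n,\Aa_m)$-bimodule with the actions induced by multiplication in $\UV$. This is the special cases $(n,n,m)$ and $(n,m,m)$ of Step 1, noting that $\Aa_n=\Aa_{n,n}$.

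\textbf{Step 3: balancing and bilinearity.} Both follow from associativity of multiplication in $\UV$:
\begin{itemize}
\item[] for $u\in\UV_0$, the identity $(xu)y=x(uy)$ gives the balancing over $\Aa_m$;
\item[] for $a\in\UV_0$ on the left and $b\in\UV_0$ on the right, the identity $a(xy)b=(ax)(yb)$ gives $(\Aa_n,\Aa_\ell)$-bilinearity.
\end{itemize}
Hence the map factors through $\Aa_{n,m}\otimes_{\Aa_m}\Aa_{m,\ell}$ as a bimodule morphism $\mu^m_{n,\ell}$. Naturality is immediate from the construction.

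\textbf{Main obstacle: completions.} $\UV$ is a quotient of a graded completion, so elements of $\NL[k]\UV_p$ are limits of finite sums $\sum u_iv_i$ with $\deg v_i\le -k$, rather than honest finite sums. I would handle this using the definition of $\NL$ on $\UV$ as the image or closure of the corresponding subspaces in $\hUu$. The argument goes as follows.
\begin{itemize}
\item[] The key point is that left multiplication by a fixed homogeneous element is continuous for the left seminorm: $x\cdot\NL[k]\UV_p\subset\NL[k]\UV_{p+\deg x}$. Symmetrically, right multiplication is continuous for the right seminorm.
\item[] Given this, the containments in Step 1 persist after taking closures, provided the subspaces $\NL[k]\UV_p$ are closed.
\item[] They are closed because they are open subgroups in the defining topology of the inverse limit.
\end{itemize}
With closedness and continuity in hand, the finite-sum computations above extend to $\UV$, and the argument is complete.
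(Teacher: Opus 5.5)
Your proposal is correct and takes the same route as the paper, which states the lemma without proof as induced by multiplication in $\UV$. Your two containments, $\UV_{n-m}\cdot\NL[\ell+1]\UV_{m-\ell}\subseteq\NL[\ell+1]\UV_{n-\ell}$ and (via $\NL[m+1]\UV_{n-m}=\NR[n+1]\UV_{n-m}$) $\NL[m+1]\UV_{n-m}\cdot\UV_{m-\ell}\subseteq\NL[\ell+1]\UV_{n-\ell}$, together with associativity for balancing and bilinearity, are exactly what is needed, and your completion paragraph is sound because the seminorm pieces are open (hence closed) subgroups and multiplication by a fixed homogeneous element is continuous.
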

The maps above give $\Aa_{n,m}(V)$ the structure of a $(\Aa_n(V),\Aa_m(V))$-bimodule.

\begin{definition}
    We define the \textit{mode transition algebra}
    \[\Ac(V) = \bigoplus_{n,m\ge 0}\Ac_{n,-m}(V) = \bigoplus_{n,m\ge 0}\Aa_{n,0}\otimes_{\Aa_0}\Aa_{0,m}.\]
    We denote $\Ac_n \eqdef \Ac_{n,-n}$ for all $n\ge 0$.
\end{definition}
In principle, one could study the tensor products $\Aa_{n,d}\otimes_{\Aa_d}\Aa_{d,m}$ for any $d\ge 0$, but for our purposes we only need $d=0$.
\begin{definition}
    Given an $\Aa_d$-module $\mathsf{M}$, we define the \textit{induced Verma module} $\PhiL_d(\mathsf{M})$ to be
    \[\PhiL_d(\mathsf{M}) = \UV/\NL[d+1]\UV \otimes_{\Aa_d(V)}\mathsf{M}.\]
    We assign the grading 
    \[\PhiL_d(\mathsf{M})_{m} = \UV_{m-d}/\NL[d+1]\UV_{m-d} \otimes_{\Aa_d(V)}\mathsf{M} = \Aa_{m,d}\otimes_{\Aa_d(V)}\mathsf{M},\]
    and it is naturally a continuous left $\UV$-module. We have $\PhiL_d(\mathsf{M})_{m} = 0$ for $m < 0$.
\end{definition}
We will only need to consider $d=0$.

\begin{defprop}\label{def:stronglyunital}
    For all $n,m,d\ge 0$, there are natural morphisms of $(\Aa_{n},\Aa_{m})$-bimodules
    \[\star\colon \Ac_{n,-d}\otimes_{\Aa_d}\Ac_{d,-m} \to \Ac_{n,-m},\qquad \mu_{n,m}\colon \Ac_{n,-m} \to \Aa_{n,m}\]
    given by multiplication in $\UV$.
    We refer to the first morphism as the \textit{star product}. We say that $\Ac_n$ is \textit{strongly unital} if there is an element $\one_n \in \Ac_n$, called a \textit{strong identity element}, such that for all $m\ge 0$, $\frak{a} \in \Ac_{m,-n}$, and $\frak{b} \in \Ac_{n,-m}$ we have
    \[\frak{a} \star \one_n = \frak{a},\qquad \one_m\star\frak{b} = \frak{b}.\]
    We say that $V$ satisfies the \textit{strong identity condition} if $\Ac_d$ is strongly unital for all $d\ge 0$.
\end{defprop}
Mode transition algebras are very useful for studying the higher Zhu algebras.
\begin{proposition}[\cite[Theorem B.3.3]{DGK23}]\label{Zhu decomposition unital}
    Suppose that $\Ac_d$ admits an identity element (not necessarily strong). Then there is a $\kk$-algebra decomposition
    \[\Aa_d \cong \Ac_d \times \Aa_{d-1}.\]
\end{proposition}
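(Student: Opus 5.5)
The plan is to realize $\Ac_d$ as a two-sided ideal of $\Aa_d$ whose quotient is $\Aa_{d-1}$, and then to use the identity element of $\Ac_d$ as a central idempotent that splits $\Aa_d$.

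\textbf{Step 1: the map into $\Aa_d$.} Consider the multiplication map $\mu_{d,d}\colon \Ac_d = \Aa_{d,0}\otimes_{\Aa_0}\Aa_{0,d}\to \Aa_{d,d} = \Aa_d$ from \zcref{def:stronglyunital}. Its image is spanned by classes of products $xy$ with $x\in \UV_d$ and $y\in \UV_{-d}$. Such a product lies in $\UV\,\UV_{\le -d}\cap \UV_0 = \NL[d]\UV_0$. So the image of $\mu_{d,d}$ lies in $\NL[d]\UV_0/\NL[d+1]\UV_0$, which is exactly the kernel of $\Aa_d\twoheadrightarrow\Aa_{d-1}$.

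\textbf{Step 2: the image is the whole kernel.} I would show the reverse containment: every element of $\NL[d]\UV_0$ is, modulo $\NL[d+1]\UV_0$, a sum of products $xy$ with $y\in\UV_{-d}$. Such an element is a limit of sums $\sum_{j\le -d} u_{-j}u_j$, and the terms with $j\le -d-1$ already lie in $\NL[d+1]\UV_0$. Hence the image of $\mu_{d,d}$ is the two-sided ideal $I=\ker(\Aa_d\to\Aa_{d-1})$. The relation $\NR[m+p]\UV_p=\NL[m]\UV_p$ shows that $\mu_{d,d}$ intertwines the star product on $\Ac_d$ with multiplication in $\Aa_d$. It is also a map of $\Aa_d$-bimodules.

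\textbf{Step 3: splitting off the idempotent.} Let $\one\in\Ac_d$ be the identity element and set $e=\mu_{d,d}(\one)$. Then $e$ is idempotent, and $e$ acts as a two-sided identity on $I$. For any $a\in\Aa_d$ we have $ea\in I$ and $ae\in I$, since $I$ is an ideal. Therefore $ea = e(ea)$, and bimodule compatibility gives $e(ea)=(ea)e = eae$. By the same reasoning $ae = eae$. Hence $e$ is central in $\Aa_d$, and
\[\Aa_d\cong e\Aa_d\times(1-e)\Aa_d,\qquad e\Aa_d = I.\]
The second factor is $(1-e)\Aa_d\cong \Aa_d/I\cong\Aa_{d-1}$.

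\textbf{Step 4: injectivity, the expected main obstacle.} It remains to show that $\mu_{d,d}$ is injective, so that $I\cong\Ac_d$. Suppose $\mathfrak{a}\in\ker\mu_{d,d}$. Then $\mathfrak{a}=\mathfrak{a}\star\one$. To compute $\mathfrak{a}\star\one$, write $\one=\sum x_i\otimes y_i$. The star product $\mathfrak{a}\star\one$ is formed from the factors of $\mathfrak{a}$ together with the products $\mu(\mathfrak{a}\text{'s right factor}\cdot x_i)$, which are computed in $\Aa_{0}$-type quotients. Because of this, the star product should factor through $\mu_{d,d}(\mathfrak{a})$ acting on $\one$ via the $\Aa_d$-module structure of $\Ac_d$. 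That would give $\mathfrak{a}=\mu_{d,d}(\mathfrak{a})\cdot\one=0$.

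Making this factorization precise requires checking that the star product $\Ac_{d,-d}\otimes_{\Aa_d}\Ac_{d,-d}\to\Ac_d$ agrees with the left $\Aa_d$-action through $\mu_{d,d}$, i.e.\ $\mathfrak{a}\star\mathfrak{b}=\mu_{d,d}(\mathfrak{a})\mathfrak{b}$. This compatibility between completions and the tensor products over $\Aa_0$ is where I expect the real work to lie, following \cite[Appendix B]{DGK23}.
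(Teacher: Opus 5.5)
Your proposal is correct and follows essentially the argument of \cite[Theorem B.3.3]{DGK23}; the paper cites that result and does not reproduce a proof. The ingredients are the same: the exact sequence $\Ac_d\xrightarrow{\mu_{d,d}}\Aa_d\to\Aa_{d-1}\to 0$, the idempotent $e=\mu_{d,d}(\one)$, and injectivity from $\mathfrak{a}=\mathfrak{a}\star\one$. On Step 3, $e$ is central simply because it is a two-sided unit on the ideal $I$, giving $ea=(ea)e=e(ae)=ae$; no bimodule argument is needed.

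The step you flag as the main obstacle is a one-line check. Everything is computed on representatives in $\UV$, so no completion issues arise. For $\mathfrak{a}=x\otimes y$ and $\mathfrak{b}=x'\otimes y'$,
\[\mathfrak{a}\star\mathfrak{b}=x\otimes\overline{yx'}\,y'=x\,\overline{yx'}\otimes y'=\overline{(xy)x'}\otimes y'=\mu_{d,d}(\mathfrak{a})\cdot\mathfrak{b},\]
using only the balancing relation of $\otimes_{\Aa_0}$ and associativity in $\UV$. Hence $\mathfrak{a}=\mu_{d,d}(\mathfrak{a})\cdot\one=0$ whenever $\mu_{d,d}(\mathfrak{a})=0$.
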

Another way of expressing strong identity elements is using a set of equations.
\begin{definition}\label{def:strong identity eqns}
We say that a sequence $(\one_d)_{d\in \N}$, with $\one_d\in \Ac$, satisfies the \textit{strong identity equations} if for every homogeneous $a\in V$ and $n\in \Z$ such that $d+n\ge 0$, we have
\begin{equation}\label{strong identity eqns}
 J_{-n}(a) \one_d = \one_{d+n}J_{-n}(a).
\end{equation}
\end{definition}
As the name suggests, the strong identity equations are closely related to strong identity elements. We omit proofs of the following results since the same proof as the original source works here without modification.
\begin{lemma}[\cite[Lemma 5.1.5]{DGK23}]\label{lem:strong identity eqns equiv}
Suppose we have a collection of elements $\one_d\in \Ac_d$ for each $d\ge 0$, with $\one_0 = 1_{\Ac_0}$. Then $\one_d$ is a strong identity element in $\Ac_d\subset \Ac$ for all $d\in \N$ if and only if the sequence $(\one_d)_{d\in \N}$ satisfies the strong identity equations.
\end{lemma}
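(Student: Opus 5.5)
The plan is to prove each implication by expanding both sides of the strong identity equations inside the mode transition algebra $\Ac$. The key structural fact is that $\Ac$ is an associative algebra under the star product, and that $\UV$ acts on it on both sides by multiplication. Here $J_{-n}(a)$ denotes the degree-$n$ mode of $a$, viewed in $\UV_n$. Its left action sends $\Ac_{d,-m}$ to $\Ac_{d+n,-m}$ through the factor $\Aa_{d,0}\to\Aa_{d+n,0}$. Its right action sends $\Ac_{m,-(d+n)}$ to $\Ac_{m,-d}$ through $\Aa_{0,d+n}\to\Aa_{0,d}$. These actions are compatible with $\star$, in the sense that
\[(u\,\frak a)\star\frak b = u(\frak a\star\frak b),\qquad (\frak a\, u)\star \frak b = \frak a\star(u\,\frak b).\]
The second identity is the key balancing relation: $\frak a u\otimes \frak b$ and $\frak a\otimes u\frak b$ have the same image because both products are computed in $\UV$ and passed to the same quotients. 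I will first record this compatibility, following the construction of $\star$ in \cite{DGK23}.

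\textbf{($\Rightarrow$).} Assume each $\one_d$ is a strong identity. Write $\one_d$ as a sum of elements $\frak x\otimes\frak y$ with $\frak x\in\Aa_{d,0}$ and $\frak y\in\Aa_{0,d}$, and use that $\one_0=1$ acts as the identity.
\begin{enumerate}
\item Since $J_{-n}(a)\one_d$ lies in $\Ac_{d+n,-d}$, left unitality gives $J_{-n}(a)\one_d=\one_{d+n}\star(J_{-n}(a)\one_d)$.
\item By the compatibility above, this equals $(\one_{d+n}J_{-n}(a))\star\one_d$.
\item Since $\one_{d+n}J_{-n}(a)$ lies in $\Ac_{d+n,-d}$, right unitality gives $(\one_{d+n}J_{-n}(a))\star\one_d=\one_{d+n}J_{-n}(a)$.
\end{enumerate}
Combining the three steps yields \eqref{strong identity eqns}.

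\textbf{($\Leftarrow$).} Assume the strong identity equations hold. It suffices to check $\frak a\star\one_n=\frak a$ for $\frak a\in\Ac_{m,-n}$; the other side is symmetric.
\begin{enumerate}
\item Write $\frak a=\sum u\otimes v$ with $u\in\Aa_{m,0}$ and $v\in\Aa_{0,n}$.
\item Lift $v$ to a product $J_{-k_1}(a_1)\cdots J_{-k_r}(a_r)$ of modes in $\UV_{-n}$, with the degrees arranged so that each partial product stays in nonnegative degree when acting on $\Aa_{0,0}=\Aa_0$. Modes that would push the degree negative act by zero, so these terms can be discarded.
\item Then $\frak a\star\one_n=\sum u\,(1_{\Ac_0}\cdot v)\star\one_n=\sum u\star(v\,\one_n)$, using the balancing relation with $\one_0=1$.
\item Commute $\one_n$ leftward through $v$, one mode at a time, using \eqref{strong identity eqns} with the appropriate intermediate indices. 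This turns $v\,\one_n$ into $\one_0 v=v$, and hence $\frak a\star\one_n=\frak a$.
\end{enumerate}

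\textbf{Main obstacle.} The delicate point is making the move from $v\,\one_n$ to $\one_0 v$ rigorous in the completed setting. Two things need care:
\begin{enumerate}
\item Elements of $\Aa_{0,n}$ are only limits of finite sums of such monomials. One has to argue that the strong identity equations pass to the completions $\hUu$ modulo $\NL$, and that they are compatible with the seminorm filtrations, so the finite-sum argument extends by continuity.
\item The indexing conventions for $\Ac_{d,-m}$ must be matched carefully so that every intermediate product actually lands in the component where \eqref{strong identity eqns} applies (in particular, $d+n\ge0$ at each stage).
\end{enumerate}
For the first point, I would use that the quotients $\Aa_{n,m}$ are already discrete. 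Hence only finitely many monomials matter modulo $\NL[m+1]$, which avoids genuine limits altogether.
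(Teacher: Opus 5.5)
Your proposal is correct and is essentially the argument of \cite[Lemma 5.1.5]{DGK23}, to which the paper defers. The forward direction is exactly the sandwich $J_{-n}(a)\one_d=\one_{d+n}\star(J_{-n}(a)\one_d)=(\one_{d+n}J_{-n}(a))\star\one_d=\one_{d+n}J_{-n}(a)$, using that $\star$ is $\UV$-balanced; the converse writes $\frak a$ as $(x\otimes 1)\cdot v$ and commutes $\one_n$ through the modes of $v$, where discarding terms with a negative intermediate index is the same as setting $\one_e=0$ for $e<0$ (and in your step 3, $u\star(v\,\one_n)$ should read $(u\otimes 1)\star(v\,\one_n)$ with $u\otimes 1\in\Ac_{m,0}$).
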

We do not need to assume anything about the underlying vertex algebra $V$, aside from the grading that is necessary to define the universal enveloping algebra $\UV$.
Note that in \zcref{def:strong identity eqns} there is no assumption on the bidegrees of $\one_d$. One may resolve this as follows:
\begin{lemma}[\cite[Lemma 5.1.4]{DGK23}]\label{lem:strong identity eqns reparam}
     If the sequence $(\one_d)_{d\in \N}$ with $\one_d\in \Ac$ satisfies the strong identity equations, then so does the sequence $(\one_d')_{d\in \N}$, where $\one_d' \eqdef (\one_d)_{d,-d}$.
\end{lemma}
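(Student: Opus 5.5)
The plan is to exploit the bigrading on $\Ac$ together with the fact that both sides of \zcref{strong identity eqns} are homogeneous operations. Write $\Ac = \bigoplus_{p,q\ge 0}\Ac_{p,-q}$, and for $\mathfrak{x}\in \Ac$ write $\mathfrak{x}_{p,-q}$ for its component in $\Ac_{p,-q}$, so that $\one_d' = (\one_d)_{d,-d}$. First I record how $J_{-n}(a)$, for homogeneous $a\in V$, interacts with the bigrading. As in \cite{DGK23}, left multiplication by $J_{-n}(a)$ (a mode of degree $n$ in $\UV$) sends $\Ac_{p,-q}$ into $\Ac_{p+n,-q}$, acting on the left factor $\Aa_{p,0}$. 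Right multiplication sends $\Ac_{p,-q}$ into $\Ac_{p,-(q-n)}$, acting on the right factor $\Aa_{0,q}$ via $\Aa_{0,q}\to\Aa_{0,q-n}$, $u\mapsto uJ_{-n}(a)$. In both cases the result is interpreted as zero when the target index is negative. Both maps are $\kk$-linear and bihomogeneous, and this is just the bimodule structure induced from multiplication in $\UV$, compatible with the star product.

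Next I take the strong identity equation $J_{-n}(a)\one_d = \one_{d+n}J_{-n}(a)$, valid for $d+n\ge 0$, and project it onto the bidegree $(d+n,-d)$ component. On the left-hand side, left multiplication raises only the first index, by $n$. Hence the $(d+n,-d)$ component of $J_{-n}(a)\one_d$ is $J_{-n}(a)(\one_d)_{d,-d} = J_{-n}(a)\one_d'$. On the right-hand side, right multiplication lowers only the second index, by $n$. Hence the $(d+n,-d)$ component of $\one_{d+n}J_{-n}(a)$ is $(\one_{d+n})_{d+n,-(d+n)}J_{-n}(a) = \one_{d+n}'J_{-n}(a)$. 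Equating components gives exactly the strong identity equation for the sequence $(\one_d')$.

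The main obstacle is making precise that the actions of $J_{-n}(a)$ on $\Ac$ are genuinely bihomogeneous of the stated bidegrees, including the edge cases. These are the cases where $d+n$ or $d$ lies at the boundary, where $\Aa_{p,q}=0$ for negative indices kills components, or where $\Ac$ is a direct sum and components are only finitely many nonzero. Concretely, I would check this by writing $\Ac_{p,-q} = \Aa_{p,0}\otimes_{\Aa_0}\Aa_{0,q}$ and observing two facts. The left action factors through $\UV_n\otimes \Aa_{p,0}\to\Aa_{p+n,0}$, which is well defined since $\NL[1]$ is a left ideal. The right action factors through $\Aa_{0,q}\otimes\UV_n\to\Aa_{0,q-n}$, which is well defined since $\NL[q+1]\UV_{-q}\cdot\UV_n\subset \NL[q-n+1]\UV_{n-q}$ by the identity $\NR[m+p]\Uu_p=\NL[m]\Uu_p$. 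Once this is in place the projection argument is immediate, and no hypotheses on $V$ beyond the grading are needed.
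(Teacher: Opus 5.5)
Your proposal is correct and takes essentially the same approach as the paper, which omits the proof and defers to \cite[Lemma 5.1.4]{DGK23}: since $J_{-n}(a)$ is homogeneous of degree $n$, one compares the bidegree $(d+n,-d)$ components of both sides of $J_{-n}(a)\one_d = \one_{d+n}J_{-n}(a)$. Your extra check that the left and right actions on $\Ac_{p,-q}$ are well defined and bihomogeneous (using $\NR[m+p]\Uu_p=\NL[m]\Uu_p$) is a correct filling-in of a detail the original takes for granted.
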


\section{Contragredient duals and symmetric bilinear forms}\label{sec:contragredient}
The purpose of this section is to give a generalization of some of the constructions of \cite{Li18} from an algebraically closed base field to any base ring. Most of our statements here will be given without proof since one may apply the original paper's arguments verbatim.

\subsection{Main definitions}\label{sec:H-mod}
Consider the Lie algebra $\mathfrak{sl}_2(\kk)$ over a ring $\kk$ with the basis $\{L_{-1},L_0,L_1\}$, satisfying the relations
\[[L_1,L_{-1}] = 2L_0,\quad [L_0,L_{\pm 1}] = \mp L_{\pm 1}.\]
Given $n\in \N$, we set
\[L_{\pm 1}^{(n)} = \frac{L_{\pm 1}^n}{n!},\qquad L_0^{(n)} = \binom{-2L_0}{n}.\]
We define $U(\mathfrak{sl}_2)_\Z$ to be the noncommutative ring generated by the symbols $\{L_{\pm 1}^{(n)},L_{0}^{(n)}\}_{n\in \N}$ with the relations
\begin{align*}
    L_{\pm 1}^{(0)} &= L_0^{(0)} = 1,\\
    L_0^{(m)}L_0^{(n)} &= \sum_{j=0}^m\binom{m}{j}\binom{n+j}{m}L_0^{(n+j)},\\
    L_{\pm 1}^{(m)}L_{\pm 1}^{(n)} &= \binom{m+n}{n}L_{\pm 1}^{(m+n)},\\
    L_0^{(m)}L_{\pm 1}^{(n)} &= L_{\pm 1}^{(n)}\binom{-2L_0\pm 2n}{m} = \sum_{i=0}^m\binom{\pm 2n}{i}L_{\pm 1}^{(n)}L_0^{(m-i)},\\
    L_1^{(m)}L_{-1}^{(n)} &= \sum_{i=0}^{\min\{m,n\}}\sum_{j=0}^i \binom{-m-n+2i}{j}(-1)^i L_{-1}^{(n-i)}L_0^{(i-j)}L_1^{(m-i)},\\
    L_{\pm 1}^{(m)}L_0^{(n)} &= \sum_{i=0}^n\binom{\mp 2m}{i}L_0^{(n-i)}L_{\pm 1}^{(m)}.
\end{align*}
The last identity is redundant. We also define $\mathcal{H} = U(\mathfrak{sl}_2)_\Z\otimes_\Z\kk$.
We define the generating series
\[e^{zL_{\pm 1}} = \sum_{k\ge 0}z^kL_{\pm 1}^{(k)},\qquad (1+z)^{-2L_0} = \sum_{k\ge 0}z^kL_0^{(k)}.\]
It is helpful to write the above identities in terms of these generating series. The identities in the second, third, fourth, and fifth lines above are respectively equivalent to the following:
\begin{align*}
    (1+z)^{-2L_0}(1+w)^{-2L_0} &= (1+z+w+zw)^{-2L_0},\\
    e^{zL_{\pm 1}}e^{wL_{\pm 1}} &= e^{(z+w)L_{\pm 1}},\\
    (1+z)^{-2L_0}e^{wL_{\pm 1}} &= e^{(1+z)^{\mp 1}wL_{\pm 1}}(1+z)^{-2L_0},\\
    e^{xL_1}e^{zL_{-1}} &= e^{(1-xz)^{-1}zL_{-1}}(1-xz)^{-2L_0}e^{(1-xz)^{-1}xL_1}.
\end{align*}
The algebra $\mathcal{H}$ also has the structure of a bialgebra. For $k\in \{-1,0,1\}$ and $n\in \N$, we set
\begin{equation*}
    \varepsilon(L_{k}^{(n)}) = \delta_{n,0},\qquad
    \Delta(L_{k}^{(n)}) = \sum_{i=0}^nL_{k}^{(n-i)}\otimes L_{k}^{(i)}.
\end{equation*}
Furthermore, there is a natural anti-automorphism $\theta\colon \mathcal{H}\to \mathcal{H}^\op$ given by
\[\theta(L_{\pm 1}^{(n)}) = L_{\mp 1}^{(n)},\quad \theta(L_0^{(n)}) = L_0^{(n)}\quad \forall n\in \N.\]
There is also a natural $\Z$-grading on $\mathcal{H}$ given by $\deg(L_{\pm 1}^{(n)}) = \mp n$ and $\deg(L_0^{(n)}) = 0$ for all $n\in \N$.

Now we study modules for $\mathcal{H}$. While all modules we consider are admissible (i.e., $\N$-graded), it may not be the case that $L_0^{(n)}$ respects the $\N$-grading in the obvious sense. Thus, it is better to work with modules equipped with a $\Lambda$-grading, where again $\Lambda$ is a subset of $\Q$ that is closed under the action of $(\Z,+)$. For example, $\Lambda$ could be $c_W+\Z\subset \Q$, so $W$ has conformal dimension $c_W\in \Q$.
\begin{definition}
    A \textit{$\Lambda$-graded weight $\mathcal{H}$-module} is a $\Lambda$-graded $\mathcal{H}$-module $W = \bigoplus_{\ell\in \Lambda}W_\ell$ such that $L_0^{(n)}$ acts as $\binom{-2\deg}{n}$ for all $n\in \N$ in a well-defined manner. That is, for all $\ell\in \Lambda$ such that $W_\ell\neq 0$, the denominator of $\ell$ is invertible in $\kk$ and
    \[L_0^{(n)}|_{W_\ell} = \binom{-2\ell}{n}.\]
    If $W_\ell = 0$, then we set $L_0^{(n)}\cdot W_\ell = 0$.
\end{definition}
\begin{definition}
    A \textit{M\"obius vertex $\kk$-algebra} (or \textit{$\mathcal{H}$-module vertex $\kk$-algebra}) is a $\Z$-graded vertex $\kk$-algebra $V$ with the structure of a $\Z$-graded weight $\mathcal{H}$-module such that:
    \begin{enumerate}
        \item $V_n=0$ for $n\ll 0$.
        \item $L_1^{(n)}\vac = \varepsilon(L_1^{(n)})\vac = \delta_{n,0}\vac$.
        \item For $a\in V$, we have
        \[e^{zL_1}Y(a,z_0)e^{-zL_1} = Y\left(e^{z(1-zz_0)L_1}(1-zz_0)^{-2L_0}a,\frac{z_0}{1-zz_0}\right).\]
    \end{enumerate}
\end{definition}
We can use these structures to define contragredient modules.
\begin{definition}\label{contragredient}
    Let $V = \bigoplus_{r\ge K}V_{r}$ be a M\"obius vertex $\kk$-algebra. Let $W = \bigoplus_{\ell\in \Lambda} W_{\ell}$ be a $\Lambda$-graded $V$-module whose grading is lower-truncated. We define its \textit{contragredient module} to be the graded dual $W' = \bigoplus_{\ell\in \Lambda} W_{\ell}^\lor$, where $W_{\ell}^\lor = \Hom_{\kk}(W_{\ell},\kk)$, and
    \[Y^{W'}(-,z)\colon V\to \End(W')[\![z,z^{-1}]\!]\]
    is the linear map determined by the relationship
    \[\left\langle Y^{W'}(a,z)\psi,w\right\rangle = \left\langle \psi,Y^{W}(e^{zL_1}(-z^{-2})^{L_0}a,z^{-1})w\right\rangle\]
    for $a\in V$, $\psi\in W'$, $w\in W$, where $\langle \cdot,\cdot\rangle$ is the natural dual pairing. If $a\in V$ is homogeneous, then this identity reads as
    \[\langle a^{W'}_{(n)}\psi,w\rangle = (-1)^{\deg(a)}\sum_{i\ge 0}\langle\psi,(L_1^{(i)}a)^W_{(2\deg(a)-n-i-2)}w\rangle.\]
\end{definition}
Since $L_1^{(i)}$ is a degree $-i$ operator and the $\Z$-grading on $V$ is assumed to be lower-truncated, for $a\in V$ and $i \gg 0$ we have $L_1^{(i)}a = 0$. This shows that the above expression is well-defined. We also have the following results from \cite{FHL} as their proof methods work identically.
\begin{lemma}[\cite[Proposition 3.8]{Li18}, \cite[Theorem 5.2.1]{FHL}]
    The above data defines a $\Lambda$-graded $V$-module $W'$ over $\kk$ whose grading is lower-truncated.
\end{lemma}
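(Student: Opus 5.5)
We follow the strategy of \cite[Theorem 5.2.1]{FHL}, organized so that no denominators beyond those already built into $\mathcal{H}$ appear. Everything rests on the mode-level formula in \zcref{contragredient}:
\[\langle a^{W'}_{(n)}\psi,w\rangle = (-1)^{\deg(a)}\sum_{i\ge 0}\langle\psi,(L_1^{(i)}a)^W_{(2\deg(a)-n-i-2)}w\rangle .\]
This is well defined on the graded dual: the sum is finite because $V$ is lower-truncated, and $a^{W'}_{(n)}$ sends $W_\ell^\lor$ to $W_{\ell+\deg(a)-1-n}^\lor$. So the degrees are correct, the operators lie in $\End(W')$, and the grading of $W'$ is indexed by $\Lambda$ and lower-truncated because that of $W$ is.

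\textbf{Step 1: truncation and vacuum.} For (M1), fix $\psi\in W_\ell^\lor$. Then $a^{W'}_{(n)}\psi$ lies in $W^\lor_{\ell+\deg a-1-n}$, and this space vanishes for $n\gg 0$ by lower truncation of $W$. For (M2), $L_1^{(i)}\vac=\delta_{i,0}\vac$ and $\deg\vac=0$, so $Y^{W'}(\vac,z)=\id$.

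\textbf{Step 2: the Jacobi identity.} Write $Y^\ast(a,z)\eqdef Y^W(e^{zL_1}(-z^{-2})^{L_0}a,z^{-1})$, so that $Y^{W'}(a,z)$ is the transpose of $Y^\ast(a,z)$. The identity (M3) for $W'$, paired against $w\in W$, is equivalent to the ``opposite'' Jacobi identity for $Y^\ast$. In that identity the order of products is reversed and the iterate appears as $Y^\ast(Y(a,z)b,y)$. I would derive it from the Jacobi identity for $W$ after the substitutions $x\mapsto x^{-1}$ and $y\mapsto y^{-1}$. To do so I need two conjugation formulas at the level of formal series with $\Z$-coefficients:
\begin{align*}
(-z^{-2})^{L_0}\,Y(a,z_0)\,(-z^{-2})^{-L_0} &= Y\bigl((-z^{-2})^{L_0}a,\,-z^{-2}z_0\bigr),\\
e^{zL_1}\,Y(a,z_0)\,e^{-zL_1} &= Y\Bigl(e^{z(1-zz_0)L_1}(1-zz_0)^{-2L_0}a,\ \tfrac{z_0}{1-zz_0}\Bigr).
\end{align*}
The first follows from the grading axiom of \zcref{Z-grading}. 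The second is axiom (3) of a M\"obius vertex algebra. Combining them reproduces the computation of \cite[Lemma 5.2.3]{FHL}, giving
\[Y\Bigl(e^{yL_1}(-y^{-2})^{L_0}Y(a,z)b,\,y^{-1}\Bigr) = Y\Bigl(e^{xL_1}(-x^{-2})^{L_0}a,\ \tfrac{-z}{xy}\Bigr)\,e^{yL_1}(-y^{-2})^{L_0}b\]
up to the standard $\delta$-function substitutions. Here $z$ is replaced by $-z/(xy)$ modulo the relation $x=y+z$, which is legitimate inside $\delta$-functions.

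\textbf{Step 3: main obstacle.} The hardest point is this conjugation and the $\delta$-function manipulation over an arbitrary ring $\kk$. In \cite{FHL} the argument uses $e^{zL_1}$ and $z^{L_0}$ as exponentials of derivations, which requires dividing by $n!$. I would instead work only with the divided-power generating series $e^{zL_{\pm1}}$ and $(1+z)^{-2L_0}$. The relations of $\mathcal{H}$, namely
\[(1+z)^{-2L_0}e^{wL_1}=e^{(1+z)^{-1}wL_1}(1+z)^{-2L_0}\]
and the $L_1$--$L_{-1}$ product relation, hold identically over $\Z$. It therefore suffices to note that every identity used in \cite{FHL} is an identity of formal series with integer coefficients in these generating series. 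Each such identity then holds universally over $U(\mathfrak{sl}_2)_\Z$ and specializes to $\kk$. We also use that the $\delta$-function identities (e.g.\ $x^{-1}\delta((y+z)/x)=y^{-1}\delta((x-z)/y)$) have integer coefficients.

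The remaining ingredient is the sign factor $(-1)^{\deg a}$ together with the $\Lambda$-grading. Only integer degrees of $V$ enter $(-z^{-2})^{L_0}$, so no denominators appear. This completes the Jacobi identity and hence the proof.
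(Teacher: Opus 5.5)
Your proposal is correct and follows the same route as the paper, which simply asserts that the argument of \cite[Theorem 5.2.1]{FHL} (cf.\ \cite[Proposition 3.8]{Li18}) goes through verbatim; you make this explicit by observing that the argument uses only the grading conjugation, M\"obius axiom (3) on $V$, and $\delta$-function identities with integer coefficients. Two harmless slips, neither of which affects the argument: first, the relation you quote should read $(1+z)^{-2L_0}e^{wL_1}=e^{(1+z)^{2}wL_1}(1+z)^{-2L_0}$, since $L_0^{(m)}L_1^{(n)}=L_1^{(n)}\binom{-2L_0+2n}{m}$ (the paper's generating-function version has a typo, and your argument never actually uses this relation); second, your Step 2 display should be the vector identity $e^{yL_1}(-y^{-2})^{L_0}Y(a,z)b=Y\bigl(e^{(y+z)L_1}(-(y+z)^{-2})^{L_0}a,\tfrac{-z}{y(y+z)}\bigr)e^{yL_1}(-y^{-2})^{L_0}b$ in $V$, with $Y^W(\,\cdot\,,y^{-1})$ applied to both sides afterwards.
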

\begin{proposition}[\cite[Proposition 5.3.1]{FHL}]
    Given an admissible $V$-module $W$, there is a natural map of $V$-modules $W\to W''$. If $\kk$ is a field, then this morphism is an isomorphism provided that the graded components of $W$ are all finite-dimensional over $\kk$.
\end{proposition}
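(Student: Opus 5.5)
We construct the map $\iota_W\colon W\to W''$ degreewise as the canonical evaluation map into the double dual. Since $W' = \bigoplus_{\ell} W_\ell^\lor$ and $W'' = \bigoplus_\ell (W_\ell^\lor)^\lor$, we set
\[\langle \iota_W(w),\psi\rangle = \langle \psi,w\rangle\qquad\text{for } w\in W_\ell,\ \psi\in W_\ell^\lor,\]
and extend linearly. The map is graded and natural in $W$: for a graded morphism $f\colon W^1\to W^2$, the dual $f'$ is defined by transposition, so $f''\circ\iota_{W^1} = \iota_{W^2}\circ f$ holds by a one-line check. Over a field, each evaluation map $W_\ell\to (W_\ell^\lor)^\lor$ is an isomorphism exactly when $\dim_\kk W_\ell<\infty$, which gives the second claim once we know $\iota_W$ is a $V$-module map.

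The substantive step is to show that $\iota_W$ intertwines $Y^W$ and $Y^{W''}$. Pairing with $\psi\in W'$ and applying the defining relation of \zcref{contragredient} twice, the required identity becomes
\[\left\langle Y^{W}\!\left(e^{z^{-1}L_1}(-z^{2})^{L_0}\,e^{zL_1}(-z^{-2})^{L_0}a,\,z\right)w,\psi\right\rangle = \langle Y^W(a,z)w,\psi\rangle .\]
Here the first relation converts $Y^{W''}(a,z)$ into $Y^{W'}$ evaluated at the transformed vector $e^{zL_1}(-z^{-2})^{L_0}a$ with argument $z^{-1}$, and the second converts this back into $Y^W$ at argument $z$. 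So it suffices to prove the operator identity
\[e^{z^{-1}L_1}(-z^{2})^{L_0}\,e^{zL_1}(-z^{-2})^{L_0} = \id \quad\text{on } V,\]
interpreted in $\mathcal{H}$ acting on the weight module $V$. This is the analogue of the computation in \cite[Proposition 5.3.1]{FHL}.

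We verify this identity on a homogeneous $a\in V_r$. The factor $(-z^{\pm 2})^{L_0}$ acts on $V_r$ by the scalar $(-1)^{r}z^{\pm 2r}$; this is well defined since $V$ is $\Z$-graded. The divided-power conjugation rule, which is the fourth generating-series identity above specialized from $(1+z)^{-2L_0}$ to monomial scalings, reads
\[x^{L_0}e^{yL_1}x^{-L_0} = e^{x^{-1}yL_1}.\]
It holds because $L_1^{(n)}$ lowers degree by $n$, so on each graded piece it is just the statement that $L_1^{(n)}$ maps $V_r$ to $V_{r-n}$, and it therefore avoids any division. Moving $(-z^{2})^{L_0}$ past $e^{zL_1}$ turns the latter into $e^{-z^{-1}L_1}$, and the product collapses via $e^{xL_1}e^{yL_1}=e^{(x+y)L_1}$ to the identity.

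The main obstacle is ensuring that every manipulation is valid over an arbitrary ring $\kk$. This means using only the divided-power relations of $\mathcal{H}$ and never expanding exponentials with factorials. It also requires checking that the sums $\sum_i L_1^{(i)}a$ are finite, which follows from lower-truncation, so that the formal-variable substitutions $z\mapsto z^{-1}$ are legitimate coefficientwise. The remaining module axioms, such as compatibility with gradings, follow directly from the preceding lemma.
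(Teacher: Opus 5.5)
Your proposal is correct and is essentially the argument the paper defers to, namely the proof of \cite[Proposition 5.3.1]{FHL}: the canonical degreewise evaluation map, with the intertwining property reduced to the identity $e^{z^{-1}L_1}(-z^{2})^{L_0}e^{zL_1}(-z^{-2})^{L_0}=\mathrm{id}$ on $V$. You verify that identity through the grading, so that $x^{L_0}L_1^{(n)}x^{-L_0}=x^{-n}L_1^{(n)}$, together with the divided-power relation $e^{xL_1}e^{yL_1}=e^{(x+y)L_1}$; this is exactly the division-free adaptation the paper has in mind when it says the original proof carries over.
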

The results we cite in the above statements are not stated in our generality, but the same proof arguments work.

Now we consider $V$-modules with a compatible $\mathcal{H}$-module structure.
\begin{definition}
    Let $V$ be a M\"obius vertex $\kk$-algebra. A \textit{$\Lambda$-graded $(V,\mathcal{H})$-module} is a $\Lambda$-graded weight $\mathcal{H}$-module $W = \bigoplus_{\ell\in \Lambda}W_\ell$ with the structure of a $\Lambda$-graded $V$-module such that the following hold:
    \begin{align*}
        e^{zL_{-1}}Y^W(a,x)e^{-zL_{-1}} &= Y^W(e^{zL_{-1}}a,x)\\
        e^{zL_1}Y^W(a,z_0)e^{-zL_1} &= Y^W\left(e^{z(1-zz_0)L_1}(1-zz_0)^{-2L_0}a,\frac{z_0}{1-zz_0}\right).
    \end{align*}
\end{definition}
From the definition, a M\"obius vertex $\kk$-algebra $V$ is naturally a $\Z$-graded $(V,\mathcal{H})$-module.

\subsection{Involutions}\label{sec:involution}
Given a M\"obius vertex $\kk$-algebra $V$, we have a linear map
\[\theta\colon V\otimes_{\kk}\kk[t,t^{-1}]\to V\otimes_{\kk}\kk[t,t^{-1}],\]
which is given for homogeneous elements $a\in V$ by
\[a\otimes \sum_{i}c_it^i \mapsto (-1)^{\deg(a)}\sum_{j\ge 0} L_1^{(j)}a\otimes \sum_{i}c_i t^{2\deg(a)-i-j-2},\]
and extended $\kk$-linearly. The above infinite sum is actually a finite sum since $L_1^{(j)}$ is a degree $-j$ operator, and the grading on $V$ is assumed to be lower-truncated.
The map $\theta$ is related to the involution $\gamma = e^{L_1}(-1)^{L_0}\colon V\to V$ defined on homogeneous elements $a\in V$ by
\[a\mapsto (-1)^{\deg(a)}\sum_{k\ge 0}L_1^{(k)}a,\]
again extended linearly. To state the relationship, we define for $a\in V$ homogeneous
\[J_n(a) = a_{[\deg(a)-1+n]} = \overline{a\otimes t^{\deg(a)-1+n}}.\]
\begin{lemma}[\cite[Lemma 3.4.2]{DGK23}]
    For $a\in V$ homogeneous, we have $\theta(J_n(a)) = J_{-n}(\gamma(a))$. Equivalently,
    \[\theta(a_{[j]}) = (-1)^{\deg(a)}\sum_{i\ge 0}(L_1^{(i)}a)_{[2\deg(a)-j-i-2]}.\]
\end{lemma}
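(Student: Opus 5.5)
The plan is a direct computation from the definitions of $\theta$, $\gamma$ and $J_n$, using only two facts: $L_1^{(i)}$ is a homogeneous operator of degree $-i$ on $V$, and the grading on $V$ is lower-truncated, so every sum below is finite. I will work at the level of $V\otimes_\kk\kk[t,t^{-1}]$, where $\theta$ is defined, and read $a_{[j]}$ and $J_n(a)$ as the elementary tensors $a\otimes t^j$ and $a\otimes t^{\deg(a)-1+n}$. Then both sides of each asserted identity are explicit finite sums of elementary tensors, and it suffices to match them term by term. Since both sides are $\kk$-linear in the element of $V$ and in the Laurent polynomial, homogeneous $a$ and single monomials suffice.

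First I prove the second, explicit formula. For homogeneous $a$, apply the defining formula of $\theta$ to the single monomial $a\otimes t^j$, that is, take $c_i=\delta_{i,j}$. This gives
\[\theta(a\otimes t^j)=(-1)^{\deg(a)}\sum_{i\ge 0}L_1^{(i)}a\otimes t^{2\deg(a)-j-i-2},\]
which is exactly $(-1)^{\deg(a)}\sum_{i\ge0}(L_1^{(i)}a)_{[2\deg(a)-j-i-2]}$. So the second formula is literally the definition.

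Next I show that it is equivalent to $\theta(J_n(a))=J_{-n}(\gamma(a))$. Put $j=\deg(a)-1+n$, so that $a_{[j]}=J_n(a)$ and the exponent becomes $2\deg(a)-j-i-2=\deg(a)-i-1-n$. On the other side, $\gamma(a)=(-1)^{\deg(a)}\sum_i L_1^{(i)}a$. Each summand $L_1^{(i)}a$ is homogeneous of degree $\deg(a)-i$, so extending $J_{-n}$ linearly gives
\[J_{-n}(L_1^{(i)}a)=(L_1^{(i)}a)_{[(\deg(a)-i)-1-n]}.\]
Summing with the sign $(-1)^{\deg(a)}$ reproduces exactly the right-hand side above. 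As $n$ ranges over $\Z$ the index $j$ ranges over all of $\Z$, so the two formulations are equivalent.

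The only point requiring care, and the step I expect to be the mild obstacle, is that $J_{-n}$ is only defined on homogeneous elements. It must therefore be applied to $\gamma(a)$ by decomposing $\gamma(a)$ into its homogeneous pieces $L_1^{(i)}a$, with the correct degree $\deg(a)-i$ used for each piece, rather than $\deg(a)$. Once that convention is fixed, the exponents match and the lemma follows. If one also wants the statement in $\LVf$, it follows because the identity already holds upstairs in $V\otimes_\kk\kk[t,t^{-1}]$; whether $\theta$ descends to $\LVf$ is a separate question not asserted here.
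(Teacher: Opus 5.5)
Your proof is correct and is essentially the intended argument. The paper gives no proof of its own, deferring to \cite[Lemma 3.4.2]{DGK23}, and the lemma reduces exactly to your two observations: the second formula is the definition of $\theta$ on a single monomial, and the substitution $j=\deg(a)-1+n$ together with $\deg(L_1^{(i)}a)=\deg(a)-i$ turns it into $J_{-n}(\gamma(a))$. Your caveat about descent to $\LVf$ also matches the paper, which states that descent separately as the next lemma.
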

\begin{lemma}[\cite[Lemma 3.4.3]{DGK23}]
    The map $\theta$ defines a Lie algebra involution on $\LVf$ such that
    \begin{align*}
        \theta(\LVf_{d}) &= \LVf_{-d}.
    \end{align*}
\end{lemma}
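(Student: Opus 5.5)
We need to show three things about $\theta$ on $\LVf$: it descends to the quotient, it is an anti-automorphism of the bracket (a ``Lie algebra involution'' in the sense of \cite{DGK23}, i.e.\ $\theta([x,y]) = [\theta(y),\theta(x)]$, equivalently $-[\theta(x),\theta(y)]$ after skew-symmetry), and $\theta^2=\id$. The degree statement then follows at once. The plan is to work with the generating-function form of $\theta$ and reduce every identity to an identity inside $V$ coming from the M\"obius structure, exactly as one does over $\C$.

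\textbf{Step 1: the degree statement and well-definedness.} Take $a$ homogeneous. Then $a_{[j]}$ has degree $\deg(a)-1-j$, and $(L_1^{(i)}a)_{[2\deg(a)-j-i-2]}$ has degree
\[(\deg a - i) - 1 - (2\deg a - j - i - 2) = -(\deg a-1-j).\]
So $\theta$ maps degree $d$ to degree $-d$. To see that $\theta$ descends, encode $\theta$ on $a\otimes f(t)$ as the coefficient extraction of $Y(e^{zL_1}(-z^{-2})^{L_0}a,z^{-1})$, which is the same twist used in \zcref{contragredient}. One checks that $\theta$ carries $\Im\nabla^{(n)}$ into $\sum_{m}\Im\nabla^{(m)}$. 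This uses the relation between $L_1^{(i)}$ and $T^{(k)}=L_{-1}^{(k)}$ given by the generating identity $e^{xL_1}e^{zL_{-1}} = e^{(1-xz)^{-1}zL_{-1}}(1-xz)^{-2L_0}e^{(1-xz)^{-1}xL_1}$, together with $L_0^{(n)}$ acting by $\binom{-2\deg}{n}$. Concretely, $\theta(\nabla^{(n)}(a\otimes t^k))$ is a combination of elements $\nabla^{(m)}(b\otimes t^{l})$, and this comes out of a binomial-coefficient identity that holds over $\Z$.

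\textbf{Step 2: bracket compatibility.} The cleanest route avoids direct binomial manipulation. Instead we use the contragredient construction as a universal model. For any lower-truncated graded module $W$, the formula in \zcref{contragredient} says precisely that the action of $x\in\LVf$ on $W'$ is the transpose of $\theta(x)$ acting on $W$. Since $W'$ is a $V$-module by \cite[Proposition 3.8]{Li18}, transposition reverses products, and we obtain $[\theta x,\theta y]^{\top}=$ the transpose of $\theta([y,x])$ acting on $W$, i.e.\ $\theta([x,y]) = [\theta y,\theta x]$ modulo operators that act by zero on every $W$. To make this an identity in $\LVf$ itself, we need a faithful module. The natural choice is to apply this to the graded dual of the induced/Verma-type modules built from $\UV$. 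Alternatively, we can verify the identity directly on the $V$-side: by skew-symmetry and \eqref{Lie bracket}, it reduces to the M\"obius conjugation formula applied to $Y(a,z)b$. I would use this direct route: expand both sides with the commutator formula \eqref{eq:commutator} and use axiom (3) of a M\"obius vertex algebra to move $e^{zL_1}(-z^{-2})^{L_0}$ through $a_{(k)}b$. This is the computation of \cite[Lemma 3.4.3]{DGK23} and \cite[\S5.2]{FHL}; it uses only integral binomial identities, so it is valid over any $\kk$.

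\textbf{Step 3: involutivity.} Using $\theta(J_n(a)) = J_{-n}(\gamma(a))$ from the preceding lemma, we get $\theta^2(J_n(a)) = J_n(\gamma^2 a)$. It therefore suffices to show $\gamma^2=\id$ on $V$. Compute
\[\gamma^2 = e^{L_1}(-1)^{L_0}e^{L_1}(-1)^{L_0} = e^{L_1}e^{-L_1} = 1,\]
which uses the fact that $(-1)^{L_0}L_1^{(k)} = (-1)^kL_1^{(k)}(-1)^{L_0}$ since $L_1^{(k)}$ has degree $-k$, and then $e^{zL_1}e^{wL_1}=e^{(z+w)L_1}$.

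\textbf{Main obstacle.} The hard part is Step 2 in positive characteristic. One cannot use $L_1$ alone and must check that every coefficient manipulation involves only divided powers $L_1^{(i)}$ and integer binomials. I would handle this by proving the identity over $\Z$ in the universal setting, via the defining relations of $U(\mathfrak{sl}_2)_\Z$, and then base changing.
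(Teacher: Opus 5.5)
Your plan is correct and is essentially the argument the paper defers to \cite[Lemma 3.4.3]{DGK23}: a degree count, involutivity from $\theta(J_n(a))=J_{-n}(\gamma(a))$ together with $\gamma^2=\mathrm{id}$, and FHL-style M\"obius computations for two things, namely descent to the quotient and the anti-bracket identity $\theta([x,y])=[\theta(y),\theta(x)]$. Those computations use only axiom (c), $L_0$-conjugation and the divided-power relations already present in $\mathcal{H}$ over $\kk$, so no ``$\Z$-form plus base change'' is needed. You should drop the faithful-module detour in Step 2, because it is not clear that $\LVf$ acts faithfully on any module, whereas the direct computation you settle on proves the identity in $\LVf$ itself.
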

\begin{lemma}[\cite[Lemma 3.4.4]{DGK23}]
    The Lie algebra involution $\theta\colon \LVf\to \LVf$ induces a $\kk$-algebra involution $U(\LVf)\to U(\LVf)^\op$, which extends to an involution of the universal enveloping algebra $\UV$ of $V$.
\end{lemma}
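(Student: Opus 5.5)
Since $\theta\colon \LVf\to\LVf$ is a Lie algebra involution by the previous lemma, I first extend it to $U = U(\LVf)$ by the universal property, and then check that it is compatible with the completions and the quotient. Define $\theta$ on the tensor algebra by $x_1\otimes\cdots\otimes x_k\mapsto \theta(x_k)\otimes\cdots\otimes\theta(x_1)$. The generating relations $x\otimes y - y\otimes x - [x,y]$ map to $\theta(y)\otimes\theta(x)-\theta(x)\otimes\theta(y)-\theta([x,y])$. Because $\theta$ preserves brackets, this equals $-(\theta(x)\otimes\theta(y)-\theta(y)\otimes\theta(x)-[\theta(x),\theta(y)])$, which again lies in the ideal. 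Skew-symmetry of the bracket is used here; we do not need $[x,x]=0$, in line with the warning. So $\theta$ descends to an anti-homomorphism $U\to U^{\op}$, and $\theta^2=\id$ on generators gives $\theta^2=\id$ on $U$. Since $\theta(\LVf_d)=\LVf_{-d}$, the map sends $U_d$ to $U_{-d}$.

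Next, I extend $\theta$ to the completion. Because $\theta$ reverses products and negates degrees,
\[\theta(\NL[m]U_d)=\theta\Big(\sum_{j\le -m}U_{d-j}U_j\Big)=\sum_{j\le -m}U_{-j}U_{j-d}=\NR[m]U_{-d}=\NL[m+d]U_{-d},\]
using the identity $\NR[m+p]U_p=\NL[m]U_p$ with $p=-d$. Thus $\theta$ carries the defining system of neighborhoods of $U_d$ onto a cofinal system for $U_{-d}$, shifted by the bounded amount $d$. It is therefore continuous and induces $\hUu_d\to\hUu_{-d}$, hence a continuous anti-involution of $\hUu=\bigoplus_d\hUu_d$. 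Multiplicativity and $\theta^2=\id$ pass to the completion by continuity.

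The main obstacle is showing $\theta(J)\subseteq\overline J$. Since $\theta$ is a continuous anti-automorphism, $\theta(J)$ is the two-sided ideal generated by the images of the generators, so it suffices to treat the vacuum relations and the Jacobi (equivalently, associator) relations. For the vacuum relation $\vac_{[j]}-\delta_{j,-1}$, the formula of the lemma with $\deg\vac=0$ and $L_1^{(i)}\vac=\delta_{i,0}\vac$ gives $\theta(\vac_{[j]})=\vac_{[-j-2]}$. So $\theta(\vac_{[-1]}-1)=\vac_{[-1]}-1$, and the remaining vacuum relations are permuted among themselves.

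For the associator relations \eqref{eq:associator}, written in $J_n$ notation, I would apply $\theta(J_n(a))=J_{-n}(\gamma a)$ and reduce to the statement that $\gamma$ is compatible with the vertex operation. This is the identity of \cite{FHL}, \cite{Li18}: $\gamma\, Y(a,z)b$ is expressed through $Y(\gamma a,-z^{-1})$-type conjugation, and it is exactly the content of the lemma that $W'$ is a $V$-module. Concretely, the $\theta$-image of an associator relation is a formal-series coefficient of the associator relation for the contragredient action, which is a (limit of) combination of associator relations in $\hUu$. Alternatively, and more cleanly, I would argue via modules. $\overline J$ is the intersection of annihilators of the continuous $\hUu$-modules that are weak $V$-modules, by the equivalence of \zcref{lem:weak V mods equiv}. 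For such a module $W$ with lower-truncated grading, $\langle \theta(u)\psi,w\rangle=\langle\psi,uw\rangle$ on $W'$ by \zcref{contragredient}, and $W'$ is a $V$-module. Hence $u\in J$ annihilates $W$ only if $\theta(u)$ annihilates $W'$.

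Making this precise requires that enough graded modules (for instance $\UV$-quotients and their duals) detect $\overline J$. I expect this detection step, or equivalently the direct verification of the transformed associator identity, to be the technical heart of the argument. Granting it, $\theta$ descends to the involution $\UV\to\UV^{\op}$.
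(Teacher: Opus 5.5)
The formal steps are fine. The identity $\theta(\NL[m]U_d)=\NL[m+d]U_{-d}$ gives continuity and the extension to $\hUu$, and the vacuum relations are handled correctly via $\theta(\vac_{[j]})=\vac_{[-j-2]}$.

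One correction is needed in the first step. Your displayed equality holds only if $\theta([x,y])=[\theta(y),\theta(x)]$, i.e.\ only if $\theta$ is a Lie \emph{anti}-homomorphism. If $\theta$ literally preserved brackets, the two sides would differ by $2[\theta(x),\theta(y)]$, and no order-reversing extension $U\to U^{\op}$ would exist. The anti-homomorphism property is what actually holds. For the Heisenberg algebra, $\theta(\alpha_{[j]})=-\alpha_{[-j]}$ and $\theta([\alpha_{[n]},\alpha_{[-n]}])=n\vac_{[-1]}=-[\theta(\alpha_{[n]}),\theta(\alpha_{[-n]})]$. So the earlier ``Lie algebra involution'' must be read as an anti-involution, and you should invoke it in that form.

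The genuine gap is $\theta(J)\subseteq\overline J$. This is the only non-formal content of the lemma, and you explicitly grant it. Sketch (i) restates the claim rather than proving it. Sketch (ii), as you set it up, shows that $\theta(u)$ kills every contragredient $W'$, but then it needs contragredient modules to detect $\overline J$, which you do not establish.

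Here is one way to close it over a field.
\begin{enumerate}
\item The left modules $W_m=\UV/\NL[m]\UV$ are continuous and graded in degrees $>-m$.
\item They form a faithful family. If homogeneous $x\in\hUu$ kills every $W_m$, apply $x$ to the class of $1$: the image of $x$ in $\UV$ lies in $\bigcap_m\NL[m]\UV$, which is zero exactly because $\overline J$ is closed in $\hUu$.
\item Let $u\in\overline J$. Since $W_m'$ is a $V$-module, $u$ acts on it by zero. Hence $\langle\psi,\theta(u)w\rangle=\langle u\psi,w\rangle=0$ for all $\psi\in W_m'$ and $w\in W_m$.
\item Here you must check that the transpose relation $\langle x\psi,w\rangle=\langle\psi,\theta(x)w\rangle$, which is the defining formula of the contragredient for $x\in\LVf$, persists on $\hUu$. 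This follows from anti-multiplicativity and the fact that only finitely many terms of a convergent sum act nontrivially on a fixed vector.
\item Graded duals of vector spaces separate points, so $\theta(u)W_m=0$ for all $m$. By step 2, $\theta(u)\in\overline J$.
\end{enumerate}
Alternatively, you can keep your orientation by applying it to $W=W_m'$ and using the injective module map $W_m\to W_m''$.

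This argument uses that $\kk$ is a field. Over a general ring, which is the stated generality of this section, $\Hom_\kk(W_\ell,\kk)$ need not separate points of $W_\ell$ (for example when there is torsion). In that case you would have to transport the formal-calculus proof of \cite[Theorem 5.2.1]{FHL} into $\hUu$ and verify directly that $\theta$ maps the associator relations into $\overline J$.
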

This involution is precisely what allows us to identify contragredient duals as left $V$-modules instead of right $V$-modules.

\section{PBW-type spanning sets}\label{sec:PBW}
The following section is necessary to prove a specific result about the coherence of sheaves of coinvariants in positive characteristic, namely \zcref{prop:fin-dim-coinvars} and \zcref{cor:CoherenceOnMgn}.
In the construction and analysis of coinvariants over $\C$, one often uses facts about the structure of $V$ as a vector space given certain finiteness assumptions. These are the $C_n$-cofiniteness properties, with the most important cases being $n=1$ and $n=2$. We will try to avoid these assumptions to make our results as general as possible, but there are times where it is difficult to avoid them.
We will reprove some standard results about $C_1$- and $C_2$-cofiniteness for vertex operator algebras over $\C$ for a general CFT-type vertex algebra $V$ over a ring $\kk$. We will not make any finiteness assumptions about the graded components unless stated otherwise. In many cases, the proofs carry over with little to no modification. For brevity, we will omit proofs when the original argument works.

\subsection{On $C_1$-cofiniteness}
In this section, we generalize several results of \cite{KL98} on $C_1$-cofiniteness to the setting of a CFT-type vertex algebra over an arbitrary ring $\kk$. 
In \cite{KL98}, they write $a(n)$ instead of $a_{(n)}$ for the $n$-th mode associated to $a\in V$.
\begin{definition}
    Let $V$ be a CFT-type vertex $\kk$-algebra, and let $V_+ = \bigoplus_{s\ge 1}V_{s}$. We define
    \[C_1(V) = \Span_\kk\{a_{(-1)}b,\ T^{(k)}c \mid a,b\in V_+,\ c\in V,\ k\ge 1\}\subseteq V_+.\]
    We say that $V$ is $C_1$-cofinite if $V/C_1(V)$ is finitely generated over $\kk$.
\end{definition}
\begin{example}
    Consider the Heisenberg vertex $\kk$-algebra $\pi$. It is straightforward to see that $\alpha_{(-n)}\vac \in C_1(\pi)$ for $n\ge 2$. Moreover, for any $v\in V_+$ we have $\alpha_{(-n)}v \in C_1(V)$ for all $n\ge 1$. With these observations, we can see that $\pi/C_1(\pi)$ is freely generated over $\kk$ by $\vac$ and $\alpha = \alpha_{(-1)}\vac$, hence it is $C_1$-cofinite.
\end{example}

Let $U$ be a graded subspace of $V_+$. We define $P(U)$ to be the subspace of $V$ generated by elements of the form
\begin{equation}\label{eq:PBW-type}
    a = a^1_{(-k_1)}\cdots a^r_{(-k_r)}\vac,
\end{equation}
where $r\in \N$, $a^i\in U$, and $k_i\ge 1$. We define a filtration on $P(U)$ given by setting $P(U)_{\le s}$ to be the subspace spanned by elements of the above form with $r\le s$.
\begin{lemma}[\cite[Lemma 3.3]{KL98}]\label{lem:KLlem1}
    Let $V$ be CFT-type. Suppose that $\sum_{i=0}^nV_{i}\subseteq P(U)$ for some $n\in \N$. Let $a,c\in V_+$ and $r\ge 1$ be such that $a_{(-r)}c \in V_{n+1}$. Then $a_{(-r)}c \in P(U)$.
\end{lemma}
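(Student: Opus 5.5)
The proof will be an induction on the length of a PBW-type expression for $c$, keeping $a\in V_+$, $r\ge 1$ and the total degree $n+1$ fixed. It rests on two preliminary observations.

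\textbf{Degree bookkeeping.} Since $\deg(a_{(-r)}c)=\deg a+\deg c+r-1=n+1$ with $r\ge 1$ and $\deg a,\deg c\ge 1$, both $a$ and $c$ have degree at most $n$. Hence, by hypothesis, $a,c\in P(U)$.

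\textbf{Closure properties of $P(U)$.} First, $P(U)$ is stable under $b_{(-k)}$ for $b\in U$ and $k\ge 1$; this is immediate from \zcref[noname]{eq:PBW-type}. Second, $P(U)$ is stable under every $T^{(m)}$. To see this, use translation covariance from \zcref{prop:VA properties}. In modes it reads
\[
T^{(m)}b_{(-k)}=\sum_{i=0}^m\binom{k-1+i}{i}\,b_{(-k-i)}\,T^{(m-i)}.
\]
Combined with $T^{(m)}\vac=\delta_{m,0}\vac$, an induction on the length $\ell$ of a monomial $b^1_{(-k_1)}\cdots b^\ell_{(-k_\ell)}\vac$ shows that $T^{(m)}$ maps it to a combination of monomials of the same length. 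I will state this as a sublemma before the main induction.

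\textbf{Main induction.} Because $c\in V_+$ and $V_0=\kk\vac$, we may write $c$ as a sum of monomials $b_{(-k)}c'$ with $b\in U$, $k\ge 1$, and $c'$ a shorter monomial (possibly $c'=\vac$). By linearity and homogeneity we may treat one such term. Apply the commutator formula \eqref{eq:commutator}:
\[
a_{(-r)}b_{(-k)}c' \;=\; b_{(-k)}\,a_{(-r)}c' \;+\; \sum_{j\ge 0}\binom{-r}{j}(a_{(j)}b)_{(-r-k-j)}c'.
\]
The first term is handled as follows. The element $a_{(-r)}c'$ has degree $n+1-(\deg b+k-1)\le n$, so it lies in $P(U)$ by hypothesis. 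Applying $b_{(-k)}$ keeps it in $P(U)$ by the first closure property.

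For the sum, set $x=a_{(j)}b$ and $m=r+k+j\ge 2$; each term $x_{(-m)}c'$ still has degree $n+1$. If $x\in V_0=\kk\vac$, then $x_{(-m)}=0$ since $m\ge 2$. Otherwise $x\in V_+$, and there are two cases.
\begin{enumerate}
\item If $c'\neq\vac$, then $c'\in V_+$ is a monomial of strictly shorter length, so the induction hypothesis, applied to the pair $(x,c')$, gives $x_{(-m)}c'\in P(U)$.
\item If $c'=\vac$, then $x_{(-m)}\vac=T^{(m-1)}x$. Here $\deg x\le n$ (because $m\ge 2$), so $x\in P(U)$, and $T^{(m-1)}x\in P(U)$ by the second closure property.
\end{enumerate}
The induction needs no base case beyond this: for $c$ of length one, $c'=\vac$ and we are entirely in case (b). This is the place where the CFT-type assumption enters, through $V_0=\kk\vac$.

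\textbf{Expected obstacle.} The main difficulty should be the $T$-stability of $P(U)$ over an arbitrary ring. There we can no longer use $T=L_{-1}$ as a derivation and divide by factorials. Instead we must use the divided-power translation covariance from \zcref{prop:VA properties}, and check that the binomial coefficients that appear are integers. This holds because $\binom{-k}{i}$ is an integer, but the mode formula above needs to be derived carefully from $e^{wT}Y(b,z)e^{-wT}=Y(b,z+w)$.
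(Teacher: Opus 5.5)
Your proof is correct, but it inducts on the opposite argument from the paper. The paper, following \cite{KL98}, inducts on the PBW length $s$ of the \emph{first} argument. It writes $a=a'_{(-k)}b$ with $a'\in U$ and $b\in P(U)_{\le s}$, and expands $(a'_{(-k)}b)_{(-r)}c$ with the associator formula. The terms $a'_{(-k-j)}b_{(-r+j)}c$ lie in $P(U)$ because $b_{(-r+j)}c$ has degree at most $n$. The terms $b_{(-k-r-j)}a'_{(j)}c$ are covered by the induction hypothesis for the shorter first argument $b$. The base case amounts to $(T^{(k-1)}a')_{(-r)}c=\binom{r+k-2}{k-1}a'_{(-r-k+1)}c$.

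You instead induct on the PBW length of the \emph{second} argument $c=b_{(-k)}c'$ and use only the commutator formula. The bracket terms $(a_{(j)}b)_{(-r-k-j)}c'$ keep total degree $n+1$, but have a shorter second argument and an arbitrary first argument in $V_+$. Accordingly, your induction statement must be quantified over all first arguments $x\in V_+$ and all $m\ge 1$ (with total degree $n+1$), not with $a$ and $r$ literally fixed as your opening sentence suggests. That quantified form is how case (a) actually uses it.

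The trade-off is this. The paper's route needs only closure of $P(U)$ under modes of elements of $U$, plus the identity above. Yours bottoms out at $c'=\vac$ and needs $T^{(m)}P(U)\subseteq P(U)$ as a separate sublemma. You derive that correctly from divided-power translation covariance: your formula $T^{(m)}b_{(-k)}=\sum_{i=0}^m\binom{k-1+i}{i}b_{(-k-i)}T^{(m-i)}$ is right and has integral coefficients. The paper needs the same fact anyway in the proof of the next proposition. In return, you avoid the associator formula entirely, your base case is absorbed into case (b), and the degree-zero possibility $a_{(j)}b\in\kk\vac$ is handled explicitly.
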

\begin{proof}
    The proof is the same as the original proof over $\C$. We leave it here for convenience.
    
    First note that since $\deg(a)+\deg(c) + r-1 = n+1$ and $\deg(a),\deg(c) > 0$, we have $\deg(a),\deg(c)\le n$, so $a,c\in P(U)$.
    
    We use induction on $s$ to prove that $a_{(-r)}c\in P(U)$ for $a\in P(U)_{\le s}$ and $r\ge 1$. It is clearly true for $s=0,1$, so assume it is true for a given $s\ge 1$ and let $a\in P(U)_{\le s+1}$. By the definition of $P(U)$, without loss of generality we may assume that $a = a'_{(-k)}b$ for $a'\in U$, $b\in P(U)_{\le s}$, and $k\ge 1$. By the associator formula \zcref[noname]{eq:associator}, we have
    \begin{align*}
        (a'_{(-k)}b)_{(-r)}c &= \sum_{j\ge 0}\binom{-k}{j}\left((-1)^j a'_{(-k-j)}b_{(-r+j)} - (-1)^{-k+j}b_{(-k-r-j)}a'_{(j)}\right)c.
    \end{align*}
    Note that $a'\in U\subseteq V_+$ and
    \[n+1 = \deg(a_{(-r)}c) = \deg(a'_{(-k-j)}b_{(-r+j)}c) = (\deg(a')+k+j-1) + \deg(b_{(-r+j)}c),\]
    so we have $\deg(b_{(-r+j)}c) \le n$. But $b_{(-r+j)}c \in P(U)$ by assumption, so we must have $a'_{(-k-j)}b_{(-r+j)}c \in P(U)$.
    
    Similarly, we have
    \[n+1 = \deg(b_{(-k-r-j)}a'_{(j)}c) = (\deg(b) + k+r+j - 1) + \deg(a'_{(j)}c),\]
    so $\deg(a'_{(j)}c)\le n$. By the induction hypothesis, we have $b_{(-k-r-j)}a'_{(j)}c \in P$. This completes the induction step, so we are done.
\end{proof}
\begin{proposition}[\cite[Proposition 3.4, Theorem 3.5]{KL98}]\label{strongly finitely generated}
    Let $U$ be a graded subspace of $V_+$. Then $V_+ = U+C_1(V)$ if and only if $V=P(U)$.
\end{proposition}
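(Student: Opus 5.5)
We prove the two implications separately; the substantive one is ``$\Rightarrow$''.

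For ``$\Leftarrow$'', assume $V=P(U)$ and let $v\in V_+$ be homogeneous. Write $v$ as a combination of monomials $a^1_{(-k_1)}\cdots a^r_{(-k_r)}\vac$ with $a^i\in U$ and $k_i\ge 1$. Monomials with $r=0$ lie in $V_0=\kk\vac$, so they do not contribute to the degree-positive part and can be discarded. For $r=1$, the monomial $a^1_{(-k_1)}\vac=T^{(k_1-1)}a^1$ lies in $U$ if $k_1=1$ and in $C_1(V)$ if $k_1\ge 2$. For $r\ge 2$, set $b=a^2_{(-k_2)}\cdots a^r_{(-k_r)}\vac$, which lies in $V_+$ by degree count. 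Then $a^1_{(-k_1)}b$ is handled as follows. If $k_1=1$, it lies in $C_1(V)$ by definition. If $k_1\ge 2$, translation covariance gives $a^1_{(-k_1)}=(T^{(k_1-1)}a^1)_{(-1)}$, and $T^{(k_1-1)}a^1\in V_+$, so again the monomial lies in $C_1(V)$. Hence $V_+\subseteq U+C_1(V)$.

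For ``$\Rightarrow$'', assume $V_+=U+C_1(V)$ and prove $V_n\subseteq P(U)$ by induction on $n$. The case $n=0$ holds because $V_0=\kk\vac\subseteq P(U)$. Suppose $\sum_{i\le n}V_i\subseteq P(U)$ and let $v\in V_{n+1}$. Since $U$ and $C_1(V)$ are graded, we can write $v=u+\sum_i a^i_{(-1)}b^i+\sum_j T^{(k_j)}c^j$, where $u\in U_{n+1}\subseteq P(U)$, $a^i,b^i\in V_+$ are homogeneous with degrees summing to $n+1$, and $k_j\ge 1$. The terms $a^i_{(-1)}b^i$ lie in $P(U)$ directly by \zcref{lem:KLlem1}, applied with $r=1$.

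For the translation terms, $c^j$ has degree $n+1-k_j\le n$, so $c^j\in P(U)$ by induction; we may assume $c^j\in V_+$, since $T^{(k)}\vac=0$ for $k\ge 1$. Write $c^j$ as a combination of monomials $a_{(-m)}b$ with $a\in U$, $m\ge 1$, and $b\in P(U)$. Each $T^{(k)}$ is a divided-power derivation, because $[T^{(k)},a_{(m)}]=\sum_{l\ge1}\binom{-m-1+l}{l}\dots$; more precisely, translation covariance $e^{wT}Y(a,z)e^{-wT}=Y(a,z+w)$ gives
\[T^{(k)}a_{(-m)}b=\sum_{i+l=k}\binom{m-1+i}{i}a_{(-m-i)}T^{(l)}b,\]
which follows from $T^{(k)}\vac=\delta_{k,0}\vac$ together with the coefficient extraction in $\partial_z^{(i)}z^{m-1}$ rewritten appropriately. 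Iterating this identity shows that $T^{(k)}$ maps monomials in $U$ to combinations of monomials in $U$ with larger $k$'s, so the translation terms also lie in $P(U)$.

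The main obstacle is making this derivation formula precise over an arbitrary ring $\kk$, where $T^{(k)}\neq T^k/k!$. The plan is to read it off from $e^{wT}Y(a,z)e^{-wT}=Y(a,z+w)$ applied to $e^{wT}b$, comparing coefficients of $w^k z^{m-1}$ and using only integral binomial coefficients. Alternatively, one can avoid the formula altogether by applying \zcref{lem:KLlem1} to $T^{(k)}(a_{(-m)}b)=$, expressed via the associator formula as an iterate, and running the same induction on length as in that lemma's proof.
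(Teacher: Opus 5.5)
Your proposal is correct and follows the paper's proof. The converse direction uses $a^1_{(-k_1)}b=(T^{(k_1-1)}a^1)_{(-1)}b$ and $a^1_{(-k_1)}\vac=T^{(k_1-1)}a^1$. The forward direction is an induction on degree, using \zcref{lem:KLlem1} with $r=1$ for the $a_{(-1)}b$ terms and $T^{(k)}P(U)\subseteq P(U)$ for the translation terms.

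Your formula $T^{(k)}a_{(-m)}b=\sum_{i+l=k}\binom{m-1+i}{i}a_{(-m-i)}T^{(l)}b$ is correct; it follows from $e^{wT}Y(a,z)b=Y(a,z+w)e^{wT}b$. It supplies the integral justification for a step the paper only attributes to ``translation covariance.'' The vague ``alternatively'' sentence at the end is unnecessary and can be dropped.
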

\begin{proof}
    The proof is more or less the same as the original proof over $\C$, but we replace $\frac{1}{n!}T^n$ with $T^{(n)}$ since we are over an arbitrary ring.
    
    Assume that $V_+ = U+C_1(V)$.
    Clearly we have $V_{0} = \kk \vac \subseteq P$. Now assume $\sum_{j=0}^nV_{j}\subset P$ for $n\ge 0$. Let $w \in V_{n+1}$. Since $V_+ = U + C_1(V)$, we may write $w\in C_1(V)$ as a sum of elements of the form $a_{(-1)}b$ for $a,b\in V_+$ and $T^{(j)}w'$ for $w'\in V_{n+1-j}$. By \zcref{lem:KLlem1}, we have $a_{(-1)}b \in P$. By the induction assumption, we have $w'\in P$. By translation covariance, we have $T^{(j)}P \subseteq P$ for all $j\ge 1$. This shows the inductive step, so $V = P(U)$.

    Conversely, suppose that $V = P(U)$, and consider an element of the form
    \[a = a^1_{(-k_1)}\cdots a^r_{(-k_r)}\vac \in P(U).\]
    Let $b= a^2_{(-k_2)}\cdots a^r_{(-k_r)}\vac$ so that $a = a^1_{(-k_1)}b$ (if $r=1$, then $b=\vac$). Then we have
    \[a^1_{(-k_1)}b = (T^{(k_1-1)}a^1)_{(-1)}b.\]
    If $r\ge 2$, then $a \in C_1(V)$. If $r=1$, then $a = T^{(k_1-1)}a^1$. If $k_1\ge 2$, then $a\in C_1(V)$ again; otherwise, we have $a\in U$. This shows the other direction.
\end{proof}


We also have the following properties of the universal enveloping algebra $\UV = \UV(V)$. The proof is the exact same as it is done over $\C$.
\begin{theorem}[\cite[Theorem 3.10]{KL98}]
    Let $U$ be a graded subspace of $V$ such that $V = P(U)$, and let $\{u^i\}_{i\in I}$ be a generating set for $U$. Given any total order $\le$ on the set of symbols
    \[\{u^i_{(m)} \colon i\in I,\ m\in \Z\},\]
    the subspace of $\UV$ spanned by all nonincreasing monomials\footnote{As in \cite{KL98}, we say that a string $X = a^{i_1}_{(n_1)}\cdots a^{i_r}_{(n_r)}$ is \emph{nonincreasing} if $a^{i_1}_{(n_1)}\ge \cdots \ge a^{i_r}_{(n_r)}$ with respect to the given ordering $\le$.} is dense in $\UV$.
\end{theorem}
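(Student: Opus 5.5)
The plan is to show that every element of $\UV$ is a limit of finite linear combinations of nonincreasing monomials in the symbols $u^i_{(m)}$. The argument has three steps: reduce to monomials in modes of generators, straighten by induction, and pass to the completion.

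\emph{Reduction to modes of the generators.} Let $S\subseteq\UV$ be the closure of the span of all finite products of the symbols $u^i_{(m)}$. I would first show that $a_{(n)}\in S$ for every $a\in V$ and $n\in\Z$. Since $V=P(U)$, it suffices to treat $a=a^1_{(-k_1)}b$ with $a^1\in U$ and $b\in P(U)_{\le s-1}$, and to induct on $s$. The associator formula \zcref[noname]{eq:associator} writes $(a^1_{(-k_1)}b)_{(n)}$ as a sum over $k\ge0$ of the products $a^1_{(-k_1-k)}b_{(n+k)}$ and $b_{(-k_1+n-k)}a^1_{(k)}$. On any fixed graded piece only finitely many of these terms are nonzero modulo $\NL[m]$. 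So the sum converges in the graded completion, and by induction each factor already lies in $S$. Because $\LVf$ is spanned by the $a_{[n]}$, this gives $S=\UV$ after completion.

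\emph{Straightening.} I would show that every monomial $X=u^{i_1}_{(n_1)}\cdots u^{i_r}_{(n_r)}$ lies in the closure of the span of nonincreasing monomials. The induction is on the length $r$, and within fixed $r$ on the number of inversions relative to $\le$. Swapping an adjacent out-of-order pair uses the commutator formula \zcref[noname]{eq:commutator}:
\[
u_{(p)}v_{(q)}=v_{(q)}u_{(p)}+\sum_{k\ge0}\binom{p}{k}\bigl(u_{(k)}v\bigr)_{(p+q-k)}.
\]
The correction terms have one fewer factor of type $u_{(\cdot)}$. However, $(u_{(k)}v)_{(p+q-k)}$ is not itself a generator mode. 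The first step re-expands it as a convergent series of monomials in generator modes, and those monomials may be longer. The induction on $r$ alone therefore does not close. This is the step I expect to be the main obstacle.

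Following \cite{KL98}, I would handle it by inducting on a finer invariant. The first component is total conformal weight: take $\sum_j \deg(u^{i_j})$, or $\sum_j \deg(a^j)$ when the expression is written via PBW-type elements. The second component is length. The point is that $\deg(u_{(k)}v)=\deg u+\deg v-k-1<\deg u+\deg v$ for $k\ge0$. When $u_{(k)}v$ is re-expanded via $V=P(U)$, every generator appearing has positive degree, because $U\subseteq V_+$; this is where CFT-type is used. Each constituent monomial then has weight sum at most $\deg(u_{(k)}v)$, which is strictly smaller than the weight sum of the pair being swapped. So the correction terms drop in the first component of the invariant, while the swapped term keeps the weight and loses an inversion.

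\emph{Density.} Finally, an arbitrary element of $\UV_d$ is a limit of finite sums of monomials modulo $\NL[m]\UV_d$. For each fixed $m$, only finitely many straightening steps are needed before all remaining terms lie in $\NL[m]$: any monomial whose rightmost mode has degree $\le -m$ already lies there. Compatibility over $m$ then gives density in the inverse limit. Over an arbitrary ring $\kk$, nothing in the argument divides by an integer; only the binomial coefficients $\binom{p}{k}\in\Z$ from \zcref[noname]{eq:commutator} and \zcref[noname]{eq:associator} appear. So the proof over $\C$ transfers verbatim.
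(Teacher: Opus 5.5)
Your proposal is correct and follows essentially the same route as the argument of \cite{KL98}, which the paper invokes by reference: the associator formula expresses every mode $a_{(n)}$ as a convergent sum of monomials in modes of elements of $U$ with weight sum at most $\deg(a)$, and the commutator formula then straightens monomials by induction on total weight and number of inversions, with only integer binomial coefficients appearing. Three small remarks. Your weight bookkeeping tacitly takes the $u^i$ to be homogeneous, which is the intended reading of the statement. The bound $\sum_j\deg(a^j)\le\deg(a^1_{(-k_1)}\cdots a^r_{(-k_r)}\vac)$ comes from $k_j\ge 1$, not from positivity of the degrees. Finally, your separate density step is redundant, since the straightening step already establishes membership in the closed subspace spanned by nonincreasing monomials.
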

\begin{corollary}[\cite[Corollary 3.12]{KL98}]\label{cor:lowest weight spanning}
    Any $\N$-graded $V$-module $W$ that is generated by the $V$-action on $W_{0}$ is generated by elements of the form
    \[u^{i_1}_{(n_1)}\cdots u^{i_r}_{(n_r)} w,\]
    where $w\in W_{0}$ and $u^{i_j} \in U$ with $\deg (u^{i_j}_{(n_j)}) > 0$.
\end{corollary}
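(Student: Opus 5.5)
The plan is to deduce the corollary from the density theorem (\cite[Theorem 3.10]{KL98} as restated above) by choosing a good total order on the symbols $u^i_{(m)}$, and then to use the grading of $W$ to discard monomials that kill $W_0$.

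First, I fix a total order in which symbols of positive degree come before those of degree $\le 0$. Concretely, compare $u^i_{(m)}$ and $u^j_{(n)}$ first by the degree $\deg(u^i)-1-m$, with larger degree counting as larger. Ties among finitely or infinitely many symbols of the same degree are broken by any fixed total order on $I$ and on $m$. With this order, a nonincreasing monomial $X=u^{i_1}_{(n_1)}\cdots u^{i_r}_{(n_r)}$ has its factors' degrees weakly decreasing from left to right. So $X$ factors as $X=X_+X_{\le 0}$, where $X_+$ contains only positive-degree modes and $X_{\le 0}$ contains only modes of degree $\le 0$.

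Second, I take $w\in W_0$ and analyse $X_{\le 0}w$.
\begin{itemize}
\item If $X_{\le 0}$ contains a factor of strictly negative degree, then, since the degrees decrease to the right, its rightmost factor has negative degree. That factor maps $W_0$ into $W_{<0}=0$, so $Xw=0$.
\item Otherwise every factor of $X_{\le 0}$ has degree exactly $0$, and $X_{\le 0}w\in W_0$.
\end{itemize}
In either case $Xw$ lies in the span of elements $X_+w'$ with $w'\in W_0$, and these are exactly the elements of the claimed form.

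Third, I pass from monomials to all of $\UV$. By \zcref{lem:weak V mods equiv}, $W$ is a continuous $\UV$-module. Since $W$ is generated by $W_0$, we have $W=\UV\cdot W_0$; equivalently, $W_d=\UV_d W_0$ in each degree $d$. Continuity means that $\NL[1]\UV_d$ annihilates $W_0$, because its elements end in negative-degree pieces. So the action of $\UV_d$ on $W_0$ factors through $\UV_d/\NL[1]\UV_d$, which is discrete. Density of the span of nonincreasing monomials then gives that every element of $\UV_d$ agrees, modulo $\NL[1]\UV_d$, with a finite linear combination of nonincreasing monomials of degree $d$. Applying this to $W_0$ and using the second step finishes the proof.

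The main point to check is that Theorem 3.10 applies with this specific order. That theorem allows any total order, so it does, but one must ensure density holds degreewise in the topology defined by the $\NL$ seminorm, so that the approximation really becomes exact on $W_0$. I expect this to follow directly from the construction of $\hUu_d$ as an inverse limit of the quotients $\Uu_d/\NL[m]\Uu_d$.
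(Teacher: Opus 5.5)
Your argument is correct and is essentially the standard deduction from the density theorem \cite[Theorem 3.10]{KL98}, which the paper defers to KL98 without writing out. Order the modes by degree so that each nonincreasing monomial has its positive-degree factors on the left; density in the $\NL$-topology then reduces everything to such monomials acting on $W_0$, and the $\N$-grading shows that the nonpositive-degree tail either kills $W_0$ or maps it back into $W_0$. (This implicitly takes the generating set $\{u^i\}$ of $U$ to be homogeneous, which you may assume since $U$ is graded.)
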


\subsection{On $C_2$-cofiniteness}
The purpose of this section is to reprove the results of \cite{BuhlSpanning} and \cite{GN03} on $C_2$-cofinite vertex algebras over a general ring $\kk$. In order to keep the discussion brief, we will generally omit proofs, as most of their arguments apply in our more general setting with no modification.
\begin{definition}
    For $n\ge 2$, we define
    \begin{equation}\label{Cn(V)}
        C_n(V) = \Span_{\kk}\{a_{(-k)}b \mid a,b\in V,\ k\ge n\}.
    \end{equation}
    We say that $V$ is \textit{$C_n$-cofinite} if the quotient $\kk$-module $V/C_n(V)$ is finitely generated over $\kk$.
\end{definition}
\begin{remark}
    Note that $C_n(V)\subset C_m(V)$ for $n\ge m$, so there is a surjection $V/C_n(V) \twoheadrightarrow V/C_m(V)$. Therefore, if $V$ is $C_n$-cofinite, then it is $C_m$-cofinite.
\end{remark}

Here we outline the contents of Buhl's paper. Let $\overline X\subset V$ be a subset of homogeneous elements such that the image of $\overline X$ under $V\to V/C_2(V)$ is a spanning set of $V/C_2(V)$. Without loss of generality, we may suppose that $\vac \in \overline X$. We define $X = \overline X \setminus \{\vac\}$, and we set $B = \sup_{x\in X}\{\deg(x)\}$.
\begin{definition}\label{def L}
    Let $W$ be a weak $V$-module, and let $w\in W$. For each $x\in X$, there exists $l_x\ge 0$ such that $x_{(l_x)}w \neq 0$ but $x_{(l_x+k)}w = 0$ for all $k>0$. We define $L = L_{W,w,X} \eqdef \sup_{x\in X}\{l_x\}+1$, writing $L = \infty$ if the set $\{l_x\}_{x\in X}\subset \N$ is unbounded.
\end{definition}
If $V$ is $C_2$-cofinite, then $X$ is a finite set, implying $L$ is finite. Note that we may set $(W,w) = (V,\vac)$, in which case we have $L = 0$ for any representative generating set $X$ since $a_{(\ell)}\vac = 0$ whenever $\ell\ge 0$.

\begin{construction}[Canonical filtration]\label{constr:filtration}
    Let $W$ be a weak $V$-module generated by an element $w\in W$. Suppose $L = L_{W,w,X}$ as in \zcref{def L} is finite. We define $\cal F_kW$ to be the submodule generated by elements of the form $u^1_{(-n_1)}\cdots u^1_{(-n_s)}w$, where $u^i\in V$ is homogeneous for all $i$, $\sum_{i=1}^s\deg(u^i)\le k$, and $n_1\ge n_2\ge \cdots\ge n_s > -L$.

    Note that $\cal F_{k-1}W\subset \cal F_kW$, and $W = \bigcup_k\cal F_kW$ since $W$ is generated by $w$. Moreover, we have $\bigcap_k \cal F_kW = 0$. Moreover, suppose that $V$ is lower-truncated; say that $V = \bigoplus_{s\ge S}V_s$ for some $S\in \Z$. Then $\cal F_kW = 0$ unless $k\ge S$.

    Let $u = u^1_{(-n_1)}\cdots u^1_{(-n_s)}w \in W_k$. Consider what happens when we transpose two modes:
    \begin{align*}
        & u^1_{(-n_1)}\cdots u^{i+1}_{(-n_{i+1})}u^i_{(-n_i)}\cdots u^1_{(-n_s)}w\\
        & = u - u^1_{(-n_1)}\cdots [u^i_{(-n_i)},u^{i+1}_{(-n_{i+1})}]\cdots u^1_{(-n_s)}w\\
        & = u - \sum_{j\ge 0}\binom{-n_i}{j}u^1_{(-n_1)}\cdots (u^i_{(j)}u^{i+1})_{-n_i-n_{i+1}-j}\cdots u^1_{(-n_s)}w.
    \end{align*}
    Note that $\deg(u^i_{(j)}u^{i+1}) = \deg(u^i) + \deg(u^{i+1})-1-j < \deg(u^i) + \deg(u^{i+1})$. Therefore the second set of terms on the right-hand side all belong to $\cal F_{k-1}W$. Therefore the string $u$ is independent of the arrangement of the modes modulo $\cal F_{k-1}W$.
    
    Using the same $u$ as above, we consider what happens when we replace $u^i$ by its representative modulo $C_2(V)$. We let $u^i = x^i + c$, where $c\in C_2(V)$. Without loss of generality, we may assume that $c$ is a finite sum of the form $\sum_{\ell\ge 1}v^\ell_{(-1-\ell)}y^\ell$. Then
    \begin{align*}
        u &= u^1_{(-n_1)}\cdots(x^i+c)_{(-n_i)}\cdots u^s_{(-n_s)}w\\
        &= u^1_{(-n_1)}\cdots x^i_{(-n_i)}\cdots u^s_{(-n_s)}w + u^1_{(-n_1)}\cdots c_{(-n_i)}\cdots u^s_{(-n_s)}w.
    \end{align*}
    Note that $\deg(v^\ell_{(-1-\ell)}y^\ell) = \deg(v^\ell) + \deg(y^\ell) + \ell = \deg(u^i)$, and by the associator formula
    \[(v^\ell_{(-1-\ell)}y^\ell)_{(-n_i)} = \sum_{j\ge 0}(-1)^j\binom{-1-\ell}{j}\left(v^\ell_{(-1-\ell-j)}y^\ell_{(-n_i+j)} - (-1)^{-1-\ell}y^\ell_{(-1-\ell-n_i-j)}v^\ell_{(j)}\right).\]
    On the right-hand side, we can see that each term contributes $\deg(v^\ell) + \deg(y^\ell) = \deg(u^i)-\ell$ to the filtration level. Modulo terms in $\cal F_{k-1}W$, we may then replace $u^i$ with its representative $x^i$ modulo $C_2(V)$.
\end{construction}

Now we establish Gaberdiel and Neitzke's spanning set for CFT-type vertex algebras over a ring $\kk$. The proof is exactly the same. 
Since the original proof is written with different notation, we give the same proof using our notation.
\begin{proposition}[\cite[Proposition 8]{GN03}]\label{prop:spanningC2}
    Let $V$ be a CFT-type vertex algebra over a ring $\kk$. Then $V$ is generated by elements of the form
    \[u = x^1_{(-n_1)}\cdots x^r_{(-n_r)}\vac,\]
    where $x^i\in X$ for all $i$ and $n_1 > n_2 > \cdots > n_r > 0$.
\end{proposition}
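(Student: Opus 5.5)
We would argue by induction on the degree, combined with the canonical filtration of \zcref{constr:filtration} applied to $(W,w)=(V,\vac)$. In this case $L=0$, so $\cal F_kV$ is spanned by strings $u^1_{(-n_1)}\cdots u^s_{(-n_s)}\vac$ with $n_1\ge\cdots\ge n_s>0$ and $\sum\deg(u^i)\le k$. Since $V$ is CFT-type, $V_0=\kk\vac$. Every element of $V_+$ is a sum of such strings, for example $a=a_{(-1)}\vac$. We would therefore prove, by induction on the pair (weight $N$, filtration level $k$), that every homogeneous element of $\cal F_kV\cap V_N$ lies in the span $S$ of the strictly ordered monomials $x^1_{(-n_1)}\cdots x^r_{(-n_r)}\vac$ with $x^i\in X$ and $n_1>\cdots>n_r>0$.

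First, by the two computations in \zcref{constr:filtration}, modulo $\cal F_{k-1}V$ we may reorder modes freely. We may also replace each $u^i$ by a representative in $\overline X$ modulo $C_2(V)$; the error terms preserve weight and drop filtration level, so they are handled by the induction hypothesis. Any $u^i=\vac$ with $n_i\ne 1$ gives zero, and $\vac_{(-1)}$ acts as the identity, so we may assume all $u^i\in X$ and $n_1\ge\cdots\ge n_s\ge 1$. A term with filtration level $k-1$ and the same weight $N$ is covered by induction on $k$. The base of the induction is $\cal F_kV=0$ for $k<0$, together with $V_0=\kk\vac$.

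The main step is to eliminate repeated mode indices, i.e. to show that a string containing $x_{(-n)}y_{(-n)}$, or more generally an adjacent pair $x_{(-n)}y_{(-m)}$ with $n=m$, can be rewritten modulo lower filtration in terms of strictly decreasing strings. Following Gaberdiel--Neitzke, we would use the associator formula \zcref[noname]{eq:associator} for $(x_{(-1)}y)_{(-2n+1)}$ (more precisely for $(x_{(-m)}y)_{(-q)}$ with suitable $m,q$). Its leading terms are $\sum_j x_{(-1-j)}y_{(-2n+1+j)}$ plus $y_{(\cdot)}x_{(j)}$ terms. Modulo reordering, the sum $\sum_{j} x_{(-1-j)}y_{(-2n+1+j)}$ contains the pair $x_{(-n)}y_{(-n)}$ together with pairs of larger spread. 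The left side is the mode of an element $x_{(-1)}y$ of the same total degree, which modulo $C_2(V)$ is a combination of single elements of $X$, plus $C_2$-terms. This yields a string with fewer modes, or one of lower filtration after re-expansion. The $x_{(j)}$ terms with $j\ge 0$ applied to the tail strictly lower the filtration. One then runs a secondary induction, on the number of modes $s$ and on a measure of how far $(n_1,\dots,n_s)$ is from strictly decreasing, such as $\sum_i\#\{j>i: n_j\ge n_i\}$ lexicographically. The goal is to show that each equality $n_i=n_{i+1}$ can be traded for pairs $(n_i+p,n_{i+1}-p)$ with $p\ge 1$, keeping the modes' total fixed, until the sequence becomes strict or $n_{i+1}-p\le 0$. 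In the latter case the mode annihilates $\vac$ after moving right; the commutators produced by moving it have lower filtration.

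We expect the main obstacle to be this rewriting of equal-index pairs. Gaberdiel--Neitzke in effect divide by a binomial coefficient to isolate $x_{(-n)}y_{(-n)}$. Over an arbitrary ring, we must check that the coefficient of the term we solve for in the associator expansion is $\pm1$. If it is not, we would choose the index of the composite mode (using the freedom in $r$ in \zcref[noname]{eq:associator}) so that the unwanted pair appears with coefficient $\binom{-1}{0}=1$, with all other terms of larger spread. Failing that, we would induct on the spread from the top down. The remaining steps are routine bookkeeping of weights and filtration levels, which works over any $\kk$ because only integer binomial coefficients appear.
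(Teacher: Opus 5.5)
Your framework matches the paper's: the canonical filtration with $L=0$, reordering and replacement by $\overline X$-representatives modulo $\mathcal{F}_{k-1}V$, and the associator formula at $r=-1$ to break a repeated index. That identity produces three kinds of terms: (i) the single mode $(x_{(-1)}y)_{(-2n+1)}$, which gives one fewer mode after $C_2$-reduction; (ii) pairs with indices $\{n-K,n+K\}$, $K\ge 1$; and (iii) terms containing $x_{(j)}$, $j\ge 0$, which have lower filtration.

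The gap is the termination of your secondary induction. The measure you propose, the number of pairs $i<j$ with $n_j\ge n_i$, need not decrease under a trade of type (ii), because the new indices can collide with ones already present. For example, $(3,2,2,1)\mapsto(3,3,1,1)$ raises it from $1$ to $2$. As written, nothing shows that the rewriting loop stops, and this is the one step that actually needs an argument.

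The repair is easy. For fixed weight, exact filtration level $k=\sum\deg u^i$ and number of modes $s$, the index sum $\sum n_i$ equals the weight minus $k$ plus $s$. Hence $\Phi=\sum n_i^2$ is bounded by $(\sum n_i)^2$. A type (ii) trade with $n-K\ge 1$ raises $\Phi$ by $2K^2$. When $n-K\le 0$, the term vanishes modulo $\mathcal{F}_{k-1}V$ once that mode is moved onto $\vac$. Induction on $(k,s,-\Phi)$, lexicographically, then completes your proof.

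The paper avoids any global potential. It inducts lexicographically on $(k,N)$ with the hypothesis ``no repeats among indices $\le N$''. If larger indices precede the repeated block at index $N$, that block and everything after it lie in $\mathcal{F}_{k-1}V$, and the hypothesis for $(k-1,N)$ applies. If the block is in front, the identity is applied there. The new leading mode is then either $x'_{(-N-K)}$ or the composite mode, and the remaining string again lies in $\mathcal{F}_{k-1}V$. So collisions among large indices never have to be tracked, and one finishes with $N=\deg(u)+1$.

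Finally, the coefficient issue you single out as the main obstacle does not arise. For $r=-1$, every coefficient in the associator formula is $(-1)^j\binom{-1}{j}=1$. So $x_{(-n)}y_{(-n)}$ appears with coefficient $1$, at $j=n-1$ rather than $j=0$, and no division is ever needed; you need not hedge over other choices of the composite mode's indices.
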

\begin{proof}
    Consider the filtration $\{\cal F_kV\}_{k\in \Z}$ that we defined above for $L=0$. Note that $\cal F_kV = 0$ unless $k\ge 0$ since $V$ is $\N$-graded. Given $ u = u^1_{(-n_1)}\cdots u^r_{(-n_r)}\vac\in V_0$, then $\sum_i\deg(u^i) = 0$ implies that $\deg(u^i) = 0$ for all $i$. Since $V$ is CFT-type, we have $u^i\in \kk\vac$. By the vacuum axioms, this shows that $\cal F_0V = \kk\vac$.
    
    It remains to show that we may strengthen the assumption to not allow $n_i = n_{i+1}$ for any $i$. We give the following induction hypothesis:
    \begin{center}
        $V_k$ is spanned by states of the form $u = x^1_{(-n_1)}\cdots x^r_{(-n_r)}\vac$ for $n_1 \ge \cdots\ge n_r > 0$, $x^i\in X$ for all $i$, and $n_i\neq n_{i+1}$ for $n_i \le N$.
    \end{center}
    We assign the lexicographic ordering $(k,N) > (k',N')$ if $k > k'$ or $k=k'$ and $N > N'$.
    
    The base case is for $(0,0)$, which we have already looked at. Given any string $u = u^1_{(-n_1)}\cdots u^r_{(-n_r)}\in \cal F_kV$, we consider it modulo $\cal F_{k-1}V$. We may replace $u^i$ with its homogeneous representative modulo $C_2(V)$, and we may rearrange the modes so that they are in the correct order. All repeats are allowed since $n_i > N=0$ for all $i$, so the statement for $(k,0)$ holds. 
    
    We assume that the induction hypothesis holds for all pairs $(k',N') < (k,N)$ for a fixed $(k,N)$. We already have the statement for $(k,N-1)$, so we just need to analyze any string of the form
    \[u = x^{1}_{(-M_1)}\cdots x^m_{(-M_m)}(x'_{(-N)})^sy^1_{(-L_1)}\cdots y^\ell_{(-L_\ell)}\vac,\]
    where $M_1\ge \cdots\ge M_m > N > L_1 > \cdots > L_\ell > 0$ and $s\ge 2$. If $m\neq 0$, then the string $(x'_{(-N)})^sy^1_{(-L_1)}\cdots y^\ell_{(-L_\ell)}\vac$ is in $\cal F_{k-1}V$. Therefore, we may apply the induction hypothesis for $(k-1,N)$ to replace the string with a sum of strings with no repeats for indices $\le N$.

    Now suppose that $m=0$.
    By the associator formula, we have
    \[a_{(-n)}b_{(-n)} = (a_{(-1)}b)_{(-2n+1)} - \sum_{\substack{k\ge 0\\ k\neq n-1}}a_{(-1-k)}b_{(-2n+1+k)} - \sum_{k\ge 0}b_{(-2n-k)}a_{(k)}.\]
    Then we have
    \begin{align*}
        u &= (x'_{(-N)})^sy^1_{(-L_1)}\cdots y^\ell_{(-L_\ell)}\vac\\
        &= (x'_{(-1)}x')_{(-2N+1)}(x'_{(-N)})^{s-2}y^1_{(-L_1)}\cdots y^\ell_{(-L_\ell)}\vac\\
        &\quad - \bigg(\sum_{\substack{k\ge 0\\ k\neq N-1}}x'_{(-1-k)}x'_{(-2N+1+k)} + \sum_{k\ge 0}x'_{(-2N-k)}x'_{(k)}\bigg) (x'_{(-N)})^{s-2}y^1_{(-L_1)}\cdots y^\ell_{(-L_\ell)}\vac.
    \end{align*}
    Now we take $u$ modulo $\cal F_{k-1}V$. We may replace $x'_{(-1)}x'$ with any homogeneous representative of the class $x'_{(-1)}x' + C_2(V)$, which we may then write in terms of the spanning set. The string $u' = (x'_{(-N)})^{s-2}y^1_{(-L_1)}\cdots y^\ell_{(-L_\ell)}\vac$ belongs to $\cal F_{k-1}V$, so by the induction hypothesis for $(k-1,N)$, we can again replace $u'$ with a sum of strings that has no repeats for indices $ \le N$. It remains to analyze the remaining terms. These are terms of the form
    \[x'_{(-N-K)}x'_{(-N+K)}(x'_{(-N)})^{s-2}y^1_{(-L_1)}\cdots y^\ell_{(-L_\ell)}\vac,\]
    where $K>0$. Modulo $\cal F_{k-1}V$, we can again rearrange the modes in the string. If $-N+K\ge 0$, then we get 0 modulo $\cal F_{k-1}V$ since $x'_{(-N+K)}\vac = 0$. Therefore we may assume that $N+K>0$ and $N-K > 0$. Once again, by the induction hypothesis for $(k-1,N)$ we may replace
    $x'_{(-N+K)}(x'_{(-N)})^{s-2}y^1_{(-L_1)}\cdots y^\ell_{(-L_\ell)}\vac$ with a sum of strings with no repeats for indices $\le N$. We have checked all of the terms, so $u$ can be expressed as a sum of strings with no repeats for indices $\le N$. This shows the induction hypothesis for $(k,N)$, so the statement holds for all $(k,N)$.

    To conclude the proof, consider an element $u = u^1_{(-n_1)}\cdots u^r_{(-n_r)}\vac \in V_k$ that is homogeneous of degree $\deg(u)\ge 0$. We have $\deg(u) = \sum_{i=1}^r(\deg(u^i) - 1 + n_i)$, so in particular $0\le n_i-1\le \deg(u)$. Setting $N = \deg(u)+1$, we conclude by the induction hypothesis for $(k,\deg(u)+1)$ that $u$ can be written as a sum of strings of the form $x^1_{(-m_1)}\cdots x^s_{(-m_s)}\vac$ for $m_1\ge \cdots \ge m_s > 0$, $\sum_{i=1}^s\deg(x^i)\le k$, and $m_i\neq m_{i+1}$ for $m_i\le \deg(u)+1$. The no-repeat condition always holds since each string has degree $\deg(u)$ as well, so this implies that $m_1 > \cdots > m_s > 0$. Since the filtration on $V$ is exhaustive, any element of the degree $s$ component $V_{s}$ belongs to $\cal F_kV$ for $k\gg 0$. Therefore this statement holds for every element of $V$, which concludes the proof.
\end{proof}
\begin{proposition}[\cite[Theorem 2.8]{BuhlSpanning}]
    Suppose $V$ is $C_2$-cofinite. Then $V$ is $C_n$-cofinite for all $n\ge 2$.
\end{proposition}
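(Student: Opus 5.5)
We plan to derive $C_n$-cofiniteness from the Gaberdiel--Neitzke spanning set of \zcref{prop:spanningC2}, combined with a counting argument. Throughout we assume, as in that proposition, that $V$ is of CFT-type, so the statement holds over an arbitrary ring $\kk$. Since $V$ is $C_2$-cofinite, we may choose the set $X$ of homogeneous representatives to be finite. Then $B=\max_{x\in X}\deg(x)$ is finite, and $V/C_2(V)$ is generated by $\vac$ together with the classes of $X$.

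By \zcref{prop:spanningC2}, $V$ is spanned over $\kk$ by the strings $u=x^1_{(-n_1)}\cdots x^r_{(-n_r)}\vac$ with $x^i\in X$ and $n_1>n_2>\cdots>n_r>0$. We claim that for fixed $n\ge 2$, every such string with $n_1\ge n$ lies in $C_n(V)$. Indeed, write $b=x^2_{(-n_2)}\cdots x^r_{(-n_r)}\vac\in V$. Then $u=x^1_{(-n_1)}b$ with $n_1\ge n$, which is one of the defining generators of $C_n(V)$ in \zcref[noname]{Cn(V)}. Hence $V/C_n(V)$ is spanned by the images of the strings with $n-1\ge n_1>\cdots>n_r>0$. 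Such a string involves at most $n-1$ modes, each drawn from the finite set $\{x_{(-m)}: x\in X,\ 1\le m\le n-1\}$, and the indices are strictly decreasing. There are therefore only finitely many such strings, so $V/C_n(V)$ is finitely generated over $\kk$.

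We also note the degree bound as a sanity check. Each surviving string has degree $\sum_i(\deg(x^i)+n_i-1)\le (n-1)(B+n-2)$. So $V/C_n(V)$ is in fact concentrated in bounded degrees.

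The main point needing care is that \zcref{prop:spanningC2} is applied over an arbitrary ring. Its proof uses only the associator formula and the filtration of \zcref{constr:filtration}, with no division, so it holds verbatim in our setting. No further obstacle arises: the strict decrease of the $n_i$ is exactly what bounds the length of a string, and this bound is what makes the count finite.
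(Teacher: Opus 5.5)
Your proof is correct and is essentially the paper's argument: use the spanning set of \zcref{prop:spanningC2}, note that every string with $n_1\ge n$ lies in $C_n(V)$, and conclude that $V/C_n(V)$ is spanned by the finitely many strings with $n>n_1>\cdots>n_r>0$ once $X$ is finite. Your remarks on the implicit CFT-type hypothesis and on the degree bound are sound additions, but they do not change the route.
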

\begin{proof}
    We use the spanning set from \zcref{prop:spanningC2}. Consider a string of the form $u = x^1_{(-n_1)}\cdots x^r_{(-n_r)}\vac$ for $x^i\in X$, $n_1 > \cdots >n_r > 0$. We have that $u\in C_n(V)$ if $n_1\ge n$. Therefore $V/C_n(V)$ is spanned by strings of the form $x^1_{(-n_1)}\cdots x^r_{(-n_r)}\vac$ for $n > n_1 > \cdots > n_r > 0$. If $V$ is $C_2$-cofinite, then we can take $X$ to be finite, hence there are only finitely many strings of this form. This concludes the proof.
\end{proof}
\begin{definition}
    If $V$ is $C_2$-cofinite, then there exists a minimal $N\ge 0$ such that $V_{\ge N}\subset C_2(V)$. We define $Q = \max\{N,2B-1\}+1$.
\end{definition}
One may use a more general version of the above argument to prove the following result:
\begin{theorem}[\cite[Theorem 1]{BuhlSpanning}]
    Let $V$ be a $C_2$-cofinite CFT-type vertex algebra over a ring $\kk$, and let $W$ be a weak $V$-module generated by $w\in W$. Then $W$ is spanned by elements of the form
    \[x^1_{(-n_1)}\cdots x^k_{(-n_k)}w,\]
    where $n_1>n_2>\cdots>n_k>-L$ and $x^j\in \overline X$ for all $1\le j\le k$. In addition, if $n_j>0$, then $n_j>n_{j'}$ for $j<j'$. If $n_j\le 0$, then $n_j = n_{j'}$ for at most $Q-1$ values of $j'$.
\end{theorem}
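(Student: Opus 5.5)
We follow the strategy of the proof of \zcref{prop:spanningC2}, now applied to the weak module $W$ with the canonical filtration $\{\cal F_kW\}$ of \zcref{constr:filtration}. Since $V$ is $C_2$-cofinite, $X$ is finite, so $L = L_{W,w,X}<\infty$ and $B<\infty$, and the filtration is defined. By \zcref{constr:filtration}, $\cal F_kW/\cal F_{k-1}W$ is spanned by images of strings $x^1_{(-n_1)}\cdots x^s_{(-n_s)}w$ with $x^i\in\overline X$ and $n_1\ge\cdots\ge n_s>-L$. Indeed, modulo $\cal F_{k-1}W$ we may reorder modes, and we may replace each $u^i$ by its representative in $X$ modulo $C_2(V)$. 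The vacuum can be discarded because $\vac_{(n)}=\delta_{n,-1}$, and any mode $x_{(m)}$ with $m\ge L$ kills $w$ once moved to the right, at the cost of lower filtration terms. Since $\cal F_kW=0$ for $k<0$ and the filtration is exhaustive, an induction on $k$ reduces the theorem to producing the stated spanning set modulo $\cal F_{k-1}W$.

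Next we eliminate repeated indices, inducting lexicographically on pairs $(k,N)$ exactly as in \zcref{prop:spanningC2}. In the positive range $n_j>0$ the argument is verbatim. For a block $(x'_{(-N)})^s$ with $s\ge2$ and $N>0$, we apply the associator identity
\[a_{(-n)}b_{(-n)} = (a_{(-1)}b)_{(-2n+1)} - \sum_{k\neq n-1}a_{(-1-k)}b_{(-2n+1+k)} - \sum_{k\ge0}b_{(-2n-k)}a_{(k)}.\]
The first term is handled via $C_2$ representatives. The remaining terms are pairs $x'_{(-N-K)}x'_{(-N+K)}$; after reordering modulo $\cal F_{k-1}W$ and applying the induction hypothesis at $(k-1,N)$ to the shorter tail, these have no repeats at indices $\le N$. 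The difference from the vacuum case is that the right-hand factor $x'_{(-N+K)}$ no longer vanishes for $-N+K\ge0$. It only vanishes once $-N+K\ge L$, so these terms stay inside the range $>-L$ and are absorbed by the induction.

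The main obstacle is the nonpositive range $-L<n_j\le 0$, where we cannot remove repeats entirely and must instead bound the multiplicity by $Q-1$. Here I would follow Buhl's counting argument. Take a block $(x_{(-n)})^{Q}$ with $n\le 0$, and more generally a product of $Q$ modes of possibly different $x^i\in X$ all at index $-n$. Repeatedly applying the associator formula rewrites it as a combination of terms $v_{(m)}$, where $v$ is a product of the form $x_{(-1)}\cdots x_{(-1)}x$ or similar, together with terms in which the index pattern is strictly more spread out (hence lexicographically larger or of lower filtration). The degree of such a $v$ is at least $Q\min\deg$ plus corrections, and the definition $Q=\max\{N,2B-1\}+1$ is chosen so that the relevant iterated products land in degree $\ge N$, hence in $C_2(V)$. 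The resulting terms then drop to $\cal F_{k-1}W$ by the $C_2$-replacement step of \zcref{constr:filtration}.

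The delicate points are the following. First, we need a well-founded ordering, for instance total degree together with lexicographic order on index sequences, under which the spreading terms really are smaller. Second, the spreading must never push indices to $\le -L$: such indices are fine, since lower indices only appear on the left. Third, we must check that the bound $2B-1$ accounts for products of two generators of degree at most $B$. I expect this combinatorial bookkeeping, rather than any characteristic-dependent issue, to be the real work. No division occurs anywhere: only integer binomial coefficients from the associator formula are used, so the argument works over an arbitrary ring $\kk$.
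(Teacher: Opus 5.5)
Your outline follows the same route as the paper. The paper likewise only asserts that Buhl's argument is a division-free refinement of the Gaberdiel--Neitzke induction in \zcref{prop:spanningC2} (with $C_2(V)$ spanned by all $a_{(-k)}b$, $k\ge 2$) and omits the details, so deferring the nonpositive-range bookkeeping to Buhl leaves you at the same level of rigor as the paper.

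If you write that step out, use the $Q$-fold identity $\sum_{m_1+\cdots+m_Q=M}x^1_{(m_1)}\cdots x^Q_{(m_Q)}\equiv (x^1_{(-1)}\cdots x^{Q-1}_{(-1)}x^Q)_{(M+Q-1)}$ modulo $\mathcal{F}_{k-1}W$, obtained by iterating the $r=-1$ associator and commuting modulo lower filtration. Avoid successive pairwise combinations, which produce intermediate modes at smaller $n$-index. With the $Q$-fold identity, the right-hand side drops to $\mathcal{F}_{k-1}W$ because its degree is at least $Q>N$ and $V_{\ge N}\subset C_2(V)$. The balanced term $x^1_{(-n)}\cdots x^Q_{(-n)}$ occurs exactly once, and every other term contains a mode $x^i_{(-n_i)}$ with $n_i>n$ that can be pulled to the front with a lower-filtration tail, exactly as in the positive range.
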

The proof of this is essentially a refinement of \zcref{prop:spanningC2}. The original proof does not require any usage of dividing by integers, provided we replace the standard definition of $C_2(V)$ with the one in \zcref[noname]{Cn(V)}. However, the proof is very long, so we omit the details.

\subsection{Some corollaries on $C_n$-cofiniteness}
Let $U$ be a graded subspace of $V$ such that $V = U\oplus C_2(V)$. Recall that $V$ is linearly spanned by vectors of the form $\alpha^1_{(-n_1)}\cdots \alpha^r_{(-n_r)}\vac$, with $\alpha^i\in U$ and $n_1 > \cdots > n_r > 0$. For any $m,q\in \N_{\ge 1}$, we set
\[C_{m,q}(W) = \{(\alpha^1_{(-n_1)}\cdots \alpha^r_{(-n_r)}\vac)_{(-p)}w \mid m\ge n_1 > \cdots n_r > 0,\ \alpha^i\in U,\ p\ge q\}.\]
For $m\ge 2$, we also define
\[C_m(U,W) = \{\alpha_{(-k)}w \mid \alpha\in U,\ w\in W,\ k\ge m\}.\]

We also have the following results from \cite{AN2}. Aside from one small modification in \zcref{lem:ANlem2}, the proofs below are the same as originally presented.
\begin{lemma}[\cite[Lemma 4.3]{AN2}]\label{lem:ANlem1}
    Let $m,q>0$. Then $C_q(W)\subset C_{m,q}(W) + C_m(U,W)$.
\end{lemma}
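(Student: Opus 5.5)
We need to show: for $m,q>0$, $C_q(W)\subset C_{m,q}(W)+C_m(U,W)$, where $C_q(W)$ is presumably spanned by $a_{(-k)}w$ with $a\in V$, $w\in W$, $k\ge q$. The plan is to reduce $a$ to PBW strings and then peel off modes with index larger than $m$.

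By \zcref{prop:spanningC2}, applied with $X$ a homogeneous basis of $U$ (the decomposition $V=U\oplus C_2(V)$ makes $U$ a valid choice of representatives), every $a\in V$ is a linear combination of strings $\alpha^1_{(-n_1)}\cdots\alpha^r_{(-n_r)}\vac$ with $\alpha^i\in U$ and $n_1>\cdots>n_r>0$. So it suffices to treat $a$ of this form and $p\ge q$. We argue by induction on the length $r$. If $n_1\le m$, then $a_{(-p)}w\in C_{m,q}(W)$ by definition, and we are done; this covers $r=0$ (i.e.\ $a=\vac$) as well.

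Suppose instead $n_1>m$, and write $a=\alpha_{(-n)}b$ with $\alpha=\alpha^1$, $n=n_1>m$, and $b$ the shorter string. Apply the associator formula \zcref[noname]{eq:associator} with $r=-n$ and $s=-p$:
\[(\alpha_{(-n)}b)_{(-p)}w=\sum_{j\ge0}(-1)^j\tbinom{-n}{j}\Big(\alpha_{(-n-j)}b_{(-p+j)}w-(-1)^{n}b_{(-n-p-j)}\alpha_{(j)}w\Big).\]
The sum is finite by truncation, since $\alpha_{(j)}w=0$ for large $j$. The first family of terms has the form $\alpha_{(-k)}w'$ with $k=n+j\ge n>m$, so it lies in $C_m(U,W)$. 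The second family has the form $b_{(-p')}w''$ with $p'=n+p+j\ge p\ge q$, where $b$ is a string of length $r-1$ whose indices $n_2>\cdots>n_r$ are still strictly decreasing and positive. By induction on $r$, each such term lies in $C_{m,q}(W)+C_m(U,W)$. This completes the induction.

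The only real subtlety is the base set-up: the spanning set must use elements of $U$ exactly, not merely representatives modulo $C_2$, and must have strictly decreasing indices so that the induction stays inside the allowed strings. Both points are guaranteed by \zcref{prop:spanningC2} once $X$ is chosen inside $U$. The formula itself uses only integer binomial coefficients, so no division is needed and the argument works over any $\kk$.
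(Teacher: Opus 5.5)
Your proof is correct and follows the paper's argument: both induct on the length of the string $\alpha^1_{(-n_1)}\cdots\alpha^r_{(-n_r)}\vac$ and apply the associator formula, so the terms $\alpha^1_{(-n_1-j)}b_{(-p+j)}w$ land in $C_m(U,W)$ because $n_1>m$, and the terms $b_{(-n_1-p-j)}\alpha^1_{(j)}w$ are handled by the inductive hypothesis because $n_1+p+j\ge q$. The only cosmetic difference is the base case: the paper starts at $r=1$ with the explicit identity $(\alpha_{(-n)}\vac)_{(-p)}w=(-1)^{p-1}\binom{-n}{p-1}\alpha_{(-n-p+1)}w$, whereas you start at $r=0$.
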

\begin{proof}
    The proof is the same as the original proof over $\C$.
    
    It suffices to show that for any $\alpha^i\in U$ and $n_1 > \cdots > n_r > 0$, $p\ge q$, and $w\in W$, we have
    \[(\alpha^1_{(-n_1)}\cdots \alpha^r_{(-n_r)}\vac)_{(-p)}w \in C_{m,q}(W) + C_m(U,W).\]
    We may assume that $n_1 > m$, since otherwise the element is automatically in $C_{m,q}(W)$.

    First we show the above statement for $r=1$. For all $\alpha\in U$, we have
    \[(\alpha_{(-n)}\vac)_{(-p)}w = (-1)^{p-1}\binom{-n}{p-1}\alpha_{(-n-p+1)}w \in C_m(U,W).\]
    Now suppose that the above holds for any $r<r_0$ with $r_0\ge 2$. Define 
    \[\beta = \alpha^2_{(-n_2)}\cdots \alpha^{r_0}_{(-n_{r_0})}\vac;\] 
    by the associator formula, we have
    \[(\alpha^1_{(-n_1)}\beta)_{(-p)}w = \sum_{i\ge 0}\binom{-n_1}{i}(-1)^i\left(\alpha^1_{(-n_1-i)}\beta_{(-p+i)}w - (-1)^{n_1}\beta_{(-n_1-p-i)}\alpha^1_{(i)}w\right).\]
    The first term on the right-hand side belongs to $C_m(U,W)$ since $n_1 > m$. By the inductive hypothesis, the second term on the right-hand side belongs to $C_{m,q}(W) + C_m(U,W)$ since $n_1+p+i\ge p\ge q$.
\end{proof}
\begin{lemma}[\cite[Lemma 4.4]{AN2}]\label{lem:ANlem2}
    Let $m>0$. For any $a\in V_+$ and $w\in W$ we have $a_{(-q)}w \in C_m(U,W)$ for all $q\ge \deg(a)m$.
\end{lemma}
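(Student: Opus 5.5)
The plan is to induct on $\deg(a)$, using the spanning set of \zcref{prop:spanningC2} (with $U$ in place of $X$, since $V = U\oplus C_2(V)$) together with \zcref{lem:ANlem1}. The main idea is that a state of degree $\deg(a)$ written in this spanning set has all its mode indices bounded by $\deg(a)$. So the $(-q)$-mode of each spanning term can be rewritten through the associator formula, and in every term some factor is hit by a mode at least as negative as $-m$.

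First write $a$ as a $\kk$-linear combination of states
\[u=\alpha^1_{(-n_1)}\cdots\alpha^r_{(-n_r)}\vac,\qquad \alpha^i\in U,\quad n_1>\cdots>n_r>0,\]
all homogeneous of degree $\deg(a)$. We may assume $\deg(\alpha^i)\ge 1$, since $V$ is CFT-type and the vacuum contributes trivially. Since $\deg(u)=\sum_i(\deg(\alpha^i)+n_i-1)$, each $n_i\le\deg(a)$ and $r\le\deg(a)$. It therefore suffices to prove $u_{(-q)}w\in C_m(U,W)$ for such $u$ whenever $q\ge \deg(u)m$.

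For $r=1$ we have $u=\alpha_{(-n)}\vac$, and the formula in the proof of \zcref{lem:ANlem1} gives
\[u_{(-q)}w=\pm\tbinom{-n}{q-1}\,\alpha_{(-n-q+1)}w .\]
Here $n+q-1\ge q\ge m$, so this lies in $C_m(U,W)$. This is exactly the point where we avoid writing $\alpha_{(-n)}\vac=T^{(n-1)}\alpha$ and using $(T\alpha)_{(k)}=-k\alpha_{(k-1)}$ with division. The divided-power binomial keeps everything integral, and I expect this is the ``small modification'' needed in positive characteristic.

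For $r\ge 2$, set $u=\alpha^1_{(-n_1)}\beta$ and apply the associator formula \zcref[noname]{eq:associator}:
\[u_{(-q)}w=\sum_{i\ge0}(-1)^i\tbinom{-n_1}{i}\Big(\alpha^1_{(-n_1-i)}\beta_{(-q+i)}w-(-1)^{n_1}\beta_{(-n_1-q-i)}\alpha^1_{(i)}w\Big).\]
The second group consists of $\beta$-modes of index at most $-q$, with $\deg\beta<\deg u$. Since $q\ge\deg(u)m\ge\deg(\beta)m$, induction on degree handles these terms (the induction is applied with $w$ replaced by $\alpha^1_{(i)}w$).

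For the first group, if $n_1+i\ge m$ the term already lies in $C_m(U,W)$. Otherwise $i<m$, so
\[q-i> \deg(u)m-m\ge \deg(\beta)m,\]
using $\deg(u)\ge\deg(\beta)+\deg(\alpha^1)\ge\deg(\beta)+1$. Hence $\beta_{(-q+i)}w\in C_m(U,W)$ by induction. The remaining task is to see that $\alpha^1_{(-n_1-i)}$ applied to an element of $C_m(U,W)$ stays in $C_m(U,W)$. Since $C_m(U,W)$ is, I expect, intended as a span, I would commute $\alpha^1_{(-k)}$ past $\gamma_{(-k')}$ with $k'\ge m$ using the commutator formula. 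The commutator terms are $(\alpha^1_{(j)}\gamma)_{(-k-k'-j)}$, whose indices have absolute value at least $m$ but whose states lie in $V$ rather than $U$. These require a secondary induction on degree: they have degree less than $\deg\alpha^1+\deg\gamma$, and one uses the fact that a mode $(-p)$ with $p$ large relative to the degree lands in $C_m(U,W)$.

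This bookkeeping is the main obstacle. I would organize it as a double induction on $(\deg(a),\ \text{the filtration level of } w)$, or more simply prove the slightly stronger statement that $C_m(U,W)$ is stable under all modes $v_{(k)}$ with $k<0$ modulo terms handled by the degree induction. The bound $q\ge\deg(a)m$ is designed precisely to absorb one loss of $m$ per unit of degree.
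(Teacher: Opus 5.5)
Your reduction to the spanning states $u=\alpha^1_{(-n_1)}\cdots\alpha^r_{(-n_r)}\vac$ is fine, and so are the case $r=1$, the second group of associator terms, and the first-group terms with $n_1+i\ge m$. The gap is in the remaining terms $\alpha^1_{(-n_1-i)}\beta_{(-q+i)}w$ with $n_1+i<m$. You first use induction to get $\beta_{(-q+i)}w\in C_m(U,W)$, and then you need $\alpha^1_{(-n_1-i)}C_m(U,W)\subseteq C_m(U,W)$. That inclusion is not available, and your sketch does not supply it.

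Write $\beta_{(-q+i)}w$ as a sum of terms $\gamma_{(-k')}w''$ with $\gamma\in U$ and $k'\ge m$, and commute. The commutator terms are $(\alpha^1_{(j)}\gamma)_{(-n_1-i-k'-j)}w''$. Their mode index is only known to be at least $m+1+j$, whereas the lemma would need it to be at least $(\deg\alpha^1+\deg\gamma-1-j)m$. This already fails for $j=0$, $k'=m$ once $\deg\alpha^1+\deg\gamma\ge 3$. Moreover, $\deg\gamma$ is bounded only by the top degree of $U$, not by $\deg(u)$. So no induction on degree can absorb these terms, and the proposed double induction has nothing to descend on.

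The missing idea is to commute \emph{before} invoking the induction hypothesis, so that the deep mode ends up outermost, acting on an arbitrary vector. By the commutator formula,
\[\alpha^1_{(-n_1-i)}\beta_{(-q+i)}w=\beta_{(-q+i)}\bigl(\alpha^1_{(-n_1-i)}w\bigr)+\sum_{j\ge 0}\tbinom{-n_1-i}{j}(\alpha^1_{(j)}\beta)_{(-n_1-q-j)}w.\]
The first term lies in $C_m(U,W)$ by induction on $\beta$. This uses your own estimate $q-i>\deg(\beta)m$ and the fact that the hypothesis holds for every vector of $W$. For the commutator terms, $\deg(\alpha^1_{(j)}\beta)=\deg(u)-n_1-j<\deg(u)$ while $n_1+q+j\ge q\ge\deg(u)m$, so induction applies again. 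A degree-$0$ state is a multiple of $\vac$, and its $(-n_1-q-j)$-mode vanishes.

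This is exactly how the paper handles the analogous term $b_{(-1-i)}c_{(-q+i)}w$ for $i<\deg(b)m$. With this repair your argument is a valid alternative to the paper's. The paper writes an element of $C_2(V)$ as $a=(T^{(k)}b')_{(-1)}c$, so its outer state $b$ is not in $U$, and it must treat $i\ge\deg(b)m$ separately by induction on $b$. Your use of the Gaberdiel--Neitzke spanning set instead guarantees $\alpha^1\in U$ and $1\le\deg\beta<\deg u$, at the cost of invoking that spanning result. It also sidesteps the degenerate case $c\in\kk\vac$ (i.e.\ $a=T^{(k)}b'$). There $\deg b=\deg a$, so the paper's inductions on $b$ and $c$ do not literally apply, although that case is easily handled directly.
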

\begin{proof}
    The proof is the same as the original proof over $\C$.
    
    If $\deg(a) = 1$, then $a\in U$ because $V_{1}\cap C_2(V) = \{0\}$; therefore $a_{(-q)}w \in C_m(U,W)$ for any $q\ge m = \deg(a)m$.

    Now suppose that the desired statement holds for all $1 \le \deg(a) < r_0$ for some $r_0\ge 2$, and suppose that $\deg(a) = r_0$. If $a\in U$ then $a_{(-q)}w\in C_m(U,W)$ for all $q\ge \deg(a)m$ (since $\deg(a)m > m$). Now assume that $a\neq 0$ is of the form $a = b'_{(-1-k)}c \in C_2(V)$ for $b',c\in V$ and $k\ge 1$. Define $b = T^{(k)}b'$, so $a = b_{(-1)}c$. We have that $\deg(a) = \deg(b) + \deg(c)$. Since $V$ is $\N$-graded, we have that $\deg(a)\ge \deg(b)$. Note that $\deg(b) = \deg(b') + k\ge 1$, and since $a$ is nonzero we have $b\neq 0$, implying $\deg(b') > 0$ (otherwise $b' \in \kk \vac$ by the CFT-type assumption). Therefore $1\le \deg(b)\le \deg(a)$. This implies $\deg(c) < \deg(a)$.

    By the associator formula, we have for any $q\in \Z$ that
    \[a_{(-q)}w = \sum_{i\ge 0}\left(b_{(-1-i)}c_{(-q+i)} + c_{(-1-q-i)}b_{(i)}\right)w.\]
    Since $q\ge \deg(a)m > \deg(c)m$, by the induction hypothesis we have that
    \[c_{(-1-q-i)}b_{(i)}C_m(U,W)\] 
    for all $i\ge 0$ and $q\ge \deg(a)m$ using induction on the element $c$. Now we must show that the other term is in $C_m(U,W)$. If $i \ge \deg(b)m$, then by the induction hypothesis again we have
    \[b_{(-1-i)}c_{(-q+i)}w \in C_m(U,W)\]
    for all $q\in \Z$. If instead we have $i < \deg(b)m$, then the commutator formula says
    \[b_{(-1-i)}c_{(-q+i)}w = c_{(-q+i)}b_{(-1-i)}w + \sum_{j\ge 0}\binom{-1-i}{j}(b_{(j)}c)_{(-1-q-j)}w.\]
    Since $q-i\ge \deg(a)m-\deg(b)m + 1 = \deg(c)m + 1 > \deg(c)m$, by the induction hypothesis we have that
    \[c_{(-q+i)}b_{(-1-i)}w\in C_m(U,W).\]
    Since $\deg(a) > \deg(b_{(j)}c)$ for all $j\ge 0$, we have by the induction hypothesis that
    \[(b_{(j)}c)_{(-1-q-j)}w\in C_m(U,W)\]
    because $1+q+j \ge \deg(a)m > \deg(b_{(j)}c)m$. This shows the result.
\end{proof}
\begin{proposition}[\cite[Proposition 4.5]{AN2}]\label{AN Prop 4.5}
    Let $V$ be a $C_2$-cofinite, CFT-type vertex $\kk$-algebra. Let $U\subset V$ be a graded subspace such that $V = U\oplus C_2(V)$, and let $m>0$. Then there exists a positive integer $k$ such that $C_k(W)\subset C_m(U,W)$.
\end{proposition}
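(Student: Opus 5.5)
The plan is to combine \zcref{lem:ANlem1} and \zcref{lem:ANlem2}, using $C_2$-cofiniteness only to bound degrees. Here $C_k(W)$ denotes the span of the elements $a_{(-p)}w$ with $a\in V$, $w\in W$ and $p\ge k$.

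First I would apply \zcref{lem:ANlem1} with $q=k$, where $k$ is still to be chosen. This gives
\[C_k(W)\subset C_{m,k}(W)+C_m(U,W).\]
So it suffices to find $k$ with $C_{m,k}(W)\subset C_m(U,W)$. A generator of $C_{m,k}(W)$ has the form $a_{(-p)}w$ with
\[a=\alpha^1_{(-n_1)}\cdots\alpha^r_{(-n_r)}\vac,\qquad m\ge n_1>\cdots>n_r>0,\qquad \alpha^i\in U,\qquad p\ge k.\]
Since $U$ is a graded subspace, I may decompose each $\alpha^i$ into homogeneous components. By multilinearity, I may then assume every $\alpha^i$ is homogeneous, so that $a$ is homogeneous.

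Next I would bound $\deg(a)$ uniformly. Because $V$ is $C_2$-cofinite, there is $N$ with $V_{\ge N}\subset C_2(V)$. Since $U$ is graded and $U\cap C_2(V)=0$, this forces $U_s=0$ for $s\ge N$, so every nonzero homogeneous $\alpha^i$ has $\deg(\alpha^i)<N$. The indices $n_i$ are distinct and lie in $\{1,\dots,m\}$, hence $r\le m$. Therefore
\[\deg(a)=\sum_{i=1}^r\bigl(\deg(\alpha^i)+n_i-1\bigr)\le m(N+m)=:D,\]
and $D$ depends only on $m$ and $V$, not on $a$, $p$ or $w$.

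Finally I would set $k=\max\{2,Dm\}$ and split into two cases.
\begin{itemize}
\item If $\deg(a)\ge 1$, then $a\in V_+$ and $p\ge k\ge Dm\ge\deg(a)m$, so \zcref{lem:ANlem2} gives $a_{(-p)}w\in C_m(U,W)$.
\item If $\deg(a)=0$, then by the CFT-type assumption $a\in\kk\vac$. In that case $a_{(-p)}w=0$ because $\vac_{(-p)}=\delta_{p,1}\id$ and $p\ge k\ge 2$.
\end{itemize}
Together these give $C_{m,k}(W)\subset C_m(U,W)$, and hence $C_k(W)\subset C_m(U,W)$.

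The main point needing care is the uniform degree bound. We work over a ring rather than a field, so the finiteness must come from the grading, via $V_{\ge N}\subset C_2(V)$ and $U\cap C_2(V)=0$, and not from any dimension count. Once that bound holds, the argument reduces to applying the two lemmas above.
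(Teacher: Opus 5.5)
Your proposal is correct and follows essentially the same route as the paper: reduce via \zcref{lem:ANlem1} to showing $C_{m,k}(W)\subset C_m(U,W)$, bound $\deg(a)$ uniformly using the bounded degrees in $U$ and $n_i\le m$, and take $k$ to be that bound times $m$ so that \zcref{lem:ANlem2} applies. Your small deviations are all sound and, if anything, slightly more careful than the paper. You bound the degrees of $U$ via $V_{\ge N}\subset C_2(V)$ and $U\cap C_2(V)=0$ rather than via finite-dimensionality, and you use the cruder bound $r\le m$. You also treat the $\deg(a)=0$ case separately: there \zcref{lem:ANlem2}, stated only for $a\in V_+$, does not apply, and the paper passes over it.
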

\begin{proof}
    The proof is the same as the original proof over $\C$.

    By \zcref{lem:ANlem1}, we have that $C_k(W) \subset C_{m,k}(W) + C_m(U,W)$ for any $k\ge 2$. It suffices to show that there exists $k$ such that $C_{m,k}(W)\subset C_m(U,W)$. Since $U$ is a finite-dimensional graded subspace, let $s_U$ be the maximum degree in $U$. For any $\alpha^i\in U$ and $m\ge n_1 > \cdots > n_r > 0$, we have
    \begin{align*}
        \deg(\alpha^1_{(-n_1)}\cdots \alpha^r_{(-n_r)}\vac) &= \sum_{i=1}^r(\deg(\alpha^i) - 1 + n_i)\\
        &\le r(s_U - 1) + \sum_{i=1}^r (m-i+1) = -\frac{1}{2}r^2 + r\left(s_U + m - \frac{1}{2}\right).
    \end{align*}
    This is bounded above by some positive integer $k_0 \ge \tfrac{1}{2}(s_U + m - \tfrac{1}{2})^2$; here $k_0$ only depends on $U$ and $m$. Setting $k = k_0m$ we get $C_{m,k}(W)\subset C_m(U,W)$ by \zcref{lem:ANlem2} since any element in $C_{m,k}(W)$ is a linear combination of elements of the form $a_{(-p)}w$ for $\deg(a)\le k_0$, $w\in W$, and $p\ge k$ (note that $k\ge \deg(a)m$).
\end{proof}
These results will be used in \zcref{prop:fin-dim-coinvars}.

\section{Change of variables for vertex algebras}\label{sec:Aut O}
In this section, we describe describe vertex algebras equipped with an action of the group $\Aut \mathcal{O}$ of continuous automorphisms of $\kk[\![t]\!]$. Geometrically, this allows us to think of the data of the vertex operator $Y(a,z)$ as being independent of the choice of $z$, up to some equivariance relation. This group action is necessary to define the vertex algebra sheaf.

\subsection{Integrating group actions}
Let $\kk$ be a commutative unital ring (in practice, an algebraically closed field), and let $\mathcal{O} = \kk[\![t]\!]$. Let $R$ be a $\kk$-algebra.
We define the group functor $\uAut\mathcal{O}$ as follows:
\[
\uAut\mathcal{O}(R)\eqdef\left\{t\mapsto\rho(t)=a_0+a_1t+a_2t^2+\cdots \;\middle|\; \begin{array}{l}  a_i \in R, \, a_1 \mbox{ a unit},\\ a_0 \mbox{ nilpotent}
  \end{array}\right\}.
\]
Similarly, we define the subfunctors $\Aut\mathcal{O}$ and $\Aut_+\mathcal{O}$:
\begin{align*}
    \Aut\mathcal{O}(R)&\eqdef\left\{t\mapsto\rho(t)=a_1t+a_2t^2+\cdots \;\middle|\; \begin{array}{l}  a_i \in R, \\  a_1 \mbox{ a unit}
  \end{array}\right\},\\
  \Aut_+\mathcal{O}(R)&\eqdef\left\{t\mapsto\rho(t)=t+a_2t^2+\cdots \;\middle|\; a_i \in R \right\}.
\end{align*}
Finally, we define the group functor $\Aut\cal K$:
\[
\Aut\cal K(R) \eqdef \left\{t\mapsto\rho(t)=\sum_{i\ge i_0} a_i t^i \;\middle|\; \begin{array}{l}  a_i \in R, \, a_1 \mbox{ a unit}, \\ a_i \mbox{ nilpotent for $i\le 0$}
  \end{array}\right\}.
\]
The groups $\Aut\mathcal{O}$ and $\Aut_+\mathcal{O}$ are related by a semidirect product:
\[\Aut\mathcal{O} \cong \Aut_+\mathcal{O}\rtimes \kk^\times.\]
Every element $x\in \kk^\times$ may be interpreted as a continuous automorphism of $\mathcal{O}$ via $x\cdot f(t) = x^{t\partial_t}f(t) = f(xt)$.
Explicitly, every continuous automorphism $\sigma\in \Aut\mathcal{O}$ may be expressed as a composition $\sigma = \rho\circ x^{t\partial_t}$, where $x\in \kk^\times$ is the $t$ coefficient of $\sigma$ and $\rho(t) = \sigma(x^{-1}t)$.

The goal of this section is to provide a natural way of defining group actions of $\Aut \mathcal{O}$ on a vertex algebra $V$ that are compatible with its vertex algebra structure.
\subsection{In characteristic 0}
In \cite{FBZ}, the authors give the following definition as a means of obtaining an $\Aut\mathcal{O}$-action on $V$:
\begin{definition}\label{FBZquasi}
    Let $V$ be a $\Z$-graded vertex algebra over a $\Q$-algebra $\kk$. We say $V$ is \textit{quasi-conformal} if it has a Lie algebra representation
    \[\pi\colon \Der\mathcal{O}\to \End_{\kk}(V),\]
    where we denote $L_{m} = \pi(-t^{m+1}\partial_t)$, such that the following hold:
    \begin{enumerate}[label = (\roman*)]
        \item We have $L_{-1}=T$.
        \item For all $a\in V$ homogeneous we have $L_0a = \deg(a)a$.
        \item For all $m\ge 0$, the operator $L_m$ is homogeneous of degree $-m$.
        \item For all $m\ge 0$, we have the following commutator formula:
        \begin{equation}
            [L_{m-1},Y(a,z)] = \sum_{k\ge 0}\binom{m}{k}z^{m-k}Y(L_{k-1}a,z).
        \end{equation}
    \end{enumerate}
\end{definition}
The authors only define the action over $\C$, but the same approach can be done over any $\Q$-algebra $\kk$.
One can show that axiom (iii) is redundant given the others. Many common examples of vertex algebras are quasi-conformal. For example, every vertex operator algebra is quasi-conformal via the action of the Virasoro algebra.
\begin{lemma}
    Let $R$ be a $\Q$-algebra. Then there is a bijection
    \begin{align*}
        \exp\colon \Der_+\mathcal{O} &\to \Aut_+\mathcal{O},\\
        v(t)\partial_{t} &\mapsto \exp(v(t)\partial_{t}) = \sum_{k=0}^\infty \frac{1}{k!}(v(t)\partial_{t})^k,
    \end{align*}
    whose inverse is given by the logarithm map
    \begin{align*}
        \log\colon \Aut_+\mathcal{O} &\to \Der_+\mathcal{O},\\
        \rho&\mapsto \log\rho = \sum_{k=1}^\infty \frac{(-1)^{k-1}}{k}(\rho-\id)^k.
    \end{align*}
\end{lemma}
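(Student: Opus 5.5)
Here $\Der_+\mathcal{O} = t^2R[\![t]\!]\partial_t$ (the vector fields $v(t)\partial_t$ with $v\in t^2R[\![t]\!]$), and $\Aut_+\mathcal{O}(R)$ is identified with continuous $R$-algebra automorphisms $\rho$ of $R[\![t]\!]$ with $\rho(t)\in t+t^2R[\![t]\!]$. The plan has three steps: show both series converge $t$-adically, show each map lands in the right set, and show the two maps are mutually inverse.

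\textbf{Convergence.} For $v\in t^2R[\![t]\!]$, the operator $D=v(t)\partial_t$ raises $t$-adic order by at least one: $D(t^n R[\![t]\!])\subseteq t^{n+1}R[\![t]\!]$. Hence $D^k f\in t^{k}R[\![t]\!]$, and $\exp(D)$ is a well-defined continuous $R$-linear endomorphism; we need $R$ to be a $\Q$-algebra for the $1/k!$. Likewise, if $\rho(t)\in t+t^2R[\![t]\!]$, then $N=\rho-\id$ satisfies $N(t^n)=\rho(t)^n-t^n\in t^{n+1}R[\![t]\!]$. So $N$ is also topologically nilpotent, raising order by at least one, and $\log\rho$ converges.

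\textbf{Landing in the right sets.} Since $D$ is a derivation, the Leibniz rule gives $D^k(fg)=\sum_i\binom{k}{i}D^i f\,D^{k-i}g$, and dividing by $k!$ shows $\exp(D)$ is multiplicative. It is invertible with inverse $\exp(-D)$, and $\exp(D)(t)=t+v+\tfrac12 v v'+\cdots\in t+t^2R[\![t]\!]$, so $\exp(D)\in\Aut_+\mathcal{O}$.

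For the converse, $\log\rho$ must be a derivation. The standard argument works in the tensor product $R[\![t]\!]\ctensor R[\![t]\!]$ with multiplication map $m$. Multiplicativity says $\rho\circ m=m\circ(\rho\otimes\rho)$. Write $\rho\otimes\rho=(\rho\otimes 1)(1\otimes\rho)$, where the two factors commute and are both unipotent for the $t$-adic filtration. Then
\[\log(\rho\otimes\rho)=\log\rho\otimes 1+1\otimes\log\rho,\]
by the identity $\log(xy)=\log x+\log y$ of formal power series for commuting unipotent $x,y$, which holds over $\Q$. Applying this gives $\log\rho\circ m=m\circ(\log\rho\otimes1+1\otimes\log\rho)$, which is the Leibniz rule. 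A continuous derivation is determined by its value on $t$, so $\log\rho=w(t)\partial_t$ with $w=(\log\rho)(t)$. Since $N^k(t)\in t^{k+1}R[\![t]\!]$, we get $w\in t^2R[\![t]\!]$.

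\textbf{Mutual inverses.} Both $\exp$ and $\log$ are given by the formal power series identities $\exp(\log(1+X))=1+X$ and $\log(\exp Y)=Y$ in $\Q[\![X]\!]$ and $\Q[\![Y]\!]$. These identities can be specialized to any topologically nilpotent operator on the complete filtered module $R[\![t]\!]$. Each coefficient of $t^n$ in the output involves only finitely many terms, so the rearrangements of series are justified. Therefore $\log\circ\exp=\id$ and $\exp\circ\log=\id$ as operators, and hence as maps between $\Der_+\mathcal{O}$ and $\Aut_+\mathcal{O}$.

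The main obstacle is the derivation property of $\log\rho$, specifically justifying $\log(\rho\otimes\rho)=\log\rho\otimes1+1\otimes\log\rho$ on the completed tensor product. I would handle it by checking that $\rho\otimes1$ and $1\otimes\rho$ commute and are both topologically unipotent for the total-degree filtration, so that the formal identity applies. An alternative is to avoid tensor products: show $\exp(s\log\rho)$ is multiplicative for an indeterminate $s$, using that it is polynomial in $s$ on each truncation and equals $\rho^s$ for $s\in\N$, then differentiate at $s=0$.
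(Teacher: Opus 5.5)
Your argument is correct and complete. The paper gives no proof of this lemma; it is quoted as the standard characteristic-zero fact behind the Frenkel--Ben-Zvi passage from $\Der\mathcal{O}$-actions to $\Aut\mathcal{O}$-actions. So there is no competing route to compare with, and your write-up supplies what the paper omits.

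Two small points are worth stating explicitly if you write it up:
\begin{enumerate}
\item The paper defines $\Aut_+\mathcal{O}(R)$ as the set of series $\rho(t)\in t+t^2R[\![t]\!]$. The formula $\log\rho=\sum_k\frac{(-1)^{k-1}}{k}(\rho-\id)^k$ only makes sense after identifying $\rho$ with the substitution operator $f\mapsto f(\rho(t))$. This identification is a bijection onto continuous $R$-algebra automorphisms of that form, since a continuous endomorphism is determined by the image of $t$. It reverses the order of composition, which is harmless for a statement about a bijection of sets.
\item In the derivation step, you pass from $\rho\circ m=m\circ(\rho\otimes\rho)$ to $\log\rho\circ m=m\circ\log(\rho\otimes\rho)$. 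This uses $(\rho-\id)^k\circ m=m\circ(\rho\otimes\rho-\id)^k$ together with continuity of $m$ for the total-degree filtration, so say so.
\end{enumerate}

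What your proof buys over the bare citation is that it shows exactly where characteristic zero is used. It enters through the denominators $1/k!$ and $1/k$ and through the formal identities $\log(xy)=\log x+\log y$ and $\exp\circ\log=\id$ over $\Q$. This is precisely why the paper drops this correspondence in positive characteristic and axiomatizes the $\Aut\mathcal{O}$-action directly instead.
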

\begin{lemma}[\cite[Lemma 6.3.2]{FBZ}]
    Let $V$ be a quasi-conformal vertex algebra over a $\Q$-algebra $\kk$. We may define a representation $\Pi\colon \Aut\mathcal{O}\to \GL_{\kk}(V)$ as follows: 
    \begin{itemize}
        \item Given $x\in \kk^\times$ and a homogeneous element $a\in V$, we define
        \[\Pi(x^{t\partial_t})a = x^{-L_0}a = x^{-\deg(a)}a.\]
        \item Given $\rho\in \Aut_+\mathcal{O}$, let $v(t)\partial_t = \sum_{m\ge 1}v_mt^{m+1}\partial_t\in \Der_+\mathcal{O}$ be the unique vector field such that $\rho = \exp(v(t)\partial_t)$. We define
        \[\Pi(\rho) = \sum_{k=0}^\infty\frac{1}{k!} \pi(v(t)\partial_t)^k,\]
        where we set $\pi(v(t)\partial_t) = -\sum_{m\ge 1}v_mL_m$.
    \end{itemize}
    Moreover, for all $\rho\in \Aut\mathcal{O}$ we have \textup{Huang's identity}:
    \begin{equation}\label{eq:Huang1}
        Y(a,z) = \Pi(\rho)Y(\Pi(\rho_z)^{-1}a,\rho(z))\Pi(\rho)^{-1},
    \end{equation}
    where $\Pi(\rho_z) = e^{-zT}\Pi(\rho)e^{\rho(z)T}$.
\end{lemma}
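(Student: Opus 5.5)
The plan is to establish the three claims in order: that $\Pi$ is well defined on each factor, that it is a representation of the semidirect product $\Aut\mathcal{O}\cong\Aut_+\mathcal{O}\rtimes\kk^\times$, and finally Huang's identity.

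\emph{Well-definedness.} For $\rho\in\Aut_+\mathcal{O}$ with $\rho=\exp(v(t)\partial_t)$, each $L_m$ with $m\ge1$ is homogeneous of degree $-m$, by axiom (iii) of \zcref{FBZquasi}. Since $V$ is lower-truncated, $\pi(v\partial_t)$ is locally nilpotent on each $V_s$. Hence both $\sum_m v_mL_m$ and the exponential are finite sums on any given vector. Next, $\Pi$ restricted to $\Aut_+\mathcal{O}$ is a homomorphism. I would argue this via Baker--Campbell--Hausdorff: $\Der_+\mathcal{O}$ is pro-nilpotent, and $\exp$ intertwines the BCH product with composition. Because $\pi$ is a Lie algebra map, the BCH series applied in $\End(V)$ converges degreewise, and it matches the product $\Pi(\rho_1)\Pi(\rho_2)$. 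One must track the order convention, since composition of substitutions is anti-multiplicative and this is exactly why $\pi(v\partial_t)=-\sum v_mL_m$ carries a sign. On the $\kk^\times$ factor, $x^{-L_0}$ is clearly a homomorphism. Compatibility with the semidirect product reduces to the identity $x^{-L_0}L_mx^{L_0}=x^{m}L_m$, which is the infinitesimal form of conjugating $t^{m+1}\partial_t$ by $t\mapsto xt$. This identity follows from homogeneity of $L_m$.

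\emph{Huang's identity.} First I would rewrite it as
\[\Pi(\rho)^{-1}Y(a,z)\Pi(\rho)=Y(\Pi(\rho_z)^{-1}a,\rho(z)).\]
Both sides are multiplicative in $\rho$, given the cocycle property $(\rho_1\circ\rho_2)_z=(\rho_1)_{\rho_2(z)}\circ(\rho_2)_z$ for the shifted automorphisms. This cocycle property should follow from translation covariance and the definition of $\rho_z$. It therefore suffices to check the identity for generators $x^{t\partial_t}$ and for elements of $\Aut_+\mathcal{O}$.

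For $x^{t\partial_t}$ the check is immediate from $L_0$-grading: $x^{L_0}a_{(k)}x^{-L_0}=x^{\deg a-1-k}a_{(k)}$. For $\rho=\exp(v\partial_t)$, I would embed the identity in the one-parameter family $\rho^{\epsilon}=\exp(\epsilon v\partial_t)$ over $\kk[\epsilon]$, or formally over $\kk[\![\epsilon]\!]$. Both sides then satisfy the same first-order ODE in $\epsilon$ with the same initial value at $\epsilon=0$, so they agree. The derivative of the left side is the commutator $[\pi(v\partial_t),Y(\cdot,z)]$. By axiom (iv) of \zcref{FBZquasi}, this equals $Y$ applied to the pullback vector field acting on $a$, expanded via $\binom{m}{k}z^{m-k}$, plus a $v(z)\partial_z$ term. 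The derivative of the right side produces exactly these two pieces: the $\partial_z$ term from $\rho(z)$ and the derivation term from $\Pi(\rho_z)$.

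The main obstacle is this last differential computation. One must show that $\rho_z$, the automorphism $t\mapsto\rho(z+t)-\rho(z)$, lies in $\Aut\mathcal{O}(\kk(\!(z)\!))$. It lies in $\Aut\mathcal{O}$ rather than $\uAut\mathcal{O}$ because its constant term vanishes. One must also show that its infinitesimal generator, expanded in $z$, reproduces the Taylor-shifted vector fields $\sum_k\binom{m}{k}z^{m-k}L_{k-1}$ of axiom (iv). The clean way is to observe that $e^{-zT}\Pi(\rho)e^{\rho(z)T}$ corresponds to conjugating $\rho$ by translations in $\uAut\mathcal{O}$, using $T=L_{-1}$ from axiom (i). The uniqueness of formal ODE solutions over a $\Q$-algebra then closes the argument.
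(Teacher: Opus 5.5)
Your proposal is correct and is essentially the Frenkel--Ben-Zvi argument that the paper invokes by citation rather than reproducing. Like that argument, you exponentiate the locally nilpotent $\Der_+\mathcal{O}$-action, let $\kk^\times$ act through the grading, and obtain Huang's identity from axiom (iv) (exactly its infinitesimal form) via multiplicativity and uniqueness of formal solutions in $\epsilon$.

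Two small refinements follow.

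\emph{The sign.} The minus sign in $\pi(v\partial_t)=-\sum_m v_mL_m$ is just the convention $L_m=\pi(-t^{m+1}\partial_t)$; it does not resolve the order issue. What the order convention must match is the product $(\rho_1\rho_2)(t)=\rho_2(\rho_1(t))$ of ring automorphisms. For that product, $\exp(v\partial_t)\mapsto\exp(\pi(v\partial_t))$ is a homomorphism and your cocycle argument goes through.

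\emph{The main obstacle.} Since the paper \emph{defines} $\Pi(\rho_z)=e^{-zT}\Pi(\rho)e^{\rho(z)T}$, your cocycle step is just a telescoping of the factors $e^{\pm\rho_1(z)T}$. Your ``main obstacle'' then reduces to the identity $e^{-zT}L_ne^{zT}=\sum_j\binom{n+1}{j}z^jL_{n-j}$, which follows from $T=L_{-1}$. This identity gives $\frac{d}{d\epsilon}\big|_{\epsilon=0}\Pi(\rho^\epsilon_z)=\pi\big((v(z+t)-v(z))\partial_t\big)$.
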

We refer to \cite[Remark 6.6.5]{FBZ} for the derivation of the identity for $\Pi(\rho_z)$. We take this as a definition for our own convenience.

Plugging in the definition of $\Pi(\rho_z)$ into Huang's identity, we obtain the following:
\begin{align*}
    \Pi(\rho)Y(\Pi(\rho_z)^{-1}a,\rho(z))\Pi(\rho)^{-1} &= \Pi(\rho)Y(e^{(\rho(z)-z)T}e^{-\rho(z)T}\Pi(\rho)^{-1}e^{zT}a,z)\Pi(\rho)^{-1}\\
    &= \Pi(\rho)Y(e^{-zT}\Pi(\rho)^{-1}e^{zT}a,z)\Pi(\rho)^{-1}.
\end{align*}
Rearranging, we obtain the following equivalent form of Huang's identity:
\begin{equation}\label{eq:Huang2}
\Pi(\rho)Y(a,z)\Pi(\rho)^{-1} = Y(e^{-zT}\Pi(\rho)e^{zT}a,z).
\end{equation}
There is a notion of a primary element with respect to the action of $\Der\mathcal{O}$. Specifically, we say that an element $a\in V$ is a \textit{primary of degree $\Delta$} if $L_0a = \Delta a$ and $L_ia = 0$ for $i>0$. It is shown in \cite[Proposition 6.4.4]{FBZ} that for such $a\in V$ we have
\[Y(a,z) = \Pi(\rho)Y(a,\rho(z))\Pi(\rho)^{-1}(\rho'(z))^\Delta.\]
Using Huang's identity, this amounts to saying that $\Pi(\rho_z)a = (\rho'(z))^{-\Delta}a$. By setting $z=0$, we obtain $\Pi(\rho)a = (\rho'(0))^{-\Delta}a$, implying that all of the degree $<0$ terms in $\Pi(\rho)$ act on $a$ by 0 by varying the coefficients of $\rho$. That is, $a\in V$ is a primary of degree $\Delta$ if and only if $\Pi(\rho_z)a = (\rho'(z))^{-\Delta}a$.

\subsection{General actions of $\Aut\mathcal{O}$}
If we are working in characteristic $p>0$, then the exponential map $\exp\colon \Der_+\mathcal{O} \to \Aut_+\mathcal{O}$ as described above may not make sense in positive characteristic since we have to divide by $k$ for all $k\ge 1$.

For the goals of this paper, it suffices to just give the correct notion of an $\Aut\mathcal{O}$-action on a vertex $\kk$-algebra $V$.
\begin{definition}\label{Aut O action def}
    Let $V = (V,\vac,Y)$ be a $\Z$-graded vertex algebra over a ring $\kk$. An \textit{action of $\Aut\mathcal{O}$ on $V$} is a group representation $\Pi\colon \Aut\mathcal{O}\to \GL_{\kk}(V)$ such that the following hold:
    \begin{enumerate}[label = (\roman*)]
        \item For all $\rho\in \Aut\mathcal{O}$ and $n\in \N$, the $z^n$ term in $\Pi(\rho_z) = e^{-zT}\Pi(\rho)e^{\rho(z)T}$, denoted $\Pi(\rho_z)[z^n]$, preserves the left filtration on $V$ induced by the $\Z$-grading. That is, $\Pi(\rho_z)[z^n]\cdot\mrm G_pV\subseteq \mrm G_pV$ for all $n\in \N$ and $p\in \Z$.
        \item For all $\rho\in \Aut\mathcal{O}$ and homogeneous $a\in V_{\Delta}$, we have
        \[\Pi(\rho_z)a - (\rho'(z))^{-\Delta}a \in \mrm G_{\Delta-1}V[\![z]\!].\]
        That is, in the associated graded $\gr^{\mrm G}V$ we have $\Pi(\rho_z)a = (\rho'(z))^{-\Delta}a$.
        \item For all $x\in \kk^\times\subset \Aut\mathcal{O}$, we have $\Pi(x^{t\partial_t})a = x^{-\deg(a)}a$ for every homogeneous $a\in V$.
        \item For all $\rho\in \Aut\mathcal{O}$, we have \textit{Huang's identity}:
        \[\Pi(\rho)Y(\Pi(\rho_z)^{-1}a,\rho(z))\Pi(\rho)^{-1} = Y(a,z).\]
        Equivalently, we have
        \[\Pi(\rho)Y(a,z)\Pi(\rho)^{-1} = Y(e^{-zT}\Pi(\rho)e^{zT}a,z).\]
    \end{enumerate}
\end{definition}
These conditions are all satisfied for $\Aut\mathcal{O}$-actions arising from quasi-conformal vertex algebras over a $\Q$-algebra.
\begin{remark}
    Note that for any $\Aut\mathcal{O}$-action on $V$, we have $\Pi(\rho)\vac = \vac$ for all $\rho\in \Aut\mathcal{O}$ using Huang's identity. Due to the axiom $a = a_{(-1)}\vac$ for vertex algebras, describing how $\rho$ acts on $V$ amounts to giving a formula for $\Pi(\rho)Y(a,z)\Pi(\rho)^{-1}$ for all $a\in V$. We will describe most $\Aut\mathcal{O}$-actions in this way.
\end{remark}
\begin{remark}\label{rmk:poschardescent}
    A general principle in algebraic geometry is that descent data for quasi-coherent sheaves is determined by the order 1 descent data (in this case, a flat connection) \textit{only in characteristic 0}. If one wishes to perform descent over more general bases, one usually has to analyze descent data of all orders.\footnote{Descent data of orders $p^n$ for all $n\ge 0$ are usually sufficient in characteristic $p>0$.}
    Our main reason for changing the underlying hypotheses in this way is that the structure of a VOA in positive characteristic is not enough to perform the desired descent to $\overline{\mathcal{M}}_{g,n}$.
\end{remark}

Given a vertex algebra with an $\Aut\mathcal{O}$-action, there is a natural version of the involution $\theta$ from \zcref{sec:involution}. Indeed, we simply define $\theta(J_n(a)) = J_{-n}(\gamma(a))$, and this induces an involution of the universal enveloping algebra $\UV$. We conjecture that an $\Aut\mathcal{O}$-action on $V$ naturally induces a M\"obius vertex algebra structure on $V$ such that $L_1^{(i)}(a) = (-1)^{\deg(a)}\Pi(\gamma)_{-i}(a)$, where $\Pi(\gamma)_{-i} = \pr_{-i}\circ\Pi(\gamma)$ is the degree $-i$ component of the filtration-preserving endomorphism $\Pi(\gamma)$.

There is also a natural notion of $\Aut\mathcal{O}$-equivariant $V$-modules. They satisfy similar identities to those of an $\Aut\mathcal{O}$-action on $V$, but we have to use $\Lambda$-graded $V$-modules again. For the following definition, we again assume that $\Lambda$ is a subset of $\Q$ that is closed under the action of $(\Z,+)$.
\begin{definition}\label{def:Aut O V-module}
    Let $V$ be a vertex algebra over an algebraically closed field $\kk$ with an $\Aut\mathcal{O}$-action. Let $W$ be a lower-truncated $\Lambda$-graded $(V,\mathcal{H})$-module. An \textit{$\Aut\mathcal{O}$-equivariant structure on $W$} is a group representation $\Pi^W\colon \Aut\mathcal{O} \to \GL_\kk(W)$ such that the following holds:
    \begin{enumerate}[label = (\roman*)]
        \item For all $\rho\in \Aut\mathcal{O}$ and $n\in \N$, the $z^n$ term in $\Pi^W(\rho_z) = e^{-zT}\Pi^W(\rho)e^{\rho(z)T}$, denoted $\Pi^W(\rho_z)[z^n]$, preserves the left filtration on $V$ induced by the $\Lambda$-grading. That is, $\Pi^W(\rho_z)[z^n]\cdot\mrm G_\ell W\subseteq \mrm G_\ell W$ for all $n\in \N$ and $\ell\in \Lambda$.
        \item For all $\rho\in \Aut\mathcal{O}$ and homogeneous $w\in W_{\ell}$, we have
        \[\Pi^W(\rho_z)w - (\rho'(z))^{-\ell}w \in \mrm G_{\ell-1}W[\![z]\!].\]
        That is, in the associated graded $\gr^{\mrm G}W$ we have $\Pi^W(\rho_z)w = (\rho'(z))^{-\ell}w$.
        \item For all $x\in \kk^\times\subset \Aut\mathcal{O}$ and $w\in W_\ell$, we have $\Pi(x^{t\partial_t})w = x^{-\ell}w$.
        \item For all $\rho\in \Aut\mathcal{O}$, we have \textit{Huang's identity}:
        \[\Pi^W(\rho)Y^W(\Pi(\rho_z)^{-1}a,\rho(z))\Pi^W(\rho)^{-1} = Y^W(a,z).\]
        Equivalently,
        \[\Pi^W(\rho)Y^W(a,z)\Pi^W(\rho)^{-1} = Y^W(e^{-zT}\Pi(\rho)e^{zT}a,z).\]
    \end{enumerate}
\end{definition}
We will not generally require that our admissible $V$-modules admit $\Aut\mathcal{O}$-equivariant structures, though we will ask for them later on when we discuss descent to $\overline{\mathcal{M}}_{g,n}$.

In this context, a \textit{primary element} of $V$ is a homogeneous element $a\in V_\Delta$ such that $\Pi(\rho_z)a = (\rho'(z))^{-\Delta}a$ for all $\rho\in \Aut\mathcal{O}$. There is a similar definition for primary elements of modules.

\subsubsection{Examples}
Let $V$ be a VOA over $\kk$. If $V$ is generated by a collection of primary fields $\{a^i(z)\}_{i\in I}$, then $V$ is naturally equipped with an action of $\Aut\mathcal{O}$ given by
\[Y(a^i,z) = \Pi(\rho)Y(a^i,\rho(z))\Pi(\rho)^{-1}(\rho'(z))^\Delta,\]
where $a^i(z) = Y(a^i,z)$. This applies to VOAs such as the affine, Heisenberg, and $\beta\gamma$ VOAs for instance. All of these are then equipped with an action of $\Aut\mathcal{O}$ in a natural sense, and the M\"obius vertex algebra structure is given by $L_1^{(i)} = \frac{1}{i!}L_1^i$ for all $i\ge 0$ in the na\"ive sense, checking that the action of these symbols is indeed well-defined. These structures are indeed compatible in the sense we have required above; we omit the details.

Another natural example is the universal Virasoro vertex algebra $\overline{V}_{\Vir}(c,0)_\kk$ over a field $\kk$ of characteristic $>2$.
Given $\rho\in \Aut\mathcal{O}$, We assign to $T(z) = Y(\omega,z)$ the action
\[\Pi(\rho)^{-1}Y(\omega,z)\Pi(\rho) = Y(\omega,\rho(z))(\rho'(z))^2 + \frac{c}{2}\widehat{S}(\rho,z),\]
where we define the \textit{normalized Schwarzian derivative} $\widehat{S}(f,z)$ to be
\[\widehat{S}(f,z) = \frac{\partial_z^{(3)}f(z)}{\partial_z^{(1)}f(z)} - \left(\frac{\partial_z^{(2)}f(z)}{\partial_z^{(1)}f(z)}\right)^2.\]
Recall that the Schwarzian derivative is
\[S(f,z) = \frac{f'''(z)}{f'(z)} - \frac{3}{2}\left(\frac{f''(z)}{f'(z)}\right)^2.\]
We defined $\widehat{S}(\rho,z)$ in the way we did so that over $\Q$ we have
\[\frac{c}{12}S(f,z) = \frac{c}{2}\widehat{S}(f,z).\]
If we replace $c/2$ with $\widetilde c$, then this is well-defined over any ring $\kk$.
The M\"obius vertex algebra structure is described in \cite{Li18} or \cite{Griffin26}, and it is given by setting $L_1^{(i)} = \frac{1}{i!}L_1^i$ in the na\"ive sense again.

\part{Sheaves of vertex algebras on curves}\label{part: sheaves}
In this part, we will carry out the main goal of this paper, which is to develop the theory of coinvariants and conformal blocks associated to an $\N$-graded vertex $\kk$-algebra with an action of $\Aut\mathcal{O}$ over families of stable curves on a fixed algebraically closed field $\kk$. We will follow the constructions in \cite{DGT21} and especially \cite{DGT23} closely.

\section{The vertex algebra sheaf on curves}\label{sec:vertex alg sheaf}
Given an $\N$-graded vertex $\kk$-algebra $V$ with an action of $\Aut\mathcal{O}$, we define the sheaf of vertex algebras $\mathcal{V}_C$ on any curve $C$ over $\kk$ with at worst simple nodal singularities, following \cite{DGT23}. We will not assume $V$ is of CFT-type.
After doing so, we generalize the construction to relative nodal curves.
In characteristic 0, it is known that $\mathcal{V}_C$ has a canonical flat logarithmic connection $\nabla\colon \mathcal{V}_C\to \mathcal{V}_C\otimes \omega_C$. In characteristic $p>0$, we still have this flat logarithmic connection, but we also have higher-order differential maps, which we arrange into the data of a so-called ``logarithmic $\mathcal{D}$-module structure'' on $\mathcal{V}_C$. We use this logarithmic stratification to define the sheaf of chiral Lie algebras, which is a geometric analogue of the ancillary Lie algebra $\mathfrak{L}(V)^{\mathsf{L}}$.



\subsection{Bundles of formal coordinates}
Here we review the principal $\left(\Aut\mathcal{O}\right)$-bundle $\mathscr{A}ut_{\mathcal{C}}$ associated to a relative nodal curve $\mathcal{C}\to S$ over $\kk$, which is described in \cite[\textsection 2.2]{DGT23}. The only difference here is that we are working over an arbitrary algebraically closed field $\kk$; most constructions carry over with minimal changes otherwise.

\subsubsection{On smooth curves}
Let $C$ be a smooth projective curve over $\kk$.
We define $\mathscr{A}ut_C$ to be the space of pairs $(P,t)$, where $P\in C$ is a point in $C$ and $t$ is a formal coordinate at $P$ (which is an element of the completed local ring $\widehat{\mathcal{O}}_{C,P}$ such that $t\in \frak m_P\setminus \frak m_P^2$, where $\frak m_P$ is the maximal ideal of $\widehat{\mathcal{O}}_{C,P}$). There is a forgetful map $\mathscr{A}ut_C\to C$ whose fiber at $P\in C$ is equal to the space $\mathscr{A}ut_P$ of formal coordinates at $P$.

The group $\Aut\mathcal{O}$ acts on fibers of $\mathscr{A}ut_C\to C$ via change of coordinates:
\[\mathscr{A}ut_C\times\Aut\mathcal{O}\to \mathscr{A}ut_C,\qquad ((P,t),\rho)\mapsto (P,t\cdot \rho \eqdef \rho(t)).\]
This action is simply transitive, which is to say $\mathscr{A}ut_C$ is a principal $\left(\Aut\mathcal{O}\right)$-bundle over $C$. The choice of a formal coordinate $t$ at $P$ induces a trivialization
\[\Aut\mathcal{O}\stackrel{\simeq_t}{\longrightarrow}\mathscr{A}ut_P,\qquad \rho\mapsto \rho(t).\]

\subsubsection{On nodal curves}
In the case where the curve is nodal, we can understand it in terms of normalizations as follows. Given a curve $C$ with a node $Q$, let $\eta\colon\widetilde C\to C$ be the partial normalization of $C$ at $Q$, and let $Q_\pm$ be the two preimages of $Q$ in $\widetilde C$. 
We construct the principal $\left(\Aut\mathcal{O}\right)$-bundle $\mathscr{A}ut_C$ on the curve $C$ by identifying the fibers of $\mathscr{A}ut_{\widetilde C}$ at the two preimages $Q_+$ and $Q_-$ of a node $Q$ via a gluing isomorphism
\begin{equation}\label{eq:nodal gluing}
    \mathscr{A}ut_{Q_+} \simeq_{s_+}\Aut\mathcal{O}\stackrel{\gamma}{\to}\Aut\mathcal{O} \simeq_{s_-}\mathscr{A}ut_{Q_-},\qquad \rho(s_+)\mapsto \rho\circ \gamma(s_-),
\end{equation} 
where $\gamma\in \Aut\mathcal{O}$ is defined as
\[\gamma(z) = \frac{1}{1+z}-1 = \sum_{k\ge 1}(-z)^k.\]
Notice that $\gamma\circ \gamma(z) = z$, hence $\gamma$ determines an involution of $\Aut\mathcal{O}$. The isomorphism in \zcref[noname]{eq:nodal gluing} then amounts to the identification $s_+ = \gamma(s_-)$ between formal coordinates. It is useful to point out that $\gamma$ may be realized as an exponential operator
\[\gamma(z) = e^{-z^2\partial_z}(-1)^{-z\partial_z}(z).\]
This fundamental relationship between $s_+$ and $s_-$ can be motivated using log geometry, which we will discuss in \zcref{sec:logstratification}.

\subsubsection{On families of curves}\label{sec families of curves Aut}
There is a natural generalization of $\mathscr{A}ut_{C}$ for families of curves, which can be understood in terms of moduli of curves.
We define $\widetriangle{\mathcal{M}}_{g,n}$ to be the moduli space of objects of type $(C,P_\bullet, t_\bullet)$, where $(C,P_\bullet=(P_1,\dots,P_n))$ is a stable, $n$-pointed genus $g$ curve, and $t_\bullet=(t_1,\dots,t_n)$ with $t_i$ a formal coordinate at $P_i$ for all $i$. The action of $(\Aut\mathcal{O})^{\times n}$ by change of coordinates endows $\widetriangle{\mathcal{M}}_{g,n}$ with the structure of an $(\Aut\mathcal{O})^{\times n}$-torsor over $\overline{\mathcal{M}}_{g,n}$. For further information about $\widetriangle{\mathcal{M}}_{g,n}$ over the locus of smooth curves, see \cite{ADCKP}, \cite[\S 6.5]{FBZ}, and \cite[\S 3.2]{DGT21}.


For example, an object of $\widetriangle{\mathcal{M}}_{g,1}$ over a scheme $S$ consists of a
semistable curve $\mathcal{C} \to S$ with a section $P\colon S \to \mathcal{C}$ mapping to the smooth locus of $\mathcal{C}$, together with a formally unramified thickening $S \times  {\rm{Spf}}(\kk[\![ t]\!]) \to \mathcal{C}$ of the section $P$ such that every genus 1 component has at least one special point and every rational component has at least three special points. 
Here, a special point is either a marked point or a node, counted with multiplicity. The object ${\rm{Spf}}(\kk[\![ t]\!])$ is the formal spectrum of the complete topological ring $\kk[\![ t]\!]$ (see \cite[\S A.1.1]{FBZ}).

The action of $\Aut\mathcal{O}$ via change of coordinates identifies $\widetriangle{\mathcal{M}}_{g,1}$ as a principal $(\Aut\mathcal{O})$-bundle over $\overline{\mathcal{M}}_{g,1}$. There is a diagram of Cartesian squares
\[
\begin{tikzcd}
\mathscr{A}ut_{P/S} \arrow[rightarrow]{r} \arrow[rightarrow]{d} &\mathscr{A}ut_{\mathcal{C}/S} \arrow[rightarrow]{r} \arrow[rightarrow]{d}& \widetriangle{\mathcal{M}}_{g,1} \arrow[rightarrow]{d}{\Aut\mathcal{O}} \\
S \arrow[rightarrow]{r}{P} &\mathcal{C} \arrow[rightarrow]{r} \arrow[rightarrow]{d}& \overline{\mathcal{M}}_{g,1} \arrow[rightarrow]{d}\\
&S \arrow[rightarrow]{r}{}& 	\overline{\mathcal{M}}_{g}.
\end{tikzcd}
\]
That is, the space $\mathscr{A}ut_{\mathcal{C}/S}$ is the pullback of the maps $\widetriangle{\mathcal{M}}_{g,1}\to \overline{\mathcal{M}}_{g}$ and $S \rightarrow \overline{\mathcal{M}}_{g}$. We recommend \cite[\textsection 2.2.2]{DGT23} for an in-depth description of the fibers of $\mathscr{A}ut_C$ over a node.

\subsection{The sheaf of vertex algebras}\label{sheaf of vertex algebras}
The sheaf $\mathcal{V}_C$ of vertex algebras on a relative nodal curve is constructed by faithfully flat descent of an $\Aut\mathcal{O}$-equivariant sheaf along an $\Aut\mathcal{O}$-torsor. This allows us to discuss coordinates for the sections of this sheaf provided that we can show that our constructions are equivariant with respect to change of coordinates.
\begin{definition}
    Let $\mathcal{C}\to S$ be a relative nodal curve over $\kk$. The \textit{vertex algebra bundle} $V_{\mathcal{C}}$ is
    \[V_{\mathcal{C}} \eqdef \mathscr{A}ut_{\mathcal{C}/S} \times_{\Aut\mathcal{O}}(V\otimes \mathcal{O}_S).\]
    The \textit{vertex algebra sheaf} $\mathcal{V}_{\mathcal{C}/S}$ is the associated sheaf of sections over $\mathcal{C}$. That is,
    \[\mathcal{V}_{\mathcal{C}/S} = (V\otimes \pi_*\mathcal{O}_{\mathscr{A}ut_{\mathcal{C}/S}})^{\Aut\mathcal{O}},\]
    where $\pi\colon \mathscr{A}ut_{\mathcal{C}/S} \to \mathcal{C}$ is the natural forgetful map.
\end{definition}
We will sometimes omit the base $S$ when referring to the vertex algebra sheaf, writing $\mathcal{V}_{\mathcal{C}}$ instead of $\mathcal{V}_{\mathcal{C}/S}$.

The above definition is sometimes not the most useful presentation.
We opt to use a more explicit presentation of the sheaf using formal smoothings.
We follow \cite[\textsection 2.5]{DGT23} for our discussion.

\subsubsection{On smooth curves}
Let $C$ be a smooth projective curve over $\kk$. Then the vertex algebra bundle $V_C$ is the space of pairs $(P,t,a) \in \mathscr{A}ut_C\times V$ modulo the equivalence 
\[(P,t,a) \equiv (P,\rho(t),\Pi(\rho^{-1}) a)\]
for all $\rho\in \Aut\mathcal{O}$. 
The fiber of $V_C$ at a point $P\in C$ is isomorphic to $\mathscr{A}ut_P\times_{\Aut\mathcal{O}}V$. Up to a formal coordinate $t$ at $P$, we get the identification
\[\mathscr{A}ut_P\times_{\Aut\mathcal{O}}V \simeq_t \bigoplus_{k\ge 0} V_k \otimes \kk t^k \simeq V.\]
This gives a description of the bundle $V_C$ locally at each fiber.

Now, we describe the vertex algebra sheaf. Given a point $P\in C$ and an open neighborhood $U\subset C$ of $P$ admitting an \'etale map $U\to \A^1_{\kk}$ determined by a local coordinate $t\in \mathcal{O}_U$, we may identify the vertex algebra sheaf $\mathcal{V}_C$ on $U$ via the trivialization
\[\mathcal{V}_C(U) \simeq_t V\otimes \mathcal{O}_U.\] 
Therefore, $\mathcal{V}_C$ is a locally free (hence quasi-coherent) $\mathcal{O}_C$-module. The rank of $\mathcal{V}_C$ on each connected component of $C$ is then equal to the dimension of $V$ over $\kk$, which is usually infinite.

\subsubsection{On nodal curves}\label{sheaf on nodal curves}
Now, suppose $C$ is a nodal curve over $\kk$ with a node $Q$. Let $\eta\colon \widetilde C\to C$ be the partial normalization of $C$ at $Q$, and let $Q_\pm$ be the two preimages of $Q$ in $\widetilde C$. Choose formal coordinates $s_\pm$ at $Q_\pm$, respectively. Denote by $\widetilde\pi\colon \mathscr{A}ut_{\widetilde C}\to \widetilde C$ the natural forgetful morphism. The action of $\Aut\mathcal{O}$ on $V\otimes \widetilde\pi_* \mathcal{O}_{\mathscr{A}ut_{\widetilde C}}$ restricts to an action of $\Aut\mathcal{O}$ on
\[\bigoplus_{k\ge 0}V_k\otimes \mathcal{O}_{\widetilde C}(-kQ_+ - kQ_-)\otimes \widetilde\pi_* \mathcal{O}_{\mathscr{A}ut_{\widetilde C}}.\]
We define the sheaf $\widetilde{\mathcal{V}}_{\widetilde{C}}$ to be
\begin{equation}\label{eq:tilde V}
    \widetilde {\mathcal{V}}_{\widetilde{C}}\eqdef \left(\bigoplus_{k\ge 0}V_k\otimes \mathcal{O}_{\widetilde C}(-kQ_+-kQ_-)\otimes \widetilde\pi_* \mathcal{O}_{\mathscr{A}ut_{\widetilde C}}\right)^{\Aut\mathcal{O}}.
\end{equation}
The sheaves $\widetilde {\mathcal{V}}_{\widetilde{C}}$ and $\mathcal{V}_{\widetilde{C}}$ agree away from the points $Q_\pm$.
Looking at the formal neighborhoods $D_{Q_{\pm}} \subset \widetilde{C}$, we have the trivializations
\[\mathcal{V}_{\widetilde{C}}(D_{Q_\pm}) \simeq_{s_\pm} V \otimes_\kk \kk[\![s_\pm]\!],\qquad \widetilde{\mathcal{V}}_{\widetilde{C}}(D_{Q_\pm}) \simeq_{s_\pm} \bigoplus_{k\ge 0}V_k\otimes_\kk s_\pm ^k\kk[\![s_\pm]\!].\]
These sections naturally agree as $\kk[\![s_\pm]\!]$-modules, so we have an isomorphism $\widetilde{\mathcal{V}}_{\widetilde{C}} \cong \mathcal{V}_{\widetilde{C}}$ of $\mathcal{O}_{\widetilde{C}}$-modules. That said, we prefer to think of them separately.

Consider the exact sequence of $\cal{O}_{\widetilde{C}}$-modules
\[ 
\begin{tikzcd}
0 \ar[r] & \cal {O}_{\widetilde{C}}(-Q_+-Q_-) \otimes \widetilde{\mathcal{V}}_{\widetilde{C}} \ar[r] & \widetilde{\mathcal{V}}_{\widetilde{C}} \ar{r}{} & V_{Q_+} \oplus V_{Q_-} \ar[r] & 0,
\end{tikzcd}
\] 
where $V_{Q_\pm}$ is the skyscraper sheaf on $C$ supported at $Q_\pm$ with space of sections isomorphic to $V$ via the choice of the coordinate $s_\pm$. Pushing forward by $\eta$, we obtain the diagram
\begin{equation}
\label{eq:diagramVC} 
\begin{tikzcd}
0 \ar[r] & \eta_* \left(\cal{O}_{\widetilde{C}}(-Q_+-Q_-) \otimes \widetilde{\mathcal{V}}_{\widetilde{C}}\right) \ar[r] & \eta_* \widetilde{\mathcal{V}}_{\widetilde{C}} \ar{r}{q} & V_{Q}^{\oplus 2} \ar[r] \ar[->>]{d}{\gamma\,\circ\, \pi_1 -\pi_2} & 0 \\
&&& V_Q.
	\end{tikzcd}
\end{equation}
Here the top row is exact, and $\pi_i\colon V_{Q} ^{\oplus 2} \rightarrow V_{Q}$ are the natural projections for $i=1,2$. In this picture, we may define the sheaf of vertex algebras $\mathcal{V}_C$ on $C$ to be
\[\mathcal{V}_C = \ker\left( (\gamma\circ \pi_1 -\pi_2) \circ q\right).\] 
That is, $\mathcal{V}_C$ is the equalizer of the morphisms $\gamma\circ \pi_1\circ q$ and $\pi_2\circ q$.


We can give an explicit local description of the sections of $\mathcal{V}_C$ at a node $Q$ as follows. The formal neighborhood of the node $Q\in C$ is given in terms of a pair of formal variables $s_\pm$ as
\[D_Q = \Spf \widehat{\mathcal{O}}_Q \simeq \Spf(\kk[\![s_+,s_-]\!]/(s_+s_-)).\]
Based on the above description of $\mathcal{V}_C$, the space of sections $\mathcal{V}_C(D_Q)$ is given by the kernel of the map
\[\eta_*\widetilde{\mathcal{V}}_{\widetilde{C}}(D_Q) = \bigoplus_{k\ge 0} V_k\otimes_\kk \left(s_+^k\kk[\![s_+]\!]\oplus s_-^k\kk[\![s_-]\!]\right)\longrightarrow V\]
induced by
\[\left(a\otimes s_+^{\deg(a)}f(s_+), b\otimes s_-^{\deg(b)}g(s_-)\right) \longmapsto f(0)\gamma(a) - g(0)b\]
for homogeneous $a,b\in V$, and for all $f(s_+)\in \kk[\![s_+]\!]$ and $g(s_-)\in \kk[\![s_-]\!]$.
Hence, $\mathcal{V}_C(D_Q)$ is spanned by elements of the form
\[\Big(a\otimes s_+^{\deg(a)},\, (-1)^{\deg(a)}\sum_{i\ge 0}L_1^{(i)}(a)\otimes s_-^{\deg(a)-i} \Big)\]
and
\[\Big(b\otimes s_+^{\deg(b)+1}f(s_+),\, c\otimes s_-^{\deg(c)+1}g(s_-)\Big)\]
for homogeneous elements $a,b,c\in V$ and power series $f \in \kk[\![s_+]\!]$ and $g\in \kk[\![s_-]\!]$. We may denote elements of the first form in shorter notation as
\[\Big(a\otimes s_+^{\deg(a)},\, (-1)^{\deg(a)}\sum_{i\ge 0}L_1^{(i)}(a)\otimes s_-^{\deg(a)-i} \Big) = \Big( s_+^{\deg(a)}a, (-1)^{\deg(a)}e^{s_-^{-1}L_1}[s_-^{\deg(a)}a]\Big).\]
Given the isomorphism $\widetilde{\mathcal{V}}_{\widetilde{C}} \simeq \mathcal{V}_{\widetilde{C}}$, we may also identify $\mathcal{V}_{C}$ as the coequalizer of the morphisms $\gamma\circ \pi_1\circ q$ and $\pi_2\circ q$ from $\eta_*\mathcal{V}_{\widetilde{C}}$ to $V_Q$.

\subsubsection{On formal smoothings}\label{smoothings}
In order to describe the vertex algebra sheaf $\mathcal{V}_{\mathcal{C}/S}$ associated to a relative nodal curve $\mathcal{C}\to S$, it suffices to describe this for certain special cases. In particular, it suffices to consider so-called smoothing families, which we describe here. Once again, we follow \cite{DGT23} and \cite{DGK23}.
\begin{construction}\label{constr:smoothing family}
    Let $R$ be a $\kk$-algebra such that $S_0 \eqdef \Spec(R)\to \Spec(\kk)$ is smooth. Let $\mathscr{C}_0\to S_0$ be a semistable curve with a section $Q$ and $n$ distinct smooth points given by sections $P_\bullet$. Assume that $\mathscr{C}_0\setminus P_\bullet(S_0)$ is affine over $S_0$. After an \'etale base change of $S_0$ of degree two, we can normalize $\mathscr{C}_0$ and obtain a smooth family of $(n+2)$-pointed curves $\widetilde{\mathscr{C}}_0\to S_0$ with sections $P_\bullet\sqcup(Q_+,Q_-)$, where $Q_\pm(S_0)\subset \widetilde{\mathscr{C}}_0$ are the preimages of the node in $\mathscr{C}_0/S_0$. Fix formal coordinates $s_\pm$ at $Q_\pm(S_0)$, respectively. Such coordinates determine what we call a \textit{smoothing} of $(\mathscr{C}_0,P_\bullet)$ over $S \eqdef \Spec(R[\![q]\!])$. That is, we have a flat family $\mathscr{C}\to S = \Spec(R[\![q]\!])$ (called a \textit{smoothing family}) with sections $P_\bullet = (P_1,\ldots,P_n)$ such that the general fiber is smooth and the special fiber is identified with $\mathscr{C}_0\to S_0$. 
    The formal coordinates at $Q_\pm (S_0)$ extends to formal coordinates at $Q_\pm(S)$, which we also denote by $s_\pm$. These formal coordinates have the property that locally around the node, the curve $\mathscr{C}$ is defined by the equation $s_+s_- = q$. The existence of smoothing families holds over the formal base $S = \Spec(R[\![q]\!])$, but it fails over a more general base. One has that $\widetilde {\mathscr{C}}\setminus P_\bullet(S)$ and $\mathscr{C}\setminus P_\bullet(S)$ are affine over $S$.
\end{construction}


Here we define the sheaf of vertex algebras $\mathcal{V}_{\mathscr{C}}$ over a smoothing family $\mathscr{C}\to S$. Similar to before, it is sufficient to describe $\mathcal{V}_{\mathscr{C}}$ on the formal neighborhood of the node $Q$, which we denote by
\[\frak{D}_Q \simeq_{s_\pm} \Spf R[\![s_+,s_-,q]\!]/(s_+s_--q).\]
The completed local ring $\widehat{\mathcal{O}}_Q$ consists of elements of the form $\sum_{i,j\ge 0}\alpha_{i,j}s_+^is_-^j$ for $\alpha_{i,j}\in R$. After identifying $s_+ = (s_+,\frac{q}{s_-})$ and $s_- = (\frac{q}{s_+},s_-)$, the ring $\widehat{\mathcal{O}}_Q$ is realized as the subring of $R(\!(s_+)\!)[\![q]\!]\oplus R(\!(s_-)\!)[\![q]\!]$ consisting of elements of the form
\[\sum_{i,j\ge 0}\alpha_{i,j}\left(s_+^{i-j}q^j,s_-^{j-i}q^i\right),\]
where $\alpha_{i,j}\in R$. The space of sections of $\mathcal{V}_{\mathscr{C}}$ over $\frak{D}_Q$ is generated by elements of the form
\begin{equation}\label{eq:typeA}
    \sum_{i,j\ge 0}\alpha_{i,j}\left(a\otimes s_+^{\deg(a)+i-j}q^j,(-1)^{\deg(a)}\sum_{k\ge 0}L_1^{(k)}(a)\otimes s_-^{\deg(a)-k+j-i}q^i\right)
\end{equation}
and
\begin{equation}\label{eq:typeB}
    \sum_{i,j\ge 0}\alpha_{i,j}\left(b\otimes s_+^{\deg(b)+i-j+1}q^j,c\otimes s_-^{\deg(c)+j-i}q^i\right)
\end{equation}
for homogeneous $a,b,c\in V$ and $\alpha_{i,j}\in R$. In the quotient $q=0$, we recover the definition of $\mathcal{V}_C$ for a nodal curve as in \zcref{sheaf on nodal curves}.

\begin{remark}\label{rmk:vertex algebra sheaf general case}
    Now that we have defined the vertex algebra sheaf explicitly on formal smoothings, we may glue using \cite{BL} to obtain the vertex algebra sheaf $\mathcal{V}_{\mathcal{C}}$ associated to a general relative nodal curve $\mathcal{C}\to S$. Indeed, the sheaf $\mathcal{V}_{\mathscr{C}}$ is constructed as the kernel of a map from $\eta_*\widetilde{\mathcal{V}}_{\widetilde{\mathscr{C}}}$ to $V_Q$. Both of these are $q$-torsion-free, so the kernel $\mathcal{V}_{\mathscr{C}}$ is also $q$-torsion-free. This is the same procedure as used in \cite[\textsection 8.6]{DGT23}, and we will use this procedure for other sheaves later on.
\end{remark}

\subsection{The structure of the vertex algebra sheaf}
For a smooth curve $C$, the sheaf $\mathcal{V}_C$ is filtered by the sheaves $\mrm{G}_k\mathcal{V}_C$, which are defined as the sheaves on $C$ of sections of the finite rank vector bundles $\mathscr{A}ut_C \times_{\Aut\mathcal{O}}\mrm{G}_kV$, where $\mrm{G}_kV = \bigoplus_{s\leq k}V_{s}$. While the action of $\Aut\mathcal{O}$ on $\mrm{G}_kV$ is well-defined, the action of $\Aut\mathcal{O}$ on $V_{k}$ is only well-defined modulo $\mrm{G}_{k-1}V$.

\begin{lemma}
    Let $V$ be an $\N$-graded vertex $\kk$-algebra with an $\Aut\mathcal{O}$-action. Then
    \[\gr_\bullet \mathcal{V}_C \cong \bigoplus_{k\ge 0}\left(\omega_C^{\otimes -k}\right)^{\oplus \dim V_{k}}.\]
\end{lemma}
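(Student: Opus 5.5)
The plan is to compute the associated graded through the associated bundle construction. The first step is to show that $\gr$ commutes with the descent construction. Since $\mathscr{A}ut_C\to C$ is an $\Aut\mathcal{O}$-torsor and $\mathcal{V}_C = (V\otimes \pi_*\mathcal{O}_{\mathscr{A}ut_C})^{\Aut\mathcal{O}}$, I would first check that $\mrm G_kV$ is $\Aut\mathcal{O}$-stable. This follows from axiom (i) of \zcref{Aut O action def}: setting $z=0$ in $\Pi(\rho_z)$ recovers $\Pi(\rho)$, which therefore preserves $\mrm G_kV$. Hence $\mrm G_k\mathcal{V}_C = \mathscr{A}ut_C\times_{\Aut\mathcal{O}}\mrm G_kV$ is a subsheaf. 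Taking associated bundles along a torsor is exact, because faithfully flat descent is an equivalence between equivariant sheaves on $\mathscr{A}ut_C$ and sheaves on $C$. It follows that
\[\gr_k\mathcal{V}_C \cong \mathscr{A}ut_C\times_{\Aut\mathcal{O}}\big(\mrm G_kV/\mrm G_{k-1}V\big) = \mathscr{A}ut_C\times_{\Aut\mathcal{O}} V_k,\]
with $V_k$ carrying the induced action.

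The second step is to identify the induced action on $V_k\cong \mrm G_kV/\mrm G_{k-1}V$. Axiom (ii), evaluated at $z=0$, gives $\Pi(\rho)a \equiv \rho'(0)^{-k}a \pmod{\mrm G_{k-1}V}$ for $a\in V_k$. So on $V_k$ the action factors through the character $\Aut\mathcal{O}\to \kk^\times\to\GL(V_k)$, $\rho\mapsto \rho'(0)^{-k}$, acting diagonally on a basis of $V_k$; it is trivial on $\Aut_+\mathcal{O}$, and axiom (iii) confirms it on $\kk^\times$. Since this representation is a direct sum of $\dim V_k$ copies of a one-dimensional character, we get $\gr_k\mathcal{V}_C\cong L_k^{\oplus\dim V_k}$, where $L_k$ is the line bundle associated to the character $\rho\mapsto\rho'(0)^{-k}$. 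If $V_k$ is infinite-dimensional, the direct sum is taken over a basis; this is harmless because associated bundles commute with direct sums.

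The third step, which I expect to be the main bookkeeping obstacle, is to identify $L_k$ with $\omega_C^{\otimes -k}$ with the correct sign convention. The line bundle attached to $\rho\mapsto\rho'(0)$ is computed from the fiber $\mathscr{A}ut_P\times_{\Aut\mathcal{O}}\kk$. A class $(t,\lambda)$ is identified with $(\rho(t),\rho'(0)^{-1}\lambda)$, and under the change of coordinates $t\mapsto \rho(t)$ the vector $\lambda\,dt$ transforms compatibly. Hence $(t,\lambda)\mapsto \lambda\, dt|_P$ or $(t,\lambda)\mapsto\lambda\,\partial_t|_P$ gives a well-defined isomorphism to $\omega_{C,P}$ or $T_{C,P}$ respectively, and one has to check which one is forced by the equivalence relation $(P,t,a)\equiv(P,\rho(t),\Pi(\rho^{-1})a)$. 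I would verify this against the local trivialization already stated in the paper, $\mathscr{A}ut_P\times_{\Aut\mathcal{O}}V\simeq_t\bigoplus_k V_k\otimes \kk t^k$. There a degree-$k$ element transforms like $t^{k}$ under rescaling, i.e.\ like $(\partial_t)^{\otimes k}$ dual to $dt^{\otimes k}$, which is the same behaviour as $\omega_C^{\otimes -k}$. It is enough to test with $\rho = x^{t\partial_t}$ using axiom (iii), since $\Aut_+\mathcal{O}$ acts trivially on both sides. Doing this check globally over $C$ gives $L_k\cong \omega_C^{\otimes -k}$, naturally in $P$.

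Summing over $k\ge0$, using that $V$ is $\N$-graded, yields the stated decomposition. For families $\mathcal{C}\to S$ the same argument goes through verbatim with $\omega_{\mathcal{C}/S}$ in place of $\omega_C$.
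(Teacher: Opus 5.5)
Your argument is correct for smooth $C$ and is essentially the paper's: axiom (ii) makes every $a\in V_k$ primary of weight $k$ in $\mrm G_kV/\mrm G_{k-1}V$, so it contributes a copy of $\omega_C^{\otimes -k}$. You also supply bookkeeping the paper leaves implicit, namely stability of $\mrm G_kV$ via axiom (i) at $z=0$, exactness of descent, and the sign check with $x^{t\partial_t}$.

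The one point to flag is your closing claim that families follow verbatim. That holds for smooth families, but the paper treats nodal curves separately, citing \cite[Lemma 2.6.1]{DGT23}. At a node there is no formal coordinate to run your pointwise check. There, $\mathcal{V}_C$ is built from $\eta_*\widetilde{\mathcal{V}}_{\widetilde C}$, whose degree-$k$ part is twisted by $\mathcal{O}(-kQ_+-kQ_-)$, glued via $\gamma$. So you must verify that on $\gr_k$ this gluing becomes $a\otimes s_+^k\equiv(-1)^k a\otimes s_-^k$. That matches $\eta^*\omega_C=\omega_{\widetilde C}(Q_++Q_-)$ glued by $ds_+/s_+=-ds_-/s_-$.
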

\begin{proof}
    Consider $V_k$ as the quotient $\mrm G_kV/\mrm G_{k-1}V$. Let $a\in V_k$, and assume that $C$ is smooth.
    Then $a$ is a primary element in the associated graded since by definition we have 
    \[\Pi(\rho_z)a - (\rho'(z))^{-k}a \in \mrm G_{k-1}V[\![z]\!]\] 
    for all $\rho\in \Aut\mathcal{O}$. Therefore, the subsheaf
    \[\mathscr{A} \eqdef (\kk a\otimes \pi_*\mathcal{O}_{\mathscr{A}ut_C})^{\Aut\mathcal{O}}\]
    is isomorphic to $\omega_C^{\otimes -k}$. Therefore, the claimed isomorphism holds when $C$ is smooth; we may extend to the nodal case using the same argument as in \cite[Lemma 2.6.1]{DGT23}.
\end{proof}
Using the same argument as in \cite[Lemma 2.6.3]{DGT23}, we obtain the following:
\begin{lemma}\label{lem: sections gr}
    Let $(C,P_\bullet)$ be a smooth, $n$-pointed curve. Then
    \[H^0(C\setminus P_\bullet,\mathcal{V}_C) \cong H^0(C\setminus P_\bullet,\gr_\bullet\mathcal{V}_C).\]
\end{lemma}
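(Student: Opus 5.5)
The plan is to exploit the filtration $\mrm G_\bullet\mathcal{V}_C$ by the finite rank subbundles $\mathscr{A}ut_C\times_{\Aut\mathcal{O}}\mrm G_kV$. Since $C\setminus P_\bullet$ is affine (the curve is smooth, $n\ge 1$, stable), the key input is the vanishing $H^1(C\setminus P_\bullet,\mathcal{F})=0$ for every quasi-coherent $\mathcal{F}$.

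First I would note that each $\mrm G_k\mathcal{V}_C$ is a locally free subsheaf, using axiom (i) of \zcref{Aut O action def}. The successive quotients $\mrm G_k\mathcal{V}_C/\mrm G_{k-1}\mathcal{V}_C$ are then identified with $(\omega_C^{\otimes -k})^{\oplus\dim V_k}$ by the preceding lemma. The filtration is exhaustive, since $V=\bigcup_k\mrm G_kV$ and $\mathcal{V}_C$ is the colimit of the $\mrm G_k\mathcal{V}_C$. It is also bounded below, because $\mrm G_{-1}\mathcal{V}_C=0$ for $\N$-graded $V$.

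The main step is to show that each short exact sequence
\[0\to \mrm G_{k-1}\mathcal{V}_C\to \mrm G_k\mathcal{V}_C\to \gr_k\mathcal{V}_C\to 0\]
splits on $C\setminus P_\bullet$, or at least on global sections. On the affine $U=C\setminus P_\bullet$, the functor $H^0(U,-)$ is exact, and every short exact sequence of quasi-coherent sheaves whose quotient is locally free splits, since $\operatorname{Ext}^1_U(\gr_k,\mrm G_{k-1})=H^1(U,\shHom(\gr_k,\mrm G_{k-1}))=0$. Choosing splittings $s_k$ inductively gives isomorphisms $H^0(U,\mrm G_k\mathcal{V}_C)\cong\bigoplus_{j\le k}H^0(U,\gr_j\mathcal{V}_C)$. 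I then need these isomorphisms to be compatible as $k$ increases. This holds by construction: the splitting at stage $k$ only adds a complement to $\mrm G_{k-1}$ and reuses the earlier choices.

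Passing to the colimit over $k$, and using that $H^0$ on a quasi-compact scheme commutes with filtered colimits of quasi-coherent sheaves, I obtain $H^0(U,\mathcal{V}_C)\cong\bigoplus_k H^0(U,\gr_k\mathcal{V}_C)=H^0(U,\gr_\bullet\mathcal{V}_C)$.

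The expected obstacle is the non-canonicity of the isomorphism. The statement only asserts an isomorphism of vector spaces, so non-canonical splittings suffice. As in \cite[Lemma 2.6.3]{DGT23}, however, one might want the isomorphism to be noncanonical but filtration-compatible. The argument above gives exactly this, with no characteristic assumptions, because only affine vanishing is used.
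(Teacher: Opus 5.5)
Your proof is correct and follows the same route as the argument the paper takes from \cite[Lemma 2.6.3]{DGT23}. Affineness of $C\setminus P_\bullet$ makes $H^0$ exact on the sequences $0\to\mrm G_{k-1}\mathcal{V}_C\to\mrm G_k\mathcal{V}_C\to\gr_k\mathcal{V}_C\to 0$; the resulting filtration on sections splits (noncanonically) over $\kk$, and one passes to the exhaustive colimit. Splitting the sheaf sequences themselves is more than you need, since exactness on sections plus linear algebra over $\kk$ already suffices, and affineness implicitly requires every connected component of $C$ to carry a marked point.
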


\begin{remark}
    These isomorphisms are the reason we imposed that an $\Aut\mathcal{O}$-action on $V$ must have the property that $\Pi(\rho_z)a \equiv (\rho'(z))^{-\Delta}a$ modulo lower degree terms.
\end{remark}

\subsection{The logarithmic $\mathcal{D}$-module structure}\label{sec:logstratification}
Over a field of characteristic 0, the sheaf $\mathcal{V}_C$ on a nodal curve $C$ admits a natural flat logarithmic connection $\nabla\colon \mathcal{V}_C\to \mathcal{V}_C\otimes \omega_C$. This same flat logarithmic connection still exists over any algebraically closed field $\kk$, but it is no longer the full picture. 
The appropriate replacement for our setting is a so-called \textit{logarithmic $\mathcal{D}$-module structure} using principal parts in the sense of \cite{BerthelotOgus}. More specifically, we will present this structure in terms of what we call a \textit{logarithmic PD-connection}, where PD is taken from the notion of a PD (or divided power) structure for rings. See \zcref{stratification discussion} for a discussion on the different possible structures used to refer to a $\mathcal{D}$-module.

\subsubsection{On smooth curves}
First, we recall the theory of principal parts. 
\begin{deflem}
Let $f\colon X\to S$ be a morphism of schemes. We define the \textit{sheaf of principal parts of order $n$}, denoted $\mathcal{P}^n_{X/S}$, to be
\[\mathcal{P}^n_{X/S} = (\mathcal{O}_X\otimes_{f^{-1}(\mathcal{O}_S)}\mathcal{O}_X)/\cal J^{n+1},\]
where $\cal J = \ker(\mathcal{O}_X\otimes_{f^{-1}(\mathcal{O}_S)}\mathcal{O}_X \to \mathcal{O}_X)$ is the kernel of the multiplication map.
This sheaf is isomorphic to the sheaf of rings of the $n$-th infinitesimal thickening $X_{(n)}$ of $X$ over $S$. Specifically,
\[\mathcal{P}^n_{X/S} \cong \Delta_X^{-1}\mathcal{O}_{X\times_S X} / \mathcal{I}^{n+1},\]
where $\Delta_X\colon X\to X\times_S X$ is the diagonal map, and $\mathcal{I}$ is the kernel of the (surjective) map $\Delta_X^{\#}\colon\Delta_{X}^{-1}\mathcal{O}_{X\times_S X} \to \mathcal{O}_X$.
\end{deflem}
From the definition, we can see that $\mathcal{P}^n_{X/S}$ is a quasi-coherent $\mathcal{O}_X$-module. There are several maps associated to $\mathcal{P}^n_{X/S}$:
\begin{itemize}
    \item For all $n\ge 0$, there are two maps
    \[d^n_0,d^n_1\colon \mathcal{O}_X\to \mathcal{P}^n_{X/S}\]
    given by mapping to the first and second component of the tensor respectively. We use $d^n_0$ to endow $\mathcal{P}^n_{X/S}$ with a left $\mathcal{O}_X$-module structure, and we define $\delta = d^n_1 - d^n_0$. It is straightforward to see that the multiplication kernel $\cal J$ is generated over $\mathcal{O}_X$ by the image of $\delta$. On local sections $a,b$ of $\mathcal{O}_X$, we can see that
    \[\delta(ab) = a\,\delta(b)+b\,\delta(a) + \delta(a)\,\delta(b).\]
    This is a generalized Leibniz rule, where we allow for a higher-order term $\delta(a)\,\delta(b)$.
    
    In fact, we can characterize $\mathcal{P}^n_{X/S}$ as the unique left $\mathcal{O}_X$-algebra generated by local sections of the form $\delta(a)$ for all local sections $a$ of $\mathcal{O}_X$ such that the following identities hold for all local sections $a,b$ of $\mathcal{O}_X$:
    \begin{enumerate}
        \item $\delta(ab) = a\,\delta(b)+b\,\delta(a) + \delta(a)\,\delta(b)$;
        \item $\delta(a+b) = \delta(a)+\delta(b)$;
        \item $\delta(a)=0$ for all $a\in \im (f^{-1}(\mathcal{O}_S)\to \mathcal{O}_X)$.
    \end{enumerate}
    In terms of this characterization, the map $d^n_1$ is given locally by $a\mapsto a+\delta(a)$.
    \item For all $n\ge m$, there is a canonical projection
    \[\varphi_{m,n}\colon \mathcal{P}^n_{X/S}\twoheadrightarrow \mathcal{P}^m_{X/S}\]
    whose kernel is $\cal J^{m+1}/\cal J^{n+1}$.
    \item For $n,m\ge 0$, there are comultiplication maps
    \[\delta^{n,m}\colon \mathcal{P}^{n+m}_{X/S} \to \mathcal{P}^{n}_{X/S}\otimes \mathcal{P}^{m}_{X/S}\]
    given on local sections $x,y$ of $\mathcal{O}_X$ by $\overline{x\otimes y} \mapsto \overline{x\otimes 1}\otimes \overline{1\otimes y}$. In terms of the generators $\delta(a)$, we map
    \[\delta(a)\mapsto \delta(a)\otimes 1 + 1\otimes \delta(a).\]
\end{itemize}
\begin{warning}
    It is common in algebraic geometry to work with stratifications of spaces, which are not necessarily related to stratifications on $\mathcal{O}_X$-modules.
\end{warning}
Now, we describe $\mathcal{D}$-module structures using principal parts. 
\begin{deflem}
    We define
    \begin{equation}
        \mathcal{D}_{X/S} = \varinjlim_{n\ge 0}\Hom_{\mathcal{O}_X}\left(\mathcal{P}^n_{X/S},\mathcal{O}_X\right) = \Hom_{\mathcal{O}_X}\left(\varprojlim_{n\ge 0}\mathcal{P}^n_{X/S},\mathcal{O}_X\right),
    \end{equation}
    where the morphism $\Hom_{\mathcal{O}_X}\left(\mathcal{P}^m_{X/S},\mathcal{O}_X\right) \to \Hom_{\mathcal{O}_X}\left(\mathcal{P}^n_{X/S},\mathcal{O}_X\right)$
    for $m\le n$ is induced from $\varphi_{m,n}\colon \mathcal{P}^n_{X/S} \to \mathcal{P}^m_{X/S}$.
    We say that an $\mathcal{O}_X$-module $\mathcal{E}$ is a $\mathcal{D}_{X/S}$-module if it admits an $\mathcal{O}_X$-module map $\mathcal{D}_{X/S}\otimes \mathcal{E} \to \mathcal{E}$.
\end{deflem}
The second presentation of $\mathcal{D}_{X/S}$ follows from the fact that the category of quasi-coherent $\mathcal{O}_X$-modules admits colimits.
\begin{example}
    Consider the $A$-algebra $A[t]$ for a ring $A$. Then the space of global sections of the sheaf of differential operators $\mathcal{D}_{\Spec(A[t])/\Spec(A)}$ is isomorphic to the $A[t]$-algebra of differential operators in one formal variable $t$. This is the associative $A[t]$-algebra generated by the symbols $\{\partial_t^{(m)}\}_{m\ge 0}$ such that the following relations hold for all $m,k\ge 0$:
    \[\partial_t^{(0)} = 1,\qquad \partial_t^{(m)}\partial_t^{(k)} = \binom{m+k}{k}\partial_t^{(m+k)},\qquad [\partial_t^{(m)},t] = \partial_t^{(m-1)}.\]
\end{example}
The primary issue with the sheaf $\mathcal{D}_{X/S}$ is that it is defined in terms of duals.
To avoid issues with taking double duals over $\mathcal{O}_X$, we work with the following alternative to a $\mathcal{D}_{X/S}$-module:
\begin{definition}\label{PD-connection def}
    Let $f\colon X\to S$ be a morphism of schemes, and let $\mathcal{E}$ be an $\mathcal{O}_X$-module. A \textit{PD-connection} is a collection of right $\mathcal{O}_X$-linear morphisms
    \[\Theta_n\colon \mathcal{E}\to \mathcal{E}\otimes \mathcal{P}^n_{X/S}\]
    for all $n\ge 0$ such that:
    \begin{enumerate}
        \item $\Theta_0 = \id_{\mathcal{E}}$.
        \item The morphisms $\{\Theta_n\}_{n\ge 0}$ are compatible with the restriction morphisms. That is, for all $n\ge m$ we have
        \[(\id_{\mathcal{E}}\otimes \,\varphi_{m,n})\circ \Theta_n = \Theta_m.\]
        \item The morphisms $\{\Theta_n\}_{n\ge 0}$ are compatible with the morphisms $\delta^{m,n}$. That is, for all $n,m\ge 0$ we have a commutative diagram
        \begin{equation*}
            \begin{tikzcd}
                \mathcal{E} \ar{r}{\Theta_n} \ar{d}{\Theta_{m+n}}& \mathcal{E}\otimes \mathcal{P}^{n}_{X/S} \ar{d}{\Theta_m\otimes \id_n}\\
                \mathcal{E} \otimes \mathcal{P}^{m+n}_{X/S}\ar{r}{\id_{\mathcal{E}}\otimes \delta^{n,m}} & \mathcal{E}\otimes \mathcal{P}^m_{X/S}\otimes\mathcal{P}^n_{X/S}
            \end{tikzcd}
        \end{equation*}
    \end{enumerate}
\end{definition}
It is straightforward to see that $d^n_1$ has the structure of a PD-connection on $\mathcal{O}_X$.
\begin{remark}\label{stratification discussion}
    We take inspiration from the notion of a stratification on an $\mathcal{O}_X$-module associated to a morphism of schemes $X\to S$ from \cite{BerthelotOgus}. 
    A stratification on a quasi-coherent $\mathcal{O}_X$-module $\mathcal{E}$ is equivalent to a quasi-coherent $\mathcal{O}_X$-module on the de Rham stack $(X/S)^{\mathrm{dR}} = \operatorname{coeq}\left(\widehat{X\times_S X}\rightrightarrows X\right)$. If $X\to S$ is a smooth morphism over $\Q$, then a stratification on $\mathcal{E}$ is equivalent to a flat connection. If the morphism $X\to S$ is smooth and $S$ is a $\Q$-scheme, then furthermore a stratification is equivalent to a PD-connection or an action of $\mathcal{D}_{X/S}$ (see \cite[Proposition 2.11]{BerthelotOgus}).
    If $X\to S$ is smooth and $S$ is an $\F_p$-scheme, then the data of a stratification is equivalent to infinite Frobenius descent data \cite{Gieseker}. This is a very strong condition in positive characteristic as first-order Frobenius descent is more common to impose in practice.
\end{remark}

First, we will show that the sheaf of vertex algebras on a smooth projective curve $C$ over $\kk$ admits a PD-connection. It is straightforward to see that $\mathcal{P}^{n}_{C/\kk}$ is a free $\mathcal{O}_C$-module of rank $n+1$ for all $n\ge 0$. Indeed, if we choose an open subset $U\subset C$ with an \'etale map $U\to \A^1_{\kk}$ defining a coordinate, then
\[\mathcal{P}^n_{C/\kk}(U) \simeq_t \bigoplus_{0\le k\le n}\mathcal{O}_U\,(\delta t)^k = \mathcal{O}_U^{\oplus (n+1)}.\]
One of the challenges with defining the PD-connection on $\mathcal{V}_C$ is that we must show our construction is equivariant with respect to $\Aut\mathcal{O}$.
To show this, we need a formal calculus result for the $\Aut\mathcal{O}$-action on $V$.
\begin{lemma}\label{lem:formalcalcequiv}
    We have the following conjugation formula in $\End(V)[\![z,w]\!]$:
    \begin{equation}\label{eq:change of variables}
        e^{wT}e^{w\partial_z} = \Pi(\rho_z)e^{(\rho(z+w)-\rho(z))T}e^{w\partial_z}\Pi(\rho_z)^{-1}.
    \end{equation}
\end{lemma}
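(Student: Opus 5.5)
The plan is to reduce \eqref{eq:change of variables} to a telescoping product of exponentials of $T$, using only the definition $\Pi(\rho_z) = e^{-zT}\Pi(\rho)e^{\rho(z)T}$ and the divided power structure of \zcref{prop:VA properties}(a).

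The first step is to move $e^{w\partial_z}$ to the far right on the right-hand side. We regard $e^{w\partial_z} = \sum_{k\ge 0}w^k\partial_z^{(k)}$ as an operator on $\End(V)[\![z,w]\!]$ that acts only on the variable $z$. Concretely, $e^{w\partial_z}F(z) = F(z+w)$ for any $F(z)\in \End(V)[\![z]\!]$, since the Taylor formula with divided-power derivatives holds over any ring. Hence $e^{w\partial_z}\circ F(z) = F(z+w)\circ e^{w\partial_z}$ as operators. Applying this with $F(z) = \Pi(\rho_z)^{-1}$, the right-hand side becomes
\[\Pi(\rho_z)\,e^{(\rho(z+w)-\rho(z))T}\,\Pi(\rho_{z+w})^{-1}\,e^{w\partial_z}.\]
It therefore suffices to prove the identity $e^{wT} = \Pi(\rho_z)e^{(\rho(z+w)-\rho(z))T}\Pi(\rho_{z+w})^{-1}$ in $\End(V)[\![z,w]\!]$.

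The second step is a general lemma. For any power series $a,b$ in commuting variables with zero constant term, $e^{aT}$ is well defined, and we have $e^{aT}e^{bT} = e^{(a+b)T}$. In particular $e^{-aT} = (e^{aT})^{-1}$. The relation $T^{(n)}T^{(m)}=\binom{n+m}{n}T^{(n+m)}$ is exactly the statement $e^{xT}e^{yT}=e^{(x+y)T}$ for independent variables $x,y$, and we then substitute $x=a$, $y=b$. Well-definedness holds because only finitely many $T^{(n)}$ contribute to each monomial. The substitutions are legitimate here because $\rho(z)\in z\kk[\![z]\!]$ (as $\rho\in\Aut\mathcal{O}$ has no constant term), $\rho(z+w)\in(z,w)\kk[\![z,w]\!]$, and $\rho(z+w)-\rho(z)\in w\kk[\![z,w]\!]$.

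The final step is the computation itself. Write $\Pi(\rho_{z+w})^{-1} = e^{-\rho(z+w)T}\Pi(\rho)^{-1}e^{(z+w)T}$. The product then reads
\[e^{-zT}\Pi(\rho)\,e^{\rho(z)T}e^{(\rho(z+w)-\rho(z))T}e^{-\rho(z+w)T}\,\Pi(\rho)^{-1}e^{(z+w)T}.\]
By the lemma, the middle three exponentials combine to $e^{0\cdot T}=\id$. This leaves $e^{-zT}\Pi(\rho)\Pi(\rho)^{-1}e^{(z+w)T} = e^{-zT}e^{(z+w)T} = e^{wT}$, as required. Note that $\Pi(\rho)$ is a constant, independent of $z$ and $w$, so it commutes with nothing problematic; we only ever cancel it against its inverse.

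I expect the main obstacle to be bookkeeping rather than algebra. One must check that every product above converges in $\End(V)[\![z,w]\!]$, which requires each factor to be a power series with finitely many contributions per monomial. One must also check that $e^{w\partial_z}$ intertwines correctly with operator-valued series in $z$ without any division by integers. Both points follow from the zero-constant-term observations above, so no characteristic assumptions are needed.
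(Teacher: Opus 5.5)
Your proof is correct and is essentially the paper's argument. The paper computes $e^{wT}e^{w\partial_z}\Pi(\rho_z)$ by pushing $e^{w\partial_z}$ past $\Pi(\rho_z)$ (shifting $z\mapsto z+w$) and then combining exponentials of $T$; you push $e^{w\partial_z}$ past $\Pi(\rho_z)^{-1}$ on the other side and telescope, which is the same computation read from the opposite end, and your remarks on why substituting zero-constant-term series into $e^{xT}$ is legitimate over an arbitrary ring fill in bookkeeping the paper leaves implicit.
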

\begin{proof}
    We have
    \begin{align*}
        e^{w\nabla}\Pi(\rho_z) &= e^{w\partial_z}e^{wT}e^{-zT}\Pi(\rho) e^{\rho(z)T}\\
        &= e^{wT}e^{w\partial_z}\left[e^{-zT}\right]\Pi(\rho) e^{w\partial_z}\left[e^{\rho(z)T}\right]e^{w\partial_z}\\
        &= e^{-zT}\Pi(\rho)e^{w\partial_z}\left[e^{\rho(z)T}\right]e^{w\partial_z}\\
        &= \Pi(\rho_z) e^{-\rho(z)T}e^{w\partial_z}\left[e^{\rho(z)T}\right]e^{w\partial_z}\\
        &= \Pi(\rho_z) e^{(\rho(z+w)-\rho(z))T}e^{w\partial_z}.
    \end{align*}
\end{proof}
This identity follows from the usual change-of-variables formula in \cite[Eq. 6.6.1]{FBZ} in characteristic 0. Indeed, given $w = \rho(z)$, the connection $\widetilde{\nabla}_{\partial_w}$ for the coordinate $w$ and the connection $\nabla_{\partial_z}$ are related via
\[\widetilde\nabla_{\partial_z} = \partial_z + T = \Pi(\rho_z)\circ\partial_z\circ\Pi(\rho_z)^{-1} + \rho'(z)\Pi(\rho_z)\circ T\circ\Pi(\rho_z)^{-1}.\]
This is precisely the $w^1$ term of \zcref[noname]{eq:change of variables}.

Now, we give the generalization of the flat connection on $\mathcal{V}_C$ for a smooth, projective curve $C$ over $\kk$.
\begin{lemma}\label{PD-connection smooth curve}
    Let $C$ be a smooth, projective curve over $\kk$. Let $U\subset C$ be an open subset with a global coordinate $t$ (say coming from an \'etale map $U\to \A^1_{\kk}$). Using the trivialization $\mathcal{V}_C|_U \simeq_t V\otimes \mathcal{O}_U$, we define a PD-connection $\Theta=\{\Theta_n\}_{n\ge 0}$ on $\mathcal{V}_C$ by setting
    \[\Theta_n(a\otimes f(t)) = \sum_{0\le i\le j\le n}T^{(i)}(a)\otimes \partial_t^{(j-i)}[f(t)]\otimes (\delta t)^{j}.\]
    This assignment is canonical; that is, the definition is independent of the choice of the coordinate $t$.
\end{lemma}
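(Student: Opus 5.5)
The statement has two parts. First, the maps $\Theta_n$ defined in the coordinate $t$ satisfy the three axioms of \zcref{PD-connection def}. Second, they do not depend on $t$. I would package the definition as a generating series and reduce both parts to identities of formal power series.

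Identify $\mathcal{P}^\infty_{U/\kk}=\varprojlim_n\mathcal{P}^n_{U/\kk}$ with $\mathcal{O}_U[\![w]\!]$, where $w=\delta t$. Under this identification $d_1(f(t))=f(t+w)=e^{w\partial_t}f$. The defining formula is then the truncation modulo $w^{n+1}$ of
\[\Theta(a\otimes f)=e^{wT}a\otimes e^{w\partial_t}f = e^{w\nabla}(a\otimes f),\qquad \nabla^{(n)}=\sum_{i+k=n}T^{(i)}\otimes\partial_t^{(k)}.\]
Indeed, the coefficient of $w^j$ is $\sum_i T^{(i)}a\otimes\partial_t^{(j-i)}f$.

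With this description the axioms are quick to check.
\begin{itemize}
\item $\Theta_0=\id$, since $T^{(0)}=\id$.
\item Compatibility with the projections $\varphi_{m,n}$ is immediate, because $\Theta_m$ is just the truncation of $\Theta$.
\item The cocycle condition follows from the divided power identity $e^{wT}e^{w'T}=e^{(w+w')T}$ of \zcref{prop:VA properties}, together with the same identity for $\partial_t^{(k)}$. This is because $\delta^{n,m}$ sends $\delta t\mapsto \delta t\otimes1+1\otimes\delta t$, that is, $w\mapsto w+w'$.
\item Right $\mathcal{O}_U$-linearity (linearity for the $d_1$-structure) holds because $e^{w\partial_t}(fg)=e^{w\partial_t}f\cdot e^{w\partial_t}g$.
\end{itemize}

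For independence of the coordinate, let $s$ be another coordinate on $U$. Locally $s$ is determined by $t$ through a change of coordinates, and at each point $z$ of $U$ the Taylor expansion gives $\rho\in\Aut\mathcal{O}$. The transition function between the trivializations $\simeq_t$ and $\simeq_s$ is $\Pi(\rho_z)$, viewed as an $\mathcal{O}_U$-linear automorphism of $V\otimes\mathcal{O}_U$ depending on the point $z$. This is the standard description via $\mathscr{A}ut_C$, as in \cite[\S6.5]{FBZ} and \cite{DGT23}; I would state it as a small lemma. In the $s$-trivialization the connection is $e^{w'T}e^{w'\partial_s}$, where $w'=\delta s=\rho(z+w)-\rho(z)$.

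Compatibility of the two definitions then amounts to one identity. Transporting by $\Pi(\rho_z)$ must intertwine $e^{wT}e^{w\partial_z}$ with $e^{(\rho(z+w)-\rho(z))T}e^{w\partial_z}$. Here we use that $e^{w'\partial_s}$ applied to a function of $s=\rho(z)$ is exactly $e^{w\partial_z}$ applied to the same function pulled back along $\rho$. This identity is precisely \zcref{lem:formalcalcequiv}. So the key step is to verify carefully that both sides of \eqref{eq:change of variables} act as claimed: the right side acts on $\mathcal{P}^\infty$ in the variable $w'=\delta s$, and expanding $w'$ as a series in $w$ recovers the $t$-formula coefficientwise, modulo $\mathcal{J}^{n+1}$ for each $n$.

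I expect the main obstacle to be this bookkeeping. Two points need care:
\begin{itemize}
\item One must correctly identify the transition function as $\Pi(\rho_z)$, with $z$ the varying point, rather than $\Pi(\rho)$. Its $z$-dependence is what makes the $e^{w\partial_z}$ factor on the right of \eqref{eq:change of variables} necessary.
\item One must check that $\delta s=\rho(t+\delta t)-\rho(t)$ holds in $\mathcal{P}^n_{U/\kk}$. This holds because $d_1(s)=s(t+\delta t)$ by the characterization of $\mathcal{P}^n$ via $\delta$.
\end{itemize}
Once these are set up, no division by integers occurs anywhere, so the argument works in every characteristic. Finally, since the definition is independent of the coordinate, the local maps $\Theta_n$ glue to a PD-connection on all of $\mathcal{V}_C$.
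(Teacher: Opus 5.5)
Your proposal is correct and follows the paper's proof essentially step for step. The paper also writes $\Theta_n$ as the truncation of $e^{(\delta t)T}e^{(\delta t)\partial_t}$. It gets right linearity from multiplicativity of $e^{(\delta t)\partial_t}$, compatibility with restriction from independence of the coefficients on $n$, and compatibility with comultiplication from $\nabla^{(n)}\nabla^{(m)}=\binom{n+m}{m}\nabla^{(n+m)}$. For coordinate independence it likewise applies \zcref{lem:formalcalcequiv} with transition $\Pi(\rho_t)$, using $\delta\rho=\rho(t+\delta t)-\rho(t)$ and $e^{(\delta t)\partial_t}=e^{\delta\rho\,\partial_\rho}$.
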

\begin{proof}
    Let $U\subset C$ be an open subset as described in the statement above. To make the following discussion simpler, we write
    \[\Theta_n(a\otimes f) = e^{(\delta t)T}(a)e^{(\delta t)\partial_t}(f) \mod (\delta t)^{n+1}.\]
    First, for any $f,g\in \mathcal{O}_U$ and $a\in V$ we have
    \[\Theta_n(a\otimes fg) = e^{(\delta t)T}(a)e^{(\delta t)\partial_t}(fg) = e^{(\delta t)T}(a)e^{(\delta t)\partial_t}(f)e^{(\delta t)\partial_t}(g) \mod (\delta t)^{n+1},\]
    hence the map $\Theta_n\colon \mathcal{V}_C\to \mathcal{V}_C\otimes \mathcal{P}^n_{C/\kk}$ is right $\mathcal{O}_U$-linear. We also clearly have $\Theta_0=\id$ by definition.
    
    Now we check the rest of the properties. Denote by $\Theta_n(a\otimes f)[(\delta t)^j]$ the coefficient of $(\delta t)^j$ in $\Theta_n(a\otimes f)$ for $n\ge j$. That is,
    \[\Theta_n(a\otimes f)[(\delta t)^j] = \sum_{0\le i\le j}T^{(i)}(a)\otimes \partial_t^{(j-i)}(f).\]
    We give two observations:
    \begin{enumerate}
        \item It is straightforward to see that $\Theta_n(a\otimes f)[(\delta t)^j]$ is independent of $n\ge j$, hence we write
        \[\nabla^{(j)}(a\otimes f) = \Theta_j(a\otimes f)[(\delta t)^j],\]
        which extends to a well-defined endomorphism $\nabla^{(j)} \in \End(\mathcal{V}_C)$ for $j\ge 0$. Also, this independence of $n\ge j$ shows that the maps $\{\Theta_n\}_{n\ge 0}$ are compatible with the projections $\varphi_{n,j}$ for $n\ge j$.
        \item For $n,m\ge 0$, we have
        \[(\nabla^{(n)}\circ\nabla^{(m)})(a\otimes f) = \binom{n+m}{m}\nabla^{(n+m)}(a\otimes f).\]
        This shows compatibility with the comultiplication map.
    \end{enumerate}
    It remains to check that this definition is coordinate independent. To see this, we check at the level of formal neighborhoods. Given a point $P\in C$, a choice of formal coordinate induces a trivialization $D_P = \Spf \kk[\![t]\!]$.
    Suppose we pick another coordinate $\rho = \rho(t)$ for some $\rho\in \Aut\mathcal{O}$. We define the map $\widetilde \Theta_{n}$ given by
    \[\widetilde\Theta_{n}(a\otimes f) = e^{(\delta \rho)T}(a)e^{(\delta (\rho))\partial_\rho}(f) \mod (\delta\rho)^{n+1}.\]
    Note that in terms of $\delta t$, we have
    \[\delta\rho = \delta(\rho(t)) = \sum_{i\ge 1}\partial_t^{(i)}[\rho(t)](\delta t)^i \mod (\delta t)^{n+1}.\]
    Since $\rho'(0)\in \kk^\times$ and $t = \rho^{-1}(\rho(t))$, we have that $(\delta\rho)^{n+1}\equiv 0$ if and only if $(\delta t)^{n+1}\equiv 0$.
    From \zcref{lem:formalcalcequiv}, we have
    \begin{align*}
        e^{(\delta t) T}e^{(\delta t)\partial_t}\Pi(\rho_t) &= \Pi(\rho_t)e^{(\rho(t+\delta t)-\rho(t))T}e^{(\delta t)\partial_t} \mod (\delta t)^{n+1}.
    \end{align*}
    But we have $\delta\rho = \rho(t+\delta t) - \rho(t)$, and by change of variables we have 
    \[e^{(\delta t) \partial_t} = e^{(\rho(t+\delta t)-\rho(t))\partial_\rho} = e^{\delta\rho\partial_\rho}.\] 
    Therefore, we have
    \[\Pi(\rho_t)e^{\delta\rho\, T}e^{\delta\rho\partial_\rho}\Pi(\rho_t)^{-1} = e^{(\delta t)T}e^{(\delta t)\partial_t} \mod (\delta t)^{n+1}.\]
    This applies on the formal neighborhood of any point $P\in C$, so the PD-connection is naturally coordinate independent. Therefore, we obtain a well-defined PD-connection for $\mathcal{V}_C$ on all of $C$.
\end{proof}


\subsubsection{On nodal curves}
We now describe a generalization of the PD-connection on $\mathcal{V}_C$ as described in \zcref{PD-connection smooth curve} for a nodal curve $C$. This will make use of the log structure on $C$.
To make this notion precise, we introduce the following generalization of sheaves of principal parts in terms of local sections.
\begin{definition}
    Let $f\colon X\to S$ be a morphism of pre-log schemes, with $\alpha\colon \cal M_X\to (\mathcal{O}_X,\cdot)$ the log structure on $X$. We define $\mathcal{P}_{X/S,\log}$ to be the left $\mathcal{O}_X$-algebra generated over $\mathcal{O}_X$ by local sections of the form $\delta(a)$ and $\delta_{\log}(m)$ with $a\in \mathcal{O}_X$ and $m\in \cal M_X$ such that for all $a,b\in \mathcal{O}_X$ and $m_1,m_2\in \cal M_X$ we have:
    \begin{enumerate}
        \item $\delta(a+b) = \delta(a)+\delta(b)$;
        \item $\delta(ab) = b\,\delta(a) + a\,\delta(b) + \delta(a)\, \delta(b)$;
        \item $\delta_{\log}(m_1m_2) = \delta_{\log}(m_1) + \delta_{\log}(m_2) + \delta_{\log}(m_1)\,\delta_{\log}(m_2)$;
        \item $\delta(r) = 0$ for all $r\in \im\left(f^{-1}(\mathcal{O}_S)\to \mathcal{O}_X\right)$;
        \item $\delta_{\log}(m) = 0$ for all $m\in \im(f^{-1}(\cal M_S)\to \cal M_X)$;
        \item $\delta(\alpha(m)) = \alpha(m)\,\delta_{\log}(m)$.
    \end{enumerate}
    We assign a right filtration to $\mathcal{P}_{X/S,\log}$ by declaring elements of the form $\delta(a)$ and $\delta_{\log}(m)$ to have degree 1 for $a\in \mathcal{O}_X$ and $m\in \cal M_X$, denoted $(\mathcal{P}_{X/S,\log})_{\ge \bullet}$. We define the \textit{sheaf of logarithmic principal parts of order $\le n$} on $X$, denoted $\mathcal{P}^n_{X/S,\log}$, to be
    \[\mathcal{P}^n_{X/S,\log} \cong \mathcal{P}_{X/S,\log}/(\mathcal{P}_{X/S,\log})_{\ge n+1}.\]
\end{definition}
This definition of log principal parts was inspired by the log differentials of Kato.
\begin{enumerate}
    \item As with usual principal parts, for all $n\ge 0$ we have a pair of maps
    \[d^n_0,d^n_1\colon \mathcal{O}_X \to \mathcal{P}^n_{X/S,\log}.\]
    The map $d^n_0$ corresponds to the left $\mathcal{O}_X$-algebra structure on $\mathcal{P}^n_{X/S,\log}$, and $d^n_1$ is defined on a local section $a\in \mathcal{O}_X$ by
    \[d^n_1(a) = \delta(a) + a.\]
    There is also a map
    \[d^n_{\log}\colon \mathcal{M}_X \to \mathcal{P}^n_{X/S,\log}\]
    given on a local section $m\in \mathcal{M}_X$ by $d^n_{\log}(m) = \delta_{\log}(m)$.
    \item There are natural projections 
    \[\varphi_{m,n}\colon \mathcal{P}^n_{X/S,\log}\twoheadrightarrow \mathcal{P}^m_{X/S,\log}\] 
    for $n\ge m$ given by taking the quotient by $(\mathcal{P}_{X/S,\log})_{\ge m+1} / (\mathcal{P}_{X/S,\log})_{\ge n+1}$.
    \item There are natural comultiplication morphisms 
    \[\delta^{n,m}\colon \mathcal{P}^{n+m}_{X/S,\log} \to \mathcal{P}^{n}_{X/S,\log}\otimes_{\mathcal{O}_X} \mathcal{P}^{m}_{X/S,\log}\] 
    induced from the comultiplication map
$\delta\colon \mathcal{P}_{X/S,\log} \to \mathcal{P}_{X/S,\log}\otimes \mathcal{P}_{X/S,\log}$ given by
\begin{align}
    \delta(a)&\mapsto \delta(a)\otimes 1 + 1\otimes \delta(a)\\
    \delta_{\log}(m)&\mapsto \delta_{\log}(m)\otimes 1 + 1\otimes \delta_{\log}(m) + \delta_{\log}(m)\otimes \delta_{\log}(m)
\end{align}
\end{enumerate}
Using the maps associated to $\mathcal{P}^n_{X/S,\log}$, we may define a \textit{logarithmic PD-connection} on an $\mathcal{O}_X$-module $\mathcal{E}$ to be a collection of maps $\Theta_n\colon \mathcal{E}\to \mathcal{E}\otimes \mathcal{P}^n_{X/S,\log} $ satisfying similar axioms to \zcref{PD-connection def}, except we use the morphisms associated to $\mathcal{P}^n_{X/S,\log}$ written above.
\begin{remark}
    As mentioned before, a PD-connection on $\mathcal{E}$ over a smooth morphism of schemes $X\to S$ is equivalent to a stratification on $\mathcal{E}$ \cite[Proposition 2.11]{BerthelotOgus}. Given that we have defined a logarithmic PD-connection, one may also naturally define a logarithmic $\mathcal{D}$-module structure. We believe that these two notions are equivalent if $X\to S$ is log smooth, but we leave it open to determine if this is the case. We are also curious if there is a log de Rham stack $(X/S)^{\mathrm{dR}}_{\log}$ such that the quasi-coherent modules on this stack are equivalent to quasi-coherent $\mathcal{O}_X$-modules with a log stratification. A version of the log de Rham stack for flat connections with nilpotent $p$-curvature was constructed in \cite{Barz25}.
\end{remark}



Let $C$ be a nodal curve over $\kk$. We refer the reader to \cite{ACGHOSS} for a discussion on the log structure on nodal curves.
We now give a description of the sheaf of log principal parts $\mathcal{P}^n_{C/\kk,\log}$ on the formal neighborhood of a node $Q$.
\begin{lemma}\label{log principal parts at node}
    Let $C$ be a nodal curve over $\kk$ with a single node $Q$, and let $n\ge 0$. Then the sheaf $\mathcal{P}^n_{C/\kk,\log}$ is the unique $\mathcal{O}_C$-algebra such that the following hold:
    \begin{enumerate}
        \item Over a smooth point $P$ with formal neighborhood $D_P = \Spf(\kk[\![t]\!])$, the sheaf of log principal parts $\mathcal{P}^n_{C/\kk,\log}(D_P)$ is generated over $\kk[\![t]\!]$ by the form $\delta t$, subject to the relation $(\delta t)^{n+1}=0$. That is, $\mathcal{P}^n_{C/\kk,\log}$ agrees with $\mathcal{P}^n_{C/\kk}$ on the complement $C\setminus Q$ of the node.
        \item Over a node $Q$ with formal neighborhood $D_Q = \Spf \kk[\![s_+,s_-]\!]/(s_+s_-)$, the sheaf of log principal parts $\mathcal{P}^n_{C/\kk,\log}(D_Q)$ is generated over ${\kk[\![s_+,s_-]\!]}/{(s_+s_-)}$ by the forms $\delta_{\log} s_+ , \delta_{\log} s_-$, subject to the relations
        \[0 = \delta_{\log}(s_+s_-) = \delta_{\log}s_+ + \delta_{\log}s_- + \delta_{\log}s_+\delta_{\log}s_-,\qquad  (\delta_{\log} s_\pm)^{n+1}=0.\]
        Moreover, we have $\delta s_\pm = s_\pm\delta_{\log}s_\pm$, so we may regard $\delta_{\log}s_\pm$ as $s_\pm^{-1}\delta s_\pm$.
    \end{enumerate}
\end{lemma}
\begin{proof}
    The first statement is clear since the log structure is trivial away from the node $Q$. For the second statement, choose formal coordinates $s_\pm$ at $Q$, which define a trivialization
    \[D_Q = \Spf\kk[\![s_+,s_-]\!]/(s_+s_-) = \varinjlim_{p\ge 0} \Spec\left(\kk[s_+,s_-]/(s_+s_-,s_+^{p+1},s_-^{p+1})\right).\]
    For convenience, let $A_p = \kk[s_+,s_-]/(s_+s_-,s_+^{p+1},s_-^{p+1})$. The $\kk$-algebra $A_p$ has a natural log structure given by a morphism of monoids
    \[\N\longrightarrow (A_p,\cdot),\qquad \ell\longmapsto (s_+s_-)^\ell.\]
    
    By definition, $\mathcal{P}^n_{\Spec(A_p)/\kk,\log}(\Spec(A_p))$ is the $A_p$-algebra generated by the symbols $\delta(f)$ and $\delta_{\log}(m)$ subject to the relations (a)-(f) given above. We have
    \[\mathcal{P}^n_{C/\kk,\log}(D_Q) \cong \varprojlim_{p\ge 0} \mathcal{P}^n_{\Spec(A_p)/\kk,\log},\]
    which shows (b).
\end{proof}
Based on the above identities for $\delta_{\log}s_\pm$, we can see that 
\begin{equation}\label{eq:delta log gamma}
    \delta_{\log}s_- \equiv \gamma(\delta_{\log}s_+) \mod (\delta_{\log} s_+)^{n+1}
\end{equation} 
where again $\gamma(z) = -z/(1+z) = \sum_{k\ge 1}(-z)^k$. That is, the generator $\delta_{\log}s_-$ is redundant. The above results may be easily generalized when $C$ has more than one node. This is the origin of the series $\gamma$ used earlier on in the section.
\begin{proposition}\label{PD-connection nodal curve}
    Denote the normalization of $C$ by $\eta\colon \widetilde C\to C$. The PD-connection on $\mathcal{V}_{\widetilde C}$ induces a logarithmic PD-connection $\Theta_n\colon \mathcal{V}_C\to \mathcal{V}_C \otimes \mathcal{P}^n_{C/\kk,\log}$ on $C$ via the functor $\eta_*$.
\end{proposition}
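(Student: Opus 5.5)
Away from the node $Q$ the sheaves $\mathcal{V}_C$ and $\mathcal{V}_{\widetilde C}$ agree, and $\mathcal{P}^n_{C/\kk,\log}$ agrees with $\mathcal{P}^n_{C/\kk}$ by \zcref{log principal parts at node}(a). There we simply take $\Theta_n$ to be the PD-connection of \zcref{PD-connection smooth curve}. So everything reduces to the formal neighbourhood $D_Q$ of the node. The plan is as follows:
\begin{enumerate}
\item Define $\Theta_n$ on $\eta_*\widetilde{\mathcal{V}}_{\widetilde C}(D_Q)$ branch by branch, in logarithmic form.
\item Check that it preserves the kernel of $(\gamma\circ\pi_1-\pi_2)\circ q$.
\item Check the axioms of a logarithmic PD-connection.
\end{enumerate}

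For step 1, on the $s_+$-branch we rewrite the formula of \zcref{PD-connection smooth curve} in terms of $\delta_{\log}s_+ = s_+^{-1}\delta s_+$. Since $\delta s_+ = s_+\,\delta_{\log}s_+$, a section $a\otimes s_+^{\deg(a)}f(s_+)$ is sent to
\[e^{s_+\delta_{\log}s_+\,T}(a)\, e^{s_+\delta_{\log}s_+\,\partial_{s_+}}\bigl(s_+^{\deg(a)}f\bigr) \mod (\delta_{\log}s_+)^{n+1}.\]
The operator $e^{s_+\delta_{\log}s_+\partial_{s_+}}$ acts on $s_+^k$ by $(1+\delta_{\log}s_+)^k$, since $\sum_j \binom{k}{j}(\delta_{\log}s_+)^j s_+^k$. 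This holds for every $k\in\Z$, so there is no pole issue. The term $T^{(i)}a$ raises degree by $i$ and comes with $s_+^i$, so the output again lies in $\bigoplus_k V_k\otimes s_+^k\kk[\![s_+]\!]\otimes\mathcal{P}^n_{\log}$. Hence $\Theta_n$ preserves $\eta_*\widetilde{\mathcal{V}}_{\widetilde C}\otimes\mathcal{P}^n_{\log}$. We do the same on the $s_-$-branch with $\delta_{\log}s_-$. Then we use \zcref[noname]{eq:delta log gamma} to express $\delta_{\log}s_-=\gamma(\delta_{\log}s_+)$, so that both branches take values in the single algebra $\mathcal{P}^n_{C/\kk,\log}(D_Q)$. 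Right $\mathcal{O}$-linearity, $\Theta_0=\id$, and compatibility with the $\varphi_{m,n}$ follow exactly as in the smooth case. Comultiplicativity needs one extra remark. The logarithmic coproduct $\delta_{\log}\mapsto \delta_{\log}\otimes1+1\otimes\delta_{\log}+\delta_{\log}\otimes\delta_{\log}$ matches the multiplicativity $(1+x)^k(1+y)^k=(1+x+y+xy)^k$. So the identity $\nabla^{(n)}\nabla^{(m)}=\binom{n+m}{m}\nabla^{(n+m)}$ transports.

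Step 2 is the main obstacle. Modulo $(s_+,s_-)$, only the terms with no positive power of $s_\pm$ survive. Applying $\Theta_n$ to a section whose reduction is $s_+^{\deg a}a$ leaves $(1+\delta_{\log}s_+)^{\deg a}a$, because the $T^{(i)}$-terms with $i>0$ carry $s_+^{i}$ beyond degree matching and vanish in $V_Q$. Similarly, $s_-^{\deg b}b$ gives $(1+\delta_{\log}s_-)^{\deg b}b$. We must therefore show that the gluing map $\gamma$ intertwines these. That is, writing $x=\delta_{\log}s_+$, we need the following for homogeneous $a$, where the right-hand side means $\sum_i (1+\gamma(x))^{\deg(a)-i}(-1)^{\deg a}L_1^{(i)}a$:
\[\gamma\bigl((1+x)^{\deg a}a\bigr)=(1+\gamma(x))^{L_0}\gamma(a).\]
Since $1+\gamma(x)=(1+x)^{-1}$, this says $\gamma=e^{L_1}(-1)^{L_0}$ commutes appropriately with $(1+x)^{L_0}$, namely $(1+x)^{-L_0}e^{L_1}=e^{(1+x)^{-1}L_1}(1+x)^{-L_0}$ after matching exponents. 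I would check this via the $\mathcal{H}$-relation $(1+z)^{-2L_0}e^{wL_{1}}=e^{(1+z)^{-1}wL_{1}}(1+z)^{-2L_0}$ from \zcref{sec:H-mod}, suitably halved. The halving needs care in characteristic $2$; I would instead verify directly on graded pieces that $L_1^{(i)}$ lowers degree by $i$, which gives the binomial identity coefficientwise. If this identity is arranged, the kernel condition $f(0)\gamma(a)=g(0)b$ is preserved, so $\Theta_n$ restricts to $\mathcal{V}_C$. Finally, the result is independent of the choice of $s_\pm$ by the equivariance argument of \zcref{PD-connection smooth curve}, applied on each branch.
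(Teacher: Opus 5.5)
There is a genuine gap in Step 2, at exactly the point you flagged as the main obstacle.

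\textbf{The fibre computation.} Your reduction to the fibre discards the $T^{(i)}$-terms, but they do not vanish in $V_Q$. As you observed yourself in Step 1, $T^{(i)}a\in V_{\deg a+i}$ comes with exactly $s_+^{\deg a+i}$. So it lies in the leading piece $V_{\deg a+i}\otimes s_+^{\deg a+i}\kk[\![s_+]\!]$, not in $\mathcal{O}(-Q_+-Q_-)\otimes\widetilde{\mathcal V}_{\widetilde C}$. Concretely, $q$ sends the $x^j$-coefficient of $\Theta_n(a\otimes s_+^{\deg a}f)$, with $x=\delta_{\log}s_+$, to $f(0)\sum_{i\le j}\binom{\deg a}{j-i}T^{(i)}a$. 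Hence the fibre operators are $e^{xT}(1+x)^{L_0}$ on the $+$ branch and $e^{\gamma(x)T}(1+x)^{-L_0}$ on the $-$ branch. The identity you actually need is
\[
e^{L_1}(-1)^{L_0}\,e^{xT}(1+x)^{L_0}=e^{\gamma(x)T}(1+x)^{-L_0}\,e^{L_1}(-1)^{L_0}.
\]
The identity you wrote down is false. Comparing the degree-$\deg a$ components of both sides gives $(1+x)^{\deg a}a=(1+x)^{-\deg a}a$, which fails for every nonzero $a$ of positive degree. So the kernel condition cannot be verified along your route as written.

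\textbf{The missing ingredient.} No bookkeeping between $L_0$ and $L_1$ alone can repair this. Such bookkeeping gives $(1+x)^{-L_0}e^{L_1}=e^{(1+x)L_1}(1+x)^{-L_0}$, not the formula you state, and in any case cannot fix the mismatch above. The discrepancy is cancelled precisely by $L_1^{(i)}$ acting on the $T$-terms. For example, take $\deg a=1$ and $L_1a=0$. The $x$-coefficient on the left is $-a+Ta+L_1Ta=a+Ta$, because $L_1Ta=2a$, and this matches the right-hand side. What you need is the $\mathcal{H}$-relation between $e^{\lambda L_1}$ and $e^{zL_{-1}}$ from \zcref{sec:H-mod}, together with $L_{-1}$ acting as $T$. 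This is exactly what the paper uses, in the form $e^{\lambda L_1}e^{zT}=e^{z(1-\lambda z)^{-1}L_{-1}}e^{\lambda(1-\lambda z)L_1}(1-\lambda z)^{-2L_0}$, with $s$ as a degree-tracking variable.

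Your overall strategy of checking the kernel condition only on the fibre is sound; the paper's computation is effectively that fibre identity. It is the fibre computation, and hence the identity to be proved, that must be corrected.
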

\begin{proof}
    Denote the PD-connection on $\mathcal{V}_{\widetilde C}$ as $\{\widetilde\Theta_n\}_{n\ge 0}$. If we restrict these maps to the subsheaf $\widetilde {\mathcal{V}}_{\widetilde{C}} \subset \mathcal{V}_{\widetilde C}$, then we obtain a map
    \[\widetilde\Theta_n\colon \widetilde {\mathcal{V}}_{\widetilde{C}} \to \widetilde {\mathcal{V}}_{\widetilde{C}} \otimes \mathcal{P}^n_{\widetilde C/\kk,\log}.\]
    This just comes from the fact for an open $U\subset \widetilde C$ with coordinate $t$, the endomorphism $T^{(j)}\otimes \partial_t^{(k-j)}$ for $0\le j\le k$ maps
    \begin{align*}
        T^{(j)}\otimes \partial_t^{(k-j)}\colon &V_{s}\otimes \mathcal{O}_{U}(-sQ_+ - sQ_-)\\
        &\to V_{s+j}\otimes \mathcal{O}_{U}(-(s-k+j)Q_+ - (s-k+j)Q_-)\\
        &= \left(V_{s+j}\otimes \mathcal{O}_{U}(-(s+j)Q_+ - (s+j)Q_-)\right) \otimes \mathcal{O}_{U}(kQ_++kQ_-).
    \end{align*}
    Applying the functor $\eta_*$, we obtain maps
    \[\Theta_n\colon \eta_*\widetilde {\mathcal{V}}_{\widetilde{C}} \to \eta_*\widetilde {\mathcal{V}}_{\widetilde{C}} \otimes \eta_*\mathcal{P}^n_{\widetilde C/\kk,\log}.\]
    It remains to show that if we restrict $\Theta_n$ to $\mathcal{V}_{C}\subset \eta_*\widetilde{\mathcal{V}}_{\widetilde{C}}$, then we obtain a log PD-connection on $\mathcal{V}_{C}$.

    For simplicity, assume that $C$ has one node $Q$. Since we have already described the log PD-connection away from the node, it suffices to consider the maps locally at the node $Q$.
    Recall the identity in \zcref[noname]{eq:delta log gamma} relating $\delta_{\log}s_+$ and $\delta_{\log}s_-$. The induced PD-connection is given on $(a\otimes f,b\otimes g)\in \mathcal{V}_C(D_Q)$ by
    \begin{align*}
        &\Theta_n(a\otimes f,b\otimes g) \\
        &\equiv (e^{(\delta s_+)T}(a)e^{(\delta s_+)\partial_{s_+}}(f), e^{(\delta s_-)T}(b)e^{(\delta s_-)\partial_{s_-}}(g))\\
        &\equiv (e^{(\delta_{\log} s_+)s_+T}(a)e^{(\delta_{\log} s_+)s_+\partial_{s_+}}(f), e^{(\delta_{\log} s_-)s_-T}(b)e^{(\delta_{\log} s_-)s_-\partial_{s_-}}(g)) \\
        &\equiv (e^{(\delta_{\log} s_+)s_+T}(a)e^{(\delta_{\log} s_+)s_+\partial_{s_+}}(f), e^{\gamma(\delta_{\log} s_+)s_-T}(b)e^{\gamma(\delta_{\log} s_+)s_-\partial_{s_-}}(g)) \mod (\delta_{\log}s_+)^{n+1}.
    \end{align*}
    Given a formal power series $f(z) = \sum_{j\ge 0}a_jz^j$, we have
    \[f(\gamma(w)) = f(-w/(1+w)) = a_0 + \sum_{j\ge 1}w^j(-1)^j\sum_{1\le i\le j}\binom{j-1}{i-1}a_i.\]
    Recalling that $(s_+)^j\partial_{s_+}^{(j)} = \binom{s_+\partial_{s_+}}{j}$ in characteristic 0, we may write as shorthand
    \begin{align*}
        &\Theta_n(a\otimes f,b\otimes g) \\
        &\equiv \left(e^{(\delta_{\log}s_+)s_+T}(a)(1+\delta_{\log}s_+)^{s_+\partial_{s_+}}(f), e^{\gamma(\delta_{\log}s_+)s_-T}(b)(1+\delta_{\log}s_+)^{-s_-\partial_{s_-}}(g)\right),
    \end{align*}
    where we define the formal power series
    \begin{align*}
        (1+\delta_{\log}s_+)^{s_+\partial_{s_+}} &= \sum_{k\ge 0}(\delta_{\log}s_+)^k\binom{s_+\partial_{s_+}}{k}\\
        &= \sum_{k\ge 0}(\delta_{\log}s_+)^k(s_+)^k\partial_{s_+}^{(k)},\\
        (1+\delta_{\log}s_+)^{-s_-\partial_{s_-}} &= (1+\gamma(\delta_{\log}s_+))^{s_-\partial_{s_-}}.
    \end{align*}

    It remains to show that the logarithmic PD-connection is equivariant with respect to $\Aut\mathcal{O}$. Similar to before, we define $\nabla_{+,\log}^{(k)}(a\otimes f)$ (resp. $\nabla_{-,\log}^{(k)}(b\otimes g)$) to be the coefficient of $(\delta_{\log}s_+)^k$ in the first (resp. second) component of $\Theta_n(a\otimes f,b\otimes g)$ for $n\ge k$. We set $\nabla^{(k)}_{\log} = \left(\nabla^{(k)}_{+,\log},\nabla^{(k)}_{-,\log}\right)$, and we write
    \begin{align*}
        (1+w)^{\nabla_{+,\log}} &= e^{wsT}(1+w)^{s\partial_s}\\
        (1+w)^{\nabla_{-,\log}} &= e^{-w(1+w)^{-1}sT}(1+w)^{-s\partial_s}
    \end{align*}
    for $(1+w)^{\nabla_{\pm,\log}} = \sum_{k\ge 0}w^k\nabla_{\pm,\log}^{(k)}$.
    Recall one of the identities given in \zcref{sec:H-mod}:
    \[e^{xL_1}e^{zL_{-1}} = e^{(1-xz)^{-1}zL_{-1}}(1-xz)^{-2L_0}e^{(1-xz)^{-1}xL_1}.\]
    Replacing $x$ with $\lambda$ and using the fact that $L_1^{(i)}$ is degree $-i$ for $i\ge 0$, we have
    \[e^{\lambda L_1}e^{zT} = e^{z(1-\lambda z)^{-1}L_{-1}}e^{\lambda(1-\lambda z)L_1}(1-\lambda z)^{-2L_0}.\]
    Then we have as a relationship between formal series
    \begin{align*}
        &(e^{s^{-1}L_1}(-1)^{L_0})(1+w)^{\nabla_{+,\log}}[s^{\deg(a)}a]\\
        &=(-1)^{\deg(a)}(1+w)^{\deg(a)}s^{\deg(a)}e^{s^{-1}L_1}e^{-wsT}(a)\\
        &=(1+w)^{-\deg(a)}(-1)^{\deg(a)}s^{\deg(a)}e^{-w(1+w)^{-1}sT}e^{(1+w)s^{-1}L_1}(a)\\
        &=e^{-w(1+w)^{-1}sT}(1+w)^{-s\partial_{s}}(-1)^{\deg(a)}s^{\deg(a)}e^{s^{-1}L_1}(a)\\
        &=(1+w)^{\nabla_{-,\log}}(-1)^{\deg(a)}s^{\deg(a)}e^{s^{-1}L_1}(a)\\
        &=(1+w)^{\nabla_{-,\log}}(e^{s^{-1}L_1}(-1)^{L_0})[s^{\deg(a)}a].
    \end{align*}
    This shows that the logarithmic PD-connection is well-defined since the maps $\nabla_{\log}^{(k)} = \left(\nabla_{+,\log}^{(k)} , \nabla_{-,\log}^{(k)}\right)$ are well-defined endomorphisms of $\mathcal{V}_C(D_Q)$ for all $k\ge 0$. It remains to check the properties of these maps. The fact that the maps $\{\Theta_n\}_{n\ge 0}$ on $\mathcal{V}_C$ are right $\mathcal{O}_C$-linear and compatible with restriction and comultiplication follows from the corresponding properties of the PD-connection on $\mathcal{V}_{\widetilde C}$.
\end{proof}
\begin{remark}
    Given a smooth projective curve $C$, checking that the canonical PD-connection on $\mathcal{V}_C$ was compatible with comultiplication amounted to proving that 
    \[e^{z\nabla}e^{w\nabla} = e^{(z+w)\nabla},\] 
    where again $\nabla^{(n)} = \sum_{0\le k\le n}T^{(k)}\otimes \partial_t^{(n-k)}$. 
    Note that $F(z,w) = z+w$ is the formal group law for addition.
    
    Similarly, showing that the logarithmic PD-connection on $\mathcal{V}_C$ is compatible with comultiplication on $D_Q$ amounts to the following calculation:
    \begin{align*}
        (1+z)^{\nabla_{+,\log}}(1+w)^{\nabla_{+,\log}} 
        &= e^{zsT}(1+z)^{s\partial_{s}}e^{wsT}(1+w)^{s\partial_{s}}\\
        &= e^{zsT}e^{w(1+z)sT}(1+z)^{s\partial_{s}}(1+w)^{s\partial_{s}}\\
        &= e^{zsT}e^{w(1+z)sT}(1+z+w+zw)^{s\partial_{s}}\\
        &= e^{(z+w+zw)sT}(1+z+w+zw)^{s\partial_{s}}\\
        &= (1+z+w+zw)^{\nabla_{+,\log}}.
    \end{align*}
    Note that $F(z,w) = z+w+zw$ is the formal group law for multiplication.
\end{remark}

\begin{remark}\label{remark characteristic 0 comparison PD-conn}
    If $\kk$ has characteristic 0, then the log PD-connection on the vertex algebra sheaf $\mathcal{V}_C$ associated to a nodal curve $C$ is equivalent to the log connection given in \cite[\textsection 2.7]{DGT23}. This is because for all $k\ge 0$ we have the following identities:
    \[\nabla^{(k)} = \frac{1}{k!}\nabla^k \eqdef \frac{1}{k!}(T\otimes \id + \id \otimes \partial_t)^k,\qquad \nabla_{+,\log}^{(k)} \eqdef \binom{\nabla_{+,\log}}{k} = \binom{T\otimes s_+ + \id \otimes s_+\partial_{s_+}}{k}.\]
    This is an instance of the general principle that when looking at descent problems, one only needs to consider order 1 data when working in characteristic 0.
\end{remark}

\subsubsection{On formal smoothings and families of curves}\label{PD-connection formal smoothings}
Similar to the case of the sheaf of vertex algebras on a nodal curve, we may define a sheaf of logarithmic principal parts $\mathcal{P}^n_{\mathscr{C}/S,\log}$ associated to a smoothing family $\mathscr{C}\to S$.
Moreover, for all $n\ge 0$ there is a logarithmic PD-connection on $\mathcal{V}_{\mathscr{C}}$ given by a collection of maps
\[\Theta_n\colon \mathcal{V}_{\mathscr{C}}\to \mathcal{V}_{\mathscr{C}}\otimes \mathcal{P}^n_{\mathscr{C}/S,\log}.\]
It suffices to describe these maps on the formal neighborhood $\frak{D}_Q$. We define $\Theta_n$ on a pair $(a\otimes f,b\otimes g)$ for $a,b\in V$ and $f\in R(\!(s_+)\!)[\![q]\!]$ and $g\in R(\!(s_-)\!)[\![q]\!]$ by expanding linearly (and continuously) over $q$ and applying the definition of $\Theta_n$ as in \zcref{PD-connection nodal curve}.

\begin{remark}\label{rmk:vertex algebra sheaf PD-conn}
    This is a continuation of the discussion in \zcref{rmk:vertex algebra sheaf general case}. Given that we have described the log PD-connection on the vertex algebra sheaf for formal smoothings, we can glue using \cite{BL} again to construct the log PD-connection for the vertex algebra sheaf $\mathcal{V}_{\mathcal{C}}$ associated to any relative semistable curve $f\colon\mathcal{C}\to S$. We will describe the log PD-connection explicitly here. Given any open subset $U\subset \mathcal{C}$ in the smooth locus of $f$ admitting a local coordinate $t$ (say from an \'etale map $U\to \A^1_S$), we obtain a trivialization $\mathcal{V}_{\mathcal{C}}|_U \simeq_t V\otimes \mathcal{O}_U$. We may describe the PD-connection on $\mathcal{V}_{\mathcal{C}}|_U$ using the same description as in \zcref{PD-connection smooth curve}. In order to describe the log PD-connection on the whole curve $\mathcal{C}$, we just have to describe it on the formal neighborhood of a node $Q$ of the closed fiber of $f$. Up to a choice of formal coordinates $s_\pm$, the formal neighborhood is given by
    \[D_Q = \Spf \mathcal{O}_S[\![s_+,s_-]\!]/(s_-s_+).\]
    Once again, we may describe the log PD-connection using \zcref{PD-connection nodal curve}.
\end{remark}

\section{The chiral Lie algebra}\label{sec:chiralLieAlg}
Given a stable relative curve $\mathcal{C}\to S$ with $n$ marked points $P_1,\ldots,P_n\colon S\to \mathcal{C}$, there is a natural sheaf of Lie algebras $\mathcal{L}_{\mathcal{C}\setminus P_\bullet}(V)$ on $S$ associated to the sheaf of vertex algebras $\mathcal{V}_{\mathcal{C}/S}$. We define this sheaf in general and study it in the case of a nodal curve or a smoothing family. Our definition will differ from that of \cite{DGK23} in the sense that we are replacing their flat logarithmic connection with our logarithmic PD-connection, but otherwise all arguments carry over with few modifications.

\subsection{The geometric picture for the ancillary Lie algebra}\label{ancillary Lie algebra geom}
Given a nodal curve $C$ and an $\N$-graded vertex $\kk$-algebra $V$ with an $\Aut\mathcal{O}$-action, we showed that the sheaf $\mathcal{V}_C$ is naturally endowed with a logarithmic PD-connection. 
The formula for $\Theta_n$ in \zcref{PD-connection nodal curve} may also be applied to sections of $\mathcal{V}_C\otimes \omega_C$, naturally endowing it with the structure of a logarithmic PD-connection.
Now we apply the above observations to define the chiral Lie algebra. 
\begin{definition}
    Let $X\to S$ be a morphism of pre-log schemes, and let $\mathcal{E}$ be an $\mathcal{O}_X$-module with a log PD-connection $\{\Theta_n\}_{n\ge 1}$. We define $\Im \nabla$ to be the minimal subsheaf such that $\Theta_n$ factors as
    \[\Theta_n\colon \mathcal{E}\to \Im \nabla\otimes \mathcal{P}^n_{X/S,\log}\to \mathcal{E}\otimes \mathcal{P}^n_{X/S,\log}.\]
\end{definition}
The condition that $\Theta_n$ factors through an induced morphism by an inclusion of sheaves $\mathcal{F}\subset \mathcal{E}$ is linear, so the minimal object indeed exists. Note that $\mathcal{E}/\Im \nabla$ has an induced log PD-connection, which is trivial.\footnote{Here, the \textit{trivial (log) PD-connection} on an $\mathcal{O}_X$-module $\mathcal{E}$ is given by the composition of morphisms $\mathcal{E}\stackrel{\sim}{\to}\mathcal{E}\otimes\mathcal{O}_X \stackrel{d^n_0}{\to}\mathcal{E}\otimes \mathcal{P}^n_{X/S,\log}$ for $n\ge 0$.}

The primary example we will consider is $\mathcal{E} = \mathcal{V}_{C} \otimes \omega_{C}$ for a nodal curve $C$. We may realize $\Im \nabla$ as the subsheaf of $\mathcal{V}_C\otimes\omega_C$ such that the sections on the formal neighborhood of a smooth point $P$ are given by the colimit $(\Im\nabla^{(k)})_{k>0}$, and the sections on the formal neighborhood of a node $Q$ are given by $(\Im\nabla^{(k)}_{\log})_{k>0}$.

Using \zcref{rmk:vertex algebra sheaf general case} and \zcref{rmk:vertex algebra sheaf PD-conn}, the vertex algebra sheaf $\mathcal{V}_{\mathcal{C}}$ associated to a relative nodal curve $\mathcal{C}\to S$ is naturally equipped with a logarithmic PD-connection, as is the case with the relative dualizing sheaf $\omega_{\mathcal{C}/S}$. Similar to the above discussions, $\mathcal{V}_{\mathcal{C}}\otimes \omega_{\mathcal{C}/S}$ is equipped with a log PD-connection.

\begin{remark}
    In characteristic 0 the data of the log PD-connection on $\mathcal{V}_C\otimes\omega_C$ we have defined is equivalent to a flat log connection $\nabla\colon \mathcal{V}_C\to \mathcal{V}_C \otimes \omega_C$, which was described in \cite[\textsection 2.7]{DGT23} (cf. \zcref{remark characteristic 0 comparison PD-conn}). Therefore, the subsheaf $\Im \nabla$ is precisely the image of the morphism $\nabla$. This is where the notation for $\Im\nabla$ comes from, and it is also the primary reason we are working with the quotient $(\mathcal{V}_C\otimes\omega_C)/\Im \nabla$ instead of $\mathcal{V}_C/\Im \nabla$, despite $\mathcal{V}_C\otimes\omega_C$ being naturally isomorphic to $\mathcal{V}_C$. The other reason we are working with $(\mathcal{V}_C\otimes\omega_C)/\Im \nabla$ instead of $\mathcal{V}_C/\Im \nabla$ is for degree reasons.
\end{remark}

\begin{lemma}\label{lem:formal neighborhood ancillary Lie alg}
    At a smooth point $P\in C$, we have a natural identification
    \[H^0\left(D_P^\times,\frac{\mathcal{V}_C\otimes\omega_C}{\Im\nabla}\right) \simeq_t\mathfrak{L}_t(V) = \frac{V\otimes_\kk \kk(\!(t)\!)}{(\Im \nabla^{(k)})_{k>0}}\]
    up to a choice of coordinate $t$ on $D_P^\times = \Spf \kk(\!(t)\!)$.
\end{lemma}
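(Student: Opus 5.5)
The plan is to compute both sides in the trivialization given by the coordinate $t$, show that the subsheaf $\Im\nabla$ restricted to $D_P^\times$ matches $(\Im\nabla^{(k)})_{k>0}$ from the definition of $\LVL$, and then check that the identification is compatible with changes of coordinate.

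\textbf{Step 1: trivialize.} A choice of formal coordinate $t$ at $P$ trivializes $\mathcal{V}_C$ on $D_P$ as $V\otimes\kk[\![t]\!]$, as in the discussion of smooth curves in \zcref{sheaf of vertex algebras}. The dualizing sheaf is trivialized by $\d t$. Passing to the punctured disc gives
\[H^0(D_P^\times,\mathcal{V}_C\otimes\omega_C)\simeq_t V\otimes_\kk\kk(\!(t)\!)\,\d t.\]
Strictly, this completed tensor product is the space of sums $\sum_n a_n\otimes t^n$ with $n$ bounded below. When $V$ is infinite-dimensional one should take the sections to be the $\Aut\mathcal{O}$-invariants built from $\kk(\!(t)\!)$. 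I would note that every element is a finite sum modulo $\mrm G$-filtered pieces, or simply work with the convention already used for $\LVL$, so that both sides carry the same topology.

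\textbf{Step 2: identify $\Im\nabla$.} By \zcref{PD-connection smooth curve}, applied to $\mathcal{V}_C\otimes\omega_C$, the coefficient of $(\delta t)^k$ in $\Theta_n$ is
\[\nabla^{(k)}=\sum_{i\le k}T^{(i)}\otimes\partial_t^{(k-i)}.\]
On the $\omega_C$ factor, $\partial_t^{(j)}$ acts on the coefficient of $\d t$, that is, via the Lie derivative in divided-power form. I need to check that the resulting operator is exactly the $\nabla^{(k)}$ appearing in the definition of $\LVL$. The minimal subsheaf through which every $\Theta_n$ factors is then generated by the images of the $\nabla^{(k)}$ with $k>0$: the $k=0$ coefficient is the identity, and factoring $\Theta_n$ only constrains the higher coefficients. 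Here I would invoke the remark in \zcref{ancillary Lie algebra geom} describing $\Im\nabla$ on formal neighborhoods of smooth points as $(\Im\nabla^{(k)})_{k>0}$.

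Taking $H^0$ on $D_P^\times$ is exact here, since $D_P^\times$ is affine. Hence
\[H^0\Bigl(D_P^\times,\frac{\mathcal{V}_C\otimes\omega_C}{\Im\nabla}\Bigr)=\frac{V\otimes\kk(\!(t)\!)}{(\Im\nabla^{(k)})_{k>0}}=\mathfrak{L}_t(V).\]

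\textbf{Step 3: naturality.} It remains to show the identification transforms correctly under a change of coordinate $t\mapsto\rho(t)$. By \zcref{lem:formalcalcequiv}, used as in the proof of \zcref{PD-connection smooth curve}, the operator $\Pi(\rho_t)$ intertwines the generating series $e^{(\delta t)T}e^{(\delta t)\partial_t}$ with $e^{\delta\rho\,T}e^{\delta\rho\,\partial_\rho}$. Consequently the span of the $\Im\nabla^{(k)}$ for $k>0$ is carried to the corresponding span in the new coordinate. The extra factor $\rho'(t)$ from $\d t$ is absorbed the same way.

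\textbf{Main obstacle.} I expect the hardest part to be this last step, together with the careful bookkeeping of $\omega_C$. Conjugation only gives that the full series agree modulo $(\delta t)^{n+1}$. To conclude that the images of the positive-order parts coincide, one needs a triangularity argument: $\delta\rho=\rho'(t)\,\delta t+\cdots$ with $\rho'(t)$ a unit. This lets one rewrite each $\nabla_\rho^{(k)}$ as a unit multiple of $\nabla_t^{(k)}$ plus combinations of the $\nabla_t^{(j)}$ with $0<j<k$, and an induction on $k$ then finishes the argument.
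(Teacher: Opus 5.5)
Your Steps 1 and 2 are correct and are essentially the paper's proof. The paper writes the map in the $t$-trivialization as $a\otimes f(t)\,dt\mapsto \Res_{t=0}Y(a,t)f(t)\,dt=\sum_j f_j a_{[j]}$. It then uses $Y(T^{(i)}a,t)=\partial_t^{(i)}Y(a,t)$ to show that each $\nabla^{(k)}(a\otimes f\,dt)$ with $k>0$ goes to the residue of the total derivative $\partial_t^{(k)}\bigl(Y(a,t)f(t)\bigr)\,dt$, hence to $0$. This is the same as your observation that, with $\omega_C$ trivialized by $dt$, the relations cutting out $\Im\nabla$ on $D_P^\times$ are literally the relations $(\Im\nabla^{(k)})_{k>0}$ defining $\mathfrak{L}_t(V)$. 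One small correction: exactness of $H^0$ should not be justified by affineness, because $\Im\nabla$ is only a sheaf of $\kk$-vector spaces. Nothing is lost, though, since $\kk(\!(t)\!)$ is a field and $D_P^\times$ is topologically a point.

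Your Step 3 is not needed here, and as sketched it would not work. The statement only asserts an identification $\simeq_t$ that depends on $t$, and $\Im\nabla$ is taken as already defined before the lemma, so no change-of-coordinates check is required.

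If you do want to show that $\Im\nabla\subset\mathcal{V}_C\otimes\omega_C$ is coordinate independent, the triangularity argument fails. The reason is that $\sum_{k>0}\Im\nabla^{(k)}$ is only a $\kk$-subspace and is not stable under multiplication by functions:
\[f\,\nabla^{(1)}(a\otimes g\,dt)=\nabla^{(1)}(a\otimes fg\,dt)-a\otimes(\partial_t f)\,g\,dt.\]
For example, $t\cdot\nabla^{(1)}(\vac\otimes t^{-1}\,dt)=-\vac\otimes t^{-1}\,dt$, whose class is $-\vac_{[-1]}\neq 0$. Expanding $\delta\rho=\rho'(t)\,\delta t+\cdots$ in $\Theta_n$ writes $\nabla^{(k)}_\rho$ as $\rho'(t)^k\nabla^{(k)}_t$ plus function multiples of lower $\nabla^{(j)}_t$, with the coefficients on the outside. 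That gives no containment of images, and an induction on $k$ does not repair it.

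What works is the right $\mathcal{D}$-module structure on $\mathcal{V}_C\otimes\omega_C$, for which $(a\otimes f\,dt)\cdot\partial_t^{(k)}=(-1)^k\nabla^{(k)}(a\otimes f)\,dt$. Under it, the span equals $(\mathcal{V}_C\otimes\omega_C)\cdot\mathcal{D}_+$, where $\mathcal{D}_+=\sum_{k>0}\mathcal{O}_C\,\partial_t^{(k)}$ is a coordinate-free left ideal. The identity $m\cdot(c\,\partial_t^{(j)})=(mc)\cdot\partial_t^{(j)}$ moves the function coefficients inside the operator. The $\omega_C$-twist is exactly what kills the zeroth-order terms, and that is the point your phrase about $\rho'(t)$ being ``absorbed the same way'' skips over.
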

\begin{proof}
    Given a formal coordinate $t$ at $P$, the punctured formal neighborhood $D_P^\times$ admits a trivialization $D_P^\times \simeq_t \Spf \kk(\!(t)\!)$. We define a map
    \[H^0\left(D_P^\times,\frac{\mathcal{V}_C\otimes\omega_C}{\Im\nabla}\right) \to \mathfrak{L}_t(V)\]
    as follows.
    Given a section of $\mathcal{V}_C\otimes\omega_C$ on $D_P^\times$ with respect to the $t$-trivialization, written as
    \[a\otimes \sum_{j\ge j_0}f_jt^j\,dt \in V\otimes_{\kk}\kk(\!(t)\!)\otimes_{\kk(\!(t)\!)}\kk(\!(t)\!)\,dt\simeq_t H^0(D_P^\times,\mathcal{V}_C\otimes \omega_C),\]
    we map it to the residue
    \[\Res_{t=0}Y(a,t)\sum_{j\ge j_0}f_jt^j\,dt = \sum_{j\ge j_0}f_ja_{[j]}\in \mathfrak{L}_t(V).\]
    The subsheaf $\Im \nabla$ is generated by elements of the form
    \[\nabla^{(k)}\left(a\otimes \sum_{j\ge j_0}f_jt^j\,dt\right) = \sum_{i=0}^k\left(T^{(i)}(a)\otimes \sum_{j\ge j_0}f_j\partial_t^{(k-i)}[t^j]\,dt\right)\]
    for all $a\in V$, $f\in \kk(\!(t)\!)$, and $k > 0$. These elements map to the residues
    \begin{align*}
        \sum_{i=0}^k\Res_{t=0}\left(Y(T^{(i)}a,t)\sum_{j\ge j_0}f_j\partial_t^{(k-i)}[t^j]\,dt\right) &=\sum_{i=0}^k\Res_{t=0}\left(\partial_t^{(i)}Y(a,t)\sum_{j\ge j_0}f_j\partial_t^{(k-i)}[t^j]\,dt\right)\\
        &= \Res_{t=0}\left(\partial_t^{(k)}\left(Y(a,t)\sum_{j\ge j_0}f_jt^j\right)\,dt\right)\\
        &=0.
    \end{align*}
Therefore, sections in $\Im\nabla^{(k)}\subset \mathcal{V}_C\otimes \omega_C$ for any $k>0$ map to 0, and this defines an isomorphism from sections of $(\mathcal{V}_C\otimes\omega_C)/\Im\nabla$ on $D_P^\times$ to $\mathfrak{L}_t(V)$.
\end{proof}

\subsubsection{Chiral algebras}
In characteristic 0, it is known that for a smooth curve $C$ the vertex algebra sheaf $\mathcal{V}_C$ has the structure of a chiral algebra. We expect that the same is true in positive characteristic provided that we work with PD-connections or regular differential operators instead of just ordinary connections.

In characteristic 0, the vector space $H^0(D_P^\times,(\mathcal{V}_C\otimes \omega_C)/\Im\nabla)$ has the structure of a (topological) Lie algebra arising from the chiral algebra structure on $\mathcal{V}_C$ such that the isomorphism in \zcref{lem:formal neighborhood ancillary Lie alg} extends to an isomorphism of Lie algebras \cite[\textsection 19.4.14, \textsection 6.6.9]{FBZ}. Chiral algebras have not been developed in positive characteristic, but one could certainly do so and replicate the proof.

A rough sketch of the proof is as follows:
If we replace every instance of $\frac{1}{n!}\partial_z^{n}$ with $\partial_z^{(n)}$ in \cite[\textsection 19, \textsection 6.6.9]{FBZ}, we will find that all of their arguments extend naturally with the convention that a $\cal D$-module is a sheaf equipped with the sheaf of regular differential operators, not just a sheaf with a flat connection (these are not the same in characteristic $p>0$). Thus, the desired claim follows from replicating their arguments. Alternatively, one could define the Lie algebra structure locally using the bracket on $\mathfrak{L}^{\mathsf{L}}(V)$. Either way, for a smooth curve $C$ the vertex algebra sheaf $\mathcal{V}_C$ is a chiral algebra in an appropriate sense such that the sheaf $(\mathcal{V}_C\otimes \omega_C)/\Im \nabla$ is naturally a sheaf of Lie algebras over $\kk$.
\begin{warning}
    Technically, we have not proven that the chiral Lie algebra $\mathcal{L}_{C\setminus P_\bullet}(V)$ is naturally a sheaf of Lie algebras. However, we believe that this is the case, so we will refer to it as such anyways. Either way, this convention is not used for any of our results.
\end{warning}

As mentioned before, it would be interesting to consider weaker notions of $\cal D$-modules in positive characteristic, such as connections with nilpotent or vanishing $p$-curvature, and study the corresponding notion of a chiral algebra.

\begin{remark}
    The arguments of \cite{HuangD-mod} could be used here to conclude that a vertex algebra over a field $\kk$ of any characteristic is equivalent to a weakly equivariant, unital chiral algebra on $\A^1_{\kk}$. Once again, we have to be careful with what we mean by a chiral algebra since we have to use the right version of a $\mathcal{D}$-module structure. We omit the details since it involves some tedious calculations.
\end{remark}

\subsection{The main definition}
Given the above observations, we may define the chiral Lie algebra for stable curves:
\begin{definition}
    Given a stable curve $f\colon\mathcal{C}\to S$ with marked sections $P_1,\ldots,P_n\colon S\to \mathcal{C}$, denote by $f_{\mathcal{C}\setminus P_\bullet}\colon \mathcal{C}\setminus P_\bullet(S) \to S$ the induced map from $f$. We define the $\mathcal{O}_S$-module $\mathcal{L}_{C\setminus P_\bullet}(V)$ to be 
    \[\mathcal{L}_{\mathcal{C}\setminus P_\bullet}(V) = (f_{\mathcal{C}\setminus P_\bullet})_*\left(\left((\mathcal{V}_{\mathcal{C}/S}\otimes \omega_{\mathcal{C}/S})/\Im \nabla\right)\big|_{\mathcal{C}\setminus P_\bullet(S)}\right).\]
\end{definition}
Given a stable, $n$-pointed curve $(C,P_\bullet)$ and an $\N$-graded vertex algebra $V$ with an $\Aut\mathcal{O}$-action, the chiral Lie algebra is given by
\[\mathcal{L}_{C\setminus P_\bullet}(V) \eqdef H^0\left(C\setminus P_\bullet,\frac{\mathcal{V}_C\otimes\omega_C}{\Im\nabla}\right).\]
From the above discussions, the chiral Lie algebra naturally has the structure of a Lie algebra.
\begin{notation}
    Occasionally, we will refer to the subsheaf $\Im \nabla \subset \mathcal{V}_C\otimes \omega_C$ as $\Im \nabla_C$ instead. This is so that it is clear over what curve $\Im \nabla$ is a sheaf.
\end{notation}

\subsection{Properties of the chiral Lie algebra}
Let $f\colon \mathcal{C}\to S$ be a stable, $n$-pointed curve with marked sections $P_1,\ldots,P_n\colon S\to \mathcal{C}$. In this relative setting, the formal neighborhood $D_P$ of a section $P\colon S\to \mathcal{C}$ is noncanonically given by $\Spf\mathcal{O}_S[\![t]\!]$, where $t$ is a chosen formal coordinate on $P(S)$. The punctured formal neighborhood is similarly $D_P^\times \simeq \Spf \mathcal{O}_S(\!(t)\!)$. Denote by $f_{D_P^\times}\colon D_P^\times \to S$ for the restriction of $f$ to $D_P^\times$. Using a similar argument to \zcref{lem:formal neighborhood ancillary Lie alg}, we have an identification
\[(f_{D_P^\times})_*\left(\left(\left(\mathcal{V}_{\mathcal{C}/S} \otimes \omega_{\mathcal{C}/S}\right)/\Im \nabla\right)\big|_{D_P^\times}\right) \simeq_t \frac{V \otimes_\kk \mathcal{O}_S(\!(t)\!)}{(\Im \nabla^{(k)})_{k>0}} = \mathfrak{L}_t(V)\:\widehat{\otimes}_\kk\: \mathcal{O}_S.\]
As shorthand, we write
\begin{equation}
\mathfrak{L}(V;\mathcal{O}_S) = \mathfrak{L}_t(V)\:\widehat{\otimes}_\kk\: \mathcal{O}_S.
\end{equation}
By restriction to the punctured formal neighborhoods of the marked sections, we have a natural morphism of Lie algebras
\begin{equation}\label{eq: varphiL}
    \varphi\colon \mathcal{L}_{\mathcal{C}\setminus P_\bullet}(V) \longrightarrow \bigoplus_{i=1}^n (f_{D_{P_i}^\times})_*\left(\left(\left(\mathcal{V}_{\mathcal{C}/S} \otimes \omega_{\mathcal{C}/S}\right)/\Im \nabla\right)\big|_{D_{P_i}^\times}\right) \stackrel{\simeq}{\longrightarrow}\bigoplus_{i=1}^n \mathfrak{L}_{t_i}(V;\mathcal{O}_S).
\end{equation}
We denote by $\sigma_{P_i}$ the image of a section $\sigma\in \mathcal{L}_{\mathcal{C}\setminus P_\bullet}(V)$ under the map $\mathcal{L}_{\mathcal{C}\setminus P_\bullet}(V)\to \mathfrak{L}_{t_i}(V;\mathcal{O}_S)$ for all $i=1,\ldots,n$.
With this, we obtain an action of $\mathcal{L}_{\mathcal{C}\setminus P_\bullet}(V)$ on $\mathfrak{L}(V;\mathcal{O}_S)^{\oplus n}$-modules. If $C$ is nodal, then there is an analogous version of this statement in \zcref{lem:lie algebra action}.

Let $(C,P_\bullet)$ be a stable, $n$-pointed curve such that $C\setminus P_\bullet$ is affine. Assume, for simplicity, that $C$ has exactly one simple node, which we denote by $Q$. Let $\eta\colon \widetilde C\to C$ be the normalization of $C$, with preimages $Q_+$ and $Q_-$, written as $Q_\pm=(Q_+,Q_-)$. Let $s_\pm$ be formal coordinates at $Q_\pm$ such that locally around $Q$, the curve is given by the equation $s_+s_-=0$. 
The chiral Lie algebra for $(\widetilde C,P_\bullet\sqcup Q_\pm)$ is
\[\mathcal{L}_{\widetilde C\setminus (P_\bullet\sqcup Q_\pm)}(V) = H^0\left(\widetilde C\setminus (P_\bullet\sqcup Q_\pm),\frac{\mathcal{V}_{\widetilde C}\otimes \omega_{\widetilde C}}{\Im\nabla_{\widetilde C}}\right).\]
Consider the linear maps given by restriction:
\begin{equation}
    \mathcal{L}_{\widetilde C\setminus (P_\bullet\sqcup Q_\pm)}(V) \longrightarrow H^0\left(D^\times_{Q_\pm},\frac{\mathcal{V}_{\widetilde C}\otimes \omega_{\widetilde C}}{\Im\nabla_{\widetilde C}}\right)\stackrel{\sim_{s_\pm}}{\longrightarrow}\mathfrak{L}_{Q_\pm}(V).
\end{equation}
Denote by $\sigma_{Q_\pm}$ the image of $\sigma\in \mathcal{L}_{\widetilde C\setminus (P_\bullet\sqcup Q_\pm)}(V)$ in $\mathfrak{L}_{Q_\pm}(V)$, and denote by $[\sigma_{Q_\pm}]_0$ the degree 0 part of $\sigma_{Q_\pm}$ with respect to the canonical split-filtration on $\mathfrak{L}(V)^{\mathsf{L}}\simeq \mathfrak{L}_{Q_\pm}(V)$.

Recall the involution $\theta$ of $\LVf$, which is given for a homogeneous element $a$ by
\begin{equation}\label{eq:iota}
    \theta(a_{[\deg(a)-1]}) = (-1)^{\deg(a)}\sum_{i\ge 0}(L_1^{(i)}a)_{[\deg(a)-i-1]}.
\end{equation}
For the following result, we fix a stable, $n$-pointed curve $C$ over $\kk$ with marked points $P_1,\ldots,P_n$.
\begin{proposition}\label{prop:chiral Lie alg nodal}
    Assume that $C\setminus P_\bullet$ is affine. Then we have an identification
    \[\eta^*(\mathcal{L}_{C\setminus P_\bullet}(V)) = \left\{\sigma\in \mathcal{L}_{\widetilde C\setminus (P_\bullet\sqcup Q_\pm)}(V) \,\big|\, \sigma_{Q_\pm}\in \LVf_{\le 0},\ [\sigma_{Q_-}]_0 = -\theta ([\sigma_{Q_+}]_0)\right\}.\]
\end{proposition}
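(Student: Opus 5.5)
We will follow the strategy of \cite[Proposition 3.3.1]{DGK23} (or \cite[Lemma 4.3.2]{DGT23}), replacing the flat log connection by the logarithmic PD-connection of \zcref{PD-connection nodal curve}. Since $C\setminus P_\bullet$ is affine, the global sections functor is exact on quasi-coherent sheaves. It therefore commutes with the quotient by $\Im\nabla$. Concretely, $\mathcal{L}_{C\setminus P_\bullet}(V)$ is the cokernel of $\Im\nabla_C(C\setminus P_\bullet)\to(\mathcal{V}_C\otimes\omega_C)(C\setminus P_\bullet)$, and similarly on $\widetilde C\setminus(P_\bullet\sqcup Q_\pm)$, which is also affine.

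The first step is to describe $\mathcal{V}_C\otimes\omega_C$ inside $\eta_*(\mathcal{V}_{\widetilde C}\otimes\omega_{\widetilde C}(Q_++Q_-))$. We use the local description of $\mathcal{V}_C(D_Q)$ from \zcref{sheaf on nodal curves}, tensored with $\omega_C$, whose local generator at $Q$ is $(ds_+/s_+,-ds_-/s_-)$. Under the residue map of \zcref{lem:formal neighborhood ancillary Lie alg}, the sections $b\otimes s_+^{\deg(b)+1}f\,ds_+/s_+$ map into $\LVf_{<0}$. The glued sections $(s_+^{\deg a}a,(-1)^{\deg a}e^{s_-^{-1}L_1}s_-^{\deg a}a)\otimes(ds_+/s_+,-ds_-/s_-)$ map to pairs $(a_{[\deg a-1]},-\theta(a_{[\deg a-1]}))$ by \zcref[noname]{eq:iota}, up to lower-degree terms. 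Consequently, a section of $\mathcal{V}_{\widetilde C}\otimes\omega_{\widetilde C}$ on $\widetilde C\setminus(P_\bullet\sqcup Q_\pm)$ descends to $C\setminus P_\bullet$ exactly when its expansions at $Q_\pm$ lie in filtration $\le 0$ and the degree-zero parts satisfy $[\sigma_{Q_-}]_0=-\theta([\sigma_{Q_+}]_0)$.

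The second step is to pass to the quotient by $\Im\nabla$. The inclusion ``$\subseteq$'' follows from the first step together with the fact that the log PD-connection on $\mathcal{V}_C$ is the restriction of $\eta_*$ of the PD-connection on $\widetilde C$ (\zcref{PD-connection nodal curve}). For the same reason, $\Im\nabla_C$ maps into $\eta_*\Im\nabla_{\widetilde C}$, so the map $\eta^*\mathcal{L}_{C\setminus P_\bullet}(V)\to\mathcal{L}_{\widetilde C\setminus(P_\bullet\sqcup Q_\pm)}(V)$ is well defined. Conversely, given a class $\sigma$ satisfying the conditions, we lift it to a section of $\mathcal{V}_{\widetilde C}\otimes\omega_{\widetilde C}$ away from $Q_\pm$. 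We then modify the lift by elements of $\Im\nabla_{\widetilde C}$ so that it has the required pole order and gluing at $Q_\pm$. Affineness, via a vanishing $H^1$ on $\widetilde C\setminus(P_\bullet\sqcup Q_\pm)$, lets us make these local corrections globally. After the correction, the first step shows that the section descends to $C$.

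Injectivity is the main obstacle. We must show that a section of $\mathcal{V}_C\otimes\omega_C$ that lies in $\Im\nabla_{\widetilde C}$ already lies in $\Im\nabla_C$. This reduces to a local statement at $Q$: the relevant image is generated by $\nabla^{(k)}_{\log}$ applied to sections of $\mathcal{V}_C(D_Q)$. We would first try a degree argument. On the components $V_m\otimes s^{m}\kk[\![s]\!]$, the operator $\nabla^{(k)}_{+,\log}=\sum_i s^iT^{(i)}\binom{s\partial_s}{k-i}$ preserves filtration degree. Using the identity $e^{\lambda L_1}e^{zT}=\cdots$ from \zcref{sec:H-mod}, as in the equivariance computation of \zcref{PD-connection nodal curve}, it also intertwines the two gluing conditions. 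It therefore suffices to check degree by degree that any preimage under $\nabla_{\widetilde C}$ of a glued section can be chosen glued. Here the degree-zero case, where $\theta$ enters, requires the explicit relation $\delta_{\log}s_-=\gamma(\delta_{\log}s_+)$ of \zcref[noname]{eq:delta log gamma}.
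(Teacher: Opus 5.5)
Your argument for the stated equality is correct and follows the paper's proof. Both use affineness to write the chiral Lie algebras as quotients of global sections. Both compute residues at $Q_\pm$: sections of $\widetilde{\mathcal{V}}_{\widetilde C}\otimes\omega_{\widetilde C}(Q_++Q_-)$ land in $\LVf_{\le 0}$, and combining the $\mathcal{V}_C$-gluing with $ds_+/s_+\equiv -ds_-/s_-$ yields $[\sigma_{Q_-}]_0=-\theta([\sigma_{Q_+}]_0)$. Both use compatibility of the log PD-connection with $\eta_*$ to get $\eta^*\Im\nabla_C\subseteq\Im\nabla_{\widetilde C}$.

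Your lift-and-correct step for the reverse inclusion is more careful than the paper's one-line ``equivalent to.'' The input there is jet-surjectivity on the affine $\widetilde C\setminus P_\bullet$, as in \zcref{prop: exists section chiral Lie alg}, rather than $H^1$-vanishing on $\widetilde C\setminus(P_\bullet\sqcup Q_\pm)$.

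Injectivity is not part of this statement, which only describes the image $\eta^*(\mathcal{L}_{C\setminus P_\bullet}(V))$; the paper leaves it to the subsequent corollary, citing \cite{DGT23}. So your final paragraph can be dropped, and as written it is only a heuristic plan, not a proof.
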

\begin{proof}
    The proof more or less carries out in the same way as in \cite[Proposition 3.3.1]{DGT23}, but we have to change some notation.

    Since $C\setminus P_\bullet$ is affine, we have
    \[\mathcal{L}_{C\setminus P_\bullet}(V) = \frac{H^0(C\setminus P_\bullet, \mathcal{V}_C\otimes \omega_C)}{H^0(C\setminus P_\bullet,\Im\nabla_C)}.\]
    Moreover, $\widetilde C\setminus (P_\bullet\sqcup Q_\pm)$ is also affine, so we have
    \[\mathcal{L}_{\widetilde C\setminus (P_\bullet\sqcup Q_\pm)}(V) = \frac{H^0\left(\widetilde C\setminus (P_\bullet\sqcup Q_\pm) , \mathcal{V}_{\widetilde C} \otimes \omega_{\widetilde C}\right)}{H^0(\widetilde C\setminus (P_\bullet\sqcup Q_\pm) , \Im\nabla_{\widetilde C})}.\]
    Therefore, elements in $\mathcal{L}_{C\setminus P_\bullet}(V)$ are represented by elements in $H^0(C\setminus P_\bullet, \mathcal{V}_C\otimes \omega_C)$.

    We have inclusions of sheaves $\eta^*\omega_C\subset \omega_{\widetilde{C}}(Q_++Q_-)$ and $\eta^*\mathcal{V}_C\subset \widetilde{\mathcal{V}}_{\widetilde{C}}$. We then have an inclusion
    \begin{equation}\label{pullback by eta H^0}
        \eta^*H^0(C\setminus P_\bullet, \mathcal{V}_C\otimes\omega_C) \subseteq H^0\left(\widetilde C\setminus P_\bullet,\widetilde{\mathcal{V}}_{\widetilde{C}}\otimes \omega_{\widetilde C}(Q_++Q_-)\right).
    \end{equation}
    Now, consider the restriction map induced by the inclusion $D_{Q_\pm} \hookrightarrow \widetilde C\setminus P_\bullet$. We may identify
    \[H^0\left(\widetilde C\setminus P_\bullet,\widetilde{\mathcal{V}}_{\widetilde{C}}\otimes \omega_{\widetilde C}(Q_++Q_-)\right) \simeq_{s_\pm} \bigoplus_{k\ge 0}V_k \otimes_\kk s^{k-1}_\pm \kk[\![s_\pm]\!]ds_\pm.\]
    It follows that the residue map
    \begin{equation}\label{res map le 0}
        \bigoplus_{k\ge 0}V_k\otimes s_\pm^{k-1}\kk[\![s_\pm]\!]ds_\pm \to \frakL_{Q_\pm}(V),\qquad a\otimes f(s_\pm)\,ds_\pm \mapsto \Res_{s_\pm = 0}\left(Y(a,s_\pm)\,ds_\pm\right)
    \end{equation}
    lies in $\frakL_{Q_\pm}(V)_{\le 0}$. Composing the inclusion in \zcref[noname]{pullback by eta H^0} with the residue map in \zcref[noname]{res map le 0}, we deduce that for $\sigma\in \eta^*H^0(C\setminus P_\bullet,\mathcal{V}_C\otimes \omega_C)$ the image $\sigma_{Q_\pm}\in\frakL_{Q_\pm}(V)$ lies in $\frakL_{Q_\pm}(V)_{\le 0}\cong \LVf_{\le 0}$.


    Now we will describe the gluing isomorphism for elements $\sigma$ of \zcref[noname]{pullback by eta H^0}. For homogeneous $a\in V$, there are gluing isomorphism on fibers of $\eta_*\widetilde{\mathcal{V}}_{\widetilde{C}}$ and $\eta_*\omega_{\widetilde{C}}$ at $Q_\pm$ defining $\mathcal{V}_C$ and $\omega_C$ respectively:
    \[a\otimes s_+^{\deg(a)} \equiv (-1)^{\deg(a)}\sum_{i\ge 0}L_1^{(i)}(a)\otimes s_-^{\deg(a)-i},\qquad s_+^{-1}ds_+ \equiv -s_-^{-1}ds_-.\]
    Combining these two identifications, the induced gluing isomorphisms on fibers of $\widetilde{\mathcal{V}}_{\widetilde{C}}\otimes \omega_{\widetilde C}(Q_++Q_-)$ at $Q_\pm$ is given by
    \begin{equation}\label{gluing V otimes omega}
        a\otimes s_+^{\deg(a)-1}ds_+ \equiv (-1)^{\deg(a)-1}\sum_{i\ge 0}L_1^{(i)}(a)\otimes s_-^{\deg(a)-i-1}ds_-.
    \end{equation}
    The above relation is equivalent to requiring that $[\sigma_{Q_-}]_0 = -\theta[\sigma_{Q_+}]_0$.
    
    To show that the two conditions are equivariant with respect to the PD-connection $\Theta$, we just have to check on a formal neighborhood of the node $Q$. Looking at the definition of $\Theta$ in \zcref{PD-connection nodal curve}, we have $\eta^*\Im \nabla_C|_{D_Q}\subseteq \eta^*\Im \nabla_{\widetilde C}|_{D_{Q_+}^\times\sqcup D_{Q_-}^\times}$, so we are done.
\end{proof}
This proposition is the reason we choose to work with $\mathcal{V}_C\otimes \omega_C$ in the definition of $\mathcal{L}_{C\setminus P_\bullet}(V)$. If we chose to replace $\mathcal{V}_C\otimes \omega_C$ with $\mathcal{V}_C$, then we would have a degree mismatch, and we would need to redefine the action of $\mathcal{L}_{C\setminus P_\bullet}(V)$ via the residue map. In principle, though, one may work with $\mathcal{V}_C$ instead of $\mathcal{V}_C\otimes \omega_C$.

\begin{corollary}\label{lem:lie algebra action}
    The normalization map $\eta\colon \widetilde C\to C$ identifies $\mathcal{L}_{C\setminus P_\bullet}(V)$ with a Lie subalgebra of $\mathcal{L}_{\widetilde C\setminus P_\bullet \sqcup Q_\bullet}(V)$ via the pullback $\eta^*$.
\end{corollary}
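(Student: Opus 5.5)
The plan is to deduce the corollary from \zcref{prop:chiral Lie alg nodal}, which already identifies $\eta^*(\mathcal{L}_{C\setminus P_\bullet}(V))$ with the subspace
\[\mathcal{S} = \left\{\sigma\in \mathcal{L}_{\widetilde C\setminus (P_\bullet\sqcup Q_\pm)}(V) \,\big|\, \sigma_{Q_\pm}\in \LVf_{\le 0},\ [\sigma_{Q_-}]_0 = -\theta ([\sigma_{Q_+}]_0)\right\}.\]
Two things then remain. First, $\eta^*$ must be injective on $\mathcal{L}_{C\setminus P_\bullet}(V)$. Second, $\mathcal{S}$ must be closed under the bracket of $\mathcal{L}_{\widetilde C\setminus (P_\bullet\sqcup Q_\pm)}(V)$, and $\eta^*$ must intertwine the brackets.

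\textbf{Injectivity.} I would reduce to the affine case as in the proof of \zcref{prop:chiral Lie alg nodal}, where $\mathcal{L}_{C\setminus P_\bullet}(V)$ is the quotient of $H^0(C\setminus P_\bullet,\mathcal{V}_C\otimes\omega_C)$ by $H^0(C\setminus P_\bullet,\Im\nabla_C)$. The inclusion \zcref[noname]{pullback by eta H^0} is injective, because $\eta$ is an isomorphism away from $Q$ and all sheaves involved are torsion-free. So it suffices to show that a section of $\mathcal{V}_C\otimes\omega_C$ whose pullback lies in $\Im\nabla_{\widetilde C}$ already lies in $\Im\nabla_C$. Away from $Q$ this is immediate. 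Near $Q$, I would use that $\Im\nabla_C|_{D_Q}$ is generated by the $\nabla^{(k)}_{\log}$ applied to sections of the two types in \zcref[noname]{eq:typeA} and \zcref[noname]{eq:typeB}. The degree-zero matching condition (the $\gamma$-gluing) is preserved by the $\nabla^{(k)}_{\log}$ by the formal-series computation in the proof of \zcref{PD-connection nodal curve}. I expect this local statement to be the main obstacle. What I would try first is to compare residues: the map $\sigma\mapsto(\sigma_{Q_+},\sigma_{Q_-})$ with the constraint of \zcref{prop:chiral Lie alg nodal} separates classes. If that fails, I would simply read "identifies" as saying that the image of $\eta^*$ is $\mathcal{S}$, exactly as in \cite[Corollary 3.3.2]{DGT23}.

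\textbf{Closure under the bracket.} I would compute at the two punctured discs $D_{Q_\pm}^\times$. The restriction maps $\sigma\mapsto\sigma_{Q_\pm}$ are Lie algebra maps into $\mathfrak{L}_{Q_\pm}(V)\cong\LVL$. The subspace $\LVf_{\le 0}$ is a Lie subalgebra, since the bracket \zcref[noname]{Lie bracket} is additive in degree. Taking degree-zero parts, $[\,\cdot\,]_0\colon \LVf_{\le 0}\to \LVf_0$ is a Lie homomorphism, because negative-degree parts cannot contribute to degree $0$. Finally, $\theta$ is a Lie algebra involution preserving degree $0$ by \cite[Lemma 3.4.3]{DGK23}, as quoted above. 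Hence, for $\sigma,\tau\in\mathcal{S}$,
\[[[\sigma,\tau]_{Q_-}]_0 = [[\sigma_{Q_-}]_0,[\tau_{Q_-}]_0] = [\theta[\sigma_{Q_+}]_0,\theta[\tau_{Q_+}]_0] = \theta[[\sigma,\tau]_{Q_+}]_0.\]
The sign needs care: since the relation for elements of $\mathcal{S}$ involves $-\theta$, the product of the two minus signs gives $+\theta$, not $-\theta$, in the last expression. So I would recheck the convention. The expected fix is that $-\theta$, not $\theta$, is the relevant anti-involution. This matches the fact that $\theta$ is an anti-automorphism on $U$, so $x\mapsto-\theta(x)$ is the Lie automorphism, and with that convention the computation closes up with $-\theta$ on both sides.

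Compatibility of $\eta^*$ with the brackets is clear away from $Q$, since there $\eta$ is an isomorphism of curves and the bracket is defined locally.
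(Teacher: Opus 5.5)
\textbf{Gap: injectivity of $\eta^*$.} This step cannot be closed, because $\eta^*$ need not be injective. Your reduction says that a section of $\mathcal{V}_C\otimes\omega_C$ over $C\setminus P_\bullet$ whose pullback lies in $\Im\nabla_{\widetilde C}$ already lies in $\Im\nabla_C$. This fails at a non-separating node.

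Here is a counterexample in characteristic $0$. Take $V=\kk\vac$, let $C$ be the nodal cubic with $\widetilde C=\P^1$, and put $P_1=\infty$, $Q_+=0$, $Q_-=1$. Let $\sigma=\vac\otimes dx$.
\begin{itemize}
\item $\sigma$ is a section of $\mathcal{V}_C\otimes\omega_C$ on $C\setminus P_1$: it is regular at $Q_\pm$ with zero residues.
\item $\eta^*\sigma=\nabla(\vac\otimes x)$, so $\eta^*[\sigma]=0$. In particular all expansions at $Q_\pm$ and $P_1$ vanish, so comparing residues detects nothing.
\item Nevertheless $[\sigma]\neq 0$. A primitive in the Zariski stalk at $Q$ would be some $g\in\mathcal{O}_{C,Q}$ with $dg=dx$, i.e.\ $g=x+c$. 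This violates $g(Q_+)=g(Q_-)$.
\end{itemize}
For a CFT-type VOA the same class survives, since $\ker T=\kk\vac$ forces any primitive of $\vac\otimes dx$ into $\mathcal{O}_C\vac$.

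The obstruction is also not formal-local at the node, contrary to what you anticipated. On $D_Q$ the pair $(x,\,x-1)$ is a perfectly good primitive. What fails is matching a single primitive at $Q_+$ and $Q_-$ across the connected curve $\widetilde C$.

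\textbf{What the corollary can actually say.} The only tenable content is your fallback reading, and it should be the proof rather than a contingency. The paper gives no argument beyond citing \cite{DGT23}.
\begin{itemize}
\item $\eta^*$ is restriction of sections of $(\mathcal{V}_C\otimes\omega_C)/\Im\nabla$ to the open set $C\setminus(P_\bullet\sqcup Q)\cong\widetilde C\setminus(P_\bullet\sqcup Q_\pm)$. Hence it is a Lie algebra homomorphism.
\item Its image is $\mathcal{S}$ by \zcref{prop:chiral Lie alg nodal}, and is therefore a Lie subalgebra.
\item The action on $M^\bullet$ factors through this image, because it only uses expansions at $P_\bullet$.
\end{itemize}

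Your explicit closure check for $\mathcal{S}$ then becomes redundant, but it is correct. Your sign repair is also right. Since $\theta$ induces an algebra map $U\to U^{\mathrm{op}}$, it is a Lie \emph{anti}-involution, despite the wording of the quoted lemma. So $-\theta$ is the Lie automorphism, and the condition $[\sigma_{Q_-}]_0=-\theta([\sigma_{Q_+}]_0)$ is preserved by brackets.
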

The proof is the same as in \cite[Proposition 3.3.2]{DGT23}, so we omit it.

\subsection{An approximation result for chiral Lie algebras}
Let $\mathcal{C}\to S$ be a stable, $n$-pointed curve with marked sections $P_1,\ldots,P_n\colon S\to \mathcal{C}$. We will need an important technical result about the sheaf of chiral Lie algebras for our main theorems.

We give the setting for our result here.
Let $\mathcal{C}\to S$ be a stable, $(n+m)$-pointed curve with marked sections $P_1,\ldots,P_n,Q_1,\ldots,Q_m\colon S\to \mathcal{C}$ such that the images of the sections are contained in the smooth locus and are pairwise disjoint. Assume that $\mathcal{C}\setminus P_\bullet(S)$ is affine over $S$. For each $i=1,\ldots,m$, let $s_i$ be a formal coordinate at $Q_i(S)$, which generates the maximal ideal sheaf $\widehat{\mathfrak{m}}_{\mathcal{C},Q_i}$ of the completion $\widehat{\mathcal{O}}_{\mathcal{C},Q_i} \simeq_{s_i} \mathcal{O}_S[\![s_i]\!]$.
\begin{proposition}\label{prop: exists section chiral Lie alg}
    In the above setting, for all $\ell\ge 0$ the natural map
    \[\left(\mathcal{V}_{\mathcal{C}/S} \otimes \omega_{\mathcal{C}/S}\right)\left(\mathcal{C}\setminus P_\bullet(S)\right) \to \bigoplus_i \left(\mathcal{V}_{\mathcal{C}/S} \otimes \omega_{\mathcal{C}/S}\right)\left(\Spec\widehat{\mathcal{O}}_{\mathcal{C},Q_i} / (\widehat{\mathfrak{m}}_{\mathcal{C},Q_i})^\ell\right)\]
    is surjective.
\end{proposition}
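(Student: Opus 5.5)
The strategy is the standard vanishing-of-$H^1$ argument, as in \cite[Lemma 3.4.1]{DGT23} or \cite{DGT21}, adapted to the filtered sheaf $\mathcal{V}_{\mathcal{C}/S}\otimes\omega_{\mathcal{C}/S}$. Write $\mathcal{E} = \mathcal{V}_{\mathcal{C}/S}\otimes \omega_{\mathcal{C}/S}$ and $U = \mathcal{C}\setminus P_\bullet(S)$. Let $\mathcal{I}\subset \mathcal{O}_{\mathcal{C}}$ be the ideal sheaf of the closed subscheme $Z_\ell = \bigsqcup_i \Spec\widehat{\mathcal{O}}_{\mathcal{C},Q_i}/(\widehat{\mathfrak{m}}_{\mathcal{C},Q_i})^\ell$. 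This is the $\ell$-th infinitesimal neighborhood of the disjoint sections $Q_i(S)$, which lie in the smooth locus and away from $P_\bullet(S)$; hence $\mathcal{I} = \mathcal{O}_{\mathcal{C}}(-\ell\sum_i Q_i)$ is invertible and $Z_\ell$ is finite over $S$.

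First I reduce the claim to a surjectivity statement on $U$. The target of the map is $H^0(U, \mathcal{E}\otimes \mathcal{O}_{Z_\ell})$, since $\mathcal{E}\otimes\mathcal{O}_{Z_\ell}$ is supported on $Z_\ell\subset U$. So it suffices to show that $H^0(U,\mathcal{E})\to H^0(U,\mathcal{E}/\mathcal{I}\mathcal{E})$ is surjective.

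The short exact sequence $0\to \mathcal{I}\mathcal{E}\to \mathcal{E}\to \mathcal{E}/\mathcal{I}\mathcal{E}\to 0$ is exact because $\mathcal{I}$ is invertible. Its long exact sequence reduces surjectivity to $H^1(U,\mathcal{I}\mathcal{E}) = 0$.

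The key point is that $U$ is affine over $S$. Working locally on $S$, we may assume $S$ is affine, so $U$ is an affine scheme. The sheaf $\mathcal{I}\mathcal{E}$ is quasi-coherent on $U$, as a tensor product of the quasi-coherent sheaf $\mathcal{V}_{\mathcal{C}/S}$ (from its construction by descent and gluing, \zcref{rmk:vertex algebra sheaf general case}) with invertible sheaves. Serre's vanishing theorem then gives $H^1(U,\mathcal{I}\mathcal{E})=0$.

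Because $\mathcal{V}_{\mathcal{C}/S}$ has infinite rank, I will check quasi-coherence carefully rather than invoke coherence. Alternatively, I can avoid that check by using the exhaustive filtration $\mrm{G}_k\mathcal{V}_{\mathcal{C}/S}$ by finite-rank (coherent) subsheaves. Each piece satisfies
\[H^0(U,\mrm{G}_k\mathcal{V}\otimes\omega)\twoheadrightarrow H^0(Z_\ell,\mrm{G}_k\mathcal{V}\otimes\omega),\]
and the target is the union over $k$ of these, because sections over the finite scheme $Z_\ell$ involve only finitely many graded components locally.

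The main obstacle is justifying that $\mathcal{V}_{\mathcal{C}/S}$ is quasi-coherent on $\mathcal{C}$ near the nodes, where it is defined as a kernel (\zcref{sheaf on nodal curves}) and glued via \cite{BL}. The sections $Q_i$ avoid the nodes, but $U$ contains nodes, so the vanishing of $H^1$ needs quasi-coherence globally on $U$. I would handle this by noting that $\mathcal{V}_{\mathcal{C}}$ is a kernel of a morphism of quasi-coherent sheaves, namely $\eta_*\widetilde{\mathcal{V}}_{\widetilde{\mathcal{C}}}\to V_Q$ with $\eta$ affine. This argument is filtration-compatible: the map restricts to $\mrm{G}_k$ pieces, which are coherent. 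With that, the argument goes through verbatim as in \cite{DGT23}.
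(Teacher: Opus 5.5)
Your argument is correct and is essentially the paper's. The paper gets surjectivity of $\mathcal{O}_{\mathcal{C}}(U)\to\bigoplus_i\mathcal{O}_{\mathcal{C},Q_i}/\mathfrak{m}_{\mathcal{C},Q_i}^{\ell}$ from the Chinese remainder theorem and then tensors with the sections of $\mathcal{E}=\mathcal{V}_{\mathcal{C}/S}\otimes\omega_{\mathcal{C}/S}$ over $U$, which is just the module-theoretic form of your vanishing of $H^1(U,\mathcal{I}\mathcal{E})$ on the affine scheme $U$. If anything, your version is slightly more careful: it makes explicit the two inputs the paper uses tacitly (affineness of $U$ after localizing on $S$, and quasi-coherence of $\mathcal{V}_{\mathcal{C}/S}$ near the nodes), and it avoids the flatness the paper invokes, which is unnecessary since $\mathcal{E}(U)\otimes_{\mathcal{O}_{\mathcal{C}}(U)}(-)$ is right exact regardless.
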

\begin{proof}
    First, we have
    \[\widehat{\mathcal{O}}_{\mathcal{C},Q_i} / (\widehat{\mathfrak{m}}_{\mathcal{C},Q_i})^\ell \cong \mathcal{O}_{\mathcal{C},Q_i} / (\mathfrak{m}_{\mathcal{C},Q_i})^\ell\]
    since the topology on $\mathcal{O}_{\mathcal{C},Q_i}$ is induced from $\mathfrak{m}_{\mathcal{C},Q_i}$.
    Note that the natural map
    \[\mathcal{O}_{\mathcal{C}}\left(\mathcal{C}\setminus P_\bullet(S)\right) \to \bigoplus_i \mathcal{O}_{\mathcal{C}}\left(\Spec\mathcal{O}_{\mathcal{C},Q_i} / (\mathfrak{m}_{\mathcal{C},Q_i})^\ell\right)\]
    is surjective by the Chinese remainder theorem. The $\mathcal{O}_{\mathcal{C}}$-module $\mathcal{V}_{\mathcal{C}/S} \otimes \omega_{\mathcal{C}/S}$ is flat since it is locally free, so tensoring with it is right exact. This shows the desired statement.
\end{proof}
Here is the primary consequence of the statement that we will make use of.
\begin{corollary}\label{cor:Riemann Roch calculation}
    Fix $j\in \{1,\ldots,m\}$ and integers $d<N$. Then for any $a\in V$ there exists $\sigma\in \mathcal{L}_{\mathcal{C}\setminus (P_\bullet\sqcup Q_\bullet)}(V)$ such that the expansion $\sigma_{Q_i} \in \mathfrak{L}_{s_i}(V;\mathcal{O}_S)^{\mathsf{L}}$ satisfies
    \[\sigma_{Q_i} \in \delta_{i,j}a_{[d]} + s_i^N(V\otimes \mathcal{O}_S)[\![s_i]\!]\]
    for all $i\in \{i,\ldots,m\}$.
\end{corollary}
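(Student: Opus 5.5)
The plan is to apply \zcref{prop: exists section chiral Lie alg} with a suitably large $\ell$ to a section that is prescribed only to finite order at each $Q_i$, and then push it to the chiral Lie algebra and read off its expansions.

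First, fix the formal coordinate $s_j$ at $Q_j(S)$, and use the trivialization $\mathcal{V}_{\mathcal{C}/S}\otimes\omega_{\mathcal{C}/S}\simeq_{s_i} V\otimes\mathcal{O}_S[\![s_i]\!]\,ds_i$ on each formal disc $D_{Q_i}$. By \zcref{lem:formal neighborhood ancillary Lie alg}, in its relative form, the class $a_{[d]}\in\mathfrak{L}_{s_j}(V;\mathcal{O}_S)$ is the residue image of the local section $a\otimes s_j^{d}\,ds_j$. If $d\ge 0$, this is a regular local section at $Q_j$. If $d<0$, we do not need a pole at $Q_j$ for the statement as phrased. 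The target congruence is modulo $s_i^N(V\otimes\mathcal{O}_S)[\![s_i]\!]$, so we only need the expansion of $\sigma$ to agree with $a\otimes s_j^d\,ds_j$ in the window of exponents below $N$.

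In the case $d\ge0$, choose $\ell=N$ (or $N+1$) and take the element of $\bigoplus_i(\mathcal{V}\otimes\omega)(\Spec\widehat{\mathcal{O}}_{\mathcal{C},Q_i}/\widehat{\mathfrak{m}}^\ell)$ that is $a\otimes s_j^d\,ds_j$ at $Q_j$ and $0$ at $Q_i$ for $i\neq j$. By \zcref{prop: exists section chiral Lie alg}, which uses that $\mathcal{C}\setminus P_\bullet(S)$ is affine, it lifts to a section $\widetilde\sigma$ over $\mathcal{C}\setminus P_\bullet(S)$. Restrict $\widetilde\sigma$ to $\mathcal{C}\setminus(P_\bullet\sqcup Q_\bullet)$ and take its class $\sigma$ in the quotient by $\Im\nabla$. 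Its expansion at $Q_i$ is then $\delta_{i,j}a\otimes s_j^d\,ds_j$ plus terms in $s_i^{N}$. Via the residue identification, this gives $\sigma_{Q_i}\in\delta_{i,j}a_{[d]}+s_i^N(\cdots)$.

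In the case $d<0$, the section must have a pole of order $-d$ at $Q_j$. To handle this, twist by the divisor $Q_j$. Apply the same Chinese remainder and right exactness argument to the locally free sheaf $(\mathcal{V}\otimes\omega)(-dQ_j)$ on the affine $\mathcal{C}\setminus P_\bullet(S)$, with $\ell=N-d$. In the trivialization this sheaf is $s_j^{d}V\otimes\mathcal{O}_S[\![s_j]\!]\,ds_j$ near $Q_j$, and we prescribe $s_j^{-d}\cdot(a\otimes s_j^d\,ds_j)=a\otimes ds_j$ to order $\ell$ there. Then, as before, restrict and pass to the quotient.

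The main subtlety is bookkeeping. We need to make sure the lifted section's expansion, interpreted through the residue map into $\mathfrak{L}_{s_i}(V;\mathcal{O}_S)$, has error terms genuinely in the stated $s_i^N$-adic neighborhood. We also need the twisted version of the surjectivity to hold, which uses that the twisted sheaf is still locally free.
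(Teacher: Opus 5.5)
Your proposal is correct and matches the argument the paper intends: the paper gives no written proof and records the corollary as a direct consequence of the surjectivity-onto-jets proposition. Your twist by $-dQ_j$ when $d<0$ supplies a step the paper leaves implicit, since that proposition as stated only produces sections regular at the $Q_i$, and its Chinese remainder/affineness argument applies verbatim to the twisted quasi-coherent sheaf. You should delete the stray remark that no pole is needed when $d<0$; it contradicts your own correct treatment of that case.
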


Let $C$ be a smooth curve, possibly disconnected, and suppose there are two non-empty sets of distinct marked points $P_\bullet = (P_1,\ldots,P_n)$ and $Q_\bullet = (Q_1,\ldots,Q_m)$. For $i\in \{1,\ldots,m\}$, let $s_i$ be a formal coordinate at $Q_i$. For $\sigma\in \mathcal{L}_{C\setminus P_\bullet\sqcup Q_\bullet}(V)$, let $\sigma_{Q_i}$ be the image of $\sigma$ under the map given by restriction:
\[\mathcal{L}_{C\setminus P_\bullet\sqcup Q_\bullet}(V) \to H^0\left(D^\times_{Q_i},\frac{\mathcal{V}_C\otimes \omega_C}{\Im\nabla}\right) \stackrel{\sim_{s_i}}{\longrightarrow}\mathfrak{L}_{Q_i}(V).\]
For an integer $N$, consider
\[\mathfrak{L}(V,NQ_i) = \frac{V\otimes_{\kk} s_i^N\kk[\![s_i]\!]}{\Im\nabla}.\]
This is a Lie subalgebra of $\mathfrak{L}_{Q_i}(V)$.

The last general result we need is not actually about chiral Lie algebras, but it is fitting to put it here since it is related to the relative dualizing sheaf $\omega_{X/S}$.
\begin{lemma}[Strong residue theorem]\label{lem:strong residue}
    Let $f\colon X\to S$ be a proper flat family of connected nodal curves. Let $D = P_1+\cdots+P_n$ be a relative effective Cartier divisor such that the sections $P_1,\ldots,P_n\colon S\to X$ have mutually distinct images that all lie in the smooth locus of $f$. Then the image of the residue map
    \[\Res\colon f_*(\omega_{X/S}(D))\to \mathcal{O}_S^{\oplus n}\]
    is given by the kernel of the summation map
    \[\ker\left(\Sigma\colon \mathcal{O}_S^{\oplus n} \to \mathcal{O}_S\right).\]
\end{lemma}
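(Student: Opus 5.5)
The plan is to reduce to the case where $S=\Spec A$ is affine, since the claim is local on $S$. We then prove the two inclusions $\Res(f_*\omega_{X/S}(D))\subseteq\ker\Sigma$ and $\ker\Sigma\subseteq\Res(f_*\omega_{X/S}(D))$ separately, using Grothendieck duality for the proper flat Cohen--Macaulay morphism $f$. Recall that $\omega_{X/S}$ is a line bundle (relative dualizing sheaf of a nodal family), and that the residues at the $P_i$ are defined via local coordinates $t_i$ with $D_{P_i}\simeq\Spf\mathcal{O}_S[\![t_i]\!]$.

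\textbf{Step 1: residues lie in $\ker\Sigma$.} This is the relative residue theorem. Consider the short exact sequence
\[0\to\omega_{X/S}\to\omega_{X/S}(D)\xrightarrow{\Res}\bigoplus_i P_{i*}\mathcal{O}_S\to0.\]
Applying $f_*$ gives the long exact sequence
\[f_*\omega_{X/S}(D)\to\mathcal{O}_S^{\oplus n}\xrightarrow{\partial}R^1f_*\omega_{X/S}\to R^1f_*\omega_{X/S}(D)\to 0.\]
Composing $\partial$ with the trace isomorphism $\mathrm{tr}\colon R^1f_*\omega_{X/S}\xrightarrow{\sim}\mathcal{O}_S$ gives the summation map $\Sigma$. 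This trace isomorphism holds because the fibers are connected, so $f_*\mathcal{O}_X=\mathcal{O}_S$ universally and duality gives $R^1f_*\omega_{X/S}\cong(f_*\mathcal{O}_X)^\vee$. The compatibility $\mathrm{tr}\circ\partial=\Sigma$ is the standard statement that the trace is the sum of residues. One can check it on fibers by base change, using that $R^1f_*\omega_{X/S}(D)=0$, so that $R^1f_*\omega_{X/S}$ commutes with base change. Over a field it is the classical residue theorem on a nodal curve, where the contributions at the two branches of a node cancel by the gluing $s_+^{-1}ds_+\equiv -s_-^{-1}ds_-$ recalled in the proof of \zcref{prop:chiral Lie alg nodal}.

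\textbf{Step 2: exactness gives equality.} From the long exact sequence, $\Res(f_*\omega_{X/S}(D))=\ker\partial=\ker(\mathrm{tr}\circ\partial)=\ker\Sigma$, since $\mathrm{tr}$ is an isomorphism. So the whole statement reduces to Step 1 together with two inputs: the identification $R^1f_*\omega_{X/S}\cong\mathcal{O}_S$ via the trace, and the surjectivity of $\Res$ onto $\bigoplus_iP_{i*}\mathcal{O}_S$ at the sheaf level on $X$. The latter is immediate from local coordinates at the smooth sections.

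\textbf{Main obstacle.} The main obstacle is Step 1 in the relative setting over a non-reduced or arbitrary base $S$: one must justify $\mathrm{tr}\circ\partial=\Sigma$ and the base-change behavior of $R^1f_*\omega_{X/S}$. I would first try to establish that $\mathrm{tr}\circ\partial$ and $\Sigma$ are both $\mathcal{O}_S$-linear maps $\mathcal{O}_S^{\oplus n}\to\mathcal{O}_S$ compatible with arbitrary base change. If so, it suffices to compare them on the basis vectors $e_i$, where both values are elements of $\mathcal{O}_S(S)$ pulled back from the universal case. One then argues over a reduced (indeed smooth) base such as the relevant moduli stack, where equality on geometric points suffices. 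If that reduction is awkward, I would fall back on citing the residue theorem in Hartshorne's \emph{Residues and Duality} / Conrad's treatment for CM morphisms of relative dimension one.
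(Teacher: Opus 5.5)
Your proposal is correct and follows the same route as the paper. Both use the short exact sequence $0\to\omega_{X/S}\to\omega_{X/S}(D)\to\bigoplus_i\mathcal{O}_{P_i}\to 0$, the long exact sequence for $f_*$, and the identification of the connecting map with $\Sigma$ under the trace isomorphism $R^1f_*\omega_{X/S}\cong\mathcal{O}_S$. The paper simply cites this last identification from the Stacks Project (Tags 0E6N, 0E5W), whereas you sketch a valid justification over an arbitrary base by reducing to a reduced universal family and comparing on geometric points.
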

\begin{proof}
    We have a short exact sequence
    \begin{equation}
            0 \to \omega_{X/S} \hookrightarrow \omega_{X/S}(D) \to \omega_{X/S}(D)/\omega_{X/S} \to 0.
    \end{equation}
    We have $\omega_{X/S}(D)/\omega_{X/S} = \bigoplus_{i=1}^n\mathcal{O}_{P_i}$. Applying the pushforward $f_*$, we have a long exact sequence
    \begin{equation}
        \begin{tikzcd}
            0 \to f_*\omega_{X/S} \to f_*\left(\omega_{X/S}(D)\right) \to f_*\left(\omega_{X/S}(D)/\omega_{X/S}\right) \stackrel{\delta}{\to} R^1f_*\omega_{X/S} \to \cdots.
        \end{tikzcd}
    \end{equation}
    By construction, we have $f_*\left(\omega_{X/S}(D)/\omega_{X/S}\right) \cong \mathcal{O}_S^{\oplus n}$. From \cite[0E6N, 0E5W]{stacks-project}, the relative dualizing sheaf $\omega_{X/S}$ admits an isomorphism 
    \[R^1f_*\omega_{X/S} \stackrel{\cong}{\longrightarrow} \mathcal{O}_S.\]
    Moreover, under these isomorphisms, the trace map $\delta$ is identified with the summation map $\Sigma\colon \mathcal{O}_S^{\oplus n}\to \mathcal{O}_S$.
\end{proof}


\subsection{On formal smoothings}
Let $\mathscr{C}\to S$ be a smoothing family as in \zcref{smoothings}. On the formal neighborhood $\frak{D}_{P_i} = \Spec R[\![t_i,q]\!]$, we have the trivialization $\mathcal{V}_{\mathscr{C}}|_{\frak{D}_{P_i}} \simeq_{t_i}(V\otimes R[\![t_i]\!])[\![q]\!]$. Once again, there is an induced log PD-connection on the tensor product $\mathcal{V}_{\mathscr{C}}\otimes \omega_{\mathscr{C}/S}$ given by the log PD-connection on $\mathcal{V}_{\mathscr{C}}$ as described in \zcref{PD-connection formal smoothings}. As such, we may define the chiral Lie algebra associated to $\mathcal{V}_{\mathscr{C}}$ to be
\[\mathcal{L}_{\mathscr{C}\setminus P_\bullet}(V) \eqdef H^0\left(\mathscr{C}\setminus P_\bullet(S),\frac{\mathcal{V}_{\mathscr{C}}\otimes \omega_{\mathscr{C}/S}}{\Im \nabla}\right),\]
where $\Im \nabla$ is defined to be the smallest subsheaf of $\mathcal{V}_{\mathscr{C}}\otimes \omega_{\mathscr{C}/S}$ such that the induced log PD-connection on $(\mathcal{V}_{\mathscr{C}}\otimes \omega_{\mathscr{C}/S})/\Im \nabla$ is trivial.

We have a map of Lie algebras
\[\mathcal{L}_{\mathscr{C}\setminus P_\bullet}(V) \to \bigoplus_{i=1}^n H^0\left(\frak{D}_{P_i}^\times,\frac{\mathcal{V}_{\mathscr{C}}\otimes \omega_{\mathscr{C}/S}}{\Im\nabla}\right) \stackrel{\cong}{\longrightarrow} \bigoplus_{i=1}^n\frakL_{t_i}(V)[\![q]\!]\]
induced by restriction of sections, similar to \zcref[noname]{eq: varphiL}.
\begin{remark}
    The Lie algebra $\mathcal{L}_{\mathscr{C}\setminus P_\bullet}(V)$ is equivalently the subspace of $\mathcal{L}_{\widetilde{\mathscr{C}}\setminus P_\bullet\sqcup Q_\bullet}(V)$ generated by elements $\sigma$ whose restrictions $\sigma_{Q_\pm}$ to $\frakL_{Q_\pm}(V)[\![q]\!]$ satisfy
    \begin{align*} 
        \sigma_{Q_+} &= \sum_{i,j\ge 0} \alpha_{i,j} \, a_{[\deg(a)+i-j-1]} \, q^j, \\
        \sigma_{Q_-} &= (-1)^{\deg(a)-1}\sum_{i,j\ge 0}  \alpha_{i,j} \sum_{k \ge 0} \left(L_1^{(k)}a \right)_{[\deg(a)-k+j-i-1]} q^i \\
        &= -\sum_{i,j\ge 0}  \alpha_{i,j} \, \theta\left(a_{[\deg(a)+i-j-1]}\right) q^i
    \end{align*} 
    for homogeneous $a \in V$ and integers $i, j \in \Z_{\ge 0}$. This is an extension of \zcref{prop:chiral Lie alg nodal}. This uses the facts that sections of $\mathcal{V}_{\mathscr{C}}$ over $\mathfrak{D}_Q$ are generated by elements of the form \zcref[noname]{eq:typeA} and \zcref[noname]{eq:typeB}; that sections of $\omega_{\mathscr{C}/S}$ over $\mathfrak{D}_Q$ are generated by $\left(\frac{ds_+}{s_+}, - \frac{ds_-}{s_-} \right)$ over $\widehat{\cal{O}}_Q$; and the definition of $\theta$ from \zcref[noname]{eq:iota}.
\end{remark}


\section{Sheaves of coinvariants}\label{sec:coinvariants}
Now that we have defined the chiral Lie algebra, we will study its associated coinvariants. These will be the main focus of study throughout the rest of our work.

\subsection{Representations of the chiral Lie algebra}\label{sec:rep of chiral}
We fix the following setting for this section.
Let $(\mathcal{C}\to S,P_\bullet,t_\bullet)$ be a stable, $n$-pointed relative curve over a smooth base $S$, with marked sections $P_i\colon S\to \mathcal{C}$ and formal coordinates $t_i$ at $P_i(S)$ for $i=1,\ldots,n$. 
Let $V$ be an $\N$-graded vertex $\kk$-algebra with an $\Aut\mathcal{O}$-action, and let $M^\bullet = (M^1,\ldots,M^n)$ be admissible $V$-modules. As an abuse of notation we also write $M^\bullet = M^1\otimes \cdots\otimes M^n$ when the context is appropriate. Here, the tensor product $M^\bullet\otimes \mathcal{O}_S$ is a module for $\mathcal{L}_{\mathcal{C}\setminus P_\bullet}(V)$ via the map $\varphi\colon \mathcal{L}_{\mathcal{C}\setminus P_\bullet}(V) \to \bigoplus_{i=1}^n\mathfrak{L}_{t_i}(V;\mathcal{O}_S)$ from \zcref[noname]{eq: varphiL}. Explicitly, for local sections $u = u^1\otimes \cdots\otimes u^n \in M^\bullet$ and $\sigma \in \mathcal{L}_{\mathcal{C}\setminus P_\bullet}(V)$ we set
\[\sigma(u) = \sum_{i=1}^n u^1\otimes\cdots\otimes \sigma_{P_i}(u^i)\otimes\cdots\otimes u^n.\]
Now we can define coinvariants of this action.
\begin{definition}
We define the \textit{sheaf of coinvariants} $\mathbb{V}(V;M^\bullet)_{(\mathcal{C}\to S,P_\bullet,t_\bullet)}$ to be the cokernel of the action map of $\mathcal{L}_{\mathcal{C}\setminus P_\bullet}(V)$ on $M^\bullet\otimes \mathcal{O}_S$:
\[\mathcal{L}_{\mathcal{C}\setminus P_\bullet}(V) \otimes (M^\bullet \otimes \mathcal{O}_S) \to M^\bullet \otimes \mathcal{O}_S \to \mathbb{V}(V;M^\bullet)_{(\mathcal{C}\to S,P_\bullet,t_\bullet)}\to 0.\]
\end{definition}
If $\mathcal{C}\setminus P_\bullet(S)$ is affine, then we have an obvious isomorphism
\[\mathbb{V}(V;M^\bullet)_{(\mathcal{C}\to S,P_\bullet,t_\bullet)} \cong (M^\bullet\otimes \mathcal{O}_S)_{\mathcal{L}_{\mathcal{C}\setminus P_\bullet}(V)} = \frac{M^\bullet\otimes \mathcal{O}_S}{\mathcal{L}_{\mathcal{C}\setminus P_\bullet}(V)\cdot \left(M^\bullet \otimes \mathcal{O}_S\right)}.\]
If $\mathcal{C}\setminus P_\bullet$ is not affine, then the sheaf of coinvariants is more complicated to describe.



\subsection{Propagation of vacua}
To describe the sheaves of coinvariants when $\mathcal{C}\setminus P_\bullet(S)$ is not affine, we will use \textit{propagation of vacua}. This allows us to describe the sheaf of coinvariants as the colimit over a specific family of sheaves. This is the first major result of our work.
\begin{theorem}[Propagation of vacua]\label{thm:propagation of vacua}
    Assume that $\mathcal{C}\setminus P_\bullet(S)$ is affine over $S$. The linear map
    \[\iota\colon M^\bullet\to M^\bullet \otimes_\kk V,\quad u\mapsto u\otimes \vac\]
    induces a canonical $\mathcal{O}_S$-module isomorphism
    \[\varphi\colon \mathbb{V}(V;M^\bullet)_{(\mathcal{C}/S,P_\bullet,t_\bullet)} \stackrel{\cong}{\to}\mathbb{V}(V;M^\bullet \otimes_\kk V)_{(\mathcal{C}/S,\, P_\bullet\sqcup Q,\, t_\bullet\sqcup r)}.\]
    The induced isomorphisms are compatible as we vary $(Q,r)$. Moreover, as $\vac$ is fixed by the action of $\Aut\mathcal{O}$, the isomorphism is equivariant with respect to a change of coordinates.
\end{theorem}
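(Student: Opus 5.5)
Following the standard argument (as in \cite{DGT23} and \cite[\textsection 10.3]{FBZ}), I will first check that $\iota$ descends to a map $\varphi$ on coinvariants, and then build an explicit inverse. Set $\mathcal{L} = \mathcal{L}_{\mathcal{C}\setminus P_\bullet}(V)$ and $\mathcal{L}' = \mathcal{L}_{\mathcal{C}\setminus (P_\bullet\sqcup Q)}(V)$. Restricting sections gives an inclusion $\mathcal{L}\subset\mathcal{L}'$. For $\sigma\in\mathcal{L}$, the expansion $\sigma_Q$ lies in $\mathfrak{L}(V,0\cdot Q)$, the image of $V\otimes\mathcal{O}_S[\![r]\!]$. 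These are modes $a_{[k]}$ with $k\ge 0$, and they annihilate $\vac$ by \ref{V3}. Hence
\[\sigma(u\otimes\vac)=\sigma(u)\otimes\vac,\]
so $\iota$ intertwines the two actions and $\varphi$ is well defined. Since $\vac$ is $\Aut\mathcal{O}$-invariant (by the earlier remark, via Huang's identity), $\varphi$ is compatible with changing $r$. Compatibility as $(Q,r)$ varies will follow from the construction once injectivity and surjectivity are proved.

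For surjectivity, I use that $V$ is generated from $\vac$ by the modes $a_{(-k)}$, so $V$ is spanned by monomials $a^1_{[d_1]}\cdots a^\ell_{[d_\ell]}\vac$ with $d_i<0$. I induct on $\ell$, and inside that on degree, to show that every $u\otimes v$ is congruent modulo $\mathcal{L}'\cdot(M^\bullet\otimes V\otimes\mathcal{O}_S)$ to an element of $M^\bullet\otimes\vac\otimes\mathcal{O}_S$. Given $v=a_{[d]}v'$, \zcref{cor:Riemann Roch calculation} (applied with marked sets $P_\bullet$ and $\{Q\}$, where $\mathcal{C}\setminus P_\bullet$ is affine) yields $\sigma\in\mathcal{L}'$ with
\[\sigma_Q\in a_{[d]}+r^N(V\otimes\mathcal{O}_S)[\![r]\!].\]
I choose $N$ large enough that the tail kills $v'$; this is possible by \ref{V1} and the grading, since only finitely many modes matter on $v'$. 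Then
\[u\otimes a_{[d]}v' \equiv -\sum_i u^1\otimes\cdots\otimes\sigma_{P_i}u^i\otimes\cdots\otimes v',\]
and the right side has shorter length at $Q$, so the induction applies. The step I must be careful about is that $\sigma_{P_i}$ acts on $u^i$ as an infinite sum. This is fine because $M^i$ is admissible: only finitely many terms are nonzero on each vector (continuity of the $\UV$-action, \zcref{lem:weak V mods equiv}).

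For injectivity, I define a map $\psi$ in the reverse direction. Here the main obstacle arises: I need a linear functional on $V$, applied at $Q$, that is compatible with the actions. The cleanest route is dual. Identify conformal blocks $\Hom_{\mathcal{L}}(M^\bullet,\mathcal{O}_S)$ and extend a block $\Psi$ to $M^\bullet\otimes V$ by
\[\widetilde\Psi(u\otimes a^1_{[d_1]}\cdots\vac)=\Psi\Big(\prod_j(-\sigma^j)\cdot u\Big),\]
where $\sigma^j$ are lifts as above. I then verify that this is independent of the choice of lifts and of the presentation of $v$. Independence of lifts holds because two lifts differ by an element of $\mathcal{L}'$ whose expansion at $Q$ lies in high $r$-order, and such an element kills the relevant vectors. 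Independence of the presentation of $v$ uses the commutator formula in $\mathfrak{L}$ together with the fact that $\mathcal{L}'\to\mathfrak{L}_r(V)$ is a Lie map, so that relations among modes acting on $\vac$ in $V$ are matched by relations acting on $M^\bullet$. The vacuum case recovers $\Psi$.

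Working directly with coinvariants rather than blocks avoids dualization issues over $\mathcal{O}_S$: I would define $\psi(u\otimes v)$ as the class of $\prod(-\sigma^j)u$, and check well-definedness modulo $\mathcal{L}\cdot M^\bullet$ using that $[\mathcal{L},\sigma^j]\subset\mathcal{L}'$ is again controlled at $Q$. Then $\psi\circ\varphi=\mathrm{id}$ is immediate, which gives injectivity. I expect this well-definedness check, in particular that $\psi$ kills $\mathcal{L}'\cdot(M^\bullet\otimes V)$, to be the hardest step. I would attack it by filtering $V$ by degree and reducing to the single-generator case, using the argument of \cite[Theorem 10.3.1]{FBZ} with divided powers $T^{(n)}$ in place of $T^n/n!$.
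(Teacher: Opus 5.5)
Your well-definedness argument is the same as the paper's, and your surjectivity argument is correct. The induction on length is unnecessary, though. Since $a=a_{[-1]}\vac$ and every element of $\mathfrak L(V,0\cdot Q)$ kills $\vac$, a single $\sigma\in\mathcal L'$ with $\sigma_Q\in a_{[-1]}+\mathfrak L(V,0\cdot Q)$ already gives $u\otimes a\equiv-\sigma(u)\otimes\vac$. This is what the paper does, and it avoids controlling tails on $v'$.

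The genuine gap is injectivity: you never show $\psi$ is well defined, and the reasons you give would not show it. Even lift-independence needs more than you say. Replacing $\sigma^j$ by $\sigma^j+\tau$ changes the output by $\pm\sigma^\ell\cdots\sigma^{j+1}\tau\sigma^{j-1}\cdots\sigma^1u$. So you must commute $\tau$ out past the later lifts and control the iterated brackets at $Q$, rather than just let $\tau_Q$ kill a vector. More seriously, independence of the presentation of $v$ cannot come from the commutator formula and the Lie map $\mathcal L'\to\mathfrak L_r(V)$. That would suffice if $V$ were the induced module $U(\mathfrak L_r(V))\otimes_{U(\mathfrak L(V,0\cdot Q))}\kk$, where your inverse is essentially a Shapiro-lemma argument. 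But $V$ is a proper quotient of it, by relations coming from the Jacobi identity. For example, $(a_{(-1)}b)_{[-1]}\vac=a_{[-1]}b_{[-1]}\vac$ equates one mode with a product of two. With lifts $\sigma_a,\sigma_b,\sigma_{a_{(-1)}b}$, respecting this relation requires $\sigma_{a_{(-1)}b}\,u+\sigma_b\sigma_a\,u\in\mathcal L\cdot M^\bullet$. That is a global associativity statement about the action at the points $P_i$, and no bookkeeping at $Q$ produces it.

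To see that this is the whole content, define $\psi(u\otimes a)=-[\sigma u]$ for any $\sigma\in\mathcal L'$ with $\sigma_Q\vac=a$. This is independent of $\sigma$: the relation $a_{[-1-n]}=(T^{(n)}a)_{[-1]}$ in $\mathfrak L_r(V)$ shows the annihilator of $\vac$ is exactly $\mathfrak L(V,0\cdot Q)$, so two choices differ by a section regular at $Q$, hence by an element of $\mathcal L$. Then $\psi\circ\varphi=\mathrm{id}$. Moreover, $\psi$ kills $\mathcal L'\cdot(M^\bullet\otimes V)$ if and only if $\rho u+\sigma\tau u\in\mathcal L\cdot M^\bullet$ whenever $\rho_Q\vac=\tau_Q\sigma_Q\vac$. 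This condition is also necessary for injectivity of $\varphi$, because $(\rho u+\sigma\tau u)\otimes\vac=\tau(\sigma u\otimes\vac)+\sigma(\tau u\otimes\vac)+\rho(u\otimes\vac)-\tau\sigma(u\otimes\vac)$ always lies in $\mathcal L'\cdot(M^\bullet\otimes V)$. Proving it needs a genuinely global input, such as the correlation-function argument in \cite{FBZ}. There one shows that $\langle\Psi,Y_{M^i}(a,z)u\rangle$ is the expansion of a section over $\mathcal C\setminus P_\bullet(S)$, and deduces invariance from the Jacobi identity of the modules $M^i$. Filtering $V$ by degree and reducing to single generators does not provide this.

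The paper argues injectivity differently, without an inverse. For $\sigma(u\otimes\vac\otimes f)$ lying in $M^\bullet\otimes\kk\vac\otimes\mathcal O_S$, it deduces $\sigma_Q\vac=g\vac$ and adds $\vac$ tensored with a differential from \zcref{lem:strong residue}, so that the result lies in $\mathcal L$. That only handles first-order elements of $\mathcal L'\cdot(M^\bullet\otimes\kk\vac\otimes\mathcal O_S)$. The element $(\rho u+\sigma\tau u)\otimes\vac$ above comes from the second-order term $\tau\sigma(u\otimes\vac)$, so the paper's argument does not supply the missing identity either.
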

\begin{proof}
    First, we show that the induced map $\varphi$ is well-defined. Let $\sigma\in \mathcal{L}_{\mathcal{C}\setminus P_\bullet}(V)$. Since $\sigma$ is regular at $Q$, each term in its Laurent series expansion $\sigma_Q \in \mathfrak{L}(V)^{\mathsf{L}}$ is of the form $a_{[i]}$ for some $a\in V$ and $i\ge 0$. From the axioms on the vacuum vector $\vac$, we have $a_{(i)}\vac = 0$ for all $i\ge 0$, which implies that $\sigma_Q(\vac) = 0$. Therefore, for all $\sigma\in \mathcal{L}_{\mathcal{C}\setminus P_\bullet}(V)$, $u\in M^\bullet$, and $f\in \mathcal{O}_S$, we have
    \[\varphi(\sigma(u\otimes f)) = \varphi(\sigma(u)\otimes f) = \sigma(u)\otimes \vac\otimes f + u\otimes \sigma_Q(\vac)\otimes f = \sigma(u\otimes \vac \otimes f).\]
    We have $\mathcal{L}_{\mathcal{C}\setminus P_\bullet}(V)\subset \mathcal{L}_{\mathcal{C}\setminus (P_\bullet\sqcup Q)}(V)$, so $\varphi$ maps 0 to 0. This implies that the map $\varphi$ is well-defined.

    Now we show that $\varphi$ is surjective. Let $u\in M^\bullet$, $a\in V$, and $f\in \mathcal{O}_S$.
    By \zcref{cor:Riemann Roch calculation}, there exists a section $\sigma$ of $\mathcal{L}_{\mathcal{C}\setminus (P_\bullet\sqcup Q)}(V)$ such that 
    \[\sigma_Q \in a_{[-1]} + (V\otimes \mathcal{O}_S)[\![r]\!].\]
    Due to the axioms on $\vac$, we have
    \[\sigma_Q\vac = a,\]
    so we have
    \begin{align*}
        u\otimes a\otimes f &= u\otimes (a\otimes \mu_Q)(\vac)\otimes f\\
        &\equiv -\sigma(u)\otimes \vac\otimes f \bmod \mathcal{L}_{\mathcal{C}\setminus (P_\bullet\sqcup Q)}(V)\\
        &= -\varphi\left(\sigma(u)\otimes f\right),
    \end{align*}
    where $\sigma(u) = \sum_{i=1}^nu_1\otimes\cdots\otimes \sigma_{P_i}(u_i)\otimes\cdots\otimes u_n$.
    This shows that $\varphi$ is surjective.

    Now we show that $\varphi$ is injective. By definition, we have
    \[\ker\varphi = \frac{\iota^{-1}\left(\mathcal{L}_{\mathcal{C}\setminus (P_\bullet\sqcup Q)}(V)\cdot ((M^\bullet\otimes V)\otimes \mathcal{O}_S)\right)}{\mathcal{L}_{\mathcal{C}\setminus P_\bullet}(V)\cdot \left(M^\bullet \otimes \mathcal{O}_S\right)}\]
    The morphism $M^\bullet\to M^\bullet\otimes V$ factors through $M^\bullet\otimes \kk\vac \cong M^\bullet$, so we may identify 
    \[\mathcal{L}_{\mathcal{C}\setminus P_\bullet}(V) \cdot \left(M^\bullet\otimes \mathcal{O}_S\right) \cong \mathcal{L}_{\mathcal{C}\setminus P_\bullet}(V) \cdot \left(\left(M^\bullet\otimes \kk\vac\right)\otimes \mathcal{O}_S\right),\]
    where the action of $\sigma\in\mathcal{L}_{\mathcal{C}\setminus P_\bullet}(V)\subset \mathcal{L}_{\mathcal{C}\setminus (P_\bullet\sqcup Q)}(V)$ on $\left(M^\bullet\otimes \kk\vac\right)\otimes \mathcal{O}_S$ satisfies
    \[\sigma\cdot(u\otimes \vac\otimes f) = \sigma(u)\otimes \vac\otimes f\]
    for $u\in M^\bullet$ and $f\in \mathcal{O}_S$. Identifying $\iota$ with the inclusion map $M^\bullet\otimes \kk\vac \to M^\bullet\otimes V$, we have
    \begin{align*}
        &\iota^{-1}\left(\mathcal{L}_{\mathcal{C}\setminus(P_\bullet\sqcup Q)}(V)\cdot \left(\left(M^\bullet\otimes V\right) \otimes \mathcal{O}_S\right)\right)\\
        &= \left(\left(M^\bullet\otimes \kk\vac\right) \otimes \mathcal{O}_S\right)\cap \left(\mathcal{L}_{\mathcal{C}\setminus(P_\bullet\sqcup Q)}(V)\cdot \left(\left(M^\bullet\otimes \kk\vac\right) \otimes \mathcal{O}_S\right)\right).
    \end{align*}
    Locally, the sheaf $\mathcal{L}_{\mathcal{C}\setminus(P_\bullet\sqcup Q)}(V)\cdot \left(\left(M^\bullet\otimes \kk\vac\right) \otimes \mathcal{O}_S\right)$ is generated by sections of the form 
    \[\sigma(u\otimes \vac\otimes f)\]
    for $\sigma\in \mathcal{L}_{\mathcal{C}\setminus (P_\bullet\sqcup Q)}(V)$, $u\in M^\bullet$ and $f\in \mathcal{O}_S$. 
    
    Again, the goal here is to show that $\ker\varphi=0$. For this, we will show that 
    \[\iota^{-1}\left(\mathcal{L}_{\mathcal{C}\setminus(P_\bullet\sqcup Q)}(V)\cdot \left(\left(M^\bullet\otimes V\right) \otimes \mathcal{O}_S\right)\right)\subseteq\mathcal{L}_{\mathcal{C}\setminus P_\bullet}(V) \cdot \left(\left(M^\bullet\otimes \kk\vac\right)\otimes \mathcal{O}_S\right).\]
    Since the reverse inclusion is obvious, this would imply that the inclusion is an equality, hence $\ker\varphi=0$.
    
    Now, suppose that 
    \[\sigma(u\otimes \vac\otimes f)\in \left(M^\bullet\otimes \kk\vac\right) \otimes \mathcal{O}_S\] 
    for some $\sigma\in \mathcal{L}_{\mathcal{C}\setminus (P_\bullet\sqcup Q)}(V)$, $u\in M^\bullet$ and $f\in \mathcal{O}_S$.
    Since $\sigma(u)\otimes \vac \otimes f$ is already a section of $\left(M^\bullet\otimes \kk\vac\right) \otimes \mathcal{O}_S$, this is equivalent to requiring that 
    \[u\otimes \sigma_Q(\vac)\otimes f \in \left(M^\bullet\otimes \kk\vac\right) \otimes \mathcal{O}_S.\]
    Once again, since we are tensoring over the field $\kk$, this is equivalent to 
    \[\sigma_Q(\vac) \in \mathcal{O}_S\vac.\]
    Say that $\sigma_Q(\vac) = \vac\otimes g$ for some $g\in \mathcal{O}_S$, so
    \[\sigma(u\otimes \vac\otimes f) = \sigma(u)\otimes \vac\otimes f + u\otimes \vac\otimes gf.\]
    It suffices to show that there is a section $\sigma'\in \mathcal{L}_{\mathcal{C}\setminus P_\bullet}(V)$ such that
    \[\sigma'(u\otimes \vac\otimes f) = \sigma(u\otimes \vac\otimes f).\]
    By definition, we have $\sigma'_Q(\vac) = 0$. It suffices to choose $\sigma'$ such that
    \[\sigma'(u\otimes f) = \sigma(u)\otimes f + u\otimes gf.\]
    There is an injective map $\omega_{\mathcal{C}/S} \hookrightarrow \mathcal{V}_{\mathcal{C}}$ given by $\tau\mapsto \vac\otimes \tau$.
    Choose any $j\in [1,n]$. By the strong residue theorem (\zcref{lem:strong residue}), there exists a section $\psi \in \omega_{\mathcal{C}/S}$ such that $\Res_{P_i}\psi = \delta_{i,j}g$ and $\Res_Q\psi = -g$. Identifying this as a section of $\mathcal{V}_{\mathcal{C}}$, we conclude that there exists a section $\tau\in \mathcal{L}_{\mathcal{C}\setminus (P_\bullet\sqcup Q)}(V)$ such that $\tau_Q = -\vac_{[-1]} \otimes g$ and $\tau_{P_i} = \delta_{i,j}\vac_{[-1]} \otimes g$. Letting $\sigma' = \sigma+\tau$, we then have
    \[\sigma'_{P_i} = \sigma_{P_i} + \tau_{P_i} = \sigma_{P_i} + \delta_{i,j}\vac_{[-1]}\otimes g,\qquad \sigma'_{Q} = \sigma_{Q} + \tau_{Q} = -\vac_{[-1]}\otimes g.\]
    Therefore, $\sigma'(u\otimes \vac\otimes f)  = \sigma(u\otimes \vac\otimes f)$. This shows that $\ker\varphi=0$, so we are done.
\end{proof}
\begin{remark}
    The original proof outlined in \cite{DGT21} for propagation of vacua assumes that the $\N$-graded vertex algebra $V$ is a CFT-type VOA. Our argument shows that the assumption $V_0\cong \kk \vac$ is unnecessary. Their proof also used a different method of showing injectivity; namely, they examined the dual morphism
    \[\varphi^\lor\colon \Hom_{\mathcal{O}_S}\left(\mathbb{V}(V;M^\bullet \otimes_\kk V)_{(\mathcal{C}/S,\, P_\bullet\sqcup Q,\, t_\bullet\sqcup r)},\mathcal{F}\right) \to \Hom_{\mathcal{O}_S}\left(\mathbb{V}(V;M^\bullet)_{(\mathcal{C}/S,P_\bullet,t_\bullet)} ,\mathcal{F}\right).\]
    They specifically let $\mathcal{F} = \mathcal{O}_S$, but the argument is the same.
    By the Yoneda embedding, $\varphi$ is injective if and only if $\varphi^\lor$ is surjective. Given any $\Psi$ in the codomain, it is easy to define the candidate $\widetilde{\Psi}$ such that $\varphi^\lor(\widetilde{\Psi}) = \Psi$, however showing that $\widetilde{\Psi}$ is well-defined is quite difficult. A detailed analysis reveals that you essentially have to do the same argument as what we did in our proof.

    Another method of showing that $\varphi$ is an isomorphism is to construct the inverse map $\varphi^{-1}$. Showing that this inverse map is well-defined is also quite difficult.
\end{remark}
Suppose we have a family of stable pointed curves $(\mathcal{C}\to S,P_\bullet)$ as above, except that $\mathcal{C}\setminus P_\bullet(S)$ is not affine.
After an \'etale base change, we may assume that the family $(\mathcal{C}\to S,P_\bullet)$ admits $m$ additional sections $Q_i\colon S\to \mathcal{C}$ such that $\mathcal{C}\setminus (P_\bullet(S)\sqcup Q_\bullet(S))$ is affine over $S$ and $(\mathcal{C}\to S,P_\bullet\sqcup Q_\bullet)$ is stable. Let $t_\bullet$ and $r_\bullet$ be the formal coordinates at $P_\bullet(S)$ and $Q_\bullet(S)$ respectively. We may calculate
\[\mathbb{V}(V;M^\bullet\sqcup (V,\ldots,V))_{(\mathcal{C}\to S,P_\bullet\sqcup Q_\bullet,t_\bullet\sqcup r_\bullet)}\]
as the usual quotient, and due to \zcref{thm:propagation of vacua} this definition is independent of the choice of $(Q_\bullet,r_\bullet)$. Consider the sheaf associated to the presheaf that assigns to an open $U\subset S$ the filtered colimit
\begin{equation}\label{coinvariants defn}
\varinjlim_{(Q_\bullet,r_\bullet)}\mathbb{V}(V;M^\bullet\sqcup (V,\ldots,V))_{(\mathcal{C}\to U,P_\bullet\sqcup Q_\bullet, t_\bullet\sqcup r_\bullet)},
\end{equation}
which is taken over all pairs $(Q_\bullet,r_\bullet) = \{(Q_j,r_j)\}_{j=1}^m$ consisting of sections $Q_\bullet$ of $\mathcal{C}_U\to U$ such that:
\begin{itemize}
    \item $(\mathcal{C}\to S,P_\bullet\sqcup Q_\bullet)$ is stable;
    \item The image $Q_j(U)$ has trivial intersection with $P_i(U)$ and $Q_k(U)$ for all $i=1,\ldots,n$ and $k\neq j$;
    \item $\mathcal{C}_U\setminus (P_\bullet(U)\sqcup Q_\bullet(U))$ is affine over $U$.
\end{itemize}
One may check that this colimit is isomorphic to $\mathbb{V}(V;M^\bullet)_{(\mathcal{C}\to S,P_\bullet, t_\bullet)}$, as is the case over $\C$.

Another convenient aspect of propagation of vacua is that in order to prove general properties about sheaves of coinvariants associated to a stable curve $\mathcal{C}\to S$, it usually suffices to assume that $\mathcal{C}\setminus P_\bullet(S)$ is affine.

\subsection{The factorization theorem}
In this section, we will set up and prove a generalization of the factorization theorems from \cite[Theorem 7.0.1]{DGT23} and \cite[Lemma 4.4.4]{DGK23} more generally. This is the second major result of our paper. 
We fix a stable, $n$-pointed curve $\mathscr{C}_0\to S_0= \Spec(R)$ for a smooth $\kk$-algebra $R$ as in \zcref{constr:smoothing family}. We denote the marked sections by $P_1,\ldots,P_n\colon S_0\to \mathscr{C}_0$, and we suppose $\mathscr{C}_0$ has a single node $Q$. The normalization may be regarded as a smooth, $(n+2)$-pointed curve $\widetilde{\mathscr{C}_0}\to S_0$ with marked sections $P_\bullet\sqcup Q_\pm$. Let $t_\bullet=(t_1,\ldots,t_n)$ be a collection of formal coordinates on $P_1(S_0),\ldots,P_n(S_0)$ respectively, and let $s_\pm(S_0) = (s_+(S_0),s_-(S_0))$ be a pair of formal coordinates on $Q_+(S_0),Q_-(S_0)$ respectively. Let $M^\bullet=(M^1,\ldots,M^n)$ be a collection of admissible $V$-modules.

\begin{theorem}[Factorization]\label{thm:factorization theorem}
    In the above setting, the morphism
    \[\iota\colon M^\bullet\longrightarrow M^\bullet\otimes \mathfrak{A},\qquad u\mapsto u\otimes 1\otimes 1\]
    induces a canonical isomorphism
    \[\alpha\colon \mathbb{V}(V;M^\bullet)_{(\mathscr{C}_0,P_\bullet,t_\bullet)} \stackrel{\cong}{\longrightarrow} \mathbb{V}(V;M^\bullet\otimes \mathfrak{A})_{(\widetilde{\mathscr{C}_0},P_\bullet\sqcup Q_\pm, t_\bullet\sqcup s_\pm)}. \]
\end{theorem}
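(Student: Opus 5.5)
Here $\mathfrak{A}=\Ac(V)=\bigoplus_{n,m}\Aa_{n,0}\otimes_{\Aa_0}\Aa_{0,m}$ is regarded as a $\UV\otimes\UV$-module: the left factor $\Aa_{n,0}$ is acted on from the left at $Q_+$, and the right factor $\Aa_{0,m}$ is acted on at $Q_-$ through the involution $\theta$ of \zcref{sec:involution}. Equivalently, $\mathfrak{A}\cong \PhiL_0(\Aa)\otimes_{\Aa}\PhiL_0(\Aa)'$, the induced Verma module tensored over $\Aa$ with its $\theta$-twisted mirror. The proof follows \cite[Lemma 4.4.4]{DGK23} and \cite[Theorem 7.0.1]{DGT23}, but avoids $C_1$-cofiniteness by using only \zcref{prop:chiral Lie alg nodal} and \zcref{cor:Riemann Roch calculation}. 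By propagation of vacua (\zcref{thm:propagation of vacua}) we may add auxiliary marked points, so we assume throughout that $\mathscr{C}_0\setminus P_\bullet$ is affine over $S_0$. Both sides are then honest quotients:
\[
\mathbb{V}(V;M^\bullet)=\frac{M^\bullet\otimes R}{\mathcal{L}_{\mathscr{C}_0\setminus P_\bullet}\cdot(\cdots)},\qquad
\mathbb{V}(V;M^\bullet\otimes\mathfrak{A})=\frac{M^\bullet\otimes\mathfrak{A}\otimes R}{\mathcal{L}_{\widetilde{\mathscr{C}}_0\setminus (P_\bullet\sqcup Q_\pm)}\cdot(\cdots)}.
\]

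\textbf{Step 1: $\alpha$ is well defined.} By \zcref{prop:chiral Lie alg nodal}, a section $\sigma\in\mathcal{L}_{\mathscr{C}_0\setminus P_\bullet}$ has $\sigma_{Q_\pm}\in\LVf_{\le 0}$ and $[\sigma_{Q_-}]_0=-\theta([\sigma_{Q_+}]_0)$. The negative-degree parts kill $1\otimes1$, because $1\in\Aa_{0,0}$ is annihilated by $\NL[1]$. The two degree-zero parts act on $1\otimes 1$ by $x\otimes1$ and $1\otimes\theta(-\theta(x))$ respectively. These cancel in $\Aa_{0,0}\otimes_{\Aa_0}\Aa_{0,0}=\Aa_0$, since $\theta$ is an anti-involution and the tensor product is balanced over $\Aa_0$. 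Hence $\alpha(\sigma u)=\sigma(u\otimes1\otimes1)$, and $\alpha$ descends to coinvariants.

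\textbf{Step 2: $\alpha$ is surjective.} An element $\mathfrak{a}\otimes\mathfrak{b}$ of $\Ac_{n,-m}$ is a sum of products of modes $a_{[k]}$ of positive degree applied to $1\otimes 1$ on either side. By \zcref{cor:Riemann Roch calculation}, for any such mode $a_{[d]}$ at $Q_+$ (respectively $Q_-$) there is a section $\sigma\in\mathcal{L}_{\widetilde{\mathscr{C}}_0\setminus(P_\bullet\sqcup Q_\pm)}$ whose expansion is $a_{[d]}$ plus terms of arbitrarily high $s_+$-order at $Q_+$, and arbitrarily high order at $Q_-$. Taking $N$ large compared with the degree, these correction terms act by zero on the relevant graded piece of $\mathfrak{A}$. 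This is where the left filtration, $\mathfrak{A}$ being built from quotients by $\NL$, is used. Modulo the image of $\mathcal{L}$, we can therefore move each mode onto $M^\bullet$, at the cost of terms of strictly lower total degree in $\mathfrak{A}$. Induction on $n+m$ reduces everything to the image of $M^\bullet\otimes\Ac_{0,0}=M^\bullet\otimes\Aa_0$. The same argument shows that $\mathcal{L}_{\mathscr{C}_0\setminus P_\bullet}$ moves $\Aa_0$ onto $M^\bullet$ via the gluing condition, which lands in the image of $\iota$.

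\textbf{Step 3: $\alpha$ is injective. This is the main obstacle.} In \cite{DGT23} this is proved with a dual argument that uses $C_1$-cofiniteness. I would instead mimic the injectivity proof of \zcref{thm:propagation of vacua}: construct an explicit left inverse
\[
\beta\colon M^\bullet\otimes\mathfrak{A}\otimes R\to \mathbb{V}(V;M^\bullet).
\]
On a monomial $u\otimes X\cdot1\otimes Y\cdot 1$, with $X,Y$ words in positive-degree modes, $\beta$ lifts each mode in $X$ and $Y$ to a section via \zcref{cor:Riemann Roch calculation}, lets these sections act on $u$ with a sign, and pairs the resulting degree-zero element of $\Aa_0$ against $M^\bullet$ through the gluing. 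The key points to check are the following.
\begin{enumerate}
\item $\beta$ is independent of the chosen lifts. Two lifts differ by a section regular to high order at $Q_\pm$, and such a section lies in $\mathcal{L}_{\mathscr{C}_0\setminus P_\bullet}$ up to higher-order terms that vanish on the relevant degree.
\item $\beta$ respects the relations in $\UV$ and the quotients by $\NL$. This should follow because the Lie bracket on sections matches the bracket \eqref{Lie bracket} on expansions.
\item $\beta$ kills the image of $\mathcal{L}_{\widetilde{\mathscr{C}}_0\setminus(P_\bullet\sqcup Q_\pm)}$.
\end{enumerate}
Then $\beta\circ\alpha=\id$. The delicate point is point (1), together with the compatibility over $\Aa_0$ in the balanced tensor product. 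I would handle it by induction on degree, using the filtration on $\mathfrak{A}$ and the identity $\theta(J_n(a))=J_{-n}(\gamma a)$.
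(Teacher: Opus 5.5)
\textbf{Steps 1 and 2 versus the paper.} Your Steps 1 and 2 are essentially the paper's well-definedness and surjectivity arguments. There is one slip at the end of Step 2.

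A section $\sigma\in\mathcal{L}_{\mathscr{C}_0\setminus P_\bullet}(V)$ acts on $u\otimes x\otimes 1$, with $x\in\Aa_0$, by $\sigma(u)\otimes x\otimes 1+u\otimes(yx-xy)\otimes 1$, where $y=[\sigma_{Q_+}]_0$. So the gluing condition only lets you remove commutators in $\Aa_0$; it cannot move $x$ onto $M^\bullet$.

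To move $x$ you must use a lift in $\mathcal{L}_{\widetilde{\mathscr{C}}_0\setminus(P_\bullet\sqcup Q_\pm)}(V)$ with $\sigma_{Q_+}=x+(\text{high order})$ and $\sigma_{Q_-}$ of high order. The paper does exactly this: its string-length induction treats degree-zero modes like any other.

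\textbf{The genuine gap: Step 3.} Injectivity is not proved. You propose a map $\beta$ and list three properties to check, but those properties are the entire content of the statement.

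Moreover, $\beta$ is not actually defined. There is no pairing of $\Aa_0$ with $M^\bullet$ ``through the gluing''; the only sensible definition is again via lifts from the larger Lie algebra.

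Point (2) does not follow from compatibility of brackets. The commutator formula is already built into $U=U(\LVf)$. What cuts $\UV=\hUu/\overline J$ down is the closure of the Jacobi (associativity) and vacuum relations. These identify a single mode such as $(a_{(r)}b)_{[s]}$ with normally ordered, in general infinite, sums of products of modes.

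For $\beta$ to descend through these relations, you would need two things to agree modulo $\mathcal{L}_{\mathscr{C}_0\setminus P_\bullet}(V)\cdot M^\bullet$: the action on $M^\bullet$ of a lift of $(a_{(r)}b)_{[s]}$, and the action of the corresponding sums of composites of lifts. That is an associativity statement about coinvariants, which your sketch never addresses.

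Likewise, point (3) needs two further ingredients:
\begin{itemize}
\item a decomposition of an arbitrary section with poles at $Q_\pm$ into an element of $\mathcal{L}_{\mathscr{C}_0\setminus P_\bullet}(V)$ plus lifts of its finitely many nonnegative-degree modes;
\item control of how the degree-$\le 0$ parts commute past $X$, and past $Y$ through $\theta$.
\end{itemize}

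\textbf{How the paper proceeds instead.} Your description ``mimic the injectivity proof of propagation of vacua'' does not match the paper. There, no inverse is constructed, and the subsequent remark says that route is quite difficult.

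For factorization the paper computes $\ker\alpha$ directly. Take $\sigma$ in the larger Lie algebra with $\sigma(u\otimes 1\otimes 1\otimes f)\in M^\bullet\otimes\kk 1\otimes\kk 1\otimes\mathcal{O}_{S_0}$. Then the degree-zero parts at $Q_\pm$ act on $1\otimes 1$ by some $g\in\mathcal{O}_{S_0}$. The strong residue theorem (\zcref{lem:strong residue}) supplies a correction $\vac\otimes\psi$ turning $\sigma$ into an element of $\mathcal{L}_{\mathscr{C}_0\setminus P_\bullet}(V)$ with the same action. That ingredient is absent from your plan.

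That argument only examines individual generators $\sigma(u\otimes 1\otimes 1\otimes f)$. A genuinely well-defined $\beta$ would control arbitrary sums $\sum_j\sigma_j\cdot x_j$ with $x_j\in M^\bullet\otimes\mathfrak{A}$. So your route would be the more robust one, but only after (1)--(3) are actually established.
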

\begin{proof}
    Due to \zcref{thm:propagation of vacua}, without loss of generality we may assume that $\mathscr{C}_0\setminus P_\bullet(S_0)$ is affine.
    
    First, we show that the map $\alpha$ is well-defined. Given a section $\sigma\in \mathcal{L}_{\mathscr{C}_0\setminus P_\bullet}(V)$, by \zcref{prop:chiral Lie alg nodal} we may identify it with a section $\sigma\in \mathcal{L}_{\widetilde{\mathscr{C}_0}\setminus (P_\bullet\sqcup Q_\pm)}(V)$ such that
    \[\sigma_{Q_\pm} \in \mathfrak{L}(V)^{\mathsf{L}}_{\le 0}\quad \text{and}\quad [\sigma_{Q_-}]_0 = -\theta[\sigma_{Q_+}]_0.\]
    Recall that 
    \[\mathfrak{A} = \PhiL_0(\Aa_0) \otimes_{\Aa_0}\PhiR_0(\Aa_0) = \PhiL_0(\Aa_0) \otimes_{\UV_0}\PhiR_0(\Aa_0).\]
    Now, $\PhiR_0(\Aa_0)$ is naturally a right $\UV$-module with a natural $\Z_{\le 0}$-grading with respect to the right $\UV$-action. After using the involution $\theta$ to regard $\PhiR_0(\Aa_0) = \PhiL_0({}^\theta\Aa_0)$ as a left $\UV$-module, the $\Z_{\le 0}$-grading becomes a $\Z_{\ge 0}$-grading with respect to the left $\UV$-action. 
    
    Now, choose $u\in M^\bullet$ and $f\in \mathcal{O}_{S_0}$. Since $1\in \Aa_0\subset \PhiL(\Aa_0),\PhiR(\Aa_0)$ is a degree 0 element, we conclude that the action of $\sigma$ is given by
    \begin{align*}
        \alpha(\sigma(u\otimes f)) &= \alpha(\sigma(u)\otimes f)\\ 
        &= \sigma(u)\otimes 1\otimes 1\otimes f\\
        &= \sigma(\alpha(u))\otimes f - u\otimes \sigma_{Q_+}(1)\otimes 1 \otimes f- u\otimes 1\otimes \theta(\sigma_{Q_-})(1)\otimes f.
    \end{align*}
    Recall that $[\sigma_{Q_-}]_0 = -\theta[\sigma_{Q_+}]_0$. Since $\mathfrak{A}$ is a tensor product over $\UV_0$, the degree 0 components of the last two terms of the right-hand side cancel. After canceling, the remaining components of the last two terms must strictly lower the degree. However, such elements must be 0, so we conclude that $\alpha$ is well-defined.

    Now we show that $\alpha$ is surjective. We write a pure tensor element of $\Ac$ as
    \[a\otimes b \eqdef a^1_{[n_1]}\cdots a^r_{[n_r]} \otimes b^1_{[m_1]}\cdots b^s_{[m_s]}.\]
    Let $a' = a^2_{[n_2]}\cdots a^r_{[n_r]}$ and $b' = b^2_{[m_2]}\cdots b^s_{[m_s]}$. Define
    \[d_+ = \deg(a),\qquad d_- = \deg(b),\qquad d_+' = \deg(a'),\qquad d_-' = \deg(b').\]
	Fix $N > n_1$. By \zcref{cor:Riemann Roch calculation}, we can find a section $\mu$ of $\omega_{\mathcal{C}/S}$ such that 
	\[\mu_{Q_+}\in s_+^{n_1}ds_+ + s_+^N\mathcal{O}_{S_0}[\![s_+]\!]\,ds_+,\qquad \mu_{Q_-}\in s_-^N\mathcal{O}_{S_0}[\![s_-]\!]\,ds_-,\]
    and we can find a section $\mathcal{B}$ of $\mathcal{V}_{\widetilde{\mathcal{C}}/S}$ such that $\mathcal{B}_{Q_+} = a^1$. Let $\sigma = \mathcal{B}\otimes \mu$.
    By choosing $N\gg 0$, we have
    \begin{equation}
    \begin{aligned}
        &\sigma(u)\otimes a'\otimes b\otimes f + u\otimes a\otimes b\otimes f\\
        &= \sigma(u)\otimes a'\otimes b\otimes f + u\otimes \sigma_{Q_+}(a')\otimes b\otimes f + u\otimes a\otimes \theta(\sigma_{Q_-})(b)\otimes f\\
        &= \sigma(u\otimes a'\otimes b\otimes f)\in \mathcal{L}_{\widetilde{\mathscr{C}_0}\setminus (P_\bullet\sqcup Q_\pm)}(V)\cdot \left(\left(M^\bullet\otimes \mathfrak{A}\right)\otimes \mathcal{O}_{S_0}\right).
    \end{aligned}
    \end{equation}
    In a similar fashion, we may choose a section $\sigma = \mathcal{B}\otimes \mu$ of $\mathcal{V}_{\mathcal{C}/S}\otimes \omega_{\mathcal{C}/S}$ such that for $N\gg 0$ we have
    \begin{equation}
    \begin{aligned}
        &\sigma(u)\otimes a\otimes b'\otimes f + u\otimes a\otimes b\otimes f\\
        &= \sigma(u)\otimes a\otimes b'\otimes f + u\otimes \sigma_{Q_+}(a)\otimes b'\otimes f + u\otimes a\otimes \theta(\sigma_{Q_-})(b')\otimes f\\
        &= \sigma(u\otimes a\otimes b'\otimes f)\in \mathcal{L}_{\widetilde{\mathscr{C}_0}\setminus (P_\bullet\sqcup Q_\pm)}(V)\cdot \left(\left(M^\bullet\otimes \mathfrak{A}\right)\otimes \mathcal{O}_{S_0}\right).
    \end{aligned}
    \end{equation}
    By double induction on the string lengths of $a$ and $b$, we conclude that every section of $\mathbb{V}(V;M^\bullet\otimes \mathfrak{A})_{(\widetilde{\mathcal{C}}/S,P_\bullet\sqcup Q_\pm, t_\bullet\sqcup s_\pm)}$ is equivalent to a linear combination of sections of the form $u\otimes 1\otimes 1\otimes f$ for $u\in M^\bullet$ and $f\in \mathcal{O}_{S_0}$. We conclude that $\alpha$ is surjective.

    Now we show that $\varphi$ is injective. Similar to before, we have
    \[\ker\alpha = \frac{\iota^{-1}\left(\mathcal{L}_{\widetilde{\mathscr{C}_0}\setminus (P_\bullet\sqcup Q_\pm)}(V)\cdot \left(\left(M^\bullet\otimes \mathfrak{A}\right)\otimes \mathcal{O}_{S_0}\right)\right)}{\mathcal{L}_{\mathscr{C}_0\setminus P_\bullet}(V) \cdot \left(M^\bullet\otimes \mathcal{O}_{S_0}\right)}.\]
    After applying the identification $M^\bullet \cong M^\bullet \otimes \kk 1\otimes \kk 1$, we have the identification
    \[\mathcal{L}_{\mathscr{C}_0\setminus P_\bullet}(V) \cdot \left(M^\bullet\otimes \mathcal{O}_{S_0}\right)\cong \mathcal{L}_{\mathscr{C}_0\setminus P_\bullet}(V) \cdot \left(\left(M^\bullet\otimes \kk 1\otimes \kk 1\right)\otimes \mathcal{O}_{S_0}\right),\]
    where the action of $\sigma\in \mathcal{L}_{\mathscr{C}_0\setminus P_\bullet}(V)\subset \mathcal{L}_{\widetilde{\mathscr{C}_0}\setminus (P_\bullet\sqcup Q_\pm)}(V)$ on $\left(M^\bullet\otimes \kk 1\otimes \kk 1\right)\otimes \mathcal{O}_{S_0}$ is given by
    \[\sigma\cdot(u\otimes 1\otimes 1\otimes f) = \sigma(u)\otimes 1\otimes 1\otimes f\]
    for $\sigma\in \mathcal{L}_{\mathscr{C}_0\setminus P_\bullet}(V)$, $u\in M^\bullet$ and $f\in \mathcal{O}_{S_0}$.
    Since $\iota$ is an inclusion, we have
    \begin{align*}
        &\iota^{-1}\left(\mathcal{L}_{\widetilde{\mathscr{C}_0}\setminus (P_\bullet\sqcup Q_\pm)}(V)\cdot \left(\left(M^\bullet\otimes \mathfrak{A}\right)\otimes \mathcal{O}_{S_0}\right)\right)\\
        &\cong \left(\left(M^\bullet\otimes \kk 1\otimes \kk 1\right) \otimes \mathcal{O}_{S_0}\right)\cap \left(\mathcal{L}_{\widetilde{\mathscr{C}_0}\setminus (P_\bullet\sqcup Q_\pm)}(V)\cdot \left(\left(M^\bullet\otimes \kk 1\otimes \kk 1\right) \otimes \mathcal{O}_{S_0}\right)\right).
    \end{align*}
    Locally, the sheaf $\mathcal{L}_{\widetilde{\mathscr{C}_0}\setminus (P_\bullet\sqcup Q_\pm)}(V)\cdot \left(\left(M^\bullet\otimes \kk 1\otimes \kk 1\right) \otimes \mathcal{O}_{S_0}\right)$ is generated by sections of the form 
    \[\sigma(u\otimes 1\otimes 1\otimes f)\]
    for $\sigma\in \mathcal{L}_{\widetilde{\mathscr{C}_0}\setminus (P_\bullet\sqcup Q_\pm)}(V)$, $u\in M^\bullet$ and $f\in \mathcal{O}_{S_0}$. 

    We carry out a similar argument to the proof of propagation of vacua. Suppose we have a section
	\[\sigma(u\otimes 1\otimes 1\otimes f) \in \left(\left(M^\bullet\otimes \kk 1\otimes \kk 1\right) \otimes \mathcal{O}_{S_0}\right)\cap \left(\mathcal{L}_{\widetilde{\mathscr{C}_0}\setminus (P_\bullet\sqcup Q_\pm)}(V)\cdot \left(\left(M^\bullet\otimes \kk 1\otimes \kk 1\right) \otimes \mathcal{O}_{S_0}\right)\right).\]
	Since $\sigma(u)\otimes \kk 1\otimes \kk 1\otimes f$ is already a section in the right-hand side, this is equivalent to $u\otimes \sigma(1\otimes 1)\otimes f$ being a section in the right-hand side.
    Since we are tensoring over the field $\kk$, this is equivalent to
	\[\sigma(1\otimes 1) = \sigma_{Q_+}(1)\otimes 1 + 1\otimes \theta(\sigma_{Q_-})(1) \in (\kk 1\otimes \kk 1)\otimes \mathcal{O}_{S_0}.\]
	This condition forces $\sigma_{Q_+}(1)$ and $\theta(\sigma_{Q_-})(1)$ to have degree 0 due to the bigrading on $\mathfrak{A}$. Since the tensor product is over $\UV_0$, the above condition is equivalent to
	\[\sigma_{Q_+}(1)\otimes 1 + 1\otimes \theta(\sigma_{Q_-})(1) = ([\sigma_{Q_+}]_0 + \theta([\sigma_{Q_-}]_0))(1)\otimes 1\in (\kk 1\otimes \kk 1)\otimes \mathcal{O}_{S_0}.\]
	Equivalently, $[\sigma_{Q_+}]_0 + \theta([\sigma_{Q_-}]_0)$ acts on $1\otimes 1$ by scaling by some regular function $g\in \mathcal{O}_{S_0}$. Therefore, we have
	\[\sigma(u\otimes 1\otimes 1\otimes f) = \sigma(u)\otimes 1\otimes 1 \otimes f + u\otimes 1\otimes 1\otimes gf.\]
	As with the proof of propagation of vacua, it suffices to show that there is some $\sigma'\in \mathcal{L}_{\mathscr{C}_0\setminus P_\bullet}(V) \subset \mathcal{L}_{\mathscr{C}_0\setminus (P_\bullet\sqcup Q_\pm)}(V)$ such that 
    \[\sigma(u\otimes 1\otimes 1\otimes f) = \sigma'(u\otimes 1\otimes 1\otimes f).\] 
    Once again, such a section exists by the strong residue theorem (\zcref{lem:strong residue}).
\end{proof}

\begin{remark}
    This statement was first proven in \cite[Lemma 4.4.4]{DGK23} as a generalization of \cite[Proposition 6.2.1]{DGT23}. They assume that $V$ is $C_1$-cofinite and CFT-type. Our proof shows that these assumptions are unnecessary. However, one is essentially forced to work with a nodal curve $C$ over $\Spec(\kk)$ or some other sufficiently local base. For instance, this result generally does not hold if we try to na\"ively replace a nodal curve with a smoothing family $\mathscr{C}\to S$. We will discuss how to overcome this issue in \zcref{sec:finiteness-smoothing}.
\end{remark}
Here are some further comments about \zcref{thm:factorization theorem}.
Our proof of the factorization theorem uses induction on the string length of elements of $\PhiL_0(\Aa_0)$ and $\PhiR_0(\Aa_0)$. If one wishes, they can safely assume that all strings are already of length 1 due to the fact that the map $V \to \Aa_{n,m}(V)$ given by $a\mapsto J_{m-n}(a)$ is surjective \cite[Lemma 3.4]{Xu2026}.

In the case where $V$ is rational, we obtain the factorization theorem from \cite{DGT23}:
\begin{lemma}
    Assume that $V$ is rational. Then
    \[\mathfrak{A} \cong \bigoplus_{[W]\in \mathscr{S}(V)} W\otimes_\kk W',\]
    where $\mathscr{S}(V)$ is the set of isomorphism classes of simple $V$-modules.
\end{lemma}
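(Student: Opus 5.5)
The plan is to compute $\mathfrak{A}=\PhiL_0(\Aa_0)\otimes_{\Aa_0}\PhiR_0(\Aa_0)$ directly from the representation theory of the Zhu algebra. The steps are: decompose $\Aa_0$, identify the induced Verma modules with simple modules, and then take the tensor product.

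First, since $V$ is rational, every admissible $V$-module is semisimple and every simple admissible module has finite-dimensional graded pieces. By the standard Zhu correspondence (in the $\UV$-formulation of \zcref{Zhu algs}), the functor $M\mapsto M_0$ gives a bijection between simple admissible $V$-modules $W$ (with $W_0\neq 0$) and simple $\Aa_0$-modules $W_0$. Its inverse sends a simple $\Aa_0$-module $\mathsf{M}$ to the unique simple quotient of $\PhiL_0(\mathsf{M})$.

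The first real step is to show that $\Aa_0$ is finite-dimensional and semisimple. I would argue this as follows.
\begin{itemize}
\item Every $\Aa_0$-module $\mathsf{M}$ is the degree-zero part of the admissible module $\PhiL_0(\mathsf{M})$, since $\PhiL_0(\mathsf{M})_0=\Aa_{0,0}\otimes_{\Aa_0}\mathsf{M}=\mathsf{M}$.
\item Quasi-rationality makes $\PhiL_0(\mathsf{M})$ semisimple, so $\mathsf{M}$ is semisimple as an $\Aa_0$-module.
\item Applying this to the regular module, $\Aa_0$ is a semisimple ring.
\item Because simple admissible modules have finite-dimensional top pieces and $\kk$ is algebraically closed, Artin--Wedderburn gives $\Aa_0\cong\prod_{[W]}\End_\kk(W_0)$, where the product runs over the set $\mathscr{S}(V)$.
\item Finiteness of $\mathscr{S}(V)$ should follow by applying semisimplicity to $\PhiL_0(\Aa_0)$, whose top is $\Aa_0$. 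If this step is delicate, I would state the result with a direct sum over $\mathscr{S}(V)$.
\end{itemize}

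Second, I would show that $\PhiL_0(W_0)\cong W$ for each simple $W$. The module $\PhiL_0(W_0)$ is semisimple and generated by its top $W_0$, which is simple. Any direct summand not meeting degree $0$ would be a submodule with zero top, and such a summand can receive no nonzero contribution from the generating top. Hence $\PhiL_0(W_0)$ is simple and equal to $W$. Since $\PhiL_0$ is additive, the decomposition $\Aa_0\cong\bigoplus_W W_0\otimes W_0^\vee$ as $(\Aa_0,\Aa_0)$-bimodules then gives $\PhiL_0(\Aa_0)\cong\bigoplus_W W\otimes_\kk W_0^\vee$ as left $\UV$-, right $\Aa_0$-modules.

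Dually, using the involution $\theta$ and the contragredient construction of \zcref{contragredient}, I would identify $\PhiR_0(\Aa_0)\cong\bigoplus_W W_0\otimes_\kk (W')^{\theta}$. Here the right $\UV$-action on $W'$ is obtained by transporting the left action through $\theta$, and $W$ can be recovered from $W''$ because the graded pieces are finite-dimensional.

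Finally, tensoring over $\Aa_0=\prod\End(W_0)$ gives
\[
\mathfrak{A}\cong\bigoplus_W W\otimes W_0^\vee\otimes_{\End(W_0)}W_0\otimes W'.
\]
The cross terms vanish by orthogonality of the central idempotents, and $W_0^\vee\otimes_{\End(W_0)}W_0\cong\kk$. This yields $\mathfrak{A}\cong\bigoplus_W W\otimes_\kk W'$.

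The main obstacle is the right-module side. I must check carefully that $\PhiR_0$ applied to $W_0$, viewed as a right $\Aa_0$-module through $\theta$, really is the contragredient $W'$ with its $\theta$-twisted action, and not some other dual. I would attempt this by comparing the formula in \zcref{contragredient} with the formula for $\theta(a_{[j]})$ in \zcref{sec:involution}. A secondary subtlety is justifying the Zhu correspondence in arbitrary characteristic, but the $\UV$-formulation should make that formal.
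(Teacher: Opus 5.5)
Your proposal is correct and follows essentially the same route as the arguments the paper defers to (\cite[Proposition 4.28]{Griffin26}, \cite[Remark 3.4.6]{DGK23}, \cite[Proposition 4.0.9]{DGK24}). There, rationality makes $\Aa_0\cong\prod_W\End_\kk(W_0)$ semisimple, one gets $\PhiL_0(\Aa_0)\cong\bigoplus_W W\otimes_\kk W_0^\vee$ and $\PhiR_0(\Aa_0)\cong\bigoplus_W W_0\otimes_\kk W'$, and tensoring over $\Aa_0$ kills the cross terms. Two small remarks: finiteness of $\mathscr{S}(V)$ is automatic once $\Aa_0$ is a semisimple ring (Artin--Wedderburn), and the right-module check you flag is immediate, because the mode formula defining $W'$ says exactly $\langle x\psi,w\rangle=\langle\psi,\theta(x)w\rangle$.
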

\begin{proof}
    This is a consequence of the arguments in \cite[Proposition 4.28]{Griffin26}. The same arguments may be found in \cite[Remark 3.4.6]{DGK23} or \cite[Proposition 4.0.9]{DGK24} for the case over $\C$.
\end{proof}
If $V$ is rational, then there are only finitely many simple $V$-modules up to isomorphism. This is because there is a bijection between simple $V$-modules and simple $\Aa_0$-modules, and $V$ being rational implies that $\Aa_0$ is semisimple (hence there are finitely many simple $\Aa_0$-modules up to isomorphism).
\begin{corollary}
    If $V$ is rational, then we have an isomorphism
    \[\alpha\colon \mathbb{V}(V;M^\bullet)_{(\mathscr{C}_0,P_\bullet,t_\bullet)} \stackrel{\cong}{\longrightarrow} \bigoplus_{[W]\in \mathscr{S}(V)}\mathbb{V}(V;M^\bullet\otimes W\otimes W')_{(\widetilde{\mathscr{C}_0}/S_0,P_\bullet\sqcup Q_\pm, t_\bullet\sqcup s_\pm)}.\]
\end{corollary}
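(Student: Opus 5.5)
The plan is to combine \zcref{thm:factorization theorem} with the preceding lemma identifying $\mathfrak{A}$, and then use that coinvariants commute with direct sums of modules.

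First, since $V$ is rational, the lemma just above gives an isomorphism $\mathfrak{A}\cong\bigoplus_{[W]\in\mathscr{S}(V)} W\otimes_\kk W'$. I will check that this is an isomorphism of modules for $\frakL_{Q_+}(V)\oplus\frakL_{Q_-}(V)$, where $\frakL_{Q_+}(V)$ acts through the left factor $\PhiL_0(\Aa_0)$ and $\frakL_{Q_-}(V)$ acts through $\PhiR_0(\Aa_0)$ twisted by $\theta$. On the right-hand side the corresponding actions are on $W$ and on the contragredient $W'$, the latter being a left module via $\theta$ as in \zcref{sec:involution}.

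Concretely, I will show that $\PhiL_0(\Aa_0)\otimes_{\Aa_0}\PhiR_0(\Aa_0)$ decomposes along the semisimple algebra $\Aa_0\cong\prod_W\End_\kk(W_0)$. Each simple block contributes $\PhiL_0(W_0)\otimes \PhiR_0(W_0^\vee)$. Rationality then lets me identify $\PhiL_0(W_0)$ with $W$ and the $\theta$-twisted $\PhiR_0(W_0^\vee)$ with $W'$. This bimodule-compatibility is the step I expect to require the most care. I would defer to the cited arguments of \cite{Griffin26} and \cite{DGK23}, checking only that the isomorphism there is built from the $\UV$-actions and is therefore equivariant.

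Next, the sheaf of coinvariants is a cokernel of the action map $\mathcal{L}\otimes(\cdot)\to(\cdot)$. The Lie algebra $\mathcal{L}_{\widetilde{\mathscr{C}_0}\setminus(P_\bullet\sqcup Q_\pm)}(V)$ acts diagonally and preserves each summand $M^\bullet\otimes W\otimes W'\otimes\mathcal{O}_{S_0}$. It follows that $\mathbb{V}(V;M^\bullet\otimes\bigoplus_W W\otimes W')$ is canonically $\bigoplus_W\mathbb{V}(V;M^\bullet\otimes W\otimes W')$. The sum is finite, since $\mathscr{S}(V)$ is finite as noted above, but finiteness is not even needed for this step.

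Finally, I compose the factorization isomorphism $\alpha$ of \zcref{thm:factorization theorem} with these two identifications to obtain the stated isomorphism. Under this composite, $u$ is sent to the sum over $W$ of the images of $u\otimes 1_W$, where $1_W$ is the canonical element of $W_0\otimes W_0^\vee$ coming from the identity in the block $\End(W_0)$.
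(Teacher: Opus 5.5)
Your proposal is correct and matches the paper's route: compose the factorization isomorphism with the identification $\mathfrak{A}\cong\bigoplus_{[W]\in\mathscr{S}(V)} W\otimes_\kk W'$ from the preceding lemma, and use that coinvariants commute with direct sums of modules. Your extra check, that this identification respects the $Q_\pm$-actions (with $\theta$ at $Q_-$), is the point the paper also leaves to \cite{Griffin26} and \cite{DGK23}.
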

This is a slightly stronger version of the factorization theorem from \cite[Theorem 7.0.1]{DGT23}. We do not need $C_2$-cofiniteness for this isomorphism, as was pointed out in \cite[Remark 3.4.6]{DGK23}.

\subsection{Relationship with intertwining operators}
While we are primarily interested in sheaves of coinvariants, we can also study the dual sheaf of conformal blocks:
\begin{definition}
    In the setting of \zcref{sec:rep of chiral}, we define the \textit{sheaf of conformal blocks} to be
    the $\mathcal{O}_S$-dual sheaf of $\mathbb{V}(V;M^\bullet)_{(\mathcal{C}\to S,P_\bullet,t_\bullet)}$.
\end{definition}
Here, we show how intertwining operators of a certain kind are equivalent to a sheaf of conformal blocks on $\P^1_S\to S = \Spec\kk[z,z^{-1}]$, where we fix the points $(0,z,z^{-1})$.
Let $M^1,M^2,M^3$ be $\Aut\mathcal{O}$-equivariant admissible $V$-modules, and assume the graded components of $M^3$ are all finite-dimensional over $\kk$. Let $S = \Spec\kk[z,z^{-1}]$. Over $\P^1_S\to S$, pick points $P_\bullet=(0,z,z^{-1})$ and local coordinates $t_\bullet = (t-z,t,t^{-1})$.\footnote{The local coordinate $t$ is not the same as a parameter $x$ such that $\P^1_\kk = \operatorname{Proj} \kk[x]$. We are just labeling the chosen local coordinate at $z$ by $t$. Geometrically, setting $t=0$ lands us at the point $z$, so $t-z$ defines a formal coordinate shifted to the point $0$.}
\begin{theorem}\label{thm:intertwining}
    Let $V$ be a $\Z$-graded vertex $\kk$-algebra with an $\Aut\mathcal{O}$-action.   Then every intertwining operator $\cal Y(\cdot , z) \in \operatorname{IO}\binom{M^3}{M^1\ M^2}$ given as a map $M^1\otimes M^2 \to M^3[\![z,z^{-1}]\!]$ is equivalent to a conformal block
    \[\Phi\in\Hom(\mathbb{V}(V;M^1,M^2,(M^3)')_{(\P^1_S\to S,P_\bullet,t_\bullet)},\mathcal{O}_S)\]
    such that $\Phi$ is $\kk^\times$-equivariant, by which we mean 
    \[\Phi((\Pi^{M^1}(k^{t\partial_t})m^1\otimes \Pi^{M^2}(k^{t\partial_t})m^2\otimes \Pi^{M^3}(k^{t\partial_t})\varphi) \otimes f(z)) = \Phi((m^1\otimes m^2\otimes \varphi) \otimes f(kz))\] 
    for all $m^1 \in M^1$, $m^2 \in M^2$, $\varphi\in (M^3)'$, and $k\in \kk^\times$.
\end{theorem}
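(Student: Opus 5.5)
We follow the classical dictionary between intertwining operators and three-point genus-zero conformal blocks, as in \cite{FBZ} and \cite{DGT21}, adapted to the divided-power setting. Given $\cal Y\in\operatorname{IO}\binom{M^3}{M^1\ M^2}$, set
\[\Phi\bigl((m^1\otimes m^2\otimes\varphi)\otimes f(z)\bigr) = f(z)\,\langle \varphi, \cal Y(m^1,z)m^2\rangle.\]
The first task is to check that this lies in $\mathcal{O}_S=\kk[z,z^{-1}]$. Since $M^3$ has finite-dimensional graded pieces and $\varphi$ is homogeneous, only finitely many modes of $\cal Y$ contribute, so the pairing is a Laurent polynomial in $z$. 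Conversely, given $\Phi$, we define $\cal Y(m^1,z)m^2\in M^3[\![z,z^{-1}]\!]$ coefficientwise. We use that $M^3\cong (M^3)''$ degreewise (finite-dimensionality and \cite[Proposition 5.3.1]{FHL}), and read off the $z$-expansion of $\Phi(m^1\otimes m^2\otimes\varphi)$ for each homogeneous $\varphi$.

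The $\kk^\times$-equivariance hypothesis handles the grading. Writing $\Pi(k^{t\partial_t})$ as $k^{-\deg}$ on each module (axiom (iii) of \zcref{def:Aut O V-module}), equivariance says exactly that $\langle\varphi,\cal Y(m^1,z)m^2\rangle$ is homogeneous in $z$ of degree $\deg\varphi-\deg m^1-\deg m^2$. This is the $L_0$-compatibility of intertwining operators. It also says that $\cal Y(m^1,z)m^2$ has the shape $\sum_n (m^1)_{(n)}m^2 z^{-n-1}$ with $(m^1)_{(n)}$ of the correct degree. Lower truncation in $M^3$ then follows from truncation of the grading on $M^2$.

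The core step is to match the Jacobi identity (together with the divided-power translation property $\cal Y(T^{(n)}m^1,z)=\partial_z^{(n)}\cal Y(m^1,z)$) with invariance under $\mathcal{L}_{\P^1\setminus P_\bullet}(V)$. Since $\P^1_S\setminus\{0,z,\infty\}$ is affine, $\mathcal{L}$ is the quotient of $V\otimes \kk[z^{\pm1}][t,t^{-1},(t-z)^{-1}]\,dt$ by $\Im\nabla$. The trivialization of \zcref{PD-connection smooth curve} is global in the coordinate $t$. Only at $\infty$ does the coordinate $t^{-1}$ enter, through $\Pi(\gamma)$-type changes, and by \zcref{contragredient} this is precisely how the contragredient action on $(M^3)'$ is defined via $e^{zL_1}(-z^{-2})^{L_0}$.

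We then expand a section $a\otimes t^{r}(t-z)^{s}\,dt$ at the three points. Its action at $0$ and $\infty$, paired through $\langle\cdot,\cdot\rangle$, gives the two iterated products $Y^{M^3}(a,x)\cal Y(m^1,z)$ and $\cal Y(m^1,z)Y^{M^2}(a,x)$, expanded in the two regions. Its action at $z$ gives $\cal Y(Y(a,x-z)m^1,z)$. Invariance under all such sections is equivalent to the residue form of the Jacobi identity for all $r,s\in\Z$. This is the standard argument of \cite[\S10]{FBZ}, and it uses only binomial coefficients $\binom{s}{k}$, so it holds over any ring.

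Invariance under $T^{(n)}$ is the divided-power translation property. It comes from \zcref{lem:formal neighborhood ancillary Lie alg}, where $\Im\nabla^{(k)}$ maps to total $\partial_t^{(k)}$-derivatives, combined with the identification $\partial_z^{(k)}$ of the moving point. This divided-power compatibility (the $L_{-1}$-derivative property in characteristic $p$) is the main obstacle. To handle it, we first try differentiating $\Phi$ in $z$ using the PD-connection on $\mathcal{O}_S$, and comparing with sections $a\otimes(t-z)^{-1-k}dt$ via the coordinate $t-z$. If that fails, we instead take the $T^{(k)}$-property as part of the definition of intertwining operator and verify that it is matched exactly. Finally, we check that the two constructions are mutually inverse, which is immediate from the coefficientwise definitions.
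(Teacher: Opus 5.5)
Your main line is the paper's proof. You pass between $\Phi$ and $\mathcal{Y}$ by the tensor--hom adjunction together with $(M^3)''\cong M^3$. You read $\kk^\times$-equivariance as the degree condition $\deg(m^1_{(n)}m^2)=\deg m^1+\deg m^2-1-n$. You then expand a section $a\otimes f(t,z)$ of $\mathcal{L}_{\P^1_S\setminus P_\bullet}(V)$ at the three points, using $\theta$ and the contragredient structure at $\infty$, to get the residue form of the Jacobi identity.

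Two small corrections:
\begin{itemize}
\item Lower truncation of $\mathcal{Y}(m^1,z)m^2$ comes from the lower-truncated grading of $M^3$, not of $M^2$. The degree of $m^1_{(n)}m^2$ decreases as $n$ grows.
\item Passing from ``for all $k\in\kk^\times$'' to homogeneity in $z$ uses that $\kk$ is infinite, as the paper remarks.
\end{itemize}

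The paragraph on the divided-power derivative property is where your proposal goes wrong. Quotienting by $\Im\nabla$ only encodes $Y(T^{(k)}a,x)=\partial_x^{(k)}Y(a,x)$ for $a\in V$, which already holds in every module. It says nothing about $T^{(k)}$ acting on $M^1$, so it cannot produce $\mathcal{Y}(T^{(k)}m^1,z)=\partial_z^{(k)}\mathcal{Y}(m^1,z)$. Nothing else on the conformal-block side does either: $\Phi$ is only required to be $\mathcal{O}_S$-linear on coinvariants and $\kk^\times$-equivariant, with no horizontality condition in $z$.

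Your fallback, building this property into the definition of intertwining operator and ``verifying it is matched'', therefore has nothing to match against. With that axiom the bijection is false in general. Take $V=\kk\vac$: then $\vac_{[j]}$ acts by $\delta_{j,-1}$ at each point, including on $(M^3)'$. So a section $\vac\otimes f\,dt$ acts on $M^1\otimes M^2\otimes(M^3)'$ by $\sum_P\operatorname{Res}_P(f\,dt)=0$. Every equivariant functional is then a block, while the derivative axiom is in general a genuine extra constraint.

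The paper's proof checks only lower truncation and the Jacobi identity. In effect it takes intertwining operators to be grading-compatible, lower-truncated maps satisfying Jacobi. You should drop that paragraph, or else add a horizontality condition on $\Phi$ to the statement. In characteristic $0$ with a conformal vector $\omega$ whose $L_{-1},L_0$ act on the modules, the first-order property does follow: combine the commutator formula for $\omega_{(1)}=L_0$ with $L_0$-homogeneity. That argument is not available in the present generality.
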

\begin{proof}
    Note the equivalence of spaces 
    \begin{align*}
        &\Hom((M^1\otimes M^2\otimes (M^3)')\otimes \mathcal{O}_S,\mathcal{O}_S) \\
        &\cong \Hom_{\kk[z,z^{-1}]}((M^1\otimes M^2\otimes (M^3)')\otimes_{\kk} \kk[z,z^{-1}],\kk[z,z^{-1}])\\
        &= \Hom_{\kk}(M^1\otimes M^2\otimes (M^3)',\End_{\kk}(\kk[z,z^{-1}]))\\
        &= \Hom_{\kk}(M^1\otimes M^2\otimes (M^3)',\kk[z,z^{-1}]),
    \end{align*}
    where in the first step we used the fact that $S$ is affine, and in the second we used the tensor-hom adjunction. Using this adjunction again, we have
    \begin{align*}
        \Hom_{\kk}\left(M^1\otimes M^2\otimes (M^3)', \kk[z,z^{-1}]\right) &\cong \Hom_{\kk}\left(M^1\otimes M^2, \Hom_{\kk}((M^3)',\kk[z,z^{-1}])\right)\\
        &= \Hom_{\kk}\left(M^1\otimes M^2, (M^3)''[z,z^{-1}]\right).
    \end{align*}
    Note that $(M^3)'' \cong M^3$ since the graded components of $M^3$ are finite-dimensional.
    Given a conformal block $\Phi\colon M^1\otimes M^2\otimes (M^3)' \to \kk[z,z^{-1}]$, we may use the pairing $(M^3)'\otimes M^3\to \kk$ to regard it as a map
    \[m^1\otimes m^2\otimes \varphi \mapsto \langle\varphi,\cal Y(m^1,z)m^2\rangle,\]
    where $\cal Y(\cdot,z)\colon M^1\otimes M^2\mapsto M^3[\![z,z^{-1}]\!]$ is our claimed intertwining operator, which we write as
    \[\cal Y(m^1,z) = \sum_{n\in \Z}m^1_{(n)}z^{-1-n}.\]
    First, we show that the series is lower-truncated; that is, for all $m^2\in M^2$ we have $m^1_{(n)}m^2 = 0$ for $n\gg 0$.
    Now we examine the $\kk^\times$-equivariance requirement. We have \[\Phi(m^1\otimes m^2\otimes \varphi)(kz) = \Phi(k^{L_0}m^1\otimes k^{L_0}m^2\otimes k^{L_0}\varphi)(z).\] 
    Equivalently,
    \begin{align*}
        \langle\varphi, k^{-L_0} \cal Y(k^{L_0}m^1,z) k^{L_0}m^2\rangle &= \langle k^{L_0}\varphi, \cal Y(k^{L_0}m^1,z) k^{L_0}m^2\rangle\\
        &= \langle\varphi,\cal Y(m^1,kz)m^2\rangle.
    \end{align*}
    That is, we have $k^{\deg(m^1_{(n)}m^2)-\deg(m^1)-\deg(m^2)} = k^{-1-n}$ for all $k\in \kk^\times$. The field $\kk$ is infinite because it is algebraically closed, so this implies that $\deg(m^1_{(n)}m^2) = \deg(m^1)+\deg(m^2)-1-n$ for all $m^1,m^2$ and $n\in \Z$. Since the $\Z$-grading is lower-truncated, we have $m^1_{(n)}m^2 = 0$ for $n\gg 0$.

    It remains to check that the Jacobi identity for intertwining operators holds. This follows from the fact that conformal blocks factor through coinvariants.
    Note that $\P^1_{S}$ minus $\{0,z,z^{-1}\}$ is isomorphic to $\Spec \kk[t,z,z^{-1},t^{-1},(t-z)^{-1}]$. We choose local coordinates $t_\bullet = (t-z,t,t^{-1})$ on the points $(0,z,z^{-1})$. Then the chiral Lie algebra may be identified with
    \[\mathcal{L}_{\P^1_S\to S}(V) = \frac{ V[t,z,t^{-1},z^{-1},(t-z)^{-1}] }{( \partial_t^{(n)} - (-1)^nT^{(n)})_{n>0}}.\]
    Fix a rational function $f(t,z)\in \kk[t,z,t^{-1},z^{-1},(t-z)^{-1}]$ and $a\in V$. Explicitly, the quotient by the action of the chiral Lie algebra guarantees that the diagonal action is 0:
    \begin{align*}
        &\Phi((a\otimes f(t,z))\cdot m^1 \otimes m^2\otimes \varphi) + \Phi(m^1 \otimes (a\otimes f(t,z))\cdot m^2\otimes \varphi)\\
        &\equiv -\Phi(m^1\otimes m^2 \otimes (a\otimes f(t,z))\cdot \varphi)
    \end{align*}
    Using the local coordinate $t-z$ at 0, it is easy to see the first term on the left-hand side is
    \begin{align*}
        &\Phi((a\otimes f(t,z))\cdot m^1 \otimes m^2\otimes \varphi) \\
        &= \left\langle \varphi,\Res_{t-z}\left(\cal Y(Y(a,t-z)m^1,z)m^2 \, \iota_{z,t-z}f(t,z)\,d(t-z)\right)\right\rangle.
    \end{align*}
    Similarly, using the local coordinate $t$ at the point $z$, the second term on the left-hand side is
    \begin{align*}
        &\Phi( m^1 \otimes (a\otimes f(t,z))\cdot m^2\otimes \varphi) \\
        &= \left\langle \varphi,\cal Y(m^1,z)\Res_{t}\left(Y(a,t)m^2 \, \iota_{z,t}f(t,z)\, dt\right)\right\rangle.
    \end{align*}
    It remains to check what the last term is.
    Using the change from a right $V$-module action to a left $V$-module action given by the involution $\theta$ and a change of variables, we have
    \begin{align*}
        &-\Phi(m^1\otimes m^2 \otimes (a\otimes f(t,z))\cdot \varphi)\\
        &=-\left\langle \Res_{t^{-1}}\left(Y(e^{t^{-1}L_1}(-t^{2})^{L_0}a,t^{-1})\varphi\, \iota_{z,t^{-1}}f(t,z)\,dt^{-1}\right),\cal Y(m^1,z)m^2\right\rangle \\
        &=-\Res_{t^{-1}}\left(\left\langle Y(e^{t^{-1}L_1}(-t^{2})^{L_0}a,t^{-1})\varphi,\cal Y(m^1,z)m^2\right\rangle \iota_{z,t^{-1}}f(t,z)\,dt^{-1}\right)\\
        &=-\Res_{t}\left(t^2\left\langle Y(e^{t^{-1}L_1}(-t^{2})^{L_0}a,t^{-1})\varphi,\cal Y(m^1,z)m^2\right\rangle \iota_{z,t^{-1}}f(t,z)\,dt^{-1}\right)\\
        &=\Res_{t}\left(\left\langle Y(e^{t^{-1}L_1}(-t^{2})^{L_0}a,t^{-1})\varphi,\cal Y(m^1,z)m^2\right\rangle \iota_{t,z}f(t,z)\,dt\right)\\
        &=\Res_{t}\left(\left\langle \varphi,Y(a,t)\cal Y(m^1,z)m^2\right\rangle \iota_{t,z}f(t,z)\,dt\right)\\
        &=\left\langle \varphi,\Res_{t}\left(Y(a,t)\cal Y(m^1,z)m^2\, \iota_{t,z}f(t,z)\,dt\right)\right\rangle.
    \end{align*}
    Since the action holds for all $\varphi$ and we are over a field, we may remove the $\langle\varphi,-\rangle$ and conclude that for all $m^1\in M^1$, $m^2\in M^2$, $\varphi\in (M^3)'$, and $f(t,z)\in \kk[t,z,t^{-1},z^{-1},(t-z)^{-1}]$
    \begin{align*}
        &\Res_{t-z}\left( \cal Y(Y(a,t-z)m^1,z)m^2 \, \iota_{z,t-z}f(t,z)\,d(t-z)\right)\\
        &=\Res_{t}\left(\left(Y(a,t)\cal Y(m^1,z)m^2\, \iota_{t,z}f(t,z) - \cal Y(m^1,z)Y(a,t)m^2\, \iota_{z,t}f(t,z)\right)dt\right).
    \end{align*}
    This is precisely the Jacobi identity for intertwining operators. This shows the bijection.
\end{proof}
The above proof requires $\kk$ to be infinite for the grading argument to work. Indeed, if $\kk=\F_q$ is a finite field for some $q = p^m$ for a prime $p$, then $k^n = 1$ for all $k\in \F_q^{\times}$ implies that $n$ is divisible by $q-1$. Also, we needed to assume that $M^3$ has finite-dimensional graded components to dualize correctly.

\subsection{Sheaves of coinvariants on $\overline{\mathcal{M}}_{g,n}$}\label{sec:descent}
Let $\widehat{\mathcal{M}}_{g,n}$ be the moduli space of smooth, $n$-pointed curves with formal coordinates at the marked points. Equivalently, this is the restriction of $\widetriangle{\mathcal{M}}_{g,n}$ over the locus $\mathcal{M}_{g,n}$ of smooth, $n$-pointed curves. Because all of our constructions thus far commute with base change, we obtain a chiral Lie algebra sheaf $\mathcal{L}(V)$ on $\widetriangle{\mathcal{M}}_{g,n}$ whose pullback along a morphism $S\to \widetriangle{\mathcal{M}}_{g,n}$ corresponding to a stable relative curve $\mathcal{C}\to S$ is precisely the sheaf $\mathcal{L}_{\mathcal{C}\setminus P_\bullet}(V)$. Another way to obtain this sheaf is to first define it on $\widehat{\mathcal{M}}_{g,n}$ and glue via formal smoothings to obtain a sheaf on $\widetriangle{\mathcal{M}}_{g,n}$ (using the descent result of \cite{BL}). Either way, we can use this to obtain a sheaf of coinvariants on $\widetriangle{\mathcal{M}}_{g,n}$ associated to a collection of admissible $V$-modules $M^1,\ldots,M^n$, which we denote by $\widetriangle{\mathbb{V}}(V;M^\bullet)$.

Now, we will show that our setup can be used to induce sheaves of coinvariants on the moduli space $\overline{\mathcal{M}}_{g,n}$ over $\Spec(\kk)$.
For this section, we will work with $\Aut\mathcal{O}$-equivariant $V$-modules $W$ with a conformal dimension $c_W$.
Over $\C$, it is possible to set the conformal dimension $c_M$ to be complex since we can take complex powers. That said, we will only consider when the conformal dimension is a rational number since for any algebraically closed field we can only take rational powers in general.

The main goal of this section is to construct the sheaf of coinvariants on $\overline{\mathcal{M}}_{g,n}$, which is given by descent in two steps. These two steps correspond to the two groups involved in the semidirect product decomposition $\Aut\mathcal{O} = \Aut_+\mathcal{O}\rtimes \G_m$. Indeed, the $(\Aut\mathcal{O})^{\times n}$-torsor $\widetriangle{\mathcal{M}}_{g,n} \to\overline{\mathcal{M}}_{g,n}$ can be written as a composition of an $(\Aut_+\mathcal{O})^{\times n}$-torsor and a $\G_m^{\times n}$-torsor:
\begin{equation}
\label{eq:torsors}
\begin{tikzcd}
&\widetriangle{\mathcal{M}}_{g,n} \arrow[rightarrow]{dl}[swap]{(\Aut_+\mathcal{O})^{\times n}} \arrow[rightarrow]{dd}{(\Aut\mathcal{O})^{\times n}} \\
\overline{\mathcal{J}}_{g,n}^{\times} \arrow[rightarrow]{dr}[swap]{\G_m^{\times n}}&\\
& \overline{\mathcal{M}}_{g,n}.
\end{tikzcd}
\end{equation}
Here $\overline{\mathcal{J}}_{g,n}^{\times}$ is the space of tuples $(C,P_\bullet, \tau_\bullet)$ consisting of a stable, $n$-pointed curve $(C,P_\bullet)$ together with a tuple $\tau_\bullet=(\tau_1,\dots,\tau_n)$ with $\tau_i$ a 1-jet of a formal coordinate at $P_i$ for each $i$. The group scheme $(\Aut_+\mathcal{O})^{\times n}$ acts on the sheaf $\widetriangle{\mathbb{V}}(V;M^\bullet)$ on $\widetriangle{\mathcal{M}}_{g,n}$; descending along the $(\Aut_+\mathcal{O})^{\times n}$-torsor in \zcref[noname]{eq:torsors}, one obtains a sheaf of coinvariants on $\overline{\mathcal{J}}_{g,n}^{\times}$, which we denote as $\mathbb{V}^J(V;M^\bullet)$. See \cite[\textsection 6.3.1]{DGT21} for more details.

The descent of $\mathbb{V}^J(V;M^\bullet)$ from $\overline{\mathcal{J}}_{g,n}^{\times}$ to a sheaf over $\overline{\mathcal{M}}_{g,n}$ is obtained as follows. We tensor $\mathbb{V}^J(V;M^\bullet)$ with a specific line bundle to obtain a sheaf on which we have an action of $\G_m^{\times n}$. After descending this new sheaf, we then tensor with the dual of the line bundle, producing the apporpriate sheaf on $\overline{\mathcal{M}}_{g,n}$. In the case where $n>1$, we may simply iterate the procedure used for $n=1$, where we consider one marked point and a $V$-module $M$ with conformal dimension $c_M\in \Q$. 

We write $c_M$ as a reduced fraction $c_M = a/d$, where $a\in \Z$ and $d\in \Z_{>0}$. If $\Char\kk=p>0$, then we require that $d$ is not divisible by $p$. We denote $\cal{M} \eqdef \overline{\cal{M}}_{g,1}$ and $\cal{J} \eqdef \overline{\mathcal{J}}_{g,1}^{\times}$. We have a natural map $\pi\colon \cal{J} \to \cal{M}$ given by forgetting the extra data.
We define the line bundles $\mathcal{L}_{\cal{M}} = (\Psi^\lor)^{\otimes a}$, and $\mathcal{L}_{\cal{J}} = \pi^*\mathcal{L}_{\cal{M}}$. Here $\Psi =  P^*(\omega_{\widetriangle{\mathcal{M}}_{g,1}/\overline{\mathcal{M}}_{g,1}})$.

\subsubsection{Root stacks}
The root stack $\sqrt[\leftroot{-2}\uproot{2}d]{\mathcal{L}_{\mathcal{M}}/\mathcal{M}}$ is the stack parametrizing $d$-th roots of the line bundle $\mathcal{L}_\mathcal{M}$. That is, $\sqrt[\leftroot{-2}\uproot{2}d]{\mathcal{L}_{\mathcal{M}}/\mathcal{M}}$ represents the functor that associates to every $\mathcal{M}$-scheme $\phi \colon Y \to \mathcal{M}$ the groupoid of pairs $(\mathcal{N}, f)$ where $\mathcal{N}$ is a line bundle on $Y$ and $f \colon \mathcal{N}^{\otimes d}\to \phi^*\mathcal{L}_{\mathcal{M}}$ is an isomorphism. An isomorphism between $(\mathcal{N}_1, f_1)$ and $(\mathcal{N}_2, f_2)$ is an isomorphism of line bundles $g \colon  \mathcal{N}_1\to \mathcal{N}_2$  such that $f_2 \circ g^{\otimes d} = f_1$. The root stack $\sqrt[\leftroot{-2}\uproot{2}d]{\mathcal{L}_{\mathcal{J}}/\mathcal{J}}$ is defined similarly. 

Since $\mathcal{L}_{\mathcal{J}}$ is the pullback of $\mathcal{L}_{\mathcal{M}}$ along $\pi$, we can write $\sqrt[\leftroot{-2}\uproot{2}d]{\mathcal{L}_{\mathcal{J}}/\mathcal{J}}$ as the pullback of $\sqrt[\leftroot{-2}\uproot{2}d]{\mathcal{L}_{\mathcal{M}}/\mathcal{M}}$ and $\mathcal{J}$:
\begin{equation}\label{CDRS}
\begin{tikzcd}
\sqrt[\leftroot{-2}\uproot{2}d]{\mathcal{L}_{\mathcal{J}}/\mathcal{J}}  \arrow{r}{\sqrt[\leftroot{-2}\uproot{2}d]{\pi}} \arrow[swap]{d}{p_{\mathcal{J}}} & \sqrt[\leftroot{-2}\uproot{2}d]{\mathcal{L}_{\mathcal{M}}/\mathcal{M}} \arrow{d}{p_{\mathcal{M}}} \\
\mathcal{J} \arrow{r}{\pi} & \mathcal{M}.
\end{tikzcd}
\end{equation} 
In particular, $\sqrt[\leftroot{-2}\uproot{2}d]{\mathcal{L}_{\mathcal{J}}/\mathcal{J}}  \to \sqrt[\leftroot{-2}\uproot{2}d]{\mathcal{L}_{\mathcal{M}}/\mathcal{M}}$ is a $\mathbb{G}_m$-torsor. The stacks $\sqrt[\leftroot{-2}\uproot{2}d]{\mathcal{L}_{\mathcal{M}}/\mathcal{M}}$ and $\sqrt[\leftroot{-2}\uproot{2}d]{\mathcal{L}_{\mathcal{J}}/\mathcal{J}}$ admit universal line bundles $\mathcal{U}_{\mathcal{M}}$ and $\mathcal{U}_{\mathcal{J}}=\left(\sqrt[\leftroot{-2}\uproot{2}d]{\pi}\right)^* \mathcal{U}_{\mathcal{M}}$ such that $\mathcal{U}_{\mathcal{M}}^{\otimes d} = p_{\mathcal{M}}^* \, \mathcal{L}_{\mathcal{M}}$ and  $\mathcal{U}_{\mathcal{J}}^{\otimes d} = p_{\mathcal{J}}^* \, \mathcal{L}_{\mathcal{J}}$.
The category of quasi-coherent sheaves on the root stack $\sqrt[\leftroot{-2}\uproot{2}d]{\mathcal{L}_{\mathcal{J}}/\mathcal{J}}$ (resp. $\sqrt[\leftroot{-2}\uproot{2}d]{\mathcal{L}_{\mathcal{M}}/\mathcal{M}}$) admits an eigendecomposition with respect to the action of the inertia group $\mu_d$ of $d$-th roots of unity, and the degree zero component consists of pullbacks of quasi-coherent sheaves on $\mathcal{J}$ (resp. $\mathcal{M}$). This follows from Maschke's theorem.\footnote{This usage of Maschke's theorem is the only characteristic-dependent statement in this descent argument.} By \cite[Lemma 3.1.1.7]{Lieblich2008},
there is an identification between the eigensheaves for the trivial character on the root stack and sheaves on the base stack. Therefore, taking the pullback via $p_{\mathcal{J}}$ is fully faithful. That is, two quasi-coherent sheaves on $\mathcal{J}$ are isomorphic if and only if they are isomorphic when pulled back to $\sqrt[\leftroot{-2}\uproot{2}d]{\mathcal{L}_{\mathcal{J}}/\mathcal{J}}$.

\subsubsection{The final descent}  \label{sec:finaldescent}
Let $\mathbb{V}^{J}:=\mathbb{V}^J(V;M )$. The quasi-coherent sheaf $p_{\mathcal{J}}^*\mathbb{V}^{J}\otimes \mathcal{U}_{\mathcal{J}}$
on $\sqrt[\leftroot{-2}\uproot{2}d]{\mathcal{L}_{\mathcal{J}}/\mathcal{J}}$  has an action of $\mathbb{G}_m$ as in \cite[\S\S 4.2.1, 6.3.2]{DGT21}.\footnote{For this action, one uses the filtration on $\mathbb{V}^J(V;M)$ induced by the $\mathbb{Z}_{\ge 0}$-grading of the module $M$.} 
Descending along the $\mathbb{G}_m$-torsor $\sqrt[\leftroot{-2}\uproot{2}d]{\mathcal{L}_{\mathcal{J}}/\mathcal{J}}  \to \sqrt[\leftroot{-2}\uproot{2}d]{\mathcal{L}_{\mathcal{M}}/\mathcal{M}}$, we obtain a sheaf $\mathcal{F}$ on $\sqrt[\leftroot{-2}\uproot{2}d]{\mathcal{L}_{\mathcal{M}}/\mathcal{M}}$ for which
\begin{equation*}\label{rootdescent}
\left(\sqrt[\leftroot{-2}\uproot{2}d]{\pi}\right)^*\mathcal{F}\cong p_{\mathcal{J}}^*\mathbb{V}^{J}\otimes \mathcal{U}_{\mathcal{J}}.
\end{equation*}

Tensoring the above with $\left(\sqrt[\leftroot{-2}\uproot{2}d]{\pi}\right)^*\mathcal{U}_{\mathcal{M}}^{\vee}= \mathcal{U}_{\mathcal{J}}^{\vee}$, we have  
\begin{equation}\label{rootdiagram}
\left(\sqrt[\leftroot{-2}\uproot{2}d]{\pi}\right)^*\left(\mathcal{F} \otimes \mathcal{U}_{\mathcal{M}}^{\vee} \right) 
\cong p_{\mathcal{J}}^*\mathbb{V}^{J}.
\end{equation}
It follows that $\mathcal{F} \otimes \mathcal{U}_{\mathcal{M}}^{\vee}$ lives in the degree zero component, hence it descends to 
 a sheaf $\mathbb{V}(V;M)$ on $\mathcal{M}$ such that
$p_{\mathcal{M}}^* \, \mathbb{V}(V;M) \cong \mathcal{F} \otimes \mathcal{U}_{\mathcal{M}}^{\vee}$. By its construction and the commutativity of \zcref[noname]{CDRS}, one has 
\begin{equation*}\label{FF}
\left(\pi \circ p_{\mathcal{J}}\right)^*  \mathbb{V}(V;M)  = \left(p_{\mathcal{M}} \circ \sqrt[\leftroot{-2}\uproot{2}d]{\pi} \right)^*  \mathbb{V}(V;M) =p_{\mathcal{J}}^*\mathbb{V}^{J}.
\end{equation*} Since the pullback of sheaves to a root stack is fully faithful, we deduce that $\mathbb{V}^{J} = \pi^* \, \mathbb{V}(V;M)$. In particular, $\mathbb{V}^{J}$ descends to a well-defined sheaf $\mathbb{V} = \mathbb{V}(V;M)$ on $\overline{\mathcal{M}}_{g,1}$.

The arguments of this section allow us to conclude with the following:
\begin{theorem}\label{sheaves of coinvariants descent}
    Let $V$ be an $\N$-graded vertex algebra over an algebraically closed field $\kk$. Let $M^1,\ldots,M^n$ be a collection of $V$-modules with $\Aut\mathcal{O}$-actions (cf. \zcref{def:Aut O V-module}) and conformal dimensions $c_{M^i} = \frac{a_i}{d_i}$ such that $d_i$ is not divisible by $p = \Char \kk$ if $p>0$. Then the sheaf $\widetriangle{\mathbb{V}}(V;M^\bullet)$ on $\widetriangle{\mathcal{M}}_{g,n}$ descends to a well-defined quasi-coherent sheaf $\mathbb{V}(V;M^\bullet)$ on $\overline{\mathcal{M}}_{g,n}$.
\end{theorem}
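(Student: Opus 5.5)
The proof runs in two descent steps along the factorization \eqref{eq:torsors} of the $(\Aut\mathcal{O})^{\times n}$-torsor $\widetriangle{\mathcal{M}}_{g,n}\to\overline{\mathcal{M}}_{g,n}$. Since the construction for one marked point may be iterated point by point, we first reduce to $n=1$. Throughout, one may assume that $\mathcal{C}\setminus P_\bullet(S)$ is affine, by \zcref{thm:propagation of vacua} and the colimit description \zcref[noname]{coinvariants defn}.

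\textbf{Step 1.} Work on the torsor itself. The chiral Lie algebra $\mathcal{L}(V)$ and the sheaf $\widetriangle{\mathbb{V}}(V;M^\bullet)$ are defined on $\widetriangle{\mathcal{M}}_{g,n}$ compatibly with base change, glued over smoothing families via \cite{BL} as in \zcref{rmk:vertex algebra sheaf general case}. Since $\widetriangle{\mathbb{V}}$ is a cokernel of a map of quasi-coherent sheaves, it is quasi-coherent.

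\textbf{Step 2.} Descend along the $(\Aut_+\mathcal{O})^{\times n}$-torsor $\widetriangle{\mathcal{M}}_{g,n}\to\overline{\mathcal{J}}^{\times}_{g,n}$. We construct an $(\Aut_+\mathcal{O})^{\times n}$-equivariant structure on $\widetriangle{\mathbb{V}}(V;M^\bullet)$ as follows. The group acts on $M^\bullet\otimes\mathcal{O}$ through $\Pi^{M^i}$ combined with the change of coordinates on the base. We must check that this action preserves the image of the $\mathcal{L}(V)$-action. The key input is Huang's identity (\zcref{def:Aut O V-module}(iv)). It shows that, for $\rho\in\Aut\mathcal{O}$, conjugation by $\Pi^{M}(\rho)$ carries the expansion $\sigma_{P}$ in the coordinate $t$ to the expansion in the coordinate $\rho(t)$. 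This is compatible with the coordinate independence of the PD-connection (\zcref{PD-connection smooth curve} and \zcref{lem:formalcalcequiv}), and hence with the identification in \zcref{lem:formal neighborhood ancillary Lie alg}. Because $\Aut_+\mathcal{O}$ is pro-unipotent and acts locally finitely on each $\mrm G_\ell M$ by axiom (i), the action is a genuine comodule structure over the coordinate ring of the group scheme, rather than merely an action of $\kk$-points. Faithfully flat descent then yields a quasi-coherent sheaf $\mathbb{V}^J(V;M^\bullet)$ on $\overline{\mathcal{J}}^{\times}_{g,n}$. The cocycle condition follows from $\Pi^{M}$ being a group representation.

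\textbf{Step 3.} Descend along $\G_m$ via the root stack argument of \zcref{sec:finaldescent}. Write $c_M=a/d$ with $p\nmid d$. We form the root stacks fitting into the square \zcref[noname]{CDRS} and give $p_{\mathcal{J}}^*\mathbb{V}^J\otimes\mathcal{U}_{\mathcal{J}}$ a $\G_m$-action. Using axiom (iii), $\kk^\times$ acts on $M_\ell$ by $x^{-\ell}$ with $\ell\in c_M+\Z$. The twist by $\mathcal{U}_{\mathcal{J}}$, a $d$-th root of $(\Psi^\vee)^{\otimes a}$, cancels the fractional part, so the weights become integral and the action is algebraic. The grading filtration of $M$ ensures compatibility with the $\mathcal{L}(V)$-quotient. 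Descending along the $\G_m$-torsor gives a sheaf $\mathcal{F}$. Then $\mathcal{F}\otimes\mathcal{U}_{\mathcal{M}}^\vee$ lies in the trivial $\mu_d$-eigencomponent. Here $p\nmid d$ makes $\mu_d$ linearly reductive (Maschke), so by \cite[Lemma 3.1.1.7]{Lieblich2008} this sheaf descends to $\mathbb{V}(V;M)$ on $\overline{\mathcal{M}}_{g,1}$. Full faithfulness of $p_{\mathcal{J}}^*$ gives $\pi^*\mathbb{V}(V;M)\cong\mathbb{V}^J$.

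\textbf{Expected obstacle.} The main difficulty is Step 2 near the boundary. Over a node there are no marked-point coordinates to transform, but the equivariance must be compatible with the gluing of $\mathcal{L}(V)$ through smoothing families. I would handle this by checking equivariance on $\mathcal{L}_{\widetilde{\mathscr C}\setminus(P_\bullet\sqcup Q_\pm)}(V)$ using \zcref{prop:chiral Lie alg nodal}. The reason this should work is that $\Aut_+\mathcal{O}$ acts only at the $P_i$ and the conditions at $Q_\pm$ are untouched.
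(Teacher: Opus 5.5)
Your proposal follows the paper's route essentially step for step. You build $\widetriangle{\mathbb{V}}(V;M^\bullet)$ on $\widetriangle{\mathcal{M}}_{g,n}$ by base change and gluing, descend along the $(\Aut_+\mathcal{O})^{\times n}$-torsor to $\overline{\mathcal{J}}^{\times}_{g,n}$, and then treat the $\G_m$-part one point at a time via the $d$-th root stacks, the twist by $\mathcal{U}_{\mathcal{J}}$, the $\mu_d$-eigendecomposition and Lieblich's full faithfulness, exactly as in the paper's final descent.

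The one inaccuracy is your justification that the $\Aut_+\mathcal{O}$-action is a comodule structure. Axiom (i) only says the action preserves the filtration $\mrm G_\ell M$, whose pieces may be infinite-dimensional. Moreover, an abstract representation of $\kk$-points of a pro-unipotent group need not be algebraic. So you should instead take $\Pi^{M^i}$ to be a representation of the group functor $\Aut\mathcal{O}$, which is what the paper implicitly assumes when it says the group scheme acts.
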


\section{Finiteness and smoothing of coinvariants}\label{sec:finiteness-smoothing}
Our results so far impose little to no restrictions on the underlying vertex algebra $V$ or its admissible $V$-modules. For example, all of our results above apply to any (not necessarily CFT-type) VOA over any algebraically closed field $\kk$ of characteristic $0$. We will now impose various restrictions on the underlying vertex algebra $V$ to obtain finiteness properties of coinvariants. Many of the arguments in this section will correspond to results from \cite{DGT23} or \cite{DGK23}, and their proofs may often be repeated here. We will omit proofs when the original argument is sufficient, as is the case for the most part.

\subsection{Finite-dimensionality of coinvariants}
Here we will use the same arguments as Abe and Nagatomo to show that spaces of coinvariants associated to a smooth, $n$-pointed curve $C$ are finite-dimensional over $\kk$.

We introduce some notation first. Given a smooth, $n$-pointed curve $(C,P_\bullet)$ over $\kk$ and an effective divisor $D = \sum_{i=1}^m n_iQ_i$ on $C$ not supported at $P_\bullet$, let
\[\mathcal{L}_{C\setminus P_\bullet}(V,D) \eqdef H^0\left(C\setminus P_\bullet, \frac{\mathcal{V}_C\otimes \omega_C(-D)}{\Im \nabla}\right).\]
That is, $\mathcal{L}_{C\setminus P_\bullet}(V,D)$ is the subspace of $\mathcal{L}_{C\setminus P_\bullet}(V)$ consisting of sections vanishing with order at least $n_i$ at $Q_i$ for each $i$. This is a Lie subalgebra of $\mathcal{L}_{C\setminus P_\bullet}(V)$.

We fix the following data for our next result. Let $V$ be a vertex $\kk$-algebra with an $\Aut\mathcal{O}$-action. Let $C$ be a smooth, projective curve with distinct points $P_\bullet=(P_1,\ldots,P_n)$, and let $D$ be an effective divisor on $C$ not supported at $P_\bullet$. Fix formal coordinates $t_i$ at $P_i$ for all $i$. 
\begin{proposition}\label{prop:fin-dim-coinvars}
    Suppose that $V$ is CFT-type and $C_2$-cofinite. For any $n$-tuple of $V$-modules $M^\bullet = (M^1,\ldots,M^n)$ that are finitely generated over $\UV$ by their degree 0 part, the space of coinvariants 
\[M^\bullet_{\mathcal{L}_{C\setminus P_\bullet}(V,D)} = M^\bullet/\left(\mathcal{L}_{C\setminus P_\bullet}(V,D)\cdot M^\bullet\right)\]   
    is finite-dimensional.
\end{proposition}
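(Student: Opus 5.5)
Following Abe--Nagatomo, I would reduce to one marked point and then carry out a filtration-and-spanning argument. First, the coinvariants for $M^\bullet=M^1\otimes\cdots\otimes M^n$ at $P_\bullet$ should be compared with coinvariants of a single module at one point. This can be done as in \cite{AN2}: each $M^i$ is a quotient of $\PhiL_0$ of a finitely generated $\Aa_0$-module, so $M^\bullet$ is generated over $\bigoplus_i \UV$ by a finite-dimensional subspace $W_0=\bigotimes_i M^i_{\le K}$. Alternatively, one can induct on $n$, treating the points one at a time and using \zcref{cor:Riemann Roch calculation} to produce sections with prescribed principal parts at one $P_j$ that are highly regular at the other points and vanish to order at least $n_i$ along $D$. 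The reduction is not essential. The core claim is that $M^\bullet$ equals $W + \mathcal{L}_{C\setminus P_\bullet}(V,D)\cdot M^\bullet$ for some finite-dimensional $W$.

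The key steps are as follows. (1) Fix $U$ with $V=U\oplus C_2(V)$. Since $V$ is $C_2$-cofinite and CFT-type, $U$ is finite-dimensional and graded. (2) Choose $m$ large; \zcref{AN Prop 4.5} then gives $k$ with $C_k(M^i)\subset C_m(U,M^i)$. Combined with \zcref{prop:spanningC2} and Buhl's spanning theorem (the generalization of \zcref{prop:spanningC2} to modules stated in \cite{BuhlSpanning}), this shows that $M^i/C_m(U,M^i)$ is finite-dimensional. Indeed, $M^i$ is spanned by strings $x^1_{(-n_1)}\cdots x^r_{(-n_r)}w$ with $w$ in a finite generating set, the $x^j$ in the finite set $X$, strictly decreasing positive indices, and bounded multiplicities for nonpositive indices. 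Modulo $C_m(U,\cdot)$, only strings with all $n_j<m$ survive, and there are finitely many of these. (3) Using \zcref{cor:Riemann Roch calculation} and \zcref{prop: exists section chiral Lie alg}, for each $\alpha\in U$ and each $k\ge m$ (with $m$ exceeding a bound depending on $g$, $\deg D$ and $\deg\alpha$, via Riemann--Roch), find $\sigma\in\mathcal{L}_{C\setminus P_\bullet}(V,D)$ whose expansion at $P_j$ is $\alpha_{[-k]}$ plus terms of higher $t_j$-order (as in \zcref{cor:Riemann Roch calculation}), and whose expansions at the other $P_i$ have bounded pole order. Then $\alpha_{(-k)}w\equiv -(\text{correction terms})$ modulo the coinvariant relations.

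(4) Run an induction on the filtration by degree and string length, using the canonical filtration of \zcref{constr:filtration}, to show that the correction terms are of lower filtration level or lie in a fixed finite-dimensional piece. Hence $M^\bullet=W+\mathcal{L}_{C\setminus P_\bullet}(V,D)\cdot M^\bullet$, with $W$ spanned by the finitely many strings from step (2) tensored together.

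I expect step (4) to be the main obstacle. The correction terms coming from the other marked points and from the higher-order part of $\sigma$ act with modes of possibly large degree, so a naive $L_0$-grading does not decrease. To handle this I would filter by $\sum_i \deg$ on $M^\bullet$ together with the Buhl filtration $\mathcal F_\bullet$. Degree-raising modes $\alpha_{(-k)}$ with $k\ge m$ are exchanged for operators that either lower the total $\mathcal F$-level or have index at most $m$, where $m$ is chosen to dominate $2g-2+\deg D$ plus the pole orders. Keeping this bookkeeping characteristic-free is unproblematic, because only the spanning results of \zcref{sec:PBW} and the associator formula are used, and neither involves division by integers.
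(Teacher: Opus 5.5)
Your steps (1) and (2) are fine. In fact, Buhl's spanning set alone gives $\dim M^i/C_m(U,M^i)<\infty$, because the positive indices strictly decrease and so only $n_1$ matters; \zcref{AN Prop 4.5} is not needed there.

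The gap is in step (3), and it is also the reason step (4) looks hard to you. \zcref{cor:Riemann Roch calculation} and \zcref{prop: exists section chiral Lie alg} cannot produce the section you describe. They prescribe a truncated expansion at the points $Q_\bullet$, but they allow poles of \emph{arbitrary} order along an auxiliary set of points whose complement is affine. They also say nothing about the $V$-degree of the error terms in $s^N(V\otimes\mathcal{O}_S)[\![s]\!]$. This leaves two options, both bad:
\begin{itemize}
\item If $P_j$ is one of the $Q$'s, the uncontrolled pole sits at some other $P_{i_0}$, or outside $P_\bullet$, in which case $\sigma\notin\mathcal{L}_{C\setminus P_\bullet}(V,D)$. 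For $n=1$ there is no room at all.
\item If $P_j$ is the auxiliary point, you lose control of the principal part at $P_j$, which is exactly what you need.
\end{itemize}
You invoke Riemann--Roch only to bound $m$, but it is actually the source of the sections themselves. This is the paper's step. Choose $N$ so that $H^1(C,\omega_C^{\otimes 1-k}(lP_j-D))=0$ for all $k\le d_U$, all $l\ge N-1$ and all $j$. Then $h^0$ increases by exactly one with $l$. Hence, uniformly for all $\alpha\in U$, $\omega_C^{\otimes 1-\deg\alpha}(lP_j-D)$ has a section with a pole of exact order $l$ at $P_j$ that is regular away from $P_j$ and vanishes along $D$. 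Realizing it inside $\mathrm{G}_{\deg\alpha}\mathcal{V}_C\otimes\omega_C$, using \zcref{lem: sections gr} as in Step 2 of \cite{DGT23}, gives $\sigma\in\mathcal{L}_{C\setminus P_\bullet}(V,D)$ with two properties. First, $\sigma_{P_j}=c\,\alpha_{[-l]}+(\text{terms of smaller degree})$ with $c\in\kk^\times$. Second, for $i\neq j$, $\sigma_{P_i}$ involves only $\beta_{[n]}$ with $n\ge 0$ and $\deg\beta\le\deg\alpha$.

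With such sections, step (4) is routine. It needs only the ordinary total degree on $M^\bullet$, not Buhl's filtration. Act with $\sigma$ on $w^1\otimes\cdots\otimes w^n$:
\begin{itemize}
\item the leading term $\alpha_{(-l)}w^j$ raises the total degree by $\deg\alpha-1+l$;
\item every other term at $P_j$ raises it by strictly less;
\item at $P_i$ with $i\neq j$, the modes $\beta_{(n)}$ with $n\ge 0$ change it by at most $\deg\alpha-1<\deg\alpha-1+l$.
\end{itemize}
So every element of $M^1\otimes\cdots\otimes C_N(U,M^j)\otimes\cdots\otimes M^n$ of total degree $p$ lies in $\mathcal{L}_{C\setminus P_\bullet}(V,D)\cdot M^\bullet$ modulo total degree $<p$. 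Induction on $p$ then gives $M^\bullet=\bigotimes_i W^i+\mathcal{L}_{C\setminus P_\bullet}(V,D)\cdot M^\bullet$, where $W^i$ is a finite-dimensional graded complement of $C_N(U,M^i)$ in $M^i$.

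As written, your step (4) is not an argument: it never says how an exchange of modes lowers the $\mathcal{F}$-level. The difficulty it addresses is also an artifact of how you chose the sections. If the pole orders at the other points were bounded independently of $k$, as you assert, the naive total degree would already drop once $k$ is large, so your stated worry would not arise. If instead those poles are unbounded, as the cited approximation results allow, no filtration of the kind you sketch closes the induction.
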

\begin{proof}
    Let $U$ be a finite-dimensional subspace of $V$ such that $V = U\oplus C_2(V)$. Let $d_U$ be the maximum degree of the homogeneous elements in $U$. 
    We claim that there exists an integer $N$ such that $h^0(C,\omega_C^{\otimes 1-k}(lP_i-D))\neq 0$ for all $k\le d_U$, $l\ge N$, and $i\in \{1,\ldots,n\}$.
    To start, we have
    \[\deg(\omega_C^{\otimes 1-k}(lP_i-D)) = (1-k)\deg(\omega_C) - \deg (D) + l = (1-k)(2g-2) - \deg D + l.\]
    By Riemann-Roch, we have
    \[h^0(C,\omega_C^{\otimes 1-k}(lP_i-D)) - h^1(C,\omega_C^{\otimes 1-k}(lP_i-D)) = \deg(\omega_C^{\otimes 1-k}(lP_i-D)) + 1 - g.\]
    By Serre duality, we have $h^1(C,\omega_C^{\otimes 1-k}(lP_i-D)) = 0$ if $\deg(\omega_C^{\otimes 1-k}(lP_i-D)) > 2g-2$. Moreover, 
    \[h^0(C,\omega_C^{\otimes 1-k}(lP_i-D)) = \deg(\omega_C^{\otimes 1-k}(lP_i-D)) + 1 - g \ge  0.\]
    Equivalently,
    \[\deg(\omega_C^{\otimes 1-k}(lP_i-D)) + 1 - g \ge  0\iff l \ge  1-g + k(2g-2) + \deg D\]
    To have $h^0(C,\omega_C^{\otimes 1-k}(lP_i-D))\neq 0$ for any $k$ and $i$, we impose the inequality
    \[l > 1-g + k(2g-2) + \deg D.\]
    So we pick $N = 2-g + d_U(2g-2) + \deg D$, which implies that $h^0(C,\omega_C^{\otimes 1-k}(lP_i-D)) \neq 0$ for $k\le d_U$, $l\ge N$, and $i=1,\ldots,n$.
    
    The rest of the proof follows from the arguments of Step 2 of the proof of \cite[Proposition 5.1.1]{DGT23}, making use of \zcref{AN Prop 4.5} and the fact that finitely generated $V$-modules for a $C_2$-cofinite CFT-type vertex algebra are themselves $C_2$-cofinite.
\end{proof}
We can carry out the same argument in families as in \cite{DGT23}.
\begin{corollary}
\label{cor:CoherenceOnMgn} 
Let $\mathcal{C}\to S$ be a smooth, $n$-pointed family of curves. Suppose that $V$ is CFT-type and $C_2$-cofinite. For any collection of $V$-modules $M^1, \dots, M^n$ that are finitely generated by their degree 0 parts, the sheaf of coinvariants $\mathbb{V}(V; M^{\bullet})_{\left(\mathcal{C}/S, P_\bullet, t_\bullet\right)}$ is a coherent $\mathcal{O}_{S}$-module.
\end{corollary}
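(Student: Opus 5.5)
The plan is to run the family version of the Abe--Nagatomo argument used in \zcref{prop:fin-dim-coinvars}, following the proof of coherence in \cite[Proposition 5.1.1, Corollary 5.1.2]{DGT23}. Coherence is local on $S$, so I would first shrink $S$ to an affine open $\Spec A$ with $A$ noetherian. Using \zcref{thm:propagation of vacua}, I would then assume (after an étale base change, which is harmless for coherence by faithfully flat descent) that $\mathcal{C}\setminus P_\bullet(S)$ is affine. Then
\[\mathbb{V}(V;M^\bullet)_{(\mathcal{C}/S,P_\bullet,t_\bullet)} = (M^\bullet\otimes A)\big/\mathcal{L}_{\mathcal{C}\setminus P_\bullet}(V)\cdot(M^\bullet\otimes A).\]
Since $A$ is noetherian, it suffices to exhibit a finite-dimensional $\kk$-subspace $F\subset M^\bullet$ such that $F\otimes A$ surjects onto the coinvariants.

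To construct $F$, I would fix a finite-dimensional graded $U\subset V$ with $V=U\oplus C_2(V)$, and let $d_U$ be its maximal degree. The first step is a relative version of the Riemann--Roch estimate in \zcref{prop:fin-dim-coinvars}. There is an $N$, depending only on $g$, $d_U$ and the sections, such that for $k\le d_U$ and $l\ge N$ the sheaf $f_*\omega_{\mathcal{C}/S}^{\otimes 1-k}(lP_i)$ is locally free and its formation commutes with base change. The reason is that $R^1f_*$ vanishes fibrewise, by the same degree bound as before together with cohomology and base change. Moreover, the restriction map to $\omega^{\otimes 1-k}(lP_i)/\omega^{\otimes 1-k}((l-1)P_i)$ at $P_i$ is surjective. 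Combining this with \zcref{lem: sections gr} and \zcref{prop: exists section chiral Lie alg}, for each $\alpha\in U$ of degree $k$ and each $l\ge N$ I get a section $\sigma\in\mathcal{L}_{\mathcal{C}\setminus P_\bullet}(V)$ whose expansion at $P_i$ is $\alpha_{(-l)}$ plus terms of lower filtration degree, possibly after shrinking $S$ further to trivialize the relevant bundles.

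Next I would use \zcref{AN Prop 4.5} (with the bound $m=N$) to find $k_0$ with $C_{k_0}(M^i)\subset C_N(U,M^i)$. Since each $M^i$ is finitely generated by its degree $0$ part over a $C_2$-cofinite CFT-type $V$, the spanning set of \cite[Theorem 1]{BuhlSpanning} shows that $M^i/C_{k_0}(M^i)$ is finite-dimensional; this is where I rely on the fact, cited in \zcref{prop:fin-dim-coinvars}, that such modules are $C_n$-cofinite. Choose $F^i$ to be a finite-dimensional graded complement of $C_{k_0}(M^i)$ that contains all degrees up to some bound, and set $F=F^1\otimes\cdots\otimes F^n$.

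Finally I would show, by induction on the total degree with respect to the $\N$-grading of $M^\bullet$, that $M^\bullet\otimes A=F\otimes A+\mathcal{L}\cdot(M^\bullet\otimes A)$. An element of $C_N(U,M^i)$ in the $i$-th tensor factor, of the form $\alpha_{(-l)}w$, is congruent modulo the image of $\mathcal{L}$ to terms coming from the correction terms of $\sigma$ at $P_i$ and from the action of $\sigma$ at the other points $P_j$. I need all of these to have strictly smaller total degree. This is the main obstacle: the leading term $\alpha_{(-l)}$ raises degree by $\deg\alpha+l-1$, while the correction terms must be shown to raise it by less. At the other points, $\sigma$ can have poles, and I have to control their order. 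In \cite{DGT23} this is handled by taking $\sigma$ in $\omega^{\otimes 1-k}(lP_i)$, so that $\sigma$ is regular at $P_j$ for $j\ne i$ and acts there by non-positive degree. I would copy that choice, and check that the $A$-coefficients do not spoil the degree count, since $A$ carries no grading. Granting this, the induction terminates because the grading is bounded below, and this proves finite generation, hence coherence.
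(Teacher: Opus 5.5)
Your route is the paper's: its proof of this corollary is only the remark that the Abe--Nagatomo argument of \zcref{prop:fin-dim-coinvars} (Step 2 of \cite[Proposition 5.1.1]{DGT23}) can be run in families. That is what you set up, and your reductions are fine: affine noetherian base, a finite-dimensional $F$, the inclusion $C_{k_0}(M^i)\subset C_N(U,M^i)$, and induction on total degree. The problem is the one step you leave as ``granting this.'' Your justification for it is wrong in two places, although the conclusion survives.

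First, a section regular at $P_j$ does not act there by non-positive degree. Take $\sigma$ a section of $\mrm G_k\mathcal{V}_{\mathcal{C}/S}\otimes\omega_{\mathcal{C}/S}$ regular at $P_j$. In the $t_j$-trivialization its components stay in $\mrm G_kV$, by axiom (i) of an $\Aut\mathcal{O}$-action. So $\sigma_{P_j}$ is an $A$-combination of modes $\beta_{[m]}$ with $m\ge 0$ and $\deg\beta\le k$. These raise degree by up to $k-1$, which is positive once $k\ge 2$. The induction survives only because $k-1<k-1+l$ for $l\ge 1$.

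Second, \zcref{lem: sections gr} does not produce the $\sigma$ you need. It lifts $\alpha\otimes s$ from $\gr_k$ only over the affine complement, with no bound on the poles of the $\mrm G_{k-1}$-corrections at $P_i$ or $P_j$. A correction such as $\beta\otimes t_i^{-l'}dt_i$ with $\deg\beta=k-1$ and $l'\ge l+1$ already has degree $\ge k-1+l$, and that breaks the induction.

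The fix is to lift $\alpha\otimes s$ to a global section of $f_*\bigl(\mrm G_k\mathcal{V}_{\mathcal{C}/S}\otimes\omega_{\mathcal{C}/S}(lP_i)\bigr)$ over all of $\mathcal{C}$. This works on affine $S$ because $R^1f_*\bigl(\mrm G_{k-1}\mathcal{V}_{\mathcal{C}/S}\otimes\omega_{\mathcal{C}/S}(lP_i)\bigr)=0$. That vanishing follows by d\'evissage from your vanishing of $R^1f_*\omega_{\mathcal{C}/S}^{\otimes 1-k'}(lP_i)$ for $k'<k\le d_U$. Indeed, the graded pieces are $V_{k'}\otimes\omega_{\mathcal{C}/S}^{\otimes 1-k'}(lP_i)$, and each $V_{k'}$ is finite-dimensional by \zcref{prop:spanningC2}.

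With this lift, choose $s$ with leading coefficient $1$ at $P_i$, which your surjectivity onto the fibre at $P_i$ allows. Then $\sigma$ is regular at every $P_j$ with $j\neq i$. Every term of $\sigma_{P_i}$ other than $\alpha_{[-l]}$ is some $\beta_{[m]}$ with $m\ge -l$, $\deg\beta\le k$ and $(m,\deg\beta)\neq(-l,k)$, hence of degree $<k-1+l$. Since $A$ sits in degree $0$, the coefficients do not affect this count, and the induction closes.
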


\subsection{Strong identity elements and smoothing}
The relationship between the mode transition algebra $\mathfrak{A}$ and sheaves of coinvariants associated to a vertex algebra $V$ as described in \cite{DGK23} may be carried out in a similar manner in positive characteristic. We have already seen the appearance of $\mathfrak{A}$ in the factorization theorem, but we only used the basic parts of its definition to prove the result. We will now study the relationship between smoothing conditions on $V$ associated to formal smoothings $\mathscr{C}\to S$ and strong identity elements.
\begin{definition}
    Given a smoothing family $(\mathscr{C},P_\bullet,t_\bullet)$ and a collection of $V$-modules $M^1,\ldots,M^n$, an element $\one = \sum_{d\ge 0}\one_d q^d \in \Ac[\![q]\!]$ defines a \textit{smoothing map} for $M^\bullet$ over $(\mathscr{C},P_\bullet,t_\bullet)$ if $\one_0 = 1\in \Ac_0$ and the map $M^\bullet\to M^\bullet \otimes \Ac[\![q]\!]$, $u\mapsto u\otimes \one$ extends by linearity and $q$-adic continuity to an $\mathcal{L}_{\mathscr{C}\setminus P_\bullet}(V)$-module morphism $\alpha\colon M^\bullet[\![q]\!]\to (M^\bullet\otimes \Ac)[\![q]\!]$. We say that $\one$ defines a \textit{smoothing map} for $V$ if it defines a smoothing map for all collections of $V$-modules $M^\bullet$ and all families $(\mathscr{C},P_\bullet,t_\bullet)$.
    
    We say smoothing holds for $M^\bullet$ over the family $(\mathscr{C},P_\bullet,t_\bullet)$ if there is an element $\one \in \Ac[\![q]\!]$ giving a smoothing map for $M^\bullet$ over $(\mathscr{C},P_\bullet,t_\bullet)$. We say $V$ \textit{satisfies smoothing} if smoothing holds for all collections of $V$-modules $M^\bullet$ and all families $(\mathscr{C},P_\bullet,t_\bullet)$.
\end{definition}
First, we can characterize smoothing maps using the strong identity equations:
\begin{proposition}
    Let $V$ be a $\Z$-graded vertex $\kk$-algebra with an $\Aut\mathcal{O}$-action, and let $\one_d\in \Ac_d$ for all $d\in \N$. Then $\one = \sum_{d\ge 0}\one_dq^d$ defines a smoothing map for $M^\bullet$ over $(\mathscr{C},P_\bullet,t_\bullet)$ if and only if the sequence $(\one_d)$ satisfies the strong identity equations.
\end{proposition}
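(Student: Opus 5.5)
The plan is to follow the proof of \cite[Theorem 5.0.3]{DGK23}, reducing the claim to a single module-theoretic computation at the node. By \zcref{thm:propagation of vacua} we may assume $\mathscr{C}\setminus P_\bullet(S)$ is affine. Then $\mathcal{L}_{\mathscr{C}\setminus P_\bullet}(V)$ is identified, via the remark following \zcref{lem:strong residue}, with the subspace of $\mathcal{L}_{\widetilde{\mathscr{C}}\setminus P_\bullet\sqcup Q_\pm}(V)$ of sections whose expansions at the node have the form
\[\sigma_{Q_+}=\sum_{i,j}\alpha_{i,j}\,a_{[\deg(a)+i-j-1]}\,q^j,\qquad \sigma_{Q_-}=-\sum_{i,j}\alpha_{i,j}\,\theta\bigl(a_{[\deg(a)+i-j-1]}\bigr)q^i .\]
The action on $M^\bullet$ only involves $\sigma_{P_\bullet}$. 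Hence $u\mapsto u\otimes\one$ is an $\mathcal{L}_{\mathscr{C}\setminus P_\bullet}(V)$-module morphism if and only if, for every such $\sigma$,
\[\sigma_{Q_+}\cdot\one+\one\cdot\theta(\sigma_{Q_-})=0\quad\text{in }\Ac[\![q]\!],\]
where $\Ac$ is viewed as a $\UV$-bimodule through $\PhiL_0(\Aa_0)\otimes_{\Aa_0}\PhiR_0(\Aa_0)$. This holds because the terms $\sigma(u)\otimes\one$ already match.

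Next I would rewrite this condition in terms of the modes $J_n$. For the generating element built from a single homogeneous $a$ and a single monomial pair $(i,j)$, set $n=j-i$. Then $a_{[\deg(a)+i-j-1]}=J_{-n}(a)$, and $\theta(\sigma_{Q_-})$ becomes $-\theta^2(J_{-n}(a))q^i=-J_{-n}(a)q^i$ acting on the right factor. Collecting powers of $q$, the coefficient of $q^{d+j}$ in the condition is
\[J_{-n}(a)\one_{d}-\one_{d+n}J_{-n}(a)=0,\]
after reindexing. This is exactly the strong identity equation \zcref[noname]{strong identity eqns}. Degree considerations force the terms with $d+n<0$ to vanish automatically, because $\Ac_{m,-k}=0$ for negative indices. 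Conversely, \zcref{cor:Riemann Roch calculation} lets us realize every pair $(a,n)$ by some section $\sigma$, up to terms of high order in $s_\pm$, and those terms act trivially on $\Ac$ in any fixed bidegree. So the strong identity equations are both necessary and sufficient. Type B generators, of the form \zcref[noname]{eq:typeB}, have positive order at $Q_\pm$ and act by strictly lowering degree. I would check that they impose the same family of equations (with shifted $n$), or nothing new.

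The main obstacle is justifying the passage from arbitrary sections to single monomials. The approximation in \zcref{cor:Riemann Roch calculation} only controls the expansion at $Q_\pm$ modulo $s^N$. Errors also appear at the $P_i$, but those cancel because we only compare $\sigma$ acting on both sides. I would handle this by noting that $\Ac_{n,-m}$ is killed by $\NL[n+1]$ on the left and $\NR[m+1]$ on the right. Thus for each fixed $q$-degree only finitely many modes matter, and the equations are linear and $q$-adically continuous. A second, more bookkeeping point is that $\one_d$ is not assumed to lie in bidegree $(d,-d)$. Here \zcref{lem:strong identity eqns reparam} lets us project to the correct bidegree without affecting either side, and $\one_0=1$ is part of both hypotheses.
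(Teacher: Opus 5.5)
Your overall route is the one the paper intends. Its proof only invokes \cite[Proposition 5.1.2]{DGK23}, with \zcref{prop: exists section chiral Lie alg} in place of \cite[Lemma 4.4.3]{DGK23}: reduce to $\sigma_{Q_+}\one+\one\,\theta(\sigma_{Q_-})=0$, expand node sections in the linked form of the remark, and read off $J_{-n}(a)\one_d=\one_{d+n}J_{-n}(a)$ coefficientwise. The ``if'' direction is fine. The converse, however, has a gap in the approximation step.

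\zcref{cor:Riemann Roch calculation} concerns sections in the smooth locus, and the node of $\mathscr{C}$ is not one. Applying it on $\widetilde{\mathscr{C}}$ at $Q_\pm$ gives elements of $\mathcal{L}_{\widetilde{\mathscr{C}}\setminus P_\bullet\sqcup Q_\pm}(V)$ with independent errors at $Q_+$ and $Q_-$, and these generally do not lie in $\mathcal{L}_{\mathscr{C}\setminus P_\bullet}(V)$. Moreover, errors that are merely of high order in $s_\pm$, with arbitrary coefficients in $V$, need not act trivially on $\Ac$. For instance, a term $b\otimes s_+^N\,ds_+$ with $\deg(b)>N+1$ contributes the degree-raising mode $b_{[N]}$. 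The annihilation of $\Ac_{n,-m}$ by $\NL[n+1]$ on the left and $\NR[m+1]$ on the right only kills modes that push the left degree below $0$ or the right degree above $0$.

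What works is to approximate on $\mathscr{C}$ itself at the node, with a degree-adapted error. Let $G_a=\bigl(a\otimes s_+^{\deg(a)},(-1)^{\deg(a)}\sum_k L_1^{(k)}a\otimes s_-^{\deg(a)-k}\bigr)\otimes\bigl(\tfrac{ds_+}{s_+},-\tfrac{ds_-}{s_-}\bigr)$. Since $\mathscr{C}\setminus P_\bullet(S)$ is affine, the argument of \zcref{prop: exists section chiral Lie alg} lifts $s_+^is_-^jG_a$ modulo $\mathfrak{m}_Q^N\cdot(\mathcal{V}_{\mathscr{C}}\otimes\omega_{\mathscr{C}/S})$, where $\mathfrak{m}_Q=(s_+,s_-)$. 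The error then consists of terms $s_+^{i'}s_-^{j'}G_b$ with $i'+j'\ge N$. Each contributes $J_{i'-j'}(b)\one_{m-j'}-\one_{m-i'}J_{i'-j'}(b)$ to the coefficient of $q^m$, and this vanishes once $N>2m$:
\begin{itemize}
\item if $j'\le m$, then $i'>m$, so the first term has left degree $m-i'<0$;
\item if $i'\le m$, then $j'>m$, so the second term has right degree $j'-m>0$.
\end{itemize}

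Your paragraph on type B generators should be dropped rather than checked. The claim that they act by strictly lowering degree fails once $q$-powers appear: a term $J_{i-j+1}(b)q^j$ raises degree when $j>i+1$. Worse, if the $Q_+$ and $Q_-$ components could really be chosen independently, an element with $\sigma_{Q_+}=\vac_{[-1]}q=q$ and $\sigma_{Q_-}=0$ would force $q\,\one=0$, so no smoothing map could ever exist. On $\mathscr{C}$ one has $s_+=(s_+,q/s_-)$, so every section over $\mathfrak{D}_Q$ is an $\widehat{\mathcal{O}}_Q$-combination of the linked generators $G_b$. That is exactly what the remark you started from records, so nothing further needs checking.

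Two smaller points. The bidegree worry is moot, because $\one_d\in\Ac_d=\Ac_{d,-d}$ is assumed. Also, $\one_0=1$ belongs to the definition of a smoothing map but not to the strong identity equations, so the ``if'' direction needs it as a separate hypothesis. The ``only if'' direction needs $M^\bullet\neq 0$ in order to cancel $u$.
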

\begin{proof}
    This may be proven in the exact same way as in \cite[Proposition 5.1.2]{DGK23}. Their proof uses \cite[Lemma 4.4.3]{DGK23}, and \zcref{prop: exists section chiral Lie alg} is a generalization of that result, so it can be applied in our setting.
\end{proof}
Because one side of this equivalence does not depend on the modules $M^\bullet$, we have the following:
	
\begin{corollary}[\cite[Corollary 5.1.3]{DGK23}]
    Smoothing holds for a collection of $V$-modules $M^\bullet$ over a family $(\mathscr{C},P_\bullet,t_\bullet)$ if and only if $V$ satisfies smoothing.
\end{corollary}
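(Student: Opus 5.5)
The plan is to deduce the corollary directly from the preceding proposition. That proposition characterizes smoothing maps purely in terms of the strong identity equations (Definition~\ref{def:strong identity eqns}), and these equations involve only $V$, through $\Ac$ and the modes $J_{-n}(a)$. They make no reference to the modules $M^\bullet$ or to the family $(\mathscr{C},P_\bullet,t_\bullet)$.

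The implication ``$V$ satisfies smoothing $\Rightarrow$ smoothing holds for $M^\bullet$ over $(\mathscr{C},P_\bullet,t_\bullet)$'' is immediate from the definitions, since satisfying smoothing means smoothing holds for all collections and all families.

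For the converse, I would argue in four steps.

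First, suppose smoothing holds for some fixed $M^\bullet$ over some fixed $(\mathscr{C},P_\bullet,t_\bullet)$. Then there is $\one=\sum_d \one_d q^d\in\Ac[\![q]\!]$ with $\one_0=1$ defining a smoothing map. The definition of smoothing map should be read with $\one_d\in\Ac_d$, or at least so that the proposition applies. If the $\one_d$ are only known to lie in $\Ac$, I would first replace them by their bidegree-$(d,-d)$ components using Lemma~\ref{lem:strong identity eqns reparam}; this is legitimate once the strong identity equations are in hand.

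Second, apply the proposition in the ``only if'' direction to conclude that $(\one_d)_{d\in\N}$ satisfies the strong identity equations.

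Third, take an arbitrary collection $N^\bullet$ and an arbitrary smoothing family $(\mathscr{C}',P'_\bullet,t'_\bullet)$. Apply the proposition in the ``if'' direction with the same sequence $(\one_d)$. This shows that $\one$ defines a smoothing map for $N^\bullet$ over $(\mathscr{C}',P'_\bullet,t'_\bullet)$.

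Fourth, since $N^\bullet$ and the family were arbitrary, $\one$ defines a smoothing map for $V$, and in particular $V$ satisfies smoothing.

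The only delicate point I foresee is bookkeeping: making sure the hypotheses of the proposition ($\one_d\in\Ac_d$, $\one_0=1$) are met in both directions. The independence of the strong identity equations from $M^\bullet$ and from the family is what carries the argument. As a byproduct, Lemma~\ref{lem:strong identity eqns equiv} shows that smoothing is also equivalent to $V$ satisfying the strong identity condition.
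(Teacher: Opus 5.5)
Your argument is correct and is the paper's own. The preceding proposition ties smoothing maps to the strong identity equations, which involve neither $M^\bullet$ nor the family, so a $\one$ that works for one pair works for all (both arguments tacitly need $M^1\otimes\cdots\otimes M^n\neq 0$, since otherwise every $\one$ with $\one_0=1$ defines a smoothing map). The detour through \zcref{lem:strong identity eqns reparam} is superfluous and, as you phrase it, circular: the equations it presupposes come from the very proposition whose hypothesis it is meant to secure. Simply apply the proposition to the given sequence in both directions, as the paper's one-line proof does; the projection to bidegree $(d,-d)$ is needed only for the subsequent theorem on the strong identity condition.
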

Combining the above two results with \zcref{lem:strong identity eqns equiv} and \zcref{lem:strong identity eqns reparam}, we obtain the following:
\begin{theorem}[\cite[Theorem 5.0.3]{DGK23}]
    Let $V$ be a $\Z$-graded vertex $\kk$-algebra with an $\Aut\mathcal{O}$-action. Then $V$ satisfies the strong identity condition if and only if $V$ satisfies smoothing.
\end{theorem}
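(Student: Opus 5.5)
This theorem should follow formally by chaining the two results stated just before it. First, apply the preceding proposition. A sequence $(\one_d)_{d\in\N}$ with $\one_d\in\Ac_d$ gives a smoothing map $\one=\sum_d \one_d q^d$ for $M^\bullet$ over $(\mathscr{C},P_\bullet,t_\bullet)$ if and only if it satisfies the strong identity equations. The condition on the right does not involve $M^\bullet$ or the family, so by the corollary, smoothing for one pair $(M^\bullet,\mathscr{C})$ is equivalent to smoothing for $V$.

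For the direction ``strong identity condition $\Rightarrow$ smoothing'', suppose each $\Ac_d$ is strongly unital with strong identity $\one_d\in\Ac_d$. The identity $\one_0$ of $\Ac_0=\Aa_0$ is $1$, because a strong identity in $\Ac_0\cong\Aa_0$ is in particular a two-sided unit and units are unique. Then \zcref{lem:strong identity eqns equiv} shows that $(\one_d)$ satisfies the strong identity equations. The proposition then gives that $\one=\sum_d\one_dq^d$ is a smoothing map for every $M^\bullet$ and every family, so $V$ satisfies smoothing.

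For the converse, suppose $V$ satisfies smoothing, and fix any family and any modules, say $M^\bullet=V$. By definition there is $\one=\sum_d \one_d q^d\in\Ac[\![q]\!]$ with $\one_0=1$ giving a smoothing map. The step needing care is that the definition only places $\one_d$ in $\Ac$, whereas the proposition is stated for $\one_d\in\Ac_d$. I would resolve this with \zcref{lem:strong identity eqns reparam}. The characterization of smoothing maps via the strong identity equations (the argument of \cite[Proposition 5.1.2]{DGK23}, which the paper says goes through using \zcref{prop: exists section chiral Lie alg}) gives the strong identity equations for the $\one_d\in\Ac$. The reparametrization lemma then replaces each $\one_d$ by its bidegree-$(d,-d)$ component $\one_d'\in\Ac_d$, and these still satisfy the equations. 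Since $\one_0=1$ already lies in $\Ac_0$, we have $\one_0'=1$.

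Finally, \zcref{lem:strong identity eqns equiv} shows that each $\one_d'$ is a strong identity element of $\Ac_d$, so $V$ satisfies the strong identity condition. The main obstacle I expect is this bookkeeping: checking that the proposition's proof really yields the equations for arbitrary $\one_d\in\Ac$ before projecting to bidegree $(d,-d)$, and confirming that $\one_0=1$ is preserved. I would verify this by rereading the degree argument in \cite[Proposition 5.1.2]{DGK23} in our notation, using that $J_{-n}(a)$ shifts bidegree by $(n,-n)$.
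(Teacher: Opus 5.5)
Your proposal is correct and follows the same route as the paper, which combines the characterization of smoothing maps by the strong identity equations with \zcref{lem:strong identity eqns equiv} and \zcref{lem:strong identity eqns reparam}, exactly as in \cite[Theorem 5.0.3]{DGK23}. You are right that the reparametrization lemma is needed only because a smoothing map a priori has $\one_d\in\Ac$ rather than $\one_d\in\Ac_d$. Choosing a nonzero collection such as $M^\bullet=V$ is also the right move, since for the zero module every $\one$ with $\one_0=1$ is trivially a smoothing map.
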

\begin{proof}
    Similar to the previous statements, the argument is the same as in \cite[Theorem 5.0.3]{DGK23}. Their proof uses \cite[Lemma 5.1.4, Lemma 5.1.5]{DGK23}, which hold in our case (cf. \zcref{lem:strong identity eqns equiv} and \zcref{lem:strong identity eqns reparam}).
\end{proof}
Once again, all of the cited results above (and the preliminary results cited within those papers) may be proven in exactly the same way over an arbitrary algebraically closed field $\kk$.

The main consequence of the smoothing characterization is the smoothing property, which we now describe. We need two preliminary results:
\begin{lemma}[\cite[Corollary 4.3.1]{DGK23}]\label{lem:cor 4.3.1}
    Suppose that $\mathbb{V}(V;(M^\bullet\otimes \mathfrak{A})[\![q]\!])_{\widetilde{\mathscr{C}},P_\bullet\sqcup Q_\pm,t_\bullet\sqcup s_\pm}$ is coherent over $S$. Then there are isomorphisms
    \begin{align*}
        \mathbb{V}(V;(M^\bullet\otimes \mathfrak{A})[\![q]\!])_{(\widetilde{\mathscr{C}}, P_\bullet\sqcup Q_\pm, t_\bullet\sqcup s_\pm)}
        &\cong \mathbb{V}(V;M^\bullet\otimes \mathfrak{A})_{(\widetilde{\mathscr{C}}, P_\bullet\sqcup Q_\pm, t_\bullet\sqcup s_\pm)}[\![q]\!]\\
        &= \mathbb{V}(V;M^\bullet\otimes \mathfrak{A})_{(\widetilde{\mathscr{C}}, P_\bullet\sqcup Q_\pm, t_\bullet\sqcup s_\pm)}\otimes_\kk \kk[\![q]\!].
    \end{align*}
\end{lemma}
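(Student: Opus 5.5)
The plan is to deduce the isomorphism from the fact that a finitely generated module over the Noetherian ring $R[\![q]\!]$ is automatically $q$-adically complete. First I compare the two sides as cokernels.

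Write $\mathcal{L} = \mathcal{L}_{\widetilde{\mathscr{C}}\setminus (P_\bullet\sqcup Q_\pm)}(V)$ for the chiral Lie algebra of the smooth family $\widetilde{\mathscr{C}}\to S=\Spec R[\![q]\!]$, and $\mathcal{L}_0$ for that of its special fiber $\widetilde{\mathscr{C}}_0\to S_0$. Because $\widetilde{\mathscr{C}}$ is the trivial extension of $\widetilde{\mathscr{C}}_0$ away from the node, $\mathcal{L}$ is the $q$-adic completion of $\mathcal{L}_0\otimes_R R[\![q]\!]$. The first step is to check this using affineness of $\widetilde{\mathscr{C}}_0\setminus(P_\bullet\sqcup Q_\pm)$ together with the description of $\Im\nabla$ from the log PD-connection. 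Hence the action map $\mathcal{L}\otimes (M^\bullet\otimes\mathfrak{A})[\![q]\!]\to (M^\bullet\otimes\mathfrak{A})[\![q]\!]$ is the $q$-linear, $q$-continuous extension of the action on the special fiber.

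Next, set $W=(M^\bullet\otimes\mathfrak{A})\otimes R$ and $I=\mathcal{L}_0\cdot W$, so the special-fiber coinvariants are $W/I$. The coinvariants over $S$ are $W[\![q]\!]/\overline{I'}$, where $I'$ is the image of the action. Since each coefficient of $q^k$ in an element of $\mathcal{L}\cdot W[\![q]\!]$ lies in $I$, there is a natural surjection
\[\mathbb{V}(V;(M^\bullet\otimes\mathfrak{A})[\![q]\!]) \twoheadrightarrow (W/I)[\![q]\!]=\varprojlim_k (W/I)\otimes \kk[q]/q^k .\]
Reducing modulo $q^k$ gives isomorphisms
\[\mathbb{V}/q^k\mathbb{V}\cong (W/I)\otimes_\kk \kk[q]/q^k,\]
because the action commutes with truncation and cokernels are right exact.

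The main step is completeness. Assuming $\mathbb{V}$ is coherent over $S$, it is a finitely generated $R[\![q]\!]$-module, and $R[\![q]\!]$ is Noetherian since $R$ is smooth of finite type over $\kk$. By the Artin--Rees lemma, $\mathbb{V}\cong\varprojlim_k \mathbb{V}/q^k\mathbb{V}$. Combining this with the truncated isomorphisms gives $\mathbb{V}\cong (W/I)[\![q]\!]$, which is the first isomorphism. I expect the hard part to be checking that the truncated isomorphisms are compatible in $k$ and that the definition of coinvariants over $S$ (image versus its closure) does not spoil them. I would handle this by working directly with the inverse system and using that cokernels commute with $-\otimes \kk[q]/q^k$.

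Finally, coherence also shows that $W/I$ is finitely generated over $R$, being a quotient of $\mathbb{V}/q\mathbb{V}$. For finitely generated $R$-modules, $(W/I)[\![q]\!]=(W/I)\otimes_R R[\![q]\!]$. Reading the tensor over the base in this sense, as the paper writes $\otimes_\kk\kk[\![q]\!]$, yields the second equality.
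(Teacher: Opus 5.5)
Your proposal is correct and is essentially the argument the paper defers to in \cite[Corollary 4.3.1]{DGK23}: right exactness of cokernels gives $\mathbb{V}/q^k\mathbb{V}\cong (W/I)\otimes_R R[q]/(q^k)$ compatibly in $k$, and a finitely generated module over the Noetherian $q$-adically complete ring $R[\![q]\!]$ is $q$-adically complete, so $\mathbb{V}\cong (W/I)[\![q]\!]$. Two small corrections. First, the coinvariants here are the plain cokernel $W[\![q]\!]/I'$; with the closure $\overline{I'}$ the coherence hypothesis would be superfluous, and your inverse-limit argument in fact proves $I'=\overline{I'}=I[\![q]\!]$. Second, for the product family, flat base change gives $\mathcal{L}=\mathcal{L}_0\otimes_R R[\![q]\!]$ rather than its completion, but this is immaterial because you only use that the $q$-coefficients of the image lie in $I$.
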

\begin{proof}
    The proof is the same as in \cite[Corollary 4.3.1]{DGK23}.
\end{proof}
\begin{proposition}[\cite[Proposition 4.3.4]{DGK23}]
    Suppose that $\mathbb{V}(V;M^\bullet)_{(\mathscr{C}_0,P_\bullet,t_\bullet)}$is a finite-dimensional vector space over $\kk$. Then both 
    \[\mathbb{V}(V;M^\bullet[\![q]\!])_{(\mathscr{C},P_\bullet,t_\bullet)} \quad \text{and}\quad \mathbb{V}(V;(M^\bullet\otimes\mathfrak{A})[\![q]\!])_{(\widetilde{\mathscr{C}}, P_\bullet\sqcup Q_\pm, t_\bullet\sqcup s_\pm)}\]
    are coherent over $S$.
\end{proposition}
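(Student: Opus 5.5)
The plan is to follow the strategy of \cite[Proposition 4.3.4]{DGK23}: a $q$-adic Nakayama argument for the first sheaf, after which the second sheaf is reduced to the first (or to the special fibre) via the factorization theorem. Throughout, $S=\Spec R[\![q]\!]$ is noetherian and $q$-adically complete, so a $q$-adically separated and complete $R[\![q]\!]$-module whose reduction mod $q$ is finitely generated over $R$ is itself finitely generated.

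\emph{First sheaf.} Set $\mathbb{V}_q=\mathbb{V}(V;M^\bullet[\![q]\!])_{(\mathscr{C},P_\bullet,t_\bullet)}$. I would first compute its reduction mod $q$. Base change of coinvariants is right exact: coinvariants are a cokernel, and the chiral Lie algebra commutes with base change, as observed in \zcref{sec:descent}. Hence
\[\mathbb{V}_q/q\,\mathbb{V}_q\cong\mathbb{V}(V;M^\bullet)_{(\mathscr{C}_0,P_\bullet,t_\bullet)}.\]
By hypothesis this is finite-dimensional. Choose finitely many $u_1,\dots,u_r\in M^\bullet$ whose images span it, and let $N\subset M^\bullet[\![q]\!]$ be the closed $R[\![q]\!]$-span of the $u_i$ together with the image of the $\mathcal{L}_{\mathscr{C}\setminus P_\bullet}(V)$-action. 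Then $N+qM^\bullet[\![q]\!]=M^\bullet[\![q]\!]$. Iterating this and using $q$-adic completeness of $M^\bullet[\![q]\!]$ gives $N=M^\bullet[\![q]\!]$, so $\mathbb{V}_q$ is generated by the $u_i$ and is therefore coherent.

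\emph{Second sheaf.} I would run the same argument for $\widetilde{\mathbb{V}}_q=\mathbb{V}(V;(M^\bullet\otimes\mathfrak{A})[\![q]\!])_{(\widetilde{\mathscr{C}},P_\bullet\sqcup Q_\pm,t_\bullet\sqcup s_\pm)}$. Its reduction mod $q$ is $\mathbb{V}(V;M^\bullet\otimes\mathfrak{A})_{(\widetilde{\mathscr{C}_0},P_\bullet\sqcup Q_\pm,t_\bullet\sqcup s_\pm)}$. By \zcref{thm:factorization theorem}, this is isomorphic to $\mathbb{V}(V;M^\bullet)_{(\mathscr{C}_0,P_\bullet,t_\bullet)}$, which is finite-dimensional by hypothesis. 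So the same Nakayama/completeness argument applies, with generators taken of the form $u\otimes1\otimes1$ as in the surjectivity part of the factorization proof. (\zcref{lem:cor 4.3.1} then applies as a consequence of this coherence; it is not needed for the argument itself.)

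\emph{Main obstacle.} The hard step is completeness: I need the submodule generated by the chiral Lie algebra action to be $q$-adically closed, so that the iteration converges inside it rather than only in its closure. The first thing I would try is to use $q$-adic continuity of the action together with \zcref{prop: exists section chiral Lie alg}. That result lets one lift each correction term mod $q^k$ to an honest section, and summing these lifts in $q$ converges because $\mathcal{L}_{\mathscr{C}\setminus P_\bullet}(V)$ is itself $q$-adically complete, being a pushforward along an affine morphism over $R[\![q]\!]$. If this lifting fails to give closedness directly, I would instead define coinvariants with the closure of the image and note that the two definitions agree, as in \cite{DGK23}.
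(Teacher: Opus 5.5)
You have identified the right difficulty, but the proposal does not resolve it. So there is a genuine gap, and it sits exactly at the step that carries the content of the statement.

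\paragraph{The gap.} Your iteration only shows that $\sum_i R[\![q]\!]u_i+\mathcal{L}_{\mathscr{C}\setminus P_\bullet}(V)\cdot M^\bullet[\![q]\!]$ is $q$-adically \emph{dense} in $M^\bullet[\![q]\!]$. Equivalently, it shows that $\mathbb{V}_q/\sum_i R[\![q]\!]u_i$ is $q$-divisible.

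To conclude that this quotient vanishes, you need $\mathbb{V}_q$ to be $q$-adically separated. That is the same as the image $\mathcal{L}_{\mathscr{C}\setminus P_\bullet}(V)\cdot M^\bullet[\![q]\!]$ being closed, and it is also necessary, since finitely generated modules over $R[\![q]\!]$ are separated. Completeness of $M^\bullet[\![q]\!]$ does not give this. Take $F$ the completion of $\bigoplus_n R[\![q]\!]e_n$ and $N$ the span of the $e_n-qe_{n+1}$: then $F/N$ is nonzero and $q$-divisible. Writing ``closed span'' into the definition of $N$ is where the conclusion gets assumed.

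Your fallback, quotienting by the closure and noting that the definitions agree, is circular. That agreement \emph{is} the missing closedness, and in the paper such a comparison is only available once coherence is known (this is what \zcref{lem:cor 4.3.1} does). The identical gap affects your treatment of the second sheaf.

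Your skeleton matches the paper's route through \cite[Proposition 4.3.4]{DGK23}: base change to the special fibre, then \zcref{thm:factorization theorem}. However, that argument also invokes \cite[Lemma 4.1.1, Lemma 4.3.3]{DGK23}. The passage from the special fibre to the family needs input beyond Nakayama and factorization, and you have not supplied it.

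\paragraph{Why the proposed remedy fails.}
\begin{enumerate}
\item $\mathcal{L}_{\mathscr{C}\setminus P_\bullet}(V)$ is not $q$-adically complete in general. It is a quotient of global sections of a quasi-coherent sheaf on an affine $S$-scheme of finite type, whose coordinate ring has infinite rank over $R[\![q]\!]$ (think of $R[\![q]\!][u,v]/(uv-q)$). Every single section has pole order along the $P_i(S)$, and hence degree-raising at the $P_i$, bounded uniformly in its $q$-expansion. So $q$-adic Cauchy sequences of sections with growing pole order have no limit. Pushforward along an affine morphism gives quasi-coherence, not completeness.
\item \zcref{prop: exists section chiral Lie alg} is a Chinese-remainder statement about jets at auxiliary smooth points. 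It says nothing about lifting modulo powers of $q$.
\item Most importantly, a complete Lie algebra would not suffice anyway. The correction at stage $k$ is $\sum_j\sigma_{k,j}\cdot m_{k,j}$, and the module elements $m_{k,j}$ vary with $k$. The series $\sum_k q^k\sum_j\sigma_{k,j}\cdot m_{k,j}$ cannot in general be rewritten as a finite sum $\sum_\ell\tau_\ell\cdot y_\ell$.
\end{enumerate}

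\paragraph{What would close the argument.} You need a uniformity statement: finitely many $\tau_1,\dots,\tau_L\in\mathcal{L}_{\mathscr{C}\setminus P_\bullet}(V)$ and a finite-dimensional $E\subset M^\bullet$ with $M^\bullet=E+\sum_\ell\bar\tau_\ell M^\bullet$ on the special fibre. Then the corrections regroup as $\sum_\ell\tau_\ell\bigl(\sum_k q^k m_{k,\ell}\bigr)$, using $R[\![q]\!]$-linearity and separatedness of $M^\bullet[\![q]\!]$, and your Nakayama argument goes through.

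Finite-dimensionality of $\mathbb{V}(V;M^\bullet)_{(\mathscr{C}_0,P_\bullet,t_\bullet)}$ provides $E$ but not the $\tau_\ell$. So you need an additional input. One option is a finite-generation property of the chiral Lie algebra: if $\mathcal{L}_{\mathscr{C}_0\setminus P_\bullet}(V)$ is generated as a Lie algebra by a finite set $F$, then it carries $M^\bullet$ into $FM^\bullet$, because $\{x:\ xM^\bullet\subseteq FM^\bullet\}$ is closed under brackets. The other option is a direct proof that the coinvariants are $q$-adically separated.
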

\begin{proof}
    The proof is the same as in \cite[Proposition 4.3.4]{DGK23}. Their proof uses the factorization theorem, which we have proven for our setting. Their proof also makes use of \cite[Lemma 4.3.3, Lemma 4.1.1]{DGK23}, and a generalization of these results holds for our case as well. Therefore, the argument passes through.
\end{proof}
The above two results and the smoothing characterization gives us the smoothing property:
\begin{corollary}[Smoothing]\label{smoothing property}
    Suppose that $V$ satisfies the strong identity condition, where $(\one_d)_{d\in \N}$ is the set of strong identity elements $\one_d\in \Ac_d$. 
    Let $M^\bullet$ be $V$-modules such that the sheaf $\mathbb{V}(V;(M^\bullet\otimes \Ac)[\![q]\!])_{(\widetilde C,P_\bullet\sqcup Q_\pm,t_\bullet\sqcup s_\pm)}$ is coherent over $S$. 
    Set $\one = \sum_{d\ge 0}\one_dq^d$, and let $\alpha\colon M^\bullet[\![q]\!]\to (M^\bullet\otimes \Ac)[\![q]\!]$ be the map induced by $u\mapsto u\otimes \one$. Then we have a commutative diagram
    \[ \begin{tikzcd}[column sep=3cm]
    {\mathbb{V}(V;M^{\bullet}[\![q]\!])_{(\Cs, P_\bullet, t_\bullet)}} {\arrow[r, "{[\alpha]}"]} {\arrow[d , two heads ]}
        & {\mathbb{V}(V;M^{\bullet}\otimes \Ac)_{(\widetilde{\Cs}_0 , P_\bullet \sqcup Q_\pm, t_\bullet \sqcup s_\pm)}[\![q]\!]} {\arrow[d, two heads]} \\
        {\mathbb{V}(V;M^{\bullet})_{(\Cs_0, P_\bullet, t_\bullet)}} \arrow[r,  "{[\alpha_0]}" ]
    & {\mathbb{V}(V;M^\bullet \otimes \Ac )}_{(\widetilde{\Cs}_0 , P_\bullet\sqcup Q_\pm, t_\bullet \sqcup s_\pm)}
    \end{tikzcd}\]
    in which $\alpha_0\colon M^\bullet \to M^\bullet\otimes \Ac$ is given by $w\mapsto w\otimes \one_0$.
\end{corollary}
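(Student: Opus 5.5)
The plan is to assemble the square from the results just stated, following \cite[Theorem 5.0.3 and Corollary 5.2.6]{DGK23}.

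\textbf{Top arrow.} Since $V$ satisfies the strong identity condition, the sequence $(\one_d)_{d\in\N}$ satisfies the strong identity equations by \zcref{lem:strong identity eqns equiv}. By the smoothing characterization proposition, $\one=\sum_d \one_d q^d$ then defines a smoothing map. Concretely, $\alpha\colon M^\bullet[\![q]\!]\to (M^\bullet\otimes\Ac)[\![q]\!]$ is a morphism of $\mathcal{L}_{\mathscr{C}\setminus P_\bullet}(V)$-modules. The action on the target is through the inclusion of $\mathcal{L}_{\mathscr{C}\setminus P_\bullet}(V)$ into $\mathcal{L}_{\widetilde{\mathscr{C}}\setminus(P_\bullet\sqcup Q_\pm)}(V)$ described in the remark on formal smoothings. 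It follows that $\alpha$ sends $\mathcal{L}_{\mathscr{C}\setminus P_\bullet}(V)\cdot M^\bullet[\![q]\!]$ into $\mathcal{L}_{\widetilde{\mathscr{C}}\setminus(P_\bullet\sqcup Q_\pm)}(V)\cdot(M^\bullet\otimes\Ac)[\![q]\!]$, so it descends to a map
\[[\alpha]\colon \mathbb{V}(V;M^\bullet[\![q]\!])_{(\Cs,P_\bullet,t_\bullet)}\to \mathbb{V}(V;(M^\bullet\otimes\Ac)[\![q]\!])_{(\widetilde{\Cs},P_\bullet\sqcup Q_\pm,t_\bullet\sqcup s_\pm)}.\]
By the coherence hypothesis we may apply \zcref{lem:cor 4.3.1}, which identifies this target with $\mathbb{V}(V;M^\bullet\otimes\Ac)_{(\widetilde{\Cs}_0,\dots)}[\![q]\!]$. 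Here we also use that the normalized family $\widetilde{\mathscr{C}}$ is the trivial product $\widetilde{\mathscr{C}}_0\times_{S_0}S$ together with $q$-adic continuity. This produces the top arrow.

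\textbf{Vertical arrows.} Both are reduction modulo $q$. Coinvariants are cokernels, and cokernels commute with the right exact base change $-\otimes_{R[\![q]\!]}R$. Restricting the chiral Lie algebra of the smoothing family to $q=0$ gives the chiral Lie algebra of the special fiber: by the explicit descriptions \zcref[noname]{eq:typeA} and \zcref[noname]{eq:typeB}, setting $q=0$ recovers the sections of $\mathcal{V}_{\mathscr{C}_0}$ at the node. Surjectivity of these maps is clear, since every element of $M^\bullet$ lifts to a constant power series.

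\textbf{Commutativity.} Reducing $u\otimes\one$ modulo $q$ gives $u\otimes\one_0$, so the square commutes at the level of modules and hence on coinvariants. Since $\one_0=1\in\Ac_0$, the bottom map $[\alpha_0]$ is exactly the factorization map of \zcref{thm:factorization theorem}; it is an isomorphism, though that is not needed for the diagram itself.

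\textbf{Main obstacle.} The delicate point is checking that the image of $\mathcal{L}_{\mathscr{C}\setminus P_\bullet}(V)\cdot M^\bullet[\![q]\!]$ under reduction mod $q$ equals $\mathcal{L}_{\mathscr{C}_0\setminus P_\bullet}(V)\cdot M^\bullet$, and not just a submodule of it. This requires lifting sections over the node. I would first handle it using the type A and type B generators and \zcref{prop: exists section chiral Lie alg}, assuming $\mathscr{C}\setminus P_\bullet$ is affine (which we may do via \zcref{thm:propagation of vacua}). With that affineness, $H^0$ commutes with the base change $q=0$ for the quasi-coherent sheaf $\mathcal{V}_{\mathscr{C}}\otimes\omega$, which is $q$-torsion-free by \zcref{rmk:vertex algebra sheaf general case}.
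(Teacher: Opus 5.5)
Your proposal is correct and follows essentially the same route as the paper, which defers to \cite[Corollary 5.2.1]{DGK23}. In both, the vertical arrows are reduction at $q=0$ (hence surjective), $[\alpha]$ comes from the $\mathcal{L}_{\mathscr{C}\setminus P_\bullet}(V)$-equivariance of $u\mapsto u\otimes\one$ via the smoothing characterization, and \zcref{lem:cor 4.3.1} identifies the target. The equality you flag as the main obstacle is not needed for this statement: the containment of $\mathcal{L}_{\mathscr{C}\setminus P_\bullet}(V)\cdot M^\bullet[\![q]\!]$ modulo $q$ in $\mathcal{L}_{\mathscr{C}_0\setminus P_\bullet}(V)\cdot M^\bullet$ already makes the left vertical map well defined, and its surjectivity comes from $M^\bullet[\![q]\!]\twoheadrightarrow M^\bullet$.
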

\begin{proof}
    The argument is the same as in \cite[Corollary 5.2.1]{DGK23}. The vertical maps are given by setting $q=0$, so they are surjective. The horizontal maps $[\alpha]$ and $[\alpha_0]$ are well-defined by using essentially the same argument as in the beginning of the proof of the factorization theorem.
\end{proof}
\begin{proposition}
    Suppose that $V$ satisfies the strong identity equations and the spaces of coinvariants $\mathbb{V}(V;M^\bullet[\![q]\!])_{\mathscr{C},P_\bullet,t_\bullet}$ and $\mathbb{V}(V;M^\bullet[\![q]\!])_{\widetilde{\mathscr{C}},P_\bullet\sqcup Q_\pm,t_\bullet\sqcup s_\pm}$ are coherent over $S$. Then the map $[\alpha]$ defined in \zcref{smoothing property} is an isomorphism.
\end{proposition}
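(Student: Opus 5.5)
The plan is to follow the proof of \cite[Theorem 5.2.2]{DGK23}: a Nakayama-type argument over $S=\Spec(R[\![q]\!])$, with the factorization theorem as the base case at $q=0$. The diagram in \zcref{smoothing property} already supplies a commutative square. Its bottom arrow $[\alpha_0]$ is induced by $u\mapsto u\otimes\one_0=u\otimes 1\otimes 1$, so it is exactly the map $\alpha$ of \zcref{thm:factorization theorem} and hence an isomorphism. The vertical maps are reduction modulo $q$. By \zcref{lem:cor 4.3.1}, the upper right corner is $\mathbb{V}(V;M^\bullet\otimes\Ac)_{\widetilde{\mathscr{C}}_0}[\![q]\!]$. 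This is a free $\kk[\![q]\!]$-module (tensored over $R$), and in particular it is $q$-torsion free and $q$-adically separated. The work therefore splits into proving surjectivity of $[\alpha]$ and proving injectivity of $[\alpha]$.

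\textbf{Surjectivity.} Both source and target are coherent over the $q$-adically complete ring $R[\![q]\!]$. Reducing $[\alpha]$ modulo $q$ gives $[\alpha_0]$, and we must check this identification: the right-exactness of coinvariants gives $\mathbb{V}(V;M^\bullet[\![q]\!])_{\mathscr{C}}/q\cong\mathbb{V}(V;M^\bullet)_{\mathscr{C}_0}$, because $\mathcal{L}_{\mathscr{C}\setminus P_\bullet}(V)$ surjects onto $\mathcal{L}_{\mathscr{C}_0\setminus P_\bullet}(V)$ by base change and affineness. Since $[\alpha_0]$ is surjective, the cokernel $K$ of $[\alpha]$ is a coherent module with $K/qK=0$. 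Nakayama's lemma, applicable because $q$ lies in the Jacobson radical of $R[\![q]\!]$, then gives $K=0$.

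\textbf{Injectivity.} This is the main obstacle. I would first reduce it to showing that $\mathbb{V}(V;M^\bullet[\![q]\!])_{\mathscr{C}}$ has no $q$-torsion and is $q$-adically separated. Granting this, let $x\neq 0$ lie in $\ker[\alpha]$. Separatedness lets us write $x=q^k y$ with $y\notin q\cdot\mathbb{V}$. Torsion-freeness of the target gives $[\alpha](y)=0$. Reducing modulo $q$, the injectivity of $[\alpha_0]$ forces $y\in q\cdot\mathbb{V}$, which is a contradiction.

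Separatedness follows from coherence via the Krull intersection theorem over the Noetherian $q$-adically complete ring. Torsion-freeness is the hard part. For it I would imitate \cite[Proposition 5.2.x]{DGK23} and construct a left inverse, or at least a retraction up to $q$-torsion, using the strong identity elements. The idea is to use the associativity $\one_m\star\frak b=\frak b$ to define a map $(M^\bullet\otimes\Ac)[\![q]\!]\to M^\bullet[\![q]\!]$. It would be given on $u\otimes\frak a\otimes\frak b\,q^d$ by sewing, that is, by letting the pieces of $\frak a$ and $\frak b$ act back through modes expressed via the $\theta$-twisted gluing $s_+s_-=q$. We would then check that this map respects the two chiral Lie algebra actions, using the description of $\mathcal{L}_{\mathscr{C}\setminus P_\bullet}(V)$ via sections of types \eqref{eq:typeA} and \eqref{eq:typeB} together with \zcref{prop: exists section chiral Lie alg}.

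If the inverse construction proves too heavy, the fallback is the DGK23 route. One compares both sides after inverting $q$: over $R(\!(q)\!)$ the curve is smooth and $[\alpha]$ becomes an isomorphism by a propagation-of-vacua-type argument. This shows that $\ker[\alpha]$ is $q$-torsion. Combined with the torsion-freeness of the target, and with surjectivity plus the coherence hypothesis applied to the torsion submodule of the source, this gives $\ker[\alpha]=0$.
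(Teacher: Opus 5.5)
Your proof is correct. In fact it is complete before the point where you say the main obstacle begins.

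\textbf{Surjectivity.} Your argument is the same Nakayama argument the paper (following DGK23) uses.

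\textbf{Injectivity.} Look again at your ``Granting this'' paragraph; it uses three inputs, and none of them is $q$-torsion-freeness of the source.
\begin{enumerate}
\item Choosing $k$ maximal with $x\in q^k\mathbb{V}$ uses only $q$-adic separatedness of the source. You already get this from the Krull intersection theorem, since the source is finitely generated over the Noetherian ring $R[\![q]\!]$ and $q$ lies in its Jacobson radical.
\item Cancelling $q^k$ uses only $q$-torsion-freeness of the target, which is automatic for a module of the form $N[\![q]\!]$.
\item The contradiction uses injectivity of $[\alpha_0]$, together with your identification of the source modulo $q$ with $\mathbb{V}(V;M^\bullet)_{(\mathscr{C}_0,P_\bullet,t_\bullet)}$.
\end{enumerate}
So the ``hard part'' is illusory: torsion-freeness of the source is a consequence of the proposition, not an input to it. You should delete your last two paragraphs. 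They are unnecessary, and as written they are unsupported. The sewing retraction is never actually defined. Propagation of vacua only adds or removes a vacuum insertion on a fixed curve, so it gives no reason for $[\alpha]$ to become an isomorphism after inverting $q$.

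\textbf{Comparison with the paper.} The paper handles injectivity differently. It uses that the target $T$ is locally free of finite rank, obtained from the lemma identifying $T$ with $\mathbb{V}(V;M^\bullet\otimes\Ac)_{(\widetilde{\mathscr{C}}_0,P_\bullet\sqcup Q_\pm,t_\bullet\sqcup s_\pm)}[\![q]\!]$. It combines this with the corollary of Nakayama that a surjective endomorphism of a finitely generated module is an isomorphism. Lifting generators of the special fiber gives surjections $F\twoheadrightarrow\mathbb{V}(V;M^\bullet[\![q]\!])_{(\mathscr{C},P_\bullet,t_\bullet)}\twoheadrightarrow T$, where $F$ is (locally) free of the same rank as $T$. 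The composite is then a surjective endomorphism, hence an isomorphism, and so is $[\alpha]$. Equivalently, the surjection onto the projective module $T$ splits, and its kernel $K$ satisfies $K/qK=0$.

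Your route replaces projectivity of the target by mere $q$-torsion-freeness, and pays for it with Krull separatedness of the source. It avoids any rank bookkeeping, and it would still work if the nodal coinvariants were only coherent rather than locally free over $R$. The paper's version is a one-line appeal to a standard lemma once local freeness is granted. Both arguments depend in the same way on the factorization theorem making $[\alpha_0]$ an isomorphism.
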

\begin{proof}
    The proof is the same as \cite[Lemma 5.2.5]{DGK23}, except we do not need to assume $C_1$-cofiniteness since that was required for their factorization result. Note that the source of $[\alpha]$ is finitely generated and the target is locally free of finite rank (the finite rank part follows from \zcref{lem:cor 4.3.1}). The fact that $[\alpha]$ is an isomorphism follows from a corollary of Nakayama's lemma, which is that a surjective endomorphism $f\colon M\to M$ of a finitely generated $R$-module $M$ is an isomorphism.
\end{proof}


\bibliographystyle{amsalpha}
\bibliography{bibfile}


  





\end{document}